\begin{document}

\title[Complex powers of the wave operator and the spectral action on Lorentzian scattering spaces]{\Large Complex powers  of the wave operator \\ {\Large and  the spectral action on Lorentzian scattering spaces}}

\author{}
\address{Sorbonne Université and Université Paris Cité, CNRS, IMJ-PRG, F-75005 Paris, France}
\email{nguyen-viet.dang@imj-prg.fr}
\author[]{\normalsize Nguyen Viet \textsc{Dang} \& Micha{\l} \textsc{Wrochna}}
\address{CY Cergy Paris Universit\'e, Laboratoire AGM, F-95302 Cergy-Pontoise, France}
\email{michal.wrochna@cyu.fr}

\begin{abstract} We consider perturbations of Minkowski space as well as more general spacetimes on which the wave operator $\square_g$ is known to be essentially self-adjoint \cite{vasyessential}. We define complex powers  $(\square_g-i\varepsilon)^{-\cv}$ by functional calculus, and show that the trace density exists as a meromorphic function of $\cv$. We relate its poles to geometric quantities, in particular to the scalar curvature.  The results allow us to formulate a spectral action principle which serves as a simple Lorentzian model for the bosonic part of the Chamseddine--Connes action. Our proof combines microlocal resolvent estimates, including radial propagation estimates, with uniform estimates for the Hadamard parametrix. The arguments operate  in Lorentzian signature directly and do not rely on a transition from the Euclidean setting. 
\end{abstract}

\maketitle

\section{Introduction}

\subsection{Introduction and main result} 

 The relationships between the geometry of  compact Riemannian manifolds and the spectral theory of elliptic operators have been a rich ground for discovery for decades, owing to powerful methods based on heat kernel and resolvent expansions, complex powers, residue traces, zeta functions and related notions \cite{Minakshisundaram1949,Ray1971,Quillen1985,BGV,Melrose1993,gilkey,shubin,paycha}. They have also profoundly influenced the world of relativistic physics,  relying on the presumption that a generalization to Lorentzian manifolds is possible \cite{Hawking1977,Connes1996,Chamseddine1997,Connes2008a,suijlekom}. This generalization was however found to be problematic on many levels. In particular, while it is possible to make sense of, e.g., formal heat kernel coefficients $\{a_i\}$ for the wave operator $\square_g$ on a Lorentzian manifold $(M,g)$ by writing  transport equations analogous to the Riemannian case, and  interpret them  in terms of a   Lorentzian  Hadamard parametrix \cite{Moretti1999,Bytsenko2003}, their relation to global objects defined by spectral theory is vastly unclear. 

On the other hand, it was recently found that if the spacetime $(M,g)$ has special symmetries or if instead, it is well-behaved at large distances,  then it is possible to interpret $\square_g$ as a self-adjoint operator in the sense of the canonical $L^2(M)$ space defined using the volume form of $g$. In fact, the essential self-adjointness of  $\square_g$ on \emph{static} spacetimes was proved by Derezi\'nski--Siemssen \cite{derezinski}, and on \emph{non-trapping Lorentzian scattering spaces} by Vasy \cite{vasyessential}; this was then generalized by Nakamura--Taira  to other differential operators of real principal type on long-range perturbations of Minkowski space \cite{nakamurataira}, cf.~\cite{cdv,kaminski,taira,Derezinski2019,Taira2020} for related recent works on self-adjointness of non-elliptic operators. As a consequence of self-adjointness, it is possible to define, e.g., $(\square_g-i\varepsilon)^{-\cv}$ for $\varepsilon>0$ and $\cv\in\cc$ abstractly by functional calculus. However, the relation to the local geometry is then an open question.

In the present work we demonstrate that globally defined complex powers of $\square_g$ are in fact related to spacetime geometry  in a way that parallels to a large extent the results known in the Riemannian case. We consider the setting of non-trapping Lorentzian scattering spaces introduced by Vasy in \cite{vasyessential},  which assumes the metric and the null geodesic flow to behave asymptotically in a certain way, see \sec{ss:bi}. The main feature is that this class contains perturbations of Minkowski space and other asymptotically Minkowski spacetimes (including the class considered in \cite{BVW,GHV}), and no particular  symmetry of $(M,g)$ nor real analyticity  is assumed.  We also make the assumption of \emph{global hyperbolicity} of $(M,g)$, which  arises naturally in, e.g., the solvability of the Cauchy problem.  Our main result is the following theorem.

\begin{theorem}[{cf.~Theorem \ref{mainmainprop} and Proposition \ref{mainmainprop}}]\label{mainthm} Assume $(M,g)$ is a globally hyperbolic, non-trapping Lorentzian scattering space (or that $(M,g)$ is an ultrastatic spacetime) and assume its  dimension $n$ is even. Then for all $\varepsilon>0$, the Schwartz kernel of $(\square_g-i \varepsilon)^{-\cv}$ has for $\Re\cv>\frac{n}{2}$ a well-defined on-diagonal restriction $(\square_g-i \varepsilon)^{-\cv}(x,x)$, which extends as a meromorphic function of $\cv\in\cc$ with poles at $\{\n2,$ $\n2-1$, $\n2-2$, $\dots$, ${1}\}$. Furthermore, 
\beq\label{mainresult}
\lim_{\varepsilon\to 0^+}\res_{\cv=\frac{n}{2}-1} \left(\square_g- i \varepsilon\right)^{-\cv}(x,x)=  \frac{R_g(x)}{{i}6(4\pi)^{\n2} \Gamma\big(\frac{n}{2}-1\big) }, 
\eeq
where $R_g(x)$ is the scalar curvature at $x\in M$. 
\end{theorem}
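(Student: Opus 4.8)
The plan is to relate the abstractly-defined complex powers $(\square_g - i\varepsilon)^{-\cv}$ to the locally-constructed Hadamard parametrix of the wave operator, and extract the meromorphic structure from the latter. First I would write $(\square_g - i\varepsilon)^{-\cv}$ via a Mellin-type formula in terms of the resolvent, e.g.
\begin{equation*}
(\square_g - i\varepsilon)^{-\cv} = \frac{1}{\Gamma(\cv)} \int_0^\infty t^{\cv-1} e^{-t(\square_g - i\varepsilon)}\,dt
\end{equation*}
(interpreted on an appropriate spectral subspace, or else through a contour integral of the resolvent $(\square_g - i\varepsilon - \lambda)^{-1}$ that avoids the spectrum), so that the analysis reduces to understanding the Schwartz kernel of $e^{-t(\square_g - i\varepsilon)}$ or of the resolvent near the diagonal. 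The key analytic input, which I would take from the microlocal estimates announced in the abstract, is that the resolvent $(\square_g - i\varepsilon - \lambda)^{-1}$ satisfies uniform microlocal (radial) propagation and elliptic estimates on the Lorentzian scattering calculus, so that its kernel is, modulo smoothing terms controlled uniformly in $\varepsilon$, given near the diagonal by a Hadamard-type parametrix built from the Lorentzian transport equations.

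The second step is the parametrix analysis proper. I would construct the Hadamard/Riesz parametrix for $(\square_g - i\varepsilon)$: write its kernel near the diagonal as a sum over Riesz distributions $R_\alpha(\sigma_\varepsilon)$ (with $\sigma$ the squared geodesic distance, suitably $i\varepsilon$-regularized so as to land on the correct side of the light cone dictated by the functional calculus), with coefficients $u_k(x,y)$ solving the usual transport equations, $u_0$ the square root of the van Vleck--Morette determinant and $u_1|_{\rm diag} = \tfrac{1}{6} R_g$ up to the standard normalization. Then one forms the $\cv$-th complex power of the parametrix — this is the step where the $\Gamma$-factors and the shift of the pole locations to $\{\n2, \n2 - 1, \dots, 1\}$ appear — by the classical computation relating complex powers of $(\sigma - i0)$-type distributions to meromorphic families, and restricts to the diagonal. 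The on-diagonal value of the $k$-th term contributes a pole at $\cv = \tfrac n2 - k$ with residue proportional to $u_k(x,x)/\Gamma(\tfrac n2 - k)$; the $k=0$ term gives the leading pole (volume/nonunimodular nature, hence the $i$), and the $k=1$ term gives exactly \eqref{mainresult} once $u_1|_{\rm diag}$ is inserted.

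The third step is to show the error between the true kernel and the parametrix is negligible for the meromorphic continuation: the difference is a family of operators whose kernels are sufficiently regular (gaining derivatives as more parametrix terms are included), uniformly in $\varepsilon \in (0,1]$, so that its $\cv$-th power has an on-diagonal restriction holomorphic in the relevant right half-plane and thus does not affect the poles up to $\cv = 1$; here global hyperbolicity and the non-trapping scattering structure are used to globalize the local parametrix and to control the long-range/asymptotic regime where the geodesic distance is not globally defined. Finally I would take $\varepsilon \to 0^+$: the residue at $\cv = \tfrac n2 - 1$ is computed from the parametrix coefficient $u_1(x,x)$, which is $\varepsilon$-independent (the $i\varepsilon$ only regularizes the Riesz distribution, not the transport coefficients), so the limit is immediate and yields the stated formula.

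I expect the main obstacle to be Step three in the asymptotic (large-distance) region: the Hadamard parametrix is a purely local object, and reconciling it with the globally-defined functional calculus requires the uniform-in-$\varepsilon$ microlocal resolvent estimates — in particular the radial-point propagation estimates at the boundary of the scattering cotangent bundle — to show that the remainder is genuinely smoothing and that no spurious poles or $\varepsilon$-dependent contributions leak in from infinity. Controlling that remainder with bounds uniform down to $\varepsilon = 0$, so that the $\varepsilon \to 0^+$ limit of the residue commutes with the meromorphic continuation in $\cv$, is the crux of the argument.
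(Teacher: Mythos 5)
Your overall skeleton---express the complex powers through the resolvent, approximate by a Hadamard-type expansion with transport coefficients $u_k$, and read off the pole at $\cv=\tfrac n2-1$ from $u_1|_{\rm diag}=\pm R_g/6$---is the same as the paper's, but two of your steps would fail as written. First, the formula $(\square_g-i\varepsilon)^{-\cv}=\Gamma(\cv)^{-1}\int_0^\infty t^{\cv-1}e^{-t(\square_g-i\varepsilon)}\,dt$ is not available: $\square_g$ is self-adjoint but not semi-bounded, so $e^{-t(\square_g-i\varepsilon)}$ is not a bounded operator; only your parenthetical alternative (a contour integral of the resolvent over a contour in the upper half-plane, which is what the paper uses) is viable. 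Second, and more seriously, you propose to build a Hadamard/Riesz parametrix for the single operator $\square_g-i\varepsilon$ and then ``form the $\cv$-th complex power of the parametrix''. That is not a well-defined operation: the kernel of $(\square_g-i\varepsilon)^{-\cv}$ is not determined by the kernel of $(\square_g-i\varepsilon)^{-1}$ near the diagonal, and complex powers do not split across a decomposition ``parametrix $+$ smoothing error'' of a single operator, which your Step three implicitly assumes when it speaks of the error's ``$\cv$-th power''. What is actually required---and what is the technical core of the paper---is a spectral-parameter-dependent Hadamard parametrix $H_N(z)$ for $P-z$, with estimates uniform in $z$ along the whole infinite contour $\gamma_\varepsilon$ and with quantitative decay as $|z|\to\infty$ (there is a genuine trade-off between regularity of the error and decay in $\Im z$), so that the contour integral of the parametrix converges and produces the meromorphic family term by term via the identity relating $\int_{\gamma_\varepsilon}(z-i\varepsilon)^{-\cv}\mathbf F_k(z)\,dz$ to $\mathbf F_{\cv+k-1}(i\varepsilon)$. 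The uniformity you emphasize, in $\varepsilon\in(0,1]$, is not the relevant one for fixed $\varepsilon$; the $\varepsilon\to0^+$ limit at the end is comparatively easy precisely because the transport coefficients are $\varepsilon$-independent.

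There is also a gap in the error analysis even granted the contour-integral setup: the parametrix property alone does not imply that $(P-z)^{-1}-H_N(z)$ is smooth near the diagonal. Parametrices of a hyperbolic operator differ by bisolutions, and for instance the difference between the Feynman and the retarded prescriptions has wavefront set over the diagonal in null directions, so its restriction to the diagonal is not even defined and could destroy the pole structure. One must therefore prove that the resolvent itself has \emph{Feynman} wavefront set, uniformly in $z$ along the contour; in the paper this consumes the global hypotheses (radial estimates at the sources and sinks of the scattering structure, propagation of singularities uniform in $z$, and a time-dependent factorization of $P-z$ in a parameter-dependent pseudodifferential calculus, followed by a uniqueness argument for uniform Feynman parametrices). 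Your appeal to ``radial propagation and elliptic estimates'' and to the $i\varepsilon$-regularization ``landing on the correct side of the light cone'' gestures at this, but the proposal does not identify the need to pin down the resolvent's wavefront set uniformly in the spectral parameter, nor how to exclude the parametrix-ambiguity (bisolution) contributions at the diagonal; this, rather than the large-distance regime alone, is where the argument would break down.
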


Theorem \ref{mainthm} can be seen as the Lorentzian version of a result attributed to Kastler \cite{kastler} and Kalau--Walze \cite{kalauwalze} in the Riemannian case (and announced previously by Connes as a consequence of classical theorems in elliptic theory due to Minakshisundaram--Pleijel \cite{Minakshisundaram1949}, Seeley \cite{seeley}, Wodzicki \cite{wodzicki} and other authors; see \cite[Thm.~1.148]{Connes2008a} for the heat kernel based argument and \cite[\S1.7]{gilkey} for an approach in the spirit of Atiyah--Bott--Patodi \cite{Atiyah1975}). This type of relationships has been used to justify definitions of curvature in non-commutative geometry, see Connes--Marcolli \cite[Def.~1.147]{Connes2008} and Connes--Moscovici \cite{Connes2014}.

The $\varepsilon$ regularizer in \eqref{mainresult} deals with the fact that in contrast to the compact Riemannian setting, $\square_g$ is not bounded from below. It is also responsible for the relationship with \emph{Feynman inverses}, see \sec{ss:proof}.

The importance of \eqref{mainresult} in physics stems from the fact that the r.h.s.~is proportional to the  \emph{Einstein--Hilbert Lagrangian}, and the variational principle $\delta_g R_g(x)=0$ is  equivalent to the Einstein equations for $g$. The l.h.s., on the other hand, refers to the spectral theory of the self-adjoint operator $\square_g$.  

\begin{remark} Our main case of interest are perturbations of Minkowski space (in arbitrary spatial and time directions) as well as more general Lorentzian scattering spaces, but the results are also valid for \emph{ultrastatic} spacetimes $(M,g)$ in the sense that $M=\rr\times Y $ and  $g=dt^2-h$ for some ($t$-independent) complete Riemannian manifold $(Y,h)$. In that case, essential self-adjointness follows from \cite{derezinski} and the proof of Theorem \ref{mainthm}  simplifies considerably, see Remark \ref{rem:static2}.
\end{remark}

The residues at the other poles can also be computed, and we show the following result.

\begin{theorem}[{cf.~Theorem \ref{thm:spectralaction}}]\label{mainthm2} For any Schwartz function $f$ with Fourier transform  supported in $\open{0,+\infty}$ and any $N\in\nn_{\geqslant 0}$, we have for $\varepsilon>0$ the large $\scalambda>0$ expansion
\beq\label{eq:main}
 f\big((\square_g+i\varepsilon)/\scalambda^2\big)(x,x) = \sum_{j=0}^N \scalambda^{n-2j} C_j(f) \, a_j(x) + {\pazocal{O}}(\varepsilon,\scalambda^{n-2N-1}),
\eeq
where each $C_j(f)$ depends only on $j\in\nn_{\geqslant0}$, the space-time dimension $n$ and $f$, and $a_j(x)$ are directly related to the Hadamard coefficients, in particular  $a_0(x)=(4\pi)^{-\n2}$, $C_0(f)=i^{-1}e^{\frac{in\pi}{4}} \int_0^\infty\widehat{f}(t)t^{\frac{n}{2}-1}dt$ and $a_1(x)={-}(4\pi)^{-\n2}\frac{1}{6} R_g(x)$, $C_1(f)=i^{-1}{e^{\frac{i(n-2)\pi}{4}} }\int_0^\infty\widehat{f}(t)t^{\frac{n}{2}-2}dt$.
\end{theorem}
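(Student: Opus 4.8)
The plan is to relate $f$ of the regularised operator to the short–time behaviour of the propagator generated by $\square_g$, equivalently to the complex powers of Theorem~\ref{mainmainprop}, and to read off the coefficients from the Hadamard parametrix. Since $\widehat f$ is rapidly decreasing and supported in $\open{0,+\infty}$, the function $f$ extends holomorphically to $\{\Im z\geq 0\}$ with rapid decay there, so the holomorphic functional calculus applies and
\beq\label{eq:propform}
 f\big((\square_g+i\varepsilon)/\scalambda^2\big)(x,x)=\int_0^\infty\widehat f(t)\,e^{-t\varepsilon/\scalambda^2}\,\big(e^{it\square_g/\scalambda^2}\big)(x,x)\,dt ,
\eeq
all expressions being meaningful by the essential self–adjointness of $\square_g$. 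Equivalently, with the Mellin transform $Mf(\cv)=\int_0^\infty f(\lambda)\lambda^{\cv-1}\,d\lambda$ — holomorphic for $\Re\cv>0$, meromorphically continued with simple poles at $\cv\in\{0,-1,-2,\dots\}$ coming from the Taylor expansion of $f$ at $0$, and rapidly decreasing on vertical lines (Mellin--Barnes, the holomorphic extension of $f$ handling that the spectrum of $(\square_g+i\varepsilon)/\scalambda^2$ is not in $[0,\infty)$) — one obtains, for $c>\frac{n}{2}$,
\beq\label{eq:mellinform}
 f\big((\square_g+i\varepsilon)/\scalambda^2\big)(x,x)=\frac1{2\pi i}\int_{\Re\cv=c}Mf(\cv)\,\scalambda^{2\cv}\,(\square_g+i\varepsilon)^{-\cv}(x,x)\,d\cv .
\eeq
I would use \eqref{eq:propform} to locate the origin of the weighted integrals of $\widehat f$, and \eqref{eq:mellinform} together with Theorem~\ref{mainmainprop} to organise the expansion.

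\emph{Extracting the expansion.} By Theorem~\ref{mainmainprop}, $\cv\mapsto(\square_g+i\varepsilon)^{-\cv}(x,x)$ is meromorphic with simple poles only at $\cv\in\{\frac n2,\dots,1\}$; the underlying uniform Hadamard parametrix moreover yields (i) $\res_{\cv=\frac n2-j}(\square_g+i\varepsilon)^{-\cv}(x,x)$ equals a universal constant (carrying $\Gamma$–factors and the Lorentzian signature phase $e^{i(n-2)\pi/4}$) times the Hadamard coefficient $a_j(x)$ for $j\leq\frac n2-1$; (ii) at the nonpositive integers $\cv=-k$, where $Mf$ has its poles, the regular value of $(\square_g+i\varepsilon)^{-\cv}(x,x)$ is a universal constant times $a_{n/2+k}(x)$; and (iii) these constants agree, up to $\pazocal O(\varepsilon)$, with their $\varepsilon\to 0^+$ limits — the mechanism behind the $\lim_{\varepsilon\to 0^+}$ in Theorem~\ref{mainthm}. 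Deforming the contour in \eqref{eq:mellinform} from $\Re\cv=c$ to $\Re\cv=-M$ and collecting the residue at each $\cv=\frac n2-j$ (for $j\leq\frac n2-1$ from the pole of $(\square_g+i\varepsilon)^{-\cv}(x,x)$, for $j\geq\frac n2$ from the pole of $Mf$) produces $\sum_j\scalambda^{n-2j}\,C_j(f)\,a_j(x)$, where $C_j(f)$ is that universal constant times $Mf(\frac n2-j)$, respectively times $\res_{\cv=\frac n2-j}Mf$. Rewriting $Mf$ through $\widehat f$ (via $\int_0^\infty\lambda^{\cv-1}e^{it\lambda}\,d\lambda=\Gamma(\cv)(-it)^{-\cv}$ and its continuation) gives $C_j(f)$ in the stated closed form; in particular $a_0=(4\pi)^{-\n2}$ is the leading Hadamard coefficient and $a_1=-(4\pi)^{-\n2}R_g/6$, the latter reproducing the curvature computation already performed for Theorem~\ref{mainthm}. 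The very same $C_j(f)\,a_j(x)$ may be obtained directly from \eqref{eq:propform} by inserting the short–time Hadamard parametrix for $\big(e^{it\square_g/\scalambda^2}\big)(x,x)$ and integrating term by term against $\widehat f(t)e^{-t\varepsilon/\scalambda^2}$.

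\emph{Remainders.} The leftover contour integral $\tfrac1{2\pi i}\int_{\Re\cv=-M}(\cdots)\,d\cv$ is $\pazocal O(\scalambda^{-2M})$ provided $(\square_g+i\varepsilon)^{-\cv}(x,x)$ grows at most polynomially in $|\Im\cv|$ along $\Re\cv=-M$, uniformly for $\varepsilon\in(0,1]$; choosing $M$ in terms of $N$ then gives the $\pazocal O(\scalambda^{n-2N-1})$ term. The dependence on the regulariser enters only through the factor $e^{-t\varepsilon/\scalambda^2}=1+\pazocal O(t\varepsilon/\scalambda^2)$ in \eqref{eq:propform}, equivalently through the $\pazocal O(\varepsilon)$ in (iii): paired against $\widehat f(t)$ and the short–time size $(t/\scalambda^2)^{-n/2}$ of the propagator, this produces the $\pazocal O(\varepsilon,\,\cdot\,)$ part of the error, with $\varepsilon$–independent leading coefficients $C_j(f)\,a_j(x)$.

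\emph{Main obstacle.} The crux is precisely the two uniformities just used: the short–time Hadamard parametrix for $e^{it\square_g}$ must be constructed and estimated uniformly near the diagonal, and to high enough order that $M$ can be taken large, and the genuine propagator — equivalently $(\square_g+i\varepsilon)^{-\cv}(x,x)$ on vertical lines — must be controlled uniformly in $\varepsilon$ and $\scalambda$ away from the parametrix regime. This is exactly where essential self–adjointness, the non–trapping hypothesis on the Lorentzian scattering space, and the microlocal (in particular radial–point) resolvent estimates enter, as in the proof of Theorem~\ref{mainmainprop}. For an ultrastatic $(M,g)=(\rr\times Y,\,dt^2-h)$ all of this collapses: a direct–integral decomposition over $(Y,h)$ reduces everything to the classical Riemannian heat–/resolvent–expansion, which is why the proof simplifies there.
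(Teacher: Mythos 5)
Your proposal is essentially the paper's own proof: the paper likewise represents $f\big((P+i\varepsilon)/\scalambda^2\big)$ as a Mellin--Barnes contour integral against the complex powers (its weight $e^{i\cv\frac{\pi}{2}}\Gamma(\cv)\pazocal{M}\widehat{f}(\cv)$ is exactly your $Mf(\cv)$), inserts the uniform Hadamard-parametrix decomposition of $(P+i\varepsilon)^{-\cv}(x,x)$ and its pole/residue structure, shifts the contour to collect the residues at $\cv=\frac{n}{2}-j$, and bounds the remainder by the shifted integral using the fast decay of the Mellin transform and the boundedness of the parametrix remainder on vertical lines. The differences are cosmetic: your auxiliary propagator formula with the on-diagonal restriction of $e^{it\square_g/\scalambda^2}$ is never actually needed (and that restriction would require justification), and the paper carries out the contour shift only past the first three poles, the general-$N$ case following in the same way from its Lemma on the decomposition of the Feynman powers.
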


 We refer to Theorem  \ref{thm:spectralaction} for a precise calculation of the first three terms of the asymptotic expansion of $f\big((\square_g+m^2+i\varepsilon)/\scalambda^2\big)(x,x)$ in terms of the regularizer $\varepsilon$ and also the mass $m$.
This formulation parallels as closely as possible the \emph{spectral action principle} established in the Riemannian case by Chamseddine--Connes \cite{Connes1996,Chamseddine1997}, which has since then become a milestone in high energy physics developments driven by the noncommutative geometry program, see e.g.~\cite{Chamseddine2007,Connes2008a,Varilly2009,suijlekom,Eckstein2018}. A particularity of Theorem \ref{mainthm2} is that in contrast to  results in the Riemannian setting, we do not allow for functions $f$ supported away from zero or in a half-line: intuitively, the reason is that the bottom of the spectrum plays  a r\^ole which cannot be harmlessly disregarded in the Lorentzian case.

\subsection{Structure of proof} \label{ss:proof}  The primary difficulty is unquestionably the non-ellipticity of $\square_g$, which makes known methods from the Riemannian setting inapplicable to our situation. We stress that except at the very final stage  (where we work \emph{locally} with quadratic forms on $\rr^n$ to compute numerical factors in the residues), our proof does not involve any kind of transition from Euclidean to  Lorentzian signature. Instead, we use  techniques from partial differential equations and microlocal analysis that emphasize the structure of the null geodesic flow on cotangent space and its asymptotic behavior, see \sec{ss:br} for bibliographical remarks.  

  Theorems \ref{mainthm} and \ref{mainthm2} rely on precise regularity estimates for the resolvent $(\square_g-z)^{-1}$. The key feature is that for $\Im z > 0$,   $(\square_g-z)^{-1}$ is  a \emph{Feynman inverse}, meaning that the singularities of the Schwartz kernel (characterized by its wavefront set) are the same as for Duistermaat--H\"ormander's Feynman parametrix \cite{DH} and Feynman propagators in Quantum Field Theory and related contexts \cite{Radzikowski1996,GHV}. In consequence, close to the diagonal in $M\times M$, the Schwartz kernel of $(\square_g-z)^{-1}$ can be approximated by the Feynman version of the \emph{Hadamard parametrix}, which is sufficiently explicit for the extraction of local geometrical quantities.  Complex powers $(\square_g-i\varepsilon)^{-\cv}$ are then expressed in terms of the resolvent as integrals over an infinite contour in the complex upper half-plane. To be useful, however, this requires  the estimates for the resolvent,  parametrix and  errors to be \emph{uniform in $z$}, with sufficient {decay} along the integration contour. This complicates the analysis of the Hadamard parametrix, as apart from difficulties due to  light-cone singularities, there is competition between regularity and decay in $\module{\Im z}$.  It is also worth stressing that it is not possible to eliminate any error term by solving a Cauchy problem for $\square_g-z$ because the associated retarded and advanced fundamental solutions badly behave as $\module{\Im z}\to+\infty$.
  
   With these issues in mind,  the proofs (in the Lorentzian scattering space case) are organized as follows:\smallskip

\ben
\item\label{istep1} Setting $P=\square_g$ or $P=\square_g+m^2$, we use radial estimates in weighted scattering Sobolev spaces (due in the present context to Vasy \cite{vasyessential} and generalizing results by Melrose \cite{melrosered}) to derive mapping properties of the resolvent $(P-z)^{-1}$, uniformly in $z$. By integrating on a contour $\gamma_\varepsilon$ in the  upper half-plane (see Figure \ref{fig:contour} in \sec{ss:restocp}) we deduce local mapping properties of $(P-i\varepsilon)^{-\cv}$ in Sobolev spaces. \smallskip
\item\label{istep2} In \sec{ss:firstparametrix}, for $\Im z\geqslant 0$ we construct a $z$-dependent parametrix of $P-z$ which is the sum of two independent parts, each with  singularities propagating in only one of the two components of the characteristic set $\Sigma$. We show that the parametrix has \emph{Feynman wavefront set}  uniformly along the contour $\gamma_\varepsilon$. This step uses a time-dependent factorization of $P-z$  in Shubin's parameter-dependent pseudo-differential calculus \cite{shubin} and the hyperbolicity of $P$.  \smallskip
\item\label{istep3} In \sec{ss:wavefront} we relate $(P-z)^{-1}$ with the parametrix from step \eqref{istep2}. The argument first emphasizes common behaviour at the radial sets, and then uses radial estimates and propagation of singularities to obtain a global result. The main conclusion is that $(P-z)^{-1}$ has Feynman wavefront set uniformly in $z$ along $\gamma_\varepsilon$.\smallskip
\item\label{istep4} In \secs{sec:elementary}{s:hadamardformal} we construct a $z$-dependent version $H_N(z,.)$ of the Hadamard parametrix of $P-z$, and show in \sec{s:hadamardformal} that it also has Feynman wavefront set uniformly in $z$. We prove regularity estimates for $H_N(z,.)$ and the remainders, with control of the decay for large $\module{z}$ and the behaviour near the real axis. An important prerequisite  are H\"older--Zygmund and microlocal estimates shown in  \sec{sec:elementary} for an elementary family of distributions $\Fse{z}$ on $\rr^n$ which serves as the building block of the parametrix in normal coordinates.
\smallskip
 \item\label{istep5}  For $\Im z> 0$, we relate the resolvent $(P-z)^{-1}$ to  the uniform Hadamard parametrix $H_N(z,.)$ using the estimates from step \eqref{istep4} and the Feynman form of the wavefront set proved in step \eqref{istep3} by a composition argument.  The local analysis of the Schwartz kernel of $(P-i\varepsilon)^{-\cv}$ and other functions of $P$ is  reduced in this way to contour integrals involving $H_N(z,.)$. \smallskip
 \item\label{istep6}  The meromorphic continuation of $\alpha\mapsto (P-i\varepsilon)^{-\cv}$ and its poles are computed on the level of contour integrals of $H_N(z,.)$. To compute the  residues we use a homological argument which can be interpreted as a local Wick rotation of quadratic forms.  Theorem \ref{mainthm2} is concluded from the full version of Theorem \ref{mainthm} by a Mellin transform argument. \smallskip
\een

Various auxiliary proofs are collected in the appendices.

We stress that although the occurence of local geometric quantities in the Hadamard parametrix is a well-known phenomenon, the relationship with globally defined functions of $P$ proved in steps \eqref{istep1}--\eqref{istep6} is new.

\begin{remark}\label{rem:static2} The case of $(M,g)$ ultrastatic is simpler because one can then give a quasi-explicit formula for $(P-z)^{-1}$ in terms of the  Laplace--Beltrami operator $\Delta_h$ on the Cauchy surface. The formula implies that $(P-z)^{-1}$ is already of the form of the parametrix in step \eqref{istep2}, so step \eqref{istep3} is no longer needed, and resolvent estimates can be derived directly, see  \sec{app:static}. From that point on, steps \eqref{istep4}--\eqref{istep6} apply verbatim.
\end{remark}

\subsection{Bibliographical remarks}\label{ss:br} The construction of complex powers of elliptic operators is due to Seeley \cite{seeley} in the case of classical pseudo-differential operators on compact manifolds, and was extended to  various other elliptic settings, among others in works by Rempel--Schulze \cite{Rempel1983},  Guillemin \cite{Guillemin1985}, Grubb \cite{Grubb1986}, Schrohe \cite{Schrohe1986,Schrohe1988}, Loya \cite{Loya2001,Loya2003}, Coriasco--Schrohe--Seiler \cite{Coriasco2003} and Ammann--Lau\-ter--Nis\-tor--Vasy \cite{ALNV}, cf.~recent work by Hintz \cite{hintz20}. Various results in the spirit of the Kastler--Kalau--Walze identity were obtained, e.g., by Ponge \cite{Ponge2006,Ponge2007,Ponge2008} (in particular, \cite{Ponge2008} discusses lower dimensional geometric invariants) and Battisti--Coriasco \cite{Battisti2011}.  The residues of the spectral zeta function have a natural interpretation in terms of the Guillemin--Wodzicki residue  \cite{wodzicki,Guillemin1985}  (cf.~Connes--Moscovici \cite{Connes1995}, Lesch \cite{Lesch}, Lesch--Pflaum \cite{Lesch2000}, Paycha \cite{paycha2,paycha}, Maeda--Manchon--Paycha \cite{Maeda2005}), see \cite{Dang2021} for a generalization to the Lorentzian case  considered here.

 Complex powers of non-elliptic first order pseudo-differential operators were obtained as paired Lagrangian distributions by Antoniano--Uhlmann \cite{Antoniano1985}, see also Greenleaf--Uhlmann \cite[\S 3]{Greenleaf1990}.   Using the calculus of paired Lagrangian distributions, complex powers of the wave operator corresponding to a retarded or advanced problem were  constructed by Joshi \cite{joshi} in the case of time-independent coefficients.  Enciso--Gonz\'alez--Vergara \cite{Enciso2017} later showed    that a particular fractional power coincides with the Dirichlet-to-Neumann map on static anti-de Sitter spacetimes. We remark that our complex powers are different from Joshi's, as the former are associated to a self-adjoint operator and are related to a Feynman problem rather than to a retarded or advanced one. However, it can be conjectured that they are paired Lagrangian distributions as well.

The approach to Lorentzian complex powers  in the present paper builds on the non-elliptic Fredholm theory introduced by Vasy in \cite{Vasy2013}, originally in the context of the retarded and advanced problem on (Kerr--)de Sitter spaces, and further developed in a series of works  tailored to the study of wave and Einstein equations (see e.g.~\cite{BVW,Hintz2015,Hintz2016,Hintz2017}),  culminating in the resolution of the Kerr--de Sitter stability conjecture by Hintz--Vasy \cite{Hintz2018a} and  the proof of linear stability of Kerr black holes by H\"afner--Hintz--Vasy \cite{Hafner2020}. The global approach to the Feynman problem for the wave equation on a class of Lorentzian scattering spaces was pioneered by Gell-Redman--Haber--Vasy \cite{GHV} (including a non-linear version), cf.~Baskin--Vasy--Wunsch \cite{BVW} for previous work on the retarded and advanced problem in that setting. The construction also applies to the Klein--Gordon operator on de Sitter spaces, and in both settings, its positivity and microlocal properties were  studied  by Vasy \cite{Vasy2017b} and Vasy--Wrochna \cite{vasywrochna}. The Feynman invertibility of the  Klein--Gordon operator $\square_g+m^2$ with $m>0$ on asymptotically Minkowski spacetimes was shown  by  G\'erard--Wrochna \cite{GWfeynman,Gerard2019b} (cf.~\cite{Gerard2020} for a brief account) using an approximate diagonalization of the evolution, related to the parametrix in \sec{ss:firstparametrix} (though the focus here is on the behaviour in $z$). In the already mentioned work of Vasy \cite{vasyessential} on  essential self-adjointness of $\square_g$, the resolvent is constructed in terms of a Feynman problem  which coincides with the Gérard--Wrochna definition by a result of Taira \cite{Taira2020a}.     Vasy \cite{vasyessential}  also shows a limiting absorption principle for $\square_g+m^2$, followed by an improvement by Taira \cite{Taira2020a}, cf.~the earlier work of Derezi\'nski--Siemssen \cite{derezinski} for the limit{ing} absorption principle in the static case (possibly with electromagnetic potentials). 

Related developments connecting the global theory of hyperbolic operators with space-time geometry have included a Lorentzian Atiyah--Patodi--Singer index theorem due to  B\"ar--Strohmaier \cite{Bar2019}, see also Braverman \cite{Braverman2020} for a spatially non-compact generalization,  and very recently  a local version was shown by B\"ar--Strohmaier \cite{Baer2020}.  Furthermore,  Strohmaier--Zelditch  proved a  Gutzwiller--Duistermaat--Guillemin trace formula and a Weyl law  for  time-like Killing vector fields on stationary space-times \cite{Strohmaier2020b,Strohmaier2020,Strohmaier2020a}, which in particular provides a spectral-theoretical way of recovering the scalar curvature and thus a spectral action in the stationary case. It is worth emphasizing that Feynman inverses appear naturally in all these developments (and in \cite{Baer2020} the relationship with the Hadamard parametrix is used, see below). 

We remark that non-elliptic Fredholm problems and radial estimates have arisen in many  contexts outside of relativistic settings, see e.g.~\cite{Dyatlov2016,Dyatlov2019,hassell}. In particular, we emphasize similarities with the work of Dyatlov--Zworski \cite{Dyatlov2016} on Anosov flows, which proves the meromorphic continuation of the Ruelle zeta function using microlocal resolvent estimates;  we expect  that semi-classical methods could provide useful alternatives to the arguments in  \sec{ss:wavefront}.

The Hadamard parametrix for  Laplace--Beltrami operators on pseudo-Riemannian manifolds is a classical tool in analysis, see e.g.~\cite{HormanderIII,soggeHangzhou,Zelditch2017} for textbook accounts focused on the Riemannian or Lorentzian time-independent case. It plays a fundamental rôle in Quantum Field Theory on curved spacetimes, where it is used to substract singularities from $N$-point functions to get well-defined non-linear  quantities \cite{DeWitt1975,Fulling1989,Kay1991,Radzikowski1996,Moretti1999,Brunetti2000,Hollands2001}. In particular, Radzikowski \cite{Radzikowski1996} proved the relationship between the bi-solution Hadamard parametrix and the Feynman parametrix of Duistermaat--H\"ormander. In the present work we work directly with the ($z$-dependent)  Feynman version of the Hadamard parametrix. Its analogue for fixed $z$ was constructed by Zelditch \cite{StevenZelditch2012} in the ultra-static case, and by Lewandowski \cite{Lewandowski2020} and Bär--Strohmaier \cite{Baer2020} in the general case using a family of distributions with distinguished wavefront set (the former construction also  gives a unified treatment of even and odd dimensions). The Hadamard parametrix is also useful in spectral theory, and was applied e.g.~by Sogge \cite{Sogge1988}, Dos Santos Ferreira--Kenig--Salo \cite{Ferreira2014}  and Bourgain--Shao--Sogge--Yao \cite{Bourgain2015} in the context of $L^p$ resolvent estimates on compact Riemannian manifolds  (including estimates uniform in the spectral parameter $z$), and by Zelditch \cite{StevenZelditch2012} in the problem of analytic continuation of eigenfunctions.

Finally, we mention only very non-exhaustively works in noncommutative geometry aimed at establishing a Lorentzian theory \cite{Moretti2003,Paschke2004,Strohmaier2006,Franco2013,DAndrea2016,VandenDungen2016,VanDenDungen2018,Devastato2018,Bizi2018,Besnard2018}. In contrast to the problem considered here, their focus is mostly on the formalism of spectral triples or on distance formul\ae. In the last few years this has included progress on spectral actions by D'Andrea--Kurkov--Lizzi \cite{DAndrea2016}, Devastato--Farnsworth--Lizzi--Martinetti \cite{Devastato2018} and Martinetti--Singh \cite{martinetti}, which involves however a transition from Euclidean signature and relies on special symmetries or analyticity, very differently from the present paper's result.

\subsection{Remarks on assumptions, outlook} The essential self-adjointness in \cite{vasyessential} and the results of the present paper extend in a straightforward way to the Hermitian bundle setting provided that the principal symbol of $P$ is a scalar wave operator and that formal self-adjointness of $P$ holds true for a \emph{positive} scalar product.   
We further comment on this  in Remark \ref{rem:vector1}.

The assumptions on spacetime geometry in   Theorems \ref{mainthm} and \ref{mainthm2}  are not expected  to be sharp. In fact, only steps \eqref{istep1} and \eqref{istep3} of the proofs use the hypothesis that $(M,g)$ is a Lorentzian scattering space, and one could try to adapt the arguments depending on estimates available in a given class of spacetimes. For instance, in view of the estimates in \cite{Vasy2013}, a natural candidate could be the class of asymptotically de Sitter spacetimes.  The essential self-adjointness of $\square_g$ is also conjectured to be true for asymptotically static spacetimes, see Derezi\'nski--Siemssen \cite[\S5.8, \S8.6]{Derezinski2020}, and it is therefore natural to ask whether \eqref{istep1} and \eqref{istep3} remain valid in that general setting.

A mathematically  delicate point is the limit $\varepsilon\to 0^+$ of the Schwartz kernel of $(P-i\varepsilon)^{-\cv}$ and of other functions of $P-i\varepsilon$. Namely, observe that in Theorems \ref{mainthm} and \ref{mainthm2} we \emph{first} compute a residue or an expansion and \emph{then} take the $\varepsilon\to 0^+$ limit, but one could ask whether the order of these operations can be reversed. We give an affirmative answer in the setting  of Theorem \ref{mainthm2} if $\square_g$ is replaced by $P=\square_g+m^2$ with $m>0$ (this then produces extra terms in the expansion which vanish in the limit $m\to 0^+$, see  Theorem \ref{thm:spectralaction}), this requires however  stronger assumptions including \emph{non-trapping at energy} $m^2$, see \sec{ss:bi} and \sec{ss:realaxis1}.   For the sake of illustration,  we also prove in \sec{a:LAPfeynman}   a limiting absorption principle for $(P-i\varepsilon)^{-\cv}$ in the case of space-compact ultrastatic spacetimes, with  similar conclusions on the possibility of taking the $\varepsilon\to 0^+$ limit before computing  residues.
  
  The study of the  $\varepsilon\to 0^+$ limit with $m=0$ rather than $m>0$ requires a different approach, based for instance on  recent work by Vasy \cite{Vasy2019}, cf.~Bouclet--Burq \cite{Bouclet2018}.

Finally, we do not consider here functions of Dirac operators nor generalizations needed to derive a spectral action principle for the whole Standard Model in Lorentzian signature, we expect this however to be a fruitful topic of research in the near future.

\subsection*{Acknowledgments} The authors are particularly grateful to Fabien Besnard, Christian Brouder, Jan Derezi\'nski, Peter Hintz, Rapha\"el Ponge, Kouichi Taira, Gunther Uhlmann and Andr\'as Vasy for helpful discussions. Support from the grant ANR-16-CE40-0012-01 is gratefully acknowledged. N.V.D.~acknowledges the support of the Institut Universitaire de France. We thank the MSRI in Berkeley and the Mittag--Leffler Institute in Djursholm for their kind hospitality during thematic programs and workshops in 2019--20.  

\section{Complex powers on Lorentzian scattering spaces}\label{sec:complexpowers}

\subsection{Klein--Gordon operator}\label{ss:kg} Let $(M,g)$ be a Lorentzian manifold. We use the  convention $(+,-,\dots,-)$ for the signature of  $g$. We denote by $L^2(M)$ the canonical $L^2$ space associated to the volume density $d\vol_g$ of $g$, i.e.~the $L^2(M)$ norm  is
$$
\| u \|= \bigg(\int_M \left| u(x) \right|^2d\vol_g\bigg)^{\12}.
$$

Let $P=\square_g+m^2$ be the \emph{wave} or \emph{Klein--Gordon operator}, i.e.~$\square_g=\frac{1}{\sqrt{|g|}}\p_\mu(\sqrt{|g|} g^{\mu\nu}\p_\nu)$ is the Laplace--Beltrami operator in Lorentzian signature, $|g|=\left|\det g\right|$ and $m^2 \geqslant 0$.

\subsection{Lorentzian scattering spaces} We will need to make assumptions on the asymptotic structure of $(M,g)$ at spacetime infinity. To that end it is convenient to assume that $M$ is the interior of a \emph{compact manifold with boundary} $\M$. 

We use the notation $\cf(\M)$ for the space of smooth function on $\M$, meant in the usual sense of smooth extensibility across the boundary (denoted in what follows by $\p \M$).

 Let $\rho$ be a boundary-defining function of $\p \M$, i.e.~a function $\rho\in\cf(\M)$ such that 
$\rho>0$ on $M$, $\p \M=\{\rho=0\}$, and $d\rho\neq 0$ on $\p \M$. Recall that by the collar neighborhood theorem, there exists $W\supseteq\pM$, $\epsilon>0$ and a diffeomorphism $\phi:\clopen{0,\epsilon}\times \pM \to W$ such that $\rho\circ \phi$ agrees with the projection to the first component of $\clopen{0,\epsilon}\times \pM$. We  use notation proper of $\clopen{0,\epsilon}\times \pM$ and drop $\phi$ in the notation when working close to the boundary, i.e.~in the collar neighborhood $W$. In this sense we can find local coordinates of the form $(\rho,y_1,\dots,y_{n-1})$, where $(y_1,\dots,y_{n-1})$ are local coordinates on $\p \M$.

We use the framework of  Melrose's $\sc$-geometry \cite{melrosered},  mostly following   the presentation in \cite{vasygrenoble,vasyessential}. Let $\be T\M$ be the  \emph{scattering tangent bundle}  (or in short, ${\rm sc}$\emph{-tangent bundle}) of $\M$. Recall that $\be T\M$ can be defined as the unique vector bundle over $\M$ such that {all} of its smooth sections $V\in \cf(\M;\be T\M)$ {are} locally of the form
\beq\label{sctvf}
V=V_0(\rho,y) \rho^2\p_\rho + \sum_{i=1}^{n-1} V_i(\rho,y)\rho \p_{y_j}, \ V_0,V_i \in \cf(\overline{U}), \ i=1,\dots,n-1
\eeq
on {a} chart neighborhood $\overline{U}$ with local coordinates of the form $(\rho,y_1,\dots,y_{n-1})$. Away from the boundary, $\be T\M$ is defined  in the exact same way as the tangent bundle $TM$, and there is indeed a canonical isomorphism $\be T_M\M \to TM$. 

The ${\rm sc}$\emph{-cotangent bundle} $\be T^*\M$ is by definition the dual bundle of $\be T\M$. Thus, in local coordinates $(\rho,y_1,\dots,y_{n-1})$, the smooth sections of $\be T^*\M$ are $\cf(\M)$-generated by $(\rho^{-2}d\rho, \rho^{-1} d y_1$, $\dots, \rho^{-1} d y_{n-1})$. Again,  over the interior there is a canonical isomorphism
\beq\label{eq:isom}
\be T^*_M\M \to T^*M.
\eeq
Next, an \emph{$\sc$-metric} is by definition a  non-degenerate smooth section of the fiberwise symmetrized tensor product bundle $\be T^*\M \otimes_{\rm s} \be T^*\M$. 

\begin{definition} $(\M,\xoverline{g})$ is a \emph{Lorentzian scattering space} (or in short, \emph{Lorentzian ${\rm sc}$-space})  if $\xoverline{g}\in \cf(\M; \be T^*\M \otimes_{\rm s} \be T^*\M )$ is of Lorentzian signature. 
\end{definition}

\begin{example}\label{ex:mi} The standard example is  $M=\rr^n$, with $\M=\overline{\rr^n}$  the \emph{radial compactification of $\rr^n$}. Recall that $\overline{\rr^n}$ is defined as the quotient of $\rr^n\sqcup\big(\clopen{0,1}_\rho \times \ss^{n-1}_y \big)$ by the  relation which identifies any non-zero $x\in\rr^n$ with the point $(\rho,y)$, where $\rho=r^{-1}$ and $(r,y)$ are the polar coordinates of $x$. The smooth structure near $\{\rho=0\}$ is the obvious one in $(\rho,y)$ coordinates.  Observe that the vector field $\p_r=-\rho^2\p_\rho$ is of the form \eqref{sctvf}. More generally, switching now to standard coordinates $(x_0,\dots,x_{n-1})$ on $\rr^n$, the frame $(\p_{x_0},\dots,\p_{x_{n-1}})$ smoothly extends to $\be T^*\overline{\rr^n}$, and any $V\in \cf( \overline{\rr^n};\be T\,\overline{\rr^n})$ is in the $\cf(\overline{\rr^n})$-span of $(\p_{x_0},\dots,\p_{x_{n-1}})$, i.e.~the coefficients smoothly extend across $\{\rho=0\}$ in $(\rho,y)$ coordinates  on top of being smooth in $\rr^n$. Similarly, any $\xoverline{g}\in \cf(\overline{\rr^n}; \be T^*\overline{\rr^n}\otimes_{\rm s} \be T^*\overline{\rr^n} )$ is in the $\cf(\overline{\rr^n})$-span of $dx^\mu\otimes_{\rm s} dx^\nu$ for $\mu,\nu=0,\dots,n-1$. In particular, the Minkowski metric $\eta=dx_0^2-(dx_1^2+\cdots+dx_{n-1}^2)$ on $\rr^n$ extends to an $\rm sc$-metric on $\overline{\rr^n}$ and in this sense Minkowski space is a Lorentzian $\rm sc$-space.
\end{example}

We will assume that $g$ (the Lorentzian metric on the boundaryless manifold $M$) extends to an $\rm sc$-metric on $\M$, and so $(\M,\xoverline{g})$ is a Lorentzian $\rm sc$-space. The volume density of $(M,g)$, $d\vol_g$, extends then to an ${\rm sc}$-density on $\M$, meaning that in local coordinates $(\rho,y)$ it is of the form $\mu(\rho,y)\left|\rho^{-2} d\rho\, \rho^{-n+1}d y  \right|$ with $\mu\in \cf(\M)$.

\subsection{Bicharacteristics and Hamilton flow}\label{ss:bi}
When discussing microlocalisation it is useful to compactify the fibers of $\be T^*\M$. The base manifold $\M$ having  a boundary already, the {fiberwise radial compactification} of $\be T^*\M$ yields a manifold with corners {(see \cite[\S6.4]{melrosered} for details)}, which we will denote by $\co$.

As a manifold with corners, $\co$ has two boundary hypersurfaces: the first  one is \emph{base infinity} or \emph{spacetime infinity}, which we denote by 
$
\basinf
$ 
(instead of using the more pedantic, but heavier notation $\overline{\be T}{}^*_{\p\overline{M}}\M$) and the other one  is \emph{fiber infinity}, which we denote   by
$
\fibinf.
$
We stress that despite what the notation could suggest, these two boundary hypersurfaces \emph{do} intersect at the corner $\corner\neq\emptyset$, and we have of course
$$
\p\co=\basinf\cup\fibinf.
$$
Let $\bra \xi\ket^{-1}$ be the formal notation for a boundary-defining function of fiber infinity.   For $z\in\cc$, the \emph{principal symbol} of $\square_g-z$ in the sense of the $\sc$-calculus is the function $p_z$ on $\p\co$ given by:
\beq\label{eqpri}
 p_z(x,\xi)=\begin{cases}
{-} | \xi |^{-2}  (\xi \cdot g^{-1} \xi) \ \mbox{ on } \ \fibinf,\\
 \bra \xi \ket^{-2}  ({-}\xi \cdot g^{-1} \xi  - z) \ \mbox{ on } \ \basinf.
 \end{cases}
\eeq
This is well-defined at $\fibinf$ thanks to the  $| \xi |^{-2}$  factor that compensates for the degree $2$ homogeneity of $\xi \cdot g^{-1} \xi$ in $\xi$.  This is also well-defined at $\basinf$ as a consequence of  the assumption that $g$ extends to an $\rm sc$-metric. Furthermore, the definition is consistent at the corner.

The \emph{characteristic set} of $\square_g-z$, denoted by $\Sigma_z$, is defined as the closure of $p^{-1}_z(\{0\})$ in $\p\co$. Note that  $\Sigma_z\subset\fibinf$ unless $z$ is real. Furthermore,  $\Sigma_z\cap\fibinf=\Sigma_0\cap\fibinf$ is always non-empty but does not depend on $z$. This is why various hypotheses can be simply written in terms of $\Sigma_\sigma$ with $\sigma\in\rr$.

The \emph{Hamilton vector field} of $p_0$ on $\be T^*\M$, denoted by $H_{p_0}$, is the  extension of the usual Hamilton vector field defined in the interior, i.e.~the standard definition on $T^*M$ induces a vector field on $\be T^*_M\M$ via the isomorphism \eqref{eq:isom}, and this then extends to a vector field $H_{p_0}$ on $\be T^*\M$. {Similarly, $\be T^*\M$  has a symplectic and contact structure inherited from $T^*M$ by extension, see e.g.~\cite[\S2]{Melrose1996}.}  In local coordinates  on  $\be T^*\M$ of the form $(\rho, y, \varrho, \eta)$, where  $(\varrho,\eta)$ are the dual coordinates of $(\rho,y)$, $H_{p_0}$ is given by
$$
H_{p_0} = \rho \bigg( (\p_\varrho p)  (\rho \p_{\rho} + \eta \cdot\p_\eta )  -  ( \rho\p_\rho +  \eta \cdot\p_\eta) p \p_\varrho + \sum_{i=1}^{n-1}\big( (\p_{\eta_i} p) \p_{y_i} - (\p_{y_i} p) \p_{\eta_i}  \big) \bigg).   
$$
The \emph{rescaled Hamilton vector field} $\xoverline{H}_{p_0}\defeq \bra \xi \ket^{-1} \rho^{-1} H_{p_0}$ extends to a smooth vector field on $\co$ which is tangent to $\p\co$. We call its flow on $\p \co$ the \emph{Hamilton flow}, and for $\sigma\in\rr$, the \emph{bicharacteristics} are the integral curves of the rescaled Hamilton vector field within $\Sigma_\sigma$. 

 \begin{definition}\label{maindef} For $\sigma\in\rr$ we  say that $(M,g)$ is \emph{non-trapping at energy $\sigma$} if the following conditions are satisfied:
 \ben
 \item There are two submanifolds $L_-\subset \basinf$ and $L_+\subset \basinf$, each transversal to $\corner$, which are sources, resp.~sinks for the Hamiltonian flow in $\Sigma_\sigma$. More precisely, this means that within $\Sigma_\sigma$,
 \ben 
 \item[a)] $d{p}_0\neq 0$ on $L_\pm$ and $\xoverline{H}_{p_0}$ is tangent to $L_\pm$,
 \item[b)]\label{sscond1} there exists a quadratic defining function $\rho_\pm$ of $L_\pm$ and a smooth function $\beta_\pm>0$ satisfying
$$
 \mp\xoverline{H}_{p_0} \rho_\pm= \beta_\pm \rho_\pm + s_\pm + r_\pm  
$$
 for some smooth $s_\pm,r_\pm$ such that $s_\pm>0$  on $L_\pm$ and $r_\pm$ vanishes cubically at $L_\pm$,
 \item[c)]\label{sscond2} there exists $\beta_{0,\pm}\in\cf(\co)$ such that $\beta_{0,\pm}>0$ on $L_\pm$ and
 $\mp\xoverline{H}_{p_0} \rho = \beta_{0,\pm} \rho$.
 \een
 \item[(2)]\label{ntpop}  Within $\Sigma_\sigma$, each bicharacteristic goes either from  $L_+$ to $L_-$, or from $L_-$ to $L_+$, or stay{s} within $L_+$ or $L_-$.
 \een
We  say that $(M,g)$ is \emph{non-trapping} if (1) and (2) hold true with $\Sigma_0\cap \fibinf$ instead of $\Sigma_\sigma$.
 \end{definition}

 We will simply refer to $L_-$ as \emph{sources}  and to $L_+$ as \emph{sinks}. In (1) of Definition \ref{maindef}, by saying that $\rho_\pm\in \cf(\co)$ is a \emph{quadratic defining function of $L_\pm$} we mean that $\rho_{\pm}=\sum_i \rho_{\pm,i}^2$ for finitely many  $\rho_{\pm,i}$ such that  $L_\pm=\cap_i\{ \rho_{\pm,i} =0\}$ within $\Sigma_\sigma\cap\basinf$, and the differentials $d \rho_{\pm,i}$ are  linearly independent  on $L_\pm\cap \Sigma_\sigma$.  A more detailed discussion of  conditions b)--c) can be found in \cite[\S 5.4.7]{vasygrenoble}.
 
 \begin{example} A (non-exhaustive) class of examples is provided by the non-trapping Lorentzian scattering metrics  introduced in \cite{BVW} and further studied in the context of the Feynman problem in \cite{GHV}.   {Namely, one assumes the existence of} a function $v\in \cf(M)$ such that for all $V\in \cf(\M;\be T\M)$ the sign of $g(V,V)$ and $v$ are the same at $\p M=\{\rho=0\}$. {Furthermore, near $\{v=\rho=0\}$,} the $\sc$-metric $g$ is {assumed to be} of the form
  $$
  g = v \frac{d\rho^2}{\rho^4}-\bigg(  \frac{d\rho}{\rho^2} \otimes_{\rm s} \frac{\omega}{\rho}\bigg)-\frac{\tilde g}{\rho^2},
 $$ 
 where $\omega$ is a smooth $1$-form such that $\omega=dv+\cO(v)+\cO(\rho)$, and the restriction of $\tilde g\in  \cf(\M; T^*\M \otimes_{\rm s}  T^*\M)$ to the joint annihilator of $d\rho, dv$ is positive. As discussed in \cite[\S{3.6}]{BVW}, this implies the existence of sources/sinks at 
 $
 \{ \rho=v=0,\, \varrho=\gamma=0, \, \mp\gamma>0\}
 $ in coordinates $(\rho,v,w,\varrho,\beta,\gamma)\in \be T^*\M$. One then needs to ensure that the non-trapping property \eqref{ntpop} of Definition \ref{maindef} holds true in $\Sigma_0\cap \fibinf$, see \cite[\S{3.2}]{BVW}. Minkowski space is a special case, see \cite[\S{3.1}]{BVW}, and we also note that in practice it is possible to consider perturbations that do not have the structure of sinks and sources, but for which the propagation estimates used in the sequel remain valid nevertheless. We also refer to \cite[\S2]{vasyessential} for remarks on the assumption of non-trapping at $\sigma\neq 0$.
  \end{example}
 
In \cite{vasyessential}, Vasy proves the following theorem, cf.~the work of Nakamura--Taira \cite{nakamurataira} for the case of real principal type operators of arbitrary orders on $\rr^n$ under a similar non-trapping condition.
 
 \begin{theorem}[{\cite[Thm.~1]{vasyessential}}] Assume $(M,g)$ is non-trapping. Then $P$ acting on $\ccf$ is essentially self-adjoint in $L^2(M)$. 
 \end{theorem}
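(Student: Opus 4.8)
The plan is to use the non-elliptic Fredholm framework of \cite{Vasy2013} in Melrose's scattering calculus \cite{melrosered}: one realises $P-z$, for $\Im z\neq0$, as an invertible operator between weighted scattering Sobolev spaces and deduces essential self-adjointness from triviality of the deficiency spaces. First I would reduce the statement. Since $P$ is a formally self-adjoint differential operator whose coefficients are smooth up to $\p\M$ in the $\mathrm{sc}$-sense, its Hilbert-space adjoint $P^\ast$ is $P$ itself acting distributionally on the maximal domain $\{u\in L^2(M):Pu\in L^2(M)\}$; by the usual deficiency-index criterion it then suffices to prove that for one (hence, by a connectedness argument, every) $z$ with $\Im z\neq0$ the only $u\in L^2(M)$ with $(P-z)u=0$ in the distributional sense is $u=0$. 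Fix such a $z$, say $\Im z>0$; the mass term $m^2\geqslant0$ merely shifts $z$ and plays no special role here.

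Next I would set up the microlocal picture. View $P-z$ as an element of the $\mathrm{sc}$-calculus of $\M$, of differential order $2$ and $\mathrm{sc}$-decay order $0$, with principal symbol $p_z$ as in \eqref{eqpri}. Because $\Sigma_z\subset\fibinf$ for $z\notin\rr$, the operator $P-z$ is \emph{elliptic at base infinity} $\basinf$, so all the delicate analysis takes place over fiber infinity, where $\Sigma_z=\Sigma_0\cap\fibinf$ carries the source/sink submanifolds $L_\pm$ at the corner $\corner$ provided by Definition~\ref{maindef} and where the rescaled Hamilton flow is of real principal type on $\Sigma_z\setminus(L_+\cup L_-)$. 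I would then assemble, in variable-order weighted scattering Sobolev spaces $H^{s,\ell}_{\mathrm{sc}}(\M)$, the three standard microlocal estimates for $P-z$: the elliptic estimate off $\Sigma_z$ (in particular rapid decay near $\basinf$); H\"ormander-type propagation of regularity along the Hamilton flow inside $\Sigma_z$; and the radial-point estimates at $L_\pm$ in their two forms — a high-regularity version valid \emph{above} a threshold and a low-regularity version propagating regularity \emph{towards} the radial set, the thresholds being dictated by the weight $\ell$ and the functions $\beta_\pm$ of Definition~\ref{maindef}. With $\Im z>0$ one makes the \emph{Feynman} choice: a variable differential order $s$ lying above the threshold at the sources $L_-$ and below it at the sinks $L_+$ (possible since $L_+$ and $L_-$ are disjoint). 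Chasing regularity and decay around $\Sigma_z$ using the non-trapping property then yields a Fredholm a priori estimate $\|u\|_{\mathcal X}\leqslant C\big(\|(P-z)u\|_{\mathcal Y}+\|u\|_{\ast}\big)$ — with $\mathcal X=\{u\in H^{s,\ell}_{\mathrm{sc}}:(P-z)u\in\mathcal Y\}$ carrying the graph norm, $\mathcal Y$ an appropriately shifted weighted $\mathrm{sc}$-Sobolev space, and $\|\cdot\|_{\ast}$ a norm on a space into which $\mathcal X$ embeds compactly — together with the dual estimate for $P-\bar z$; hence $P-z\colon\mathcal X\to\mathcal Y$ is Fredholm.

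It remains to upgrade Fredholm to invertible and to conclude. If $(P-z)u=0$ with $u\in\mathcal X$, elliptic regularity at $\basinf$ makes $u$ rapidly decreasing there and the radial and propagation estimates upgrade it to $H^\infty$ near $\fibinf$, so $u\in L^2(M)$ with $Pu=zu\in L^2(M)$ and enough regularity and decay to integrate by parts without contributions at infinity; then $0=\langle(P-z)u,u\rangle-\langle u,(P-z)u\rangle=-2i\,\Im z\,\|u\|^2$ forces $u=0$, and the same computation for $P-\bar z$ on the dual spaces kills the cokernel, so $P-z\colon\mathcal X\to\mathcal Y$ is invertible. Finally, given any $u\in L^2(M)$ with $(P-z)u=0$ distributionally, one checks that $u\in\mathcal X$: ellipticity at $\basinf$ gives the decay there, and near $L_\pm$ the Feynman choice of order and weight is precisely what lets the radial-point estimates run starting from no better a priori input than $u\in L^2=H^{0,0}_{\mathrm{sc}}$ — below threshold at the sinks this is automatic, while at the sources the weight-dependent threshold can be arranged to accommodate $H^0$. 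Then $u\in\mathcal X$ and $(P-z)u=0$, so $u=0$ by the previous paragraph; thus $\ker(P^\ast-z)=0$, and the same for $\bar z$, which gives essential self-adjointness of $P$ on $\ccf$.

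The hard part, I expect, is the simultaneous book-keeping in the last two steps: the variable order and the weight must be chosen compatibly with the Feynman sign condition at $L_\pm$, with the dual (adjoint) problem, and with the bare $L^2$ regularity of deficiency elements, all at once. Reconciling these constraints is exactly where the source/sink structure of the Hamilton flow at the corner and global non-trapping are used decisively — without them the a priori $L^2$ regularity would not be enough to close the radial-point estimates and the argument would break down.
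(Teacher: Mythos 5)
Two remarks before the substance: the paper itself does not prove this statement — it is imported verbatim from \cite{vasyessential} — and the only ingredients recorded here are the uniform elliptic, propagation and radial estimates (Propositions \ref{pos}, \ref{radial1}, \ref{radial2} and their combination in Proposition \ref{prop1}). Your outline reconstructs the strategy of Vasy's proof, and its skeleton (deficiency-index criterion, ellipticity of $P-z$ at base infinity for $\Im z\neq 0$, radial estimates at $L_\pm$ with propagation of singularities in between) is the right one. However, the step you dispatch in one clause — that a deficiency element has ``enough regularity and decay to integrate by parts without contributions at infinity'', and that the sink condition is ``automatic'' — is precisely the crux of the theorem, and it is not justified in your write-up. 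Symmetry of the maximal operator, i.e.\ $\langle Pu,u\rangle=\langle u,Pu\rangle$ on the maximal domain, is essentially equivalent to what is being proved, so it must be extracted from the microlocal information, and that information is borderline exactly at the sinks: for $\Im z>0$, the higher-decay estimate at $L_-$ (where $u\in L^2=H^{0,0}_{\rm sc}$ sits above the threshold $-\tfrac12$) together with forward propagation makes $u$ microlocally trivial away from $L_+$, but at $L_+$ the lower-decay estimate only yields weights $\ell_+<-\tfrac12$, strictly worse than $L^2$, while the a priori $L^2$ membership gives weight $0$ with no gain in differential order. Closing the pairing argument requires an additional mechanism at $L_+$ — an interpolation between these two memberships, or a cutoff/commutator (boundary-pairing) argument, or, as in Vasy's actual proof, the extra $(\Im z)^{\frac12}\|u\|_{s-\frac12,\ell+\frac12}$ term visible in \eqref{est1}, which for $(P-z)u=0$ supplies exactly the missing half order of decay that makes the commutator terms vanish. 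None of this appears in your argument. Moreover, your detour through invertibility of the Fredholm problem is circular at the same spot: a kernel element of $P-z$ on the Feynman space only carries weight $<-\tfrac12$ at $L_+$, hence is not obviously in $L^2$, so the claim ``so $u\in L^2(M)$'' and the ensuing computation $0=-2i\,\Im z\,\|u\|^2$ are unjustified there; for essential self-adjointness you should run the pairing directly on the $L^2$ deficiency element, where at least square-integrability is given.

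A secondary but genuine inaccuracy: in this scattering setting the radial sets $L_\pm$ lie at base infinity, and the threshold conditions are on the decay weight $\ell$ ($\ell_->-\tfrac12$, $\ell_+<-\tfrac12$, $\ell$ monotone along the flow), with the differential order $s$ arbitrary — compare Proposition \ref{prop1} and Propositions \ref{radial1}--\ref{radial2}. Your ``Feynman choice'' of a \emph{variable differential order} $s$ above/below threshold transplants the picture of \cite{Vasy2013}, where the radial sets sit at fiber infinity; here that choice buys nothing, and the entire bookkeeping you rightly identify as the hard part lives in the weight $\ell$.
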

 
 As a consequence, if we denote in the same way the closure of $P$ acting on   $\ccf$,  functions of $P$ can be defined using the functional calculus for self-adjoint operators. We are particularly interested in Schwartz kernels of functions of $P$, and therefore we need to know more precise mapping properties of the resolvent,  also basing on the results from \cite{vasyessential}.

\subsection{Sobolev spaces}\label{ss:jj} If $s\in\zz_{\geqslant 0}$, then the \emph{$\rm sc$-Sobolev space of order $s$} is by definition the space:
$$
\Hsc{s,0}=\big\{ u\in L^2(M) \ \vert \ \forall k\leqslant s \mbox{ and } V_1,\dots,V_k \in \cf(\M;\be T\M), \ V_1\dots V_k u \in L^2(M)\big\}.
$$
The definition of  $\Hsc{s,0}$ and of its norm $\| \cdot\|_{s,0}$  for  arbitrary $s\in\rr$ is most efficiently formulated with the help of $\sc$-pseudo-differential operators, see \sec{ss:scattering}. The \emph{weighted Sobolev spaces} are defined for non-zero $\ell\in \rr$ by 
$$
\Hsc{s,\ell}=\rho^{\ell} \Hsc{s,0},
 $$
with norm $\| u \|_{s,\ell}=\| \rho^{-\ell}  u \|_{s,0}$, where $\rho$ is as before a boundary-defining function of $\p\M$. Thus, higher $s$ means more regularity, and higher $\ell$ means more decay at spacetime infinity, i.e.~at $\p \M$. In the special case of Minkowski space modelled on $\overline{\rr^n}$, the space $\Hsc{s,0}$ coincides with the usual Sobolev space $H^s(\rr^n)$, and if we choose as boundary defining function $\rho=(1+|x|^2)^{-\12}\eqdef\bra x\ket^{-1}$ then $\Hsc{s,\ell}$ coincides with the weighted Sobolev space $\bra x\ket^{-\ell} H^{s}(\rr^n)$. 

The definition of $\Hsc{s,\ell}$ can be usefully generalized to weight orders $\ell$ that vary in phase space, i.e.~to $\ell\in\cf(\co)$ rather than just $\ell\in \rr$, see \sec{ss:scattering}.  We will also use the Fr\'echet spaces
$$
 \Hsc{\infty,\ell}\defeq  \textstyle\bigcap_{s\geqslant 0} \Hsc{s,\ell}, \quad  \Hsc{s,\infty}\defeq  \textstyle\bigcap_{\ell\geqslant 0} \Hsc{s,\ell}.
$$

We stress that  unless $s=\ell=0$, the definition of $\Hsc{s,\ell}$  refers to the manifold with boundary $\M$. As a rule, we do not necessarily emphasize the dependence on $\M$ or $\xoverline{g}$ in the notation if there is an ``$\rm sc$'' subscript, which indicates  the dependence on the scattering structure already.  Apart from  the spaces with an ``$\rm sc$'' subscript, we use standard notation. For instance, $\cf_{\rm c}(M)$, $\cf(M)$, $H^s_{\rm c}(M)$ and $H^s_{\rm loc}(M)$ are the standard  spaces on  the \emph{boundaryless} manifold $M$ (in contrast to the space of smooth functions $\cf(\M)$ on the manifold with boundary $\M$), and similarly for the space of  distributions $\cD'(M)$ and of compactly supported distributions $\cE'(M)$ on $M$. 

Note that for all $s\in\rr$  and $\ell\in\cf(\co)$ we have the continuous inclusions
$$
 H^s_{\rm c}(M)\subset \Hsc{s,\ell}\subset H^s_{\rm loc}(M)\subset \cD'(M).
$$

\subsection{Estimates for imaginary spectral parameter} We now  consider the operator $P-z$ for $z\in\cc$, focusing first on the case $\Im z>0$.

 For $\Im z\neq 0$, $P-z\in \Psi^{2,0}_{\sc}(M)$ is microlocally {elliptic} in the sense of the $\sc$-calculus except at fiber infinity $\fibinf$; propagation estimates take place inside of $\Sigma_0\cap\fibinf$.
   
For $\ell\in \cf(\co)$ we set $\ell_\pm=\ell|_{L_\pm}$.  We will say that $\ell$ is \emph{monotone in $\Sigma_z$} if it is monotone along the  Hamiltonian flow restricted to $\Sigma_z$. In various estimates, $\sS,\sL\in\rr$ will always be sufficiently negative numbers, which can be taken arbitrarily negative. 

For some arbitrary $c>0$ let $Z=\{ \Im z \geqslant c \module{\Re z}\}$, and let $\delta>0$.

\begin{proposition}[{\cite[Prop.~2]{vasyessential}}]  \label{prop1} Let $s\in\rr$, and let $\ell\in \cf(\co)$ be monotone in $\Sigma_0$ and such that $\ell_->-\12$ and $\ell_+<-\12$. Then for all $s'\in\rr$,  all $\ell'\in \cf(\co)$ with $\ell'_-\in\open{-\12,\ell_-}$  and all $u\in\Hsc{{s'},{\ell'}}$,
\beq\label{est1}
\|  u\|_{s,\ell} + (\Im z)^\12  \|  u\|_{s-\12,\ell+\12}  \leqslant C (\|  (P-z) u \|_{s-1,\ell+1} + \err{u}), 
\eeq
uniformly for $z\in Z\cap\{ \module{z}\geqslant\delta\}$.
\end{proposition}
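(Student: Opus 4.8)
The plan is to derive the estimate \eqref{est1} as a synthesis of local elliptic estimates, propagation-of-singularities estimates in the interior of the characteristic set, and radial point estimates at the source/sink submanifolds $L_\pm$, exactly in the microlocal style of Vasy's Fredholm framework, while carefully tracking the dependence on $z$ in the region $Z \cap \{|z| \geqslant \delta\}$. First I would note that since $\Im z > 0$ is bounded below (by $c|\Re z| \geqslant 0$ and $|z| \geqslant \delta$, so in fact $\Im z \geqslant \delta/\sqrt{1+c^{-2}}$ on $Z \cap \{|z|\geqslant\delta\}$), the operator $P - z$ is elliptic in the scattering calculus everywhere on $\basinf$ and everywhere on $\fibinf$ away from $\Sigma_0 \cap \fibinf$: indeed from \eqref{eqpri} the symbol $p_z$ can only vanish where $-\xi\cdot g^{-1}\xi = \Re z$ \emph{and} $\Im z = 0$ at base infinity, which is excluded, while at fiber infinity the symbol is $z$-independent and vanishes on $\Sigma_0 \cap \fibinf$. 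Hence microlocal elliptic regularity in the weighted sc-Sobolev spaces gives, away from a neighborhood of $\Sigma_0\cap\fibinf$, an estimate of the form $\|Bu\|_{s,\ell} \leqslant C(\|(P-z)u\|_{s-1,\ell+1} + \|u\|_{\sS,\sL})$ with constants that are uniform in $z$ because the elliptic parametrix construction depends continuously (in fact polynomially, after rescaling) on the bounded symbol data; the gain of one order in the weight reflects that $P-z\in\Psi^{2,0}_{\sc}$ and ellipticity holds at $\basinf$.

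Next I would treat the region inside $\Sigma_0 \cap \fibinf$. There $P - z$ is of real principal type (its principal symbol at fiber infinity is real and $z$-independent), so one has Hörmander-type propagation-of-singularities estimates along the rescaled Hamilton flow $\xoverline H_{p_0}$: regularity can be propagated from any microlocal region into any other region connected to it by a bicharacteristic, with the usual loss of one order in the differential sense and control by $\|(P-z)u\|$. The imaginary part $\Im z > 0$ contributes a term with a definite sign in the commutator/positive-commutator argument — this is precisely the source of the extra term $(\Im z)^{\frac12}\|u\|_{s-\frac12,\ell+\frac12}$ on the left-hand side: when one forms $\Im\langle (P-z)u, Au\rangle$ for a suitable self-adjoint microlocal multiplier $A$, the $-\Im z\,\|A^{\frac12}u\|^2$ term can be absorbed as a \emph{favorable} (left-hand side) contribution rather than an error, after an application of Cauchy–Schwarz pairing $(P-z)u$ against $u$ in dual weighted spaces. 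The key point to check is that all constants in these propagation estimates are uniform for $z \in Z$: this follows because the only $z$-dependence in the symbolic computation enters through the lower-order term $-z$, whose contribution to the Hamilton vector field vanishes and whose contribution to the commutator is the sign-definite $\Im z$ term, so after rescaling $\bra\xi\ket$ the relevant symbol estimates are uniform.

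The decisive step, and the one I expect to be the main obstacle, is the radial point estimate at $L_\pm$. Here the threshold condition on the weight order enters: the hypotheses $\ell_- > -\frac12$ at the source $L_-$ and $\ell_+ < -\frac12$ at the sink $L_+$ are precisely the sc-analogues of the threshold regularity conditions in Melrose's and Vasy's radial estimates, governed by the linearization of the flow transverse to $L_\pm$ encoded in the functions $\beta_\pm, \beta_{0,\pm}, s_\pm$ of Definition \ref{maindef}. At the source $L_-$ one proves an estimate allowing one to \emph{propagate regularity/decay out of} $L_-$, provided one already controls $u$ microlocally there in a weaker space $\Hsc{s',\ell'}$ with $\ell'_- \in \open{-\frac12,\ell_-}$ — this is exactly why the proposition assumes $u \in \Hsc{s',\ell'}$ with that constraint rather than merely $u\in L^2$. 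At the sink $L_+$, by contrast, the estimate is unconditional (one can always propagate \emph{into} the sink), which is why only $\ell_+ < -\frac12$ is required and no a priori hypothesis at $L_+$ is imposed. The monotonicity of $\ell$ along the flow in $\Sigma_0$ is what allows these local estimates to be patched together: one starts near $L_+$ (sink, unconditional estimate), propagates backward along bicharacteristics, which by non-trapping (2) of Definition \ref{maindef} either return to $L_+$ or reach $L_-$, and at $L_-$ one closes the estimate using the a priori bound in $\Hsc{s',\ell'}$; monotonicity guarantees the weight $\ell$ is compatible with the direction of propagation at every stage. Tracking $z$-uniformity through the radial estimate is the delicate part: one must verify that the positive-commutator construction at $L_\pm$ — where the multiplier involves powers of the quadratic defining function $\rho_\pm$ — produces constants independent of $z \in Z$, again using that $-z$ enters only through the sign-definite $\Im z$ term, which at the radial set is simply dropped (or absorbed favorably). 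Assembling the elliptic estimate on $\basinf$ and on $\fibinf\setminus\Sigma_0$, the propagation estimate in $\Sigma_0\cap\fibinf$, and the two radial estimates at $L_\pm$, and summing, gives \eqref{est1} with the stated uniformity; this is the content of \cite[Prop.~2]{vasyessential}, whose proof we follow.
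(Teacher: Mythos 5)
Your proposal assembles the right ingredients---uniform elliptic estimates away from $\Sigma_0\cap\fibinf$, propagation of singularities in which the sign-definite $\Im z$ contribution yields the extra $(\Im z)^{\frac{1}{2}}$ term on the left, and the two threshold radial estimates at $L_\pm$---and this is indeed the route taken in the paper (which itself only sketches the combination, citing Vasy). But the way you chain the estimates contains a genuine error. For $\Im z\geqslant 0$, propagation of regularity is \emph{one-directional}: in Proposition \ref{pos} the control condition requires the controlling region $\elll_\sc(A_2)$ to lie \emph{backward} along the flow from the estimated region, i.e.\ estimates propagate only forward, precisely because the $\Im z$ term has the favorable sign for that direction alone (this is also why the resolvent is a Feynman inverse). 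Your claim that regularity can be propagated ``from any microlocal region into any other region connected to it by a bicharacteristic'' is therefore false in this setting, and your assembly---start at $L_+$ with an ``unconditional'' estimate, propagate backward, and close at $L_-$ with the a priori bound---does not run: the below-threshold (low decay) radial estimate at the sink $L_+$ (Proposition \ref{radial2}) is \emph{not} unconditional, since it requires the term $\| B_1 u\|_{s,\ell}$ on a punctured neighborhood of $L_+$, and backward propagation is simply not available for $\Im z\geqslant 0$ to transport control toward $L_-$.

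The correct order is the reverse of what you describe: the a priori hypothesis $u\in\Hsc{s',\ell'}$ with $\ell'_-\in(-\tfrac{1}{2},\ell_-)$ feeds the above-threshold (higher decay) radial estimate at the source $L_-$ (Proposition \ref{radial1}), giving $\Hsc{s,\ell}$ control microlocally near $L_-$ with no punctured-neighborhood input; forward propagation of singularities, together with non-trapping and the monotonicity of $\ell$ along the flow, carries this control along every bicharacteristic into an arbitrary punctured neighborhood of $L_+$; this propagated control is exactly what supplies the $B_1$ term in the low decay estimate at $L_+$, which then closes the argument, and the elliptic estimate handles the rest of $\p\co$. (A smaller imprecision: $P-z$ is not elliptic ``everywhere on $\basinf$''; ellipticity fails where $\Sigma_0\cap\fibinf$ meets the corner $\corner$, which is precisely where the relevant parts of $L_\pm$ sit, so the elliptic region is the complement of a neighborhood of $\Sigma_0\cap\fibinf$, as the paper states.) Your identification of the thresholds $\ell_->-\tfrac{1}{2}$, $\ell_+<-\tfrac{1}{2}$ and of where the a priori hypothesis enters is correct, but as literally written the directional structure of the argument is inverted, and that structure is the substance of the proof.
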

\beproof The proof is based on a slight modification of the estimates in \cite[\S 5.4]{vasygrenoble} and can be found in \cite{vasyessential}. We only sketch it very briefly for the reader's convenience.  

The basic ingredients are the \emph{higher decay radial estimate} at sources and the \emph{lower decay radial estimate} into the sinks, recalled in more detail in \sec{ss:pe}, see also \sec{ss:scattering} for prerequisites on scattering pseudo-differential calculus. The first estimate (see Proposition \ref{radial1}) reads
$$
\| A u\|_{s,\ell} + (\Im z)^\12  \| A u\|_{s-\12,\ell+\12}   \leqslant C (\| B (P-z) u \|_{s-1,\ell+1} + \err{u}), 
$$
for all $u\in\Hsc{{s'},{\ell'}}$,  $\ell>\ell'>-\12$, $s,s',\in\rr$, $L_- \subset \elll_\sc(A)$, $\wf'_\sc(A)$ contained in a small neighborhood of $L_-$ in $\elll_\sc(B)$, and within $\elll_\sc(B)$, bicharacteristics from $\wf'_\sc(A)$ tend to $L_-$ in the forward  direction along the flow. The second estimate (see Proposition \ref{radial2}) is
$$
\| A u\|_{s,\ell}  +(\Im z)^{\12}  \| A u\|_{s-\12,\ell+\12}\leqslant C (\| B_1 u \|_{s,\ell}  + \| B (P-z) u \|_{s-1,\ell+1} + \| B u \|_{s',\ell'} + \err{u}), 
$$
for  all $u\in\Hsc{{s'},{\ell'}}$,   $\ell<-\12$, $\ell',s,s'\in\rr$, $L_+ \subset \elll_\sc(A)$, $\wf'_\sc(A)$ contained in a small neighborhood of $L_+$ in $\elll_\sc(B)$, and within $\elll_\sc(B)$, bicharacteristics from $\wf'_\sc(A)\setminus L_+$ tend to $L_+$ in the forward direction along the flow, and intersect $\elll_\sc(B_1)$ in the backward direction.

Thanks to the non-trapping assumption, by taking $\ell$ as in the assumption of the proposition, the two estimates applied in a neighborhood of $\Sigma_0\cap\fibinf$ can be combined with propagation of singularities estimates (Proposition \ref{pos}) and with the elliptic estimate to yield \eqref{est1} (see e.g.~\cite[\S 3.2]{hassell} for a pedagogical explanation of how to combine this type of estimates).
\qeds

By iterating the estimate  \eqref{est1} we can conclude that for all $\Im z>0$ and all $N\in\nn_{\geqslant 0}$,  
\beq\label{est3}
(P-z)^{-N}:L^2_{\rm c}(M)\to H^N_{\loc}(M).
\eeq
By replacing $P$ by $-P$ (the rôle of $L_+$ and $L_-$ is then exchanged) we also obtain  \eqref{est3} for $\Im z<0$. Note that in contrast to the elliptic case, we cannot expect that the image is in $H^{2N}_{\loc}(M)$.

To show regularity properties of non-integer powers we will need the following more precise statement.

\begin{proposition}  \label{prop2} Let $\varepsilon>0$, $N\in\nn_{>0}$, $s\in\rr$, and let $\ell\in \cf(\co)$ be monotone in $\Sigma_0$ and such that $\ell_->-\12$ and $\ell_+<-N$. Then
\beq\label{est2}
\| (P-i \varepsilon)^{-N}  (P-z)^{-1}f   \|_{s,\ell}\leqslant C (\Im z)^{-\12}   (\|  f \|_{s-N-\12,\ell+N+\12} + \norm{f}), 
\eeq
uniformly for all $z\in Z\cap\{ \module{z}\geqslant\delta\}$ and   for all $f\in L^2(M)\cap \Hsc{s-N-\12,\ell+N+\12}$.
\end{proposition}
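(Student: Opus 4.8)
The plan is to obtain \eqref{est2} by combining two applications of the single estimate \eqref{est1} of Proposition \ref{prop1}, using that $(P-i\varepsilon)^{-N}$ and $(P-z)^{-1}$ commute (both belonging to the functional calculus of the self-adjoint operator $P$). Writing $v\defeq(P-z)^{-1}f$, so that $(P-i\varepsilon)^{-N}(P-z)^{-1}f=(P-i\varepsilon)^{-N}v$, the two steps are: (i) apply \eqref{est1} with the running parameter $z$ to show $v\in\Hsc{s-N,\ell+N}$ together with
\beq\label{eq:prop2halfgain}
\|v\|_{s-N,\ell+N}\leqslant C(\Im z)^{-\12}\big(\|f\|_{s-N-\12,\ell+N+\12}+\err{v}\big),
\eeq
the factor $(\Im z)^{-\12}$ coming from the \emph{second}, ``half-order gain'' term on the left-hand side of \eqref{est1}; and (ii) iterate \eqref{est1} $N$ times with the \emph{fixed} parameter $z=i\varepsilon$ to get the boundedness $(P-i\varepsilon)^{-N}\colon\Hsc{s-N,\ell+N}\to\Hsc{s,\ell}$. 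Throughout we invoke \eqref{est1} with $\delta$ replaced by $\min(\delta,\varepsilon)$ (legitimate since \eqref{est1} holds for every $\delta>0$), which puts both the running parameter $z\in Z\cap\{\module{z}\geqslant\delta\}$ and $i\varepsilon$ in the range of validity, and we note that on $Z\cap\{\module{z}\geqslant\delta\}$ one has $\delta^2\leqslant\module{z}^2\leqslant(1+c^{-2})(\Im z)^2$, hence $\Im z\geqslant\delta'>0$ with $\delta'$ depending only on $c,\delta$.

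For step (i), apply \eqref{est1} to $v=(P-z)^{-1}f$ with regularity order $s-N+\12$ and weight $\ell+N-\12\in\cf(\co)$ in place of $s$ and $\ell$. This weight is monotone in $\Sigma_0$ (a constant shift of $\ell$), and its threshold conditions, $(\ell+N-\12)_->-\12$ and $(\ell+N-\12)_+<-\12$, are equivalent to $\ell_->-\12$ and $\ell_+<-N$ — which is exactly why these are the hypotheses. The a priori requirement of \eqref{est1} is met because $v\in L^2(M)$ (since $\|(P-z)^{-1}\|_{L^2\to L^2}\leqslant(\Im z)^{-1}$) and $L^2(M)\subset\Hsc{0,\ell'}$ for any constant weight $\ell'\in\open{-\12,\ell_-}\cap\open{-\12,0}$, a nonempty set because $\ell_->-\12$; this $\ell'$ is admissible in every application of \eqref{est1} below as well. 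Since $(P-z)v=f$, reading off the half-order-gain term gives $v\in\Hsc{s-N,\ell+N}$ and \eqref{eq:prop2halfgain}; moreover $\err{v}\leqslant C\|v\|_{L^2}\leqslant C(\Im z)^{-1}\|f\|\leqslant C(\Im z)^{-\12}\|f\|$ by the lower bound $\Im z\geqslant\delta'$, so the error term also carries the factor $(\Im z)^{-\12}$.

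For step (ii), put $u_0=v$ and $u_j=(P-i\varepsilon)^{-1}u_{j-1}$, and apply \eqref{est1} with spectral parameter $i\varepsilon$ to $u_j$, with regularity order $s-N+j$ and weight $\ell+N-j$, for $j=1,\dots,N$; the threshold conditions for these weights again reduce to $\ell_->-\12$ (binding at $j=N$) and $\ell_+<-N$ (binding at $j=1$). Since $(P-i\varepsilon)u_j=u_{j-1}$, this yields $u_j\in\Hsc{s-N+j,\ell+N-j}$ with $\|u_j\|_{s-N+j,\ell+N-j}\leqslant C(\|u_{j-1}\|_{s-N+j-1,\ell+N-j+1}+\err{u_j})$, and since $u_j\in L^2(M)$ with $\|u_j\|_{L^2}\leqslant\varepsilon^{-j}(\Im z)^{-1}\|f\|$ the error term obeys $\err{u_j}\leqslant C\|u_j\|_{L^2}\leqslant C(\Im z)^{-\12}\|f\|$. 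Iterating from $j=1$ to $j=N$ and inserting \eqref{eq:prop2halfgain} for $\|u_0\|_{s-N,\ell+N}$, every term carries the factor $(\Im z)^{-\12}$, and one arrives at
\[
\|(P-i\varepsilon)^{-N}(P-z)^{-1}f\|_{s,\ell}\leqslant C(\Im z)^{-\12}\big(\|f\|_{s-N-\12,\ell+N+\12}+\|f\|\big),
\]
which is \eqref{est2}. (That each $u_j$ genuinely lies in the stated space, rather than only satisfying the a priori bound, is part of the content of \eqref{est1}, using \eqref{est3}.)

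The delicate point — the step I would most worry about — is not losing the decay factor $(\Im z)^{-\12}$. A naive composition of the full-order mapping property of \eqref{est1} (which carries no $z$-decay at all) with the crude bound $\|(P-z)^{-1}\|_{L^2\to L^2}\leqslant(\Im z)^{-1}$ is simultaneously too lossy in regularity and in weight and produces the wrong power of $\Im z$; the correct mechanism is to extract exactly one factor $(\Im z)^{-\12}$ from the half-order-gain term of \eqref{est1} applied to the single running resolvent $(P-z)^{-1}$, and then to verify that the various error terms generated along the way inherit the same decay through the uniform lower bound $\Im z\geqslant\delta'$. The accompanying weight bookkeeping in steps (i)--(ii) is the other thing that must be done carefully, since it is precisely what forces the hypotheses $\ell_->-\12$ and $\ell_+<-N$.
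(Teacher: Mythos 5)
Your proof is correct and follows essentially the same route as the paper's: one application of Proposition \ref{prop1} with spectral parameter $z$ at orders $(s-N+\tfrac12,\ell+N-\tfrac12)$, keeping the half-order-gain term to extract the single factor $(\Im z)^{-\12}$, combined with an $N$-fold iteration of \eqref{est1} with spectral parameter $i\varepsilon$ (dropping the gain term), and errors controlled via $\|(P-z)^{-1}f\|\leqslant(\Im z)^{-1}\|f\|$; the paper merely performs the two steps in the opposite order, which is immaterial.
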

\beproof Since $\ell_->-\12$ and $\ell_+<-N+\12$, we can  apply Proposition \ref{prop1}  with  $u= (P-i \varepsilon)^{-N}  (P-z)^{-1}f\in L^2(M)$ and then iterate the estimate, $N$ times in total. By dropping the part proportional to $\Im z$ from the l.h.s.~each time, we obtain
$$
\| (P-i \varepsilon)^{-N}  (P-z)^{-1}f   \|_{s,\ell}\leqslant C (\| (P-z)^{-1}  f \|_{s-N,\ell+N} +\err{u}).
$$
Since $\ell_- + N -\12>-\12$ and $\ell_+ +N -\12<-\12$,  we can apply Proposition \ref{prop1}  to  $v=(P-z)^{-1}f\in L^2(M)$. By keeping only the part proportional to $(\Im z)^\12$ on the l.h.s.~we obtain
$$
(\Im z)^{\12}\| (P-z)^{-1}  f \|_{s-N,\ell+N}\leqslant  C(\|  f \|_{s-N-\12,\ell+N+\12} + \err{v}).
$$
The terms $\err{u}$ and $\err{v}$ above can be estimated by $\norm{(P-z)^{-1}f}$ and thus by $(\Im z)^{-1}$ using the self-adjointness of $P$. Combining the estimates yields \eqref{est2}. \qed

\ber\label{remdisc} Proposition \ref{prop2} is also valid for $N= 0$ if $\ell_->0$, as can easily be seen by dropping the first part of the proof.
\eer

 \subsection{From resolvent to complex powers}\label{ss:restocp} As $P$ is a self-adjoint operator,  the complex powers $(P-i\varepsilon)^{-\cv}$ are well-defined by functional calculus for $\varepsilon>0$ and $\cv\in\cc$, and also for $\varepsilon=0$ if $\Re\cv <0$. We  deduce below various regularity properties of  $(P-i\varepsilon)^{-\cv}$ from resolvent estimates. 
 
 We will express $(P-i\varepsilon)^{-\cv}$ as an integral of $(P-z)^{-1}$ over a contour $\gamma_\varepsilon$ defined as follows. Let $\tilde\gamma_{\varepsilon}$ be a   contour going from $\Re z\ll 0$ to  $\Re z\gg 0$ in the upper half-plane of the form
    $$
    \tilde\gamma_{\varepsilon} = e^{i(\pi-\theta)}\opencl{-\infty,\textstyle\frac{\varepsilon}{2}}\cup \{\textstyle\frac{\varepsilon}{2} e^{i\omega}\, | \, \pi-\theta<\omega<\theta\}\cup  e^{i\theta}\clopen{\textstyle\frac{\varepsilon}{2},+\infty}
    $$  
    for some fixed $\theta\in\open{0,\pid}$. We then define $\gamma_\varepsilon\defeq \tilde\gamma_\varepsilon+i\varepsilon$ (see Figure \ref{fig:contour}). We also define its degenerate version    $\gamma_{0}$, which also goes from $\Re z\ll 0$ to  $\Re z\gg 0$ in the upper half-plane and is the form
         $$
         \gamma_{0} = e^{i(\pi-\theta)}\opencl{-\infty,0}\cup  e^{i\theta}\clopen{0,+\infty}.
         $$  

 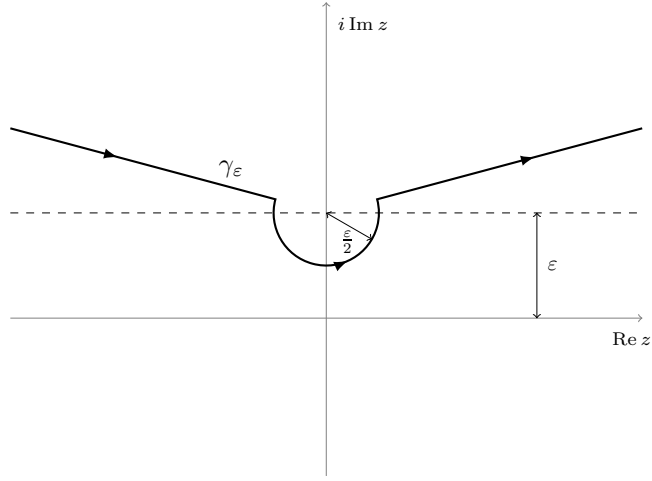
\begin{figure}
 \begin{tikzpicture}[scale=1.4]
 \def\bigradius{3}
 \def\incangle{15}
 
 \def\littleradius{0.5}
 
 \draw [help lines,->] (-1.0*\bigradius, 0) -- (1.0*\bigradius,0);
 \draw [help lines,->] (0, -0.5*\bigradius) -- (0, 1.0*\bigradius);
 
 \begin{scope}[shift={(0,2*\littleradius)}]
  \node at (-0.9,0.42) {$\gamma_{\varepsilon}$};
\path[draw,line width=0.8pt,decoration={ markings,
      mark=at position 0.15 with {\arrow{latex}}, 
      mark=at position 0.53 with {\arrow{latex}},
      mark=at position 0.85 with {\arrow{latex}}},postaction=decorate] (-\bigradius,{\bigradius*tan(\incangle)})   -- (-\incangle:-\littleradius) arc (180-\incangle:360+\incangle:\littleradius)   -- (\bigradius,{\bigradius*tan(\incangle)});
      \path[draw,line width=0.2pt,postaction=decorate,<->] (0,0)   -- (330:\littleradius);
\end{scope}

\path[draw,line width=0.2pt,postaction=decorate,dashed] (-\bigradius,2*\littleradius)   -- (\bigradius,2*\littleradius);

\path[draw,line width=0.2pt,postaction=decorate,<->] (2,0)   -- (2,2*\littleradius);
\node at (2.15,\littleradius){\scalebox{0.8}{$\varepsilon$}};
\node at (0.2,1.45*\littleradius){\scalebox{0.8}{$\frac{\varepsilon}{2}$}};

 \node at (2.9,-0.2){$\scriptstyle \Re z$};
 \node at (0.35,2.8) {$\scriptstyle i \Im z$};
 
 \end{tikzpicture}
 \caption{\label{fig:contour}The contour $\gamma_\varepsilon$ used to express $(P-i\varepsilon)^{-\cv}$ as an integral of the resolvent $(P-z)^{-1}$ for $P$ self-adjoint. If $\varepsilon=0$ the contour degenerates to two half-lines intersecting the real line at $0$.}
 \end{figure}

 \begin{proposition}\label{laplap} Assume  $(M,g)$ is non-trapping.  
 \begin{enumerate}
 \item\label{er1} For all $\varepsilon>0$ and $\cv\in\cc$, $\Hsc{\infty,0}\subset \Dom(P-i \varepsilon)^{-\cv}$.
  \item\label{er1p} For all $\Re\cv<0$, $\Hsc{\infty,0}\subset \Dom(P-i0)^{-\cv}$.
 \item\label{er2} For all $u\in \Hsc{\infty,0}$ and all $\varepsilon>0$,  the functions
$
 \cc\ni\cv\mapsto (P-i\varepsilon)^{-\cv} u \in L^2(M)
 $ 
and $
 \{\Re \cv < 0 \} \ni\cv\mapsto (P-i 0)^{-\cv} u \in L^2(M)
 $ 
 are  holomorphic. 
  \item\label{er35} For all $\varepsilon\geqslant 0$, $\Re \cv<0$, $s\geqslant 0$  and $\epsilon>0$, $$(P-i \varepsilon)^{-\cv} : \Hsc{s,\infty}\to \Hsc{s+2\alfint-\32,-\epsilon}$$ continuously. 
 \item\label{er3} For all $\varepsilon>0$, $\Re \cv>\12$, $s\geqslant 0$  and $\epsilon>0$, $$(P-i \varepsilon)^{-\cv} : \Hsc{s,\infty}\to \Hsc{s+\alfinth+\12,-\alfinth+\12-\epsilon}$$ continuously.
 \end{enumerate}
 \end{proposition}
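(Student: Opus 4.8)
The plan is to derive parts (1)--(3) from the functional calculus together with the mapping properties of $P$, and parts (4)--(5) from the resolvent estimates of Propositions \ref{prop1} and \ref{prop2} via a contour representation of the complex powers.

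For (1)--(3): since $P\in\Psi^{2,0}_{\sc}(M)$, its powers $P^k$ map $\Hsc{s,\ell}$ continuously into $\Hsc{s-2k,\ell}$, so $\Hsc{\infty,0}\subset\bigcap_{k\geqslant0}\Dom(P^k)$ (recall that, $P$ being essentially self-adjoint, $\Dom(P^k)=\{u\in L^2:P^ju\in L^2,\ j\leqslant k\}$ in the distributional sense). On $\rr$ the symbol $\lambda\mapsto(\lambda-i\varepsilon)^{-\cv}$ --- and, when $\Re\cv<0$, also $\lambda\mapsto(\lambda-i0)^{-\cv}$, which then extends continuously to $\lambda=0$ --- is bounded in modulus by $C_\cv(1+|\lambda|)^{\max(0,-\Re\cv)}$, so $\bigcap_k\Dom(P^k)$ lies in the domain of the corresponding operator by the spectral theorem; this gives (1) and (2). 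For (3), the difference quotients in $\cv$ of these symbols converge locally uniformly in $\cv$ with a dominating bound of the same polynomial type, so dominated convergence in the spectral representation yields holomorphy, with $\p_\cv(P-i\varepsilon)^{-\cv}u=-\log(P-i\varepsilon)(P-i\varepsilon)^{-\cv}u$.

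For (4)--(5) the starting point is the identity, valid for $\varepsilon>0$ and $\Re(\cv-N)>\12$ with $N\in\nn_{\geqslant0}$,
\beq
(P-i\varepsilon)^{-\cv}=\frac{1}{2\pi i}\int_{\gamma_\varepsilon}(z-i\varepsilon)^{-(\cv-N)}(P-i\varepsilon)^{-N}(P-z)^{-1}\,dz,
\eeq
which follows from the functional calculus once one checks, by Cauchy's theorem (closing $\gamma_\varepsilon$ in the lower half-plane, where the large arc gives no contribution when $\Re(\cv-N)>\12$), that the corresponding scalar integral produces $(\lambda-i\varepsilon)^{-\cv}$; for $f\in\Hsc{s,\infty}$ the integrand is, by Proposition \ref{prop2}, a continuous $\Hsc{s,\ell}$-valued function of $z$ along $\gamma_\varepsilon$ which is integrable because $\int_{\gamma_\varepsilon}|z-i\varepsilon|^{-\Re(\cv-N)}(\Im z)^{-\12}\,|dz|<\infty$ precisely when $\Re(\cv-N)>\12$ (the part of $\gamma_\varepsilon$ with $|z|\leqslant\delta$ being trivial, as there $\Im z$ and $|z-i\varepsilon|$ are bounded below and $(P-i\varepsilon)^{-N}$ is bounded). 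Given $\Re\cv>\12$, part (5) follows by choosing $N=\alfinth$ as large as this convergence permits (using Remark \ref{remdisc} when $N=0$) and a monotone weight $\ell$ with $\ell_->-\12$, $\ell_+<-N$, and reading off from Proposition \ref{prop2} that $(P-i\varepsilon)^{-N}(P-z)^{-1}$, hence $(P-i\varepsilon)^{-\cv}$, maps $\Hsc{s-N-\12,\ell+N+\12}$ into $\Hsc{s,\ell}$ --- i.e., after relabelling, $\Hsc{s,\infty}\to\Hsc{s+\alfinth+\12,-\alfinth+\12-\epsilon}$ for a variable weight realising that amount of decay away from the radial sets. Part (4) for $\varepsilon>0$ then reduces to (5) via $(P-i\varepsilon)^{-\cv}=(P-i\varepsilon)^{-(\cv+k)}\circ(P-i\varepsilon)^{k}$ with $k\in\nn$ minimal such that $\Re(\cv+k)>\12$, the second factor being a polynomial in $P$ and so mapping $\Hsc{s,\infty}$ into $\Hsc{s-2k,\infty}$; the stated order $s+2\alfint-\32$ and weight $-\epsilon$ come out of composing the two. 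For (4) at $\varepsilon=0$ one uses the degenerate contour $\gamma_0$ instead: since $\Re\cv<0$ the weight $z^{-\cv}$ is integrable near $z=0$ on $\gamma_0$ but grows at infinity, so after peeling off an integer power $P^j$ on the right to reduce to $\cv$ in a fixed bounded range (unproblematic since $\Hsc{s,\infty}\subset\bigcap_j\Dom(P^j)$), one inserts finitely many terms of the expansion $(P-z)^{-1}=\sum_{j=0}^{J-1}z^{-j-1}P^j+z^{-J}P^J(P-z)^{-1}$ so that the remaining integral $\int_{\gamma_0}z^{-\cv}z^{-J}P^J(P-z)^{-1}\,dz$ converges at infinity for $J$ large; the subtracted terms give multiples of $P^j$, Propositions \ref{prop1}--\ref{prop2} again supply the uniform bounds, and the endpoint $z=0$ causes no harm because $P^J(P-z)^{-1}$ stays bounded there.

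\textbf{Main obstacle.} The delicate points are, first, turning the contour representation into a rigorous operator identity on the Sobolev scales --- which requires tracking domains and the polynomial growth of the symbols along $\gamma_\varepsilon$ when interchanging the $z$-integral with $P$ and its powers --- and, second, part (4) at $\varepsilon=0$, where one must manage the competition already emphasised in \sec{ss:proof} between integrability of the contour integrand near $z=0$ and its decay as $|z|\to\infty$; this forces the regularisation step and a careful choice of the variable weight near the radial sets $L_\pm$ so that only an $\epsilon$ of decay is lost.
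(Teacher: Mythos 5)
Your treatment of (1)--(3) and (5) is essentially the paper's: the same factorization into a bounded power times an integer power for the domain statements, and the same contour representation $(P-i\varepsilon)^{-\cv}=\frac{1}{2\pi i}\int_{\gamma_\varepsilon}(z-i\varepsilon)^{-(\cv-N)}(P-i\varepsilon)^{-N}(P-z)^{-1}dz$ combined with Proposition \ref{prop2} and dominated convergence for (5) (this is exactly \eqref{jkjk2}). The divergence, and the gap, is in part (4). For $\varepsilon>0$ you reduce (4) to (5) by writing $(P-i\varepsilon)^{-\cv}=(P-i\varepsilon)^{-(\cv+k)}\circ(P-i\varepsilon)^{k}$, with the integer power acting \emph{first} on $u$. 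But $(P-i\varepsilon)^{k}u\in\Hsc{s-2k,\infty}$, and for $s<2k$ (which occurs for all $s\geqslant 0$ once $\Re\cv$ is even slightly negative) this space is not contained in $L^2(M)$; part (5) is only stated for nonnegative regularity, and its proof rests on Propositions \ref{prop1}--\ref{prop2}, whose right-hand sides carry the $L^2$-norm of the input (controlled via self-adjointness), so they simply do not apply to inputs outside $L^2(M)$. The paper avoids this by keeping the positive power of $P$ on the \emph{left} of the resolvent inside the integral, as the pseudodifferential factor $(P-i)^N$ in \eqref{jkjk}, so the resolvent always acts on an admissible function and the loss of $2N$ derivatives is paid only afterwards; this is also why the exponent $s+2\alfint-\32$ appears directly, and why the same formula covers $\varepsilon=0$ with no extra work.

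Your separate argument for $\varepsilon=0$ also does not go through as described. After inserting $(P-z)^{-1}=\sum_{j=0}^{J-1}z^{-j-1}P^j+z^{-J}P^J(P-z)^{-1}$ (up to sign), the subtracted terms contribute scalar integrals $\int_{\gamma_0}z^{-\cv-j-1}dz$ which are individually divergent: at infinity for small $j$ (e.g.\ $j=0$ with $\Re\cv\in\open{-1,0}$ the integrand decays like $|z|^{-\Re\cv-1}$, which is not integrable) and at $z=0$ for larger $j$; there is no oscillation along the rays of $\gamma_0$ to rescue conditional convergence, so the term-by-term splitting destroys convergence rather than organizing it. Moreover the claim that ``$P^J(P-z)^{-1}$ stays bounded'' near $z=0$ is not supported by the available estimates: Propositions \ref{prop1}--\ref{prop2} are uniform only on $Z\cap\{|z|\geqslant\delta\}$ and give $\cO((\Im z)^{-\12})$, which blows up as $z\to 0$ along $\gamma_0$; what one actually needs (and what the paper's proof of \eqref{er35} uses implicitly) is integrability near $z=0$ of $|z|^{-\Re\cv}$ times this singular factor, valid precisely because $\Re\cv<0$, together with a separate, cruder bound on the small-$|z|$ piece. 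So part (4) needs to be redone along the lines of \eqref{jkjk}, or else (5) must first be extended to negative Sobolev orders, which the quoted resolvent estimates do not provide.
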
 
 \beproof   \eqref{er1}: We write $(P-i\varepsilon)^{-\cv}=(P-i\varepsilon)^{-\cv-N}(P-i\varepsilon)^{N}$ for some  $N\in \nn_0$ with $N\geqslant -\Re \cv$. Then  $(P-i\varepsilon)^{-\cv-N}$ is bounded on $L^2(M)$ and $(P-i\varepsilon)^{N}:\Hsc{\infty,0}\to \Hsc{\infty,0}$ continuously, hence $(P-i\varepsilon)^{-\cv}: \Hsc{\infty,0}\to L^2(M)$ continuously  and the claim follows. 
 
  \eqref{er1p}: The assertion follows directly from  \eqref{er1} and the fact  that (see \ref{ss:contours}) for all $\Re\cv<0$ and all $\varepsilon>0$, $\Dom (P-i 0)^{-\cv} = \Dom (P-i\varepsilon)^{-\cv}$.  
 
 \eqref{er2}: This  follows easily from \eqref{er1} and functional calculus.
  
   \eqref{er35}: Let  $N=-\alfint+1$. By   \eqref{ctin2},
   \beq\label{jkjk}
   (P-i\varepsilon)^{-\cv}=   \frac{1}{2\pi i}\int_{\gamma_{0}+i\varepsilon} \frac{(z-i\varepsilon)^{-\cv}}{(z-i)^N} (P-i)^N(P-z)^{-1} dz.
   \eeq
   By $P-i\in\Psi^{2,0}_{\sc}(M)$ and by  Remark \ref{remdisc},  for $L\in\rr$ sufficiently large the norm of
   \beq\label{soboma}
   (P-i)^{N}(P-z)^{-1}: {\Hsc{s,L}\to \Hsc{s+\12-2N,-\epsilon}}\, \mbox{ is }\, \cO(\left|\Im z\right|^{-\12}).
   \eeq
  By dominated convergence, the  integral \eqref{jkjk} is bounded on the same spaces.

 \eqref{er3}: We write $(P-i\varepsilon)^{-\cv}=(P-i\varepsilon)^{-N}(P-i\varepsilon)^{-\mu}$, where  $N=\alfinth$ and $\Re\mu>\12$, and then we express $(P-i\varepsilon)^{-\mu}$ in terms of the resolvent of $i P$ as a contour integral using  \eqref{ctin0}. This gives 
 \beq\label{jkjk2}
 (P-i\varepsilon)^{-\cv}=   \frac{1}{2\pi i}\int_{\gamma_{\varepsilon}} (z-i\varepsilon)^{-\mu}   (P-i\varepsilon)^{-N}(P-z)^{-1} dz.
 \eeq
 By Proposition \ref{prop2} (with $s+N+\12$ instead of $s$ and $\ell_+=-N-\epsilon$), for $L\in\rr$ sufficiently large the norm of
 \beq\label{soboma2}
 (P-i\varepsilon)^{-N}(P-z)^{-1}: {\Hsc{s,L}\to \Hsc{s+N+\12,-N-\epsilon}}\, \mbox{ is }\, \cO(\left|\Im z\right|^{-\12}).
 \eeq
By dominated convergence, the  integral \eqref{jkjk2} is bounded on the same spaces.
 \qeds

 \subsection{Estimates uniform down to the real axis} \label{ss:realaxis1} In \cite{vasyessential}, estimates uniform down to the real axis are obtained under the extra hypothesis that $P=\square_g+m^2$ with  $m^2\neq 0$ and $(M,g)$ is \emph{non-trapping at  energy $\sigma=m^2$} (see Definition \ref{maindef}). This is not necessary for our main results. However, we briefly discuss the improved estimates here as they  lead to  stronger  results (in terms of the dependence on $\varepsilon$ for functions of $P-i\varepsilon$) in later sections.
  
  The non-trapping at energy $\sigma=m^2$ ensures that  the Fredholm estimates  for $P-i\varepsilon$ are uniform down to  $\varepsilon=0$. Let us state this as a proposition (proved in analogy to Proposition \ref{prop1}) for further reference.
  
  \begin{proposition}[{\cite[Prop.~2]{vasyessential}}]  \label{propunif} Let $(M,g)$ be a non-trapping Lorentzian scattering space and  assume it is non-trapping at energy $\sigma=m^2\neq 0$.  Let $s\in\rr$, and let $\ell\in \cf(\co)$ be monotone in $\Sigma_\sigma$ and such that $\ell_->-\12$ and $\ell_+<-\12$. Then there exists $\delta>0$ such that for all $s'\in\rr$, all $\ell'\in \cf(\co)$ with $\ell'_-\in\open{-\12,\ell_-}$  and all $u\in\Hsc{{s'},{\ell'}}$,
  \beq\label{pest}
  \|  u\|_{s,\ell} \leqslant C (\|  (P-z) u \|_{s-1,\ell+1} + \err{u})
  \eeq
  uniformly in $z\in\{\Im z > 0\}\cup \{ | z|\leqslant \delta \}$.
  \end{proposition}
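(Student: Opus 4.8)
The plan is to run the positive-commutator (Mourre-type) scheme exactly as in the proof sketch of Proposition \ref{prop1}, but with the radial sets $L_\pm$ and the propagation of singularities now taking place over \emph{base infinity} $\basinf$ and not merely inside $\Sigma_0\cap\fibinf$, and with every microlocal estimate made uniform over the parameter region $\{\Im z>0\}\cup\{|z|\leqslant\delta\}$. I would split this region into two overlapping pieces. On $\{\Im z>0\}$ the operator $P-z$ is microlocally elliptic over $\basinf$, since there the imaginary part of the principal symbol equals $-\bra \xi \ket^{-2}\Im z\neq 0$; the situation is then that of Proposition \ref{prop1}, the only point being to check that the constants do not degenerate as $\Im z\downarrow 0$. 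On the disc $\{|z|\leqslant\delta\}$ one has $P-z=\square_g+(m^2-z)$, whose characteristic set over $\basinf$ is, because $m^2\neq 0$, a non-degenerate hyperboloid lying uniformly in the energy regime covered by the non-trapping-at-energy-$m^2$ hypothesis; since the conditions of Definition \ref{maindef} (existence of $L_\pm$, the sign conditions b)--c) with $s_\pm>0$, $\beta_{0,\pm}>0$, and the non-trapping condition (2)) are \emph{open with respect to the energy parameter}, after shrinking $\delta$ they hold with $z$-independent data for all the spectral values in the disc.

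The core of the estimate is then to patch together, via a partition of unity on the compactified phase space $\co$: (a) the higher decay radial estimate at the sources $L_-$ (Proposition \ref{radial1}), valid only above the threshold weight, which is what forces $\ell_->-\12$; (b) the lower decay radial estimate into the sinks $L_+$ (Proposition \ref{radial2}), valid only below threshold, which forces $\ell_+<-\12$ and comes with an a priori control from a backward region; (c) real-principal-type propagation of singularities along bicharacteristics inside $\Sigma_z$ (Proposition \ref{pos}), which by condition (2) of Definition \ref{maindef} transports the microlocal control from a neighbourhood of $L_-$ to a neighbourhood of $L_+$ and thereby feeds (b); and (d) microlocal elliptic regularity off $\Sigma_z$. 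Assembled as in \cite[\S3.2]{hassell}, these give $\|u\|_{s,\ell}\leqslant C(\|(P-z)u\|_{s-1,\ell+1}+\err{u})$. In contrast to \eqref{est1} there is now no term proportional to a power of $\Im z$: one does not exploit the sign of $\Im z$ in the commutator, which is precisely what lets the estimate survive down to $\Im z=0$.

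The hard part will be \emph{uniformity of $C$ in $z$}, above all as $\Im z\to 0^+$, and this is exactly where the non-trapping-at-energy-$m^2$ hypothesis is indispensable. Three things need checking. First, the commutants constructed in (a)--(c), hence the $\sc$-pseudo-differential operators entering all four estimates, can be chosen to depend continuously on $z$; since large $|z|$ is handled by a rescaling as in Proposition \ref{prop1} and the remaining $z$ lie in a compact set, this gives a uniform bound. Second, the positivity gained in the commutator arguments must be quantitatively uniform, which reduces to uniform positive lower bounds for $s_\pm$ near $L_\pm$ and for $\beta_{0,\pm}$, available from the continuous dependence of these data on the energy. Third, at real $z$ no bicharacteristic of $\Sigma_z$ may be trapped, so that step (c) genuinely links the sources to the sinks with a $z$-independent constant; this is precisely condition (2) of Definition \ref{maindef} at the energies in play, and it is exactly this that fails in the massless case, forcing $m^2\neq 0$. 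Once these three points are in place, \eqref{pest} closes with a $z$-independent constant; the details parallel \cite[\S5.4]{vasygrenoble}, adapted to the $z$-dependent characteristic set as in \cite{vasyessential}.
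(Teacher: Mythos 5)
Your overall scheme---higher/lower decay radial estimates at $L_\pm$, real-principal-type propagation, elliptic regularity, patched on $\co$, with the non-trapping-at-energy-$m^2$ hypothesis supplying the dynamical structure over $\basinf$ that keeps the constants uniform as $\Im z\to 0^+$ near $z=0$---is exactly the route intended in the paper, which does not reprove the statement but presents it as the analogue of Proposition \ref{prop1} imported from \cite[Prop.~2]{vasyessential}. In that respect your reconstruction of the disc part, including the three uniformity checks (continuity of the commutants in $z$, quantitative positivity of $s_\pm,\beta_{0,\pm}$, and the source-to-sink linking via condition (2) of Definition \ref{maindef}), is on target; note only that your claim that the conditions of Definition \ref{maindef} are ``open in the energy parameter'' is stronger than needed and not justified as stated---what one actually uses, and what your three checks amount to, is that the escape functions/commutants built at energy $m^2$ retain symbol-level positivity for nearby spectral parameters, which \emph{is} an open condition.

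The genuine gap is that your two-region decomposition does not cover the uniformity region claimed in \eqref{pest}. A point $z=\sigma_1+i\epsilon$ with $|\sigma_1|>\delta$ and $\epsilon$ tiny lies in neither of your cases: it is outside the disc, and it is \emph{not} ``the situation of Proposition \ref{prop1}'', since that proposition is restricted to the sector $Z=\{\Im z\geqslant c\,|\Re z|\}$ with $|z|\geqslant\delta$ precisely so that the scattering-ellipticity of $P-z$ over $\basinf$ holds with a uniform modulus. As $\epsilon\downarrow 0$ the ellipticity modulus near the part of the characteristic set of $P-\sigma_1$ lying over $\basinf$ degenerates like $\epsilon$, and the hypothesis of non-trapping at energy $m^2$ gives no radial-set structure, no non-trapping, and no threshold/monotonicity information for $\ell$ along the flow in $\Sigma_{\sigma}$ at those energies, so neither the elliptic estimate nor the propagation/radial estimates close there with a $z$-independent constant. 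This regime is the genuinely delicate part of the assertion ``uniformly in $\{\Im z>0\}$'', and your proposal dismisses it with ``the only point being to check that the constants do not degenerate'', which is precisely what is not established. (For the contours $\gamma_\varepsilon$ and $\gamma_0$ actually used later in the paper only the sector plus the small disc matter, so what you prove suffices for those applications, but not for the proposition as stated.) A small further correction: the sign $\Im z\geqslant 0$ is still exploited---it fixes the admissible direction of propagation in Proposition \ref{pos}---what one forgoes in \eqref{pest} compared to \eqref{est1} is only the lower bound on $\Im z$ and the associated $(\Im z)^{1/2}$ gain.
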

 
 An injectivity property is needed to get an invertibility statement in weighted Sobolev spaces down to $\Im z\to 0^+$.
 
  \begin{definition}\label{hyp3} We say that \emph{injectivity} holds at $\sigma=m^2\neq 0$ if for some $s\in\rr$ and some $\ell\in \cf(\co)$  monotone with $\ell_->-\12$ and $\ell_+<-\12$
 \beq\label{trivialker}
 ( u\in \Hsc{s,\ell}, \ Pu =0 )  \ \Rightarrow \  (  u=0 ).
  \eeq
  \end{definition}
 
 Sufficient conditions for injectivity with $-1<\ell_+<-\12$ are discussed in \cite{vasyessential}.  As a consequence one concludes a \emph{limiting absorption principle}, i.e.~the existence of the limiting operator $(P-i0)^{-1}$ on weighted Sobolev spaces. We state here the following variant.

 \begin{proposition}  \label{prop3} Let $(M,g)$ be a non-trapping Lorentzian scattering space and  assume  non-trapping  at energy $\sigma=m^2$ and injectivity. Let $s\in\rr$ and $\ell\in \cf(\co)$ be as in Hypothesis \ref{hyp3}. There exists $\delta>0$ such that
 \beq\label{est3p}
 \|  (P-z)^{-1}  f\|_{s,\ell} \leqslant C \|  f \|_{s-1,\ell+1}, 
 \eeq
 uniformly for $\Im z \geqslant 0$, $|z|<\delta$.
 \end{proposition}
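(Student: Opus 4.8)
The plan is to deduce Proposition \ref{prop3} from the Fredholm estimate of Proposition \ref{propunif} together with the injectivity assumption, by a now-standard argument combining an a priori estimate on $P-z$ with an analogous estimate on its adjoint. First I would set up the functional-analytic framework: fix $s\in\rr$ and $\ell\in\cf(\co)$ as in Hypothesis \ref{hyp3}, and consider $P-z$ as a bounded operator $\pazocal{X}\to\pazocal{Y}$ between the Hilbert spaces $\pazocal{X}=\{u\in\Hsc{s,\ell}:(P-z)u\in\Hsc{s-1,\ell+1}\}$ (with the graph norm) and $\pazocal{Y}=\Hsc{s-1,\ell+1}$; by the radial and propagation estimates behind Proposition \ref{propunif}, the inclusion $\pazocal{X}\hookrightarrow\Hsc{s',\ell'}$ is compact for $s'<s$, $\ell'<\ell$ chosen as in that proposition (with $\ell'_-\in\open{-\12,\ell_-}$), so the estimate \eqref{pest} says precisely that $P-z:\pazocal{X}\to\pazocal{Y}$ is semi-Fredholm with finite-dimensional kernel and closed range. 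Applying the same reasoning to the formal adjoint of $P-z$, which is $\bar P-\bar z=P-\bar z$ acting with the dual weight $-\ell+$ (shift), one gets that $P-z$ is in fact Fredholm of index zero on these spaces, uniformly for $z$ in the relevant region.

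Next I would use the injectivity. For $\Im z>0$ the operator $P-z$ is invertible on $L^2(M)$ by self-adjointness, hence has trivial kernel in $\Hsc{s,\ell}\subset L^2$ when $\ell\geqslant 0$; more generally for $\ell$ as in Hypothesis \ref{hyp3} a separate elliptic-regularity and propagation argument (already part of the setup in \cite{vasyessential}) shows $\ker(P-z)=\{0\}$ on $\Hsc{s,\ell}$ for $\Im z>0$, and for $z$ real with $|z|<\delta$, $z\neq 0$ small one reduces to the case $z=m^2$, where injectivity is exactly the hypothesis \eqref{trivialker}; the case $z=0$ is covered by $P$ being injective on $\Hsc{s,\ell}$ which again follows from non-trapping at energy $0$ by the original essential self-adjointness argument. (One has to be mildly careful that the injectivity hypothesis is stated at $\sigma=m^2$, so $P-z$ with $z$ ranging near $0$ means $\square_g+m^2-z$, i.e.\ energy $m^2-z$ near $m^2$, and non-trapping at $\sigma=m^2$ together with the a priori estimate propagates triviality of the kernel to a neighborhood; this perturbative step should be spelled out.) Combined with index zero this gives bijectivity of $P-z:\pazocal{X}\to\pazocal{Y}$, hence a bounded inverse for each such $z$.

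Finally I would upgrade the pointwise bounded-inverse statement to the \emph{uniform} bound \eqref{est3p}. The key point is that once $\ker(P-z)=\{0\}$ and the range is all of $\pazocal{Y}$, the error term $\err{u}$ in \eqref{pest}, which involves a weaker norm $\|u\|_{s',\ell'}$, can be absorbed: if \eqref{est3p} failed uniformly there would be a sequence $z_n$ in $\{\Im z\geqslant 0,|z|<\delta\}$ and $f_n$ with $\|f_n\|_{s-1,\ell+1}\to 0$ but $\|(P-z_n)^{-1}f_n\|_{s,\ell}=1$; set $u_n=(P-z_n)^{-1}f_n$, pass to a subsequence with $z_n\to z_\infty$, and use the compactness of $\pazocal{X}\hookrightarrow\Hsc{s',\ell'}$ to extract a convergent subsequence $u_n\to u_\infty$ in the weaker norm; then \eqref{pest} applied to $u_n-u_m$ gives that $(u_n)$ is Cauchy in $\Hsc{s,\ell}$, so $u_n\to u_\infty$ there, $\|u_\infty\|_{s,\ell}=1$, yet $(P-z_\infty)u_\infty=\lim f_n=0$, contradicting injectivity at $z_\infty$ (which lies in the region where we have just established trivial kernel). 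This contradiction yields the uniform constant $C$; shrinking $\delta$ if necessary so that the whole disc $|z|<\delta$ stays in the good region completes the proof.

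I expect the main obstacle to be the bookkeeping around the injectivity and index-zero claims near the real axis: making precise that the hypothesis at the single energy $\sigma=m^2$, together with non-trapping at that energy, propagates to give trivial kernel for all $z$ in a full half-disc $\{\Im z\geqslant 0,|z|<\delta\}$ and not just at the one point $z=m^2$. This is where one must carefully invoke the $z$-dependence of the characteristic set and of the radial points, check that the thresholds $\ell_->-\12$, $\ell_+<-\12$ remain the correct ones as $z$ varies, and verify that the adjoint problem lands in the dual weighted space so that index zero is genuinely available; everything else is a routine Fredholm-plus-compactness argument of the kind already used in \cite{vasyessential} and \cite{vasygrenoble}.
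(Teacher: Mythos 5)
Your strategy—re-deriving the limiting absorption principle via the uniform Fredholm estimate of Proposition \ref{propunif}, injectivity, and a compactness--contradiction argument—is not what the paper does: the paper's proof is a two-line citation of \cite[Thm.~5]{vasyessential} for the weak-operator convergence $(P-z)^{-1}\to(P-i0)^{-1}$ on the weighted spaces, upgraded to the norm bound \eqref{est3p} by the compact embeddings $\Hsc{s_1,\ell_1}\hookrightarrow\Hsc{s_2,\ell_2}$. Reproving the cited theorem is legitimate in principle (it is essentially how that theorem is proved), but as written your argument has genuine gaps. The most serious is the injectivity bookkeeping. Hypothesis \ref{hyp3} is injectivity of $P=\square_g+m^2$ itself on $\Hsc{s,\ell}$, i.e.\ triviality of the kernel of the family $P-z$ at the single point $z=0$. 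Your claims that for real $z\neq 0$ one "reduces to the case $z=m^2$, where injectivity is exactly the hypothesis," and that the case $z=0$ "follows from non-trapping at energy $0$," invert this: the hypothesis \emph{is} the $z=0$ statement, it says nothing at other $z$, and non-trapping alone never yields injectivity. Since your contradiction argument allows the limit point $z_\infty$ to be an arbitrary point of the closed half-disc, you would need trivial kernel of $P-z_\infty$ on $\Hsc{s,\ell}$ there, which you have not established (for $\Im z_\infty>0$ one must still argue that kernel elements in $\Hsc{s,\ell}$, which may grow at $L_+$ since $\ell_+<-\12$, lie in $L^2$; for real $z_\infty\neq 0$ nothing in the hypotheses gives it directly). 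The natural repair is to negate the quantifier "there exists $\delta$" so that the bad sequence has $z_n\to 0$ and only injectivity at $z=0$, i.e.\ \eqref{trivialker}, is invoked; but even then the existence of $(P-z)^{-1}$ on these weighted spaces for real $z\neq 0$, which the statement presupposes, must come from somewhere—this is precisely part of what the cited theorem supplies.

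The second gap is structural: multiplication by the constant $z$ maps $\Hsc{s,\ell}$ to itself but \emph{not} into $\Hsc{s-1,\ell+1}$, since the target requires one more order of decay. Hence $z\mapsto P-z$ is not a norm-continuous family of bounded operators between fixed spaces $\pazocal{X}\to\pazocal{Y}$ (your $\pazocal{X}$ in fact depends on $z$), so the openness-of-invertibility and index-constancy arguments you lean on do not apply off the shelf, and your "Cauchy in $\Hsc{s,\ell}$" step fails: $(P-z_n)(u_n-u_m)=f_n-f_m+(z_n-z_m)u_m$, and the term $(z_n-z_m)u_m$ is not controlled (a priori not even finite) in $\Hsc{s-1,\ell+1}$. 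A correct version of the endgame only extracts a weak limit $u_\infty\in\Hsc{s,\ell}$ together with strong convergence in the weaker norm $\Hsc{s',\ell'}$, uses \eqref{pest} applied to $u_n$ alone to see that $\|u_n\|_{s',\ell'}$ stays bounded below, and concludes $Pu_\infty=0$ with $u_\infty\neq 0$, contradicting \eqref{trivialker}. Finally, the index-zero claim via the adjoint "with dual weights" is asserted rather than verified; the adjoint problem needs the reversed threshold conditions (weights above threshold at $L_+$, below at $L_-$) and propagation in the opposite direction, and without that verification the Fredholm-of-index-zero step, and hence surjectivity, is not justified.
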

 \proof By \cite[Thm.~5]{vasyessential}, if $\Im z \geqslant 0$ and $|z|<\delta$ with $\delta$ sufficiently small then $(P-z)^{-1}$ tends to $(P-i0)^{-1}$  in the weak operator topology as $z\to 0$. By compactness of the embeddings $\Hsc{s_1,\ell_1}\hookrightarrow\Hsc{s_2,\ell_2}$ for any $s_1>s_2$ and $\ell_1>\ell_2$  this gives boundedness in norm, i.e.~\eqref{est3p}. \qed

 \section{Wavefront set of the resolvent}\label{sec:wf}
 
 \subsection{Summary} Our next goal is to estimate the wavefront set of the resolvent $(P-z)^{-1}$ and give sufficient conditions for a given parametrix of $P-z$ to be equal $(P-z)^{-1}$ modulo smooth terms (in the sense of  having smooth Schwartz kernel in $M\times M$). This needs to be true \emph{uniformly in $z$} in an appropriate sense because we will be then interested in integrating in $z$ when considering complex powers.
 
 We remark that techniques to deduce the wavefront set of resolvents from propagation estimates were developed by Dyatlov--Zworski \cite{Dyatlov2016} in the semi-classical case, originally in the context of Anosov flows. Here, we use an argument more similar to the work of Vasy--Wrochna \cite{vasywrochna} and we also construct a parametrix  related to that of  G\'erard--Wrochna \cite{GWfeynman}. The disadvantage as compared to the semi-classical approach is that it is less evident how to deal with possible singularities of Schwartz kernels which are microlocally at $\zero\times T^*M$ or $T^*M\times \zero$, where $\zero$ is the zero section. This issue is however circumvented by considering an operator version of the wavefront set similarly as in \cite{holographic,gannotwrochna}.
 
\subsection{Uniform operator wavefront set} Let $\complex \subset \cc$ and let $h$ be a strictly positive function on $\complex$. Suppose $G_z: \cE'(M)\to \cD'(M)$ for $z\in \complex$. 

  \begin{definition}For $s\in\rr$, we write
$$
G_z = \Oreg{}
$$ 
if for all $l\in\rr$, $h(z)^{-1} G_z$ is a uniformly bounded family of continuous operators  $H^l_\c(M)\to H^{l+s}_\loc(M)$. We write $G_z=\Onorm{}$ if $G_z=\Oreg{}$ for some $s\in \rr$. 
\end{definition}

Note that $G_z=\Oreg{}$ implies  $G_z^*=\Oreg{}$, and  $G_z=\Onorm{}$ implies $G_z^*=\Onorm{}$.

We will be mostly interested in wavefront set estimates in the interior $M$ of $\M$. Over the interior, $\fibinf$ is isomorphic to $\cosb$, the boundary of the fiber compactification of $T^*M$. We denote by $\Psi^s(M)$ the class of \emph{properly supported} pseudo-differential operators of order $s\in\rr$ on $M$ (in the sense of the usual calculus on the boundaryless manifold $M$).  One says that $A\in\Psi^s(M)$ is \emph{elliptic at $q\in\cosb$} if its principal symbol is non-zero at $q$.

\bed If $G_z=\Onorm{}$ then its \emph{uniform operator wavefront set of order $s\in\rr$} is the set $\wfl{}(G_z)\subset\cosb\times \cosb$ defined as follows:  $(q_1,q_2)\notin\wfl{}(G_z)$ iff there exist  $B_i\in \Psi^{0}(M)$, elliptic at $q_i$ ($i=1,2$) and such that 
\beq\label{unireg}
 B_1 G_z  B_2^* = \Oreg{}.
\eeq
\eed
 
 We show several elementary properties of the uniform operator wavefront set, the proof of which is to a large extent analogous to \cite[\S 5.1]{holographic}.
 
Let us recall that for $A\in\Psi^s(M)$, there is a closely related notion of operator wavefront set $\wf'(A)$ which characterizes the directions $q\in\cosb$ in which microlocally, $A$ does not behave as a regularizing operator.

 \begin{lemma}\label{lem:altdefwfb} For any $q_1,q_2\in\cosb\times\cosb$, $(q_1,q_2)\notin\wfl{}(G_z)$ if and only if for $i=1,2$ there exist neighborhoods $\Gamma_i$ of $q_i$ such that \eqref{unireg} holds true for all $B_i\in\Psi^{0}(M)$ elliptic at $q_i$ and satisfying $\wf'(B_i)\subset \Gamma_i$.
 \end{lemma}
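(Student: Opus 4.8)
The statement is the standard equivalence between the ``exists a cutoff elliptic at $q_i$'' formulation of the uniform operator wavefront set and the ``for all cutoffs microlocally supported near $q_i$'' formulation. One implication is trivial: if \eqref{unireg} holds for \emph{all} $B_i$ elliptic at $q_i$ with $\wf'(B_i)\subset\Gamma_i$, then in particular it holds for some such choice, which gives $(q_1,q_2)\notin\wfl{}(G_z)$ by definition. So the content is the other direction: knowing \eqref{unireg} for \emph{one} pair $B_1,B_2$ (elliptic at $q_1,q_2$, with arbitrarily large operator wavefront sets), we must produce neighborhoods $\Gamma_1,\Gamma_2$ such that \eqref{unireg} holds for \emph{every} $\tilde B_i$ elliptic at $q_i$ with $\wf'(\tilde B_i)\subset\Gamma_i$.

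\textbf{Key steps.} First I would fix the pair $B_1,B_2\in\Psi^0(M)$ provided by $(q_1,q_2)\notin\wfl{}(G_z)$, so that $B_1 G_z B_2^*=\Oreg{}$. Since $B_i$ is elliptic at $q_i$, by the standard microlocal elliptic parametrix construction (on the boundaryless manifold $M$, with properly supported operators) there exist an open neighborhood $\Gamma_i$ of $q_i$ contained in the elliptic set of $B_i$ and operators $Q_i\in\Psi^0(M)$ such that $Q_i B_i = \Id + R_i$ with $\wf'(R_i)\cap\Gamma_i=\emptyset$; equivalently, $B_i$ is microlocally invertible on $\Gamma_i$. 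Now take any $\tilde B_i\in\Psi^0(M)$ elliptic at $q_i$ with $\wf'(\tilde B_i)\subset\Gamma_i$. Because $\wf'(\tilde B_i)\subset\Gamma_i$ and $Q_iB_i=\Id$ microlocally on $\Gamma_i$, we have $\tilde B_i = \tilde B_i Q_i B_i + \tilde B_i(\Id - Q_i B_i) = \tilde B_i Q_i B_i + S_i$ with $S_i=-\tilde B_i R_i$ a properly supported smoothing operator (its kernel is smooth since $\wf'(\tilde B_i)\cap\wf'(R_i)=\emptyset$). Writing $\tilde B_i = C_i B_i + S_i$ with $C_i=\tilde B_i Q_i\in\Psi^0(M)$ properly supported and $S_i$ smoothing, I would then substitute:
\begin{equation*}
\tilde B_1 G_z \tilde B_2^* = (C_1 B_1 + S_1)\, G_z\, (C_2 B_2 + S_2)^* = C_1 (B_1 G_z B_2^*) C_2^* + C_1 B_1 G_z S_2^* + S_1 G_z (C_2 B_2)^* + S_1 G_z S_2^*.
\end{equation*}
The first term is $C_1(\Oreg{})C_2^*=\Oreg{}$ since composition with fixed properly supported order-zero operators preserves uniform boundedness $H^l_\c\to H^{l+s}_\loc$ for all $l$. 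For the remaining three terms I would use that $G_z=\Onorm{}$, i.e.\ $h(z)^{-1}G_z$ is uniformly bounded $H^l_\c\to H^{l+s'}_\loc$ for some fixed $s'\in\rr$ and all $l$, together with the fact that $S_1,S_2$ are smoothing and properly supported, hence map any $H^l_\c$ to $H^{l'}_\loc$ for all $l,l'$; this makes each of those three products $=\Oreg{}$ for the chosen $s$ (indeed for arbitrary regularity order, so a fortiori for order $s$). Combining, $\tilde B_1 G_z \tilde B_2^* = \Oreg{}$, which is exactly \eqref{unireg} for $\tilde B_1,\tilde B_2$.

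\textbf{Main obstacle.} There is no deep obstacle here; this is a bookkeeping lemma, and the analogous statement in \cite[\S5.1]{holographic} can be followed essentially verbatim. The only points requiring a little care are (i) the uniformity in $z$: one must check that all compositions performed are with \emph{$z$-independent} pseudo-differential operators ($C_i$, $S_i$, $Q_i$, $R_i$ all come from $B_i,\tilde B_i$ alone), so that the single factor $h(z)$ is never touched and uniform boundedness is preserved under the finitely many compositions; and (ii) keeping everything properly supported so that the compositions $C_iB_i$, $\tilde B_iQ_i$, etc., are well-defined operators on $M$ and the mapping properties $H^l_\c\to H^{l'}_\loc$ compose correctly. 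Both are routine once one observes that the elliptic parametrix $Q_i$ and the neighborhood $\Gamma_i$ depend only on $B_i$, and that $\tilde B_i$ enters only through $C_i=\tilde B_iQ_i$ and the smoothing error $S_i=-\tilde B_iR_i$.
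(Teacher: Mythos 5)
Your proof is correct and follows essentially the same route as the paper: both fix the pair of operators furnished by the definition, use a microlocal elliptic parametrix on a neighborhood $\Gamma_i$ of $q_i$, and split $\tilde B_1 G_z \tilde B_2^*$ into the main term conjugating the known bound $B_1 G_z B_2^*$ plus three terms involving properly supported smoothing errors, controlled by $G_z=\Onorm{}$. The only differences are notational (the paper calls the given operators $A_i$ and the arbitrary ones $B_i$), so nothing further is needed.
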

 \beproof Suppose $(q_1,q_2)\notin\wfl{}(G_z)$, so that there exists $A_i \in\Psi^0(M)$, $i=1,2$,  elliptic at $q_i$, such that $A_1 G_z A_2^* = \Oreg{}$. There exists a compact neighborhood $\Gamma_i$ of $q_i$ on which $A_i$ is elliptic. Therefore, there exists $A_i^\inv\in\Psi^0(M)$ such that
 \[
 \wf'(A^\inv_i A_i - \one)\cap \Gamma_i = \emptyset.
 \]
 Let $B_i\in\Psi^0(M)$ be elliptic at $q_i$ and such that $\wf'(B_i)\subset \Gamma_i$. These conditions  imply that
 \beq\label{eq:bababa}
 B_1(A^\inv_1 A_1 - \one)\in\Psi^{-\infty}(M), \ \ (A_2^* (A^\inv_2)^* - \one)B_2^*\in\Psi^{-\infty}(M).
 \eeq
 We can write
 \[
 \bea
 B_1 G_z B_2^*&=B_1 A_1^\inv A_1 G_z  A_2^* (A^\inv_2)^* B_2^* + B_1 (\one - A_1^\inv A_1) G_z A_2^* (A^\inv_2)^* B_2^*\\
  & \phantom{=}\, + B_1 A_1^\inv A_1  G_z (\one-A_2^* (A^\inv_2)^* )B_2^* \\
  & \phantom{=}\, + B_1 (\one - A_1^\inv A_1) G_z (\one-A_2^* (A^\inv_2)^*)B_2^*.
 \eea 
 \]
 By $A_1 G_z A_2^* = \Oreg{}$ and \eqref{eq:bababa}, all the summands are $\Oreg{}$, hence $$B_1 G_z B_2^*=\Oreg{}.$$  The opposite direction is trivial.\qed

 \begin{lemma}\label{wfs} If $G_z,G'_z=\Onorm{}$, then
 \[
\wfl{}(G_z+\tilde G_z)\subset \wfl{}(G_z)\cup\wfl{}(\tilde G_z).
 \]
 \end{lemma}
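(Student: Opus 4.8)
The plan is to reduce the statement to the equivalent characterization of the uniform operator wavefront set in Lemma \ref{lem:altdefwfb}, which lets us test membership using \emph{any} cutoff operators whose operator wavefront set is small enough. First I would check that $G_z+\tilde G_z$ is itself $\Onorm{}$, so that $\wfl{}(G_z+\tilde G_z)$ is defined: writing $G_z=\Oreg{}$ of some order $s_1\in\rr$ and $\tilde G_z=\Oreg{}$ of some order $s_2\in\rr$, the continuous inclusions $H^{l+\max(s_1,s_2)}_\loc(M)\hookrightarrow H^{l+\min(s_1,s_2)}_\loc(M)$ for every $l\in\rr$ (together with the fact that $h$ is one fixed function throughout) show that the sum is again $\Oreg{}$, of order $\min(s_1,s_2)$.

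Next, I would fix a point $(q_1,q_2)\in\cosb\times\cosb$ with $(q_1,q_2)\notin\wfl{}(G_z)\cup\wfl{}(\tilde G_z)$, the goal being to exhibit $B_1,B_2\in\Psi^0(M)$ elliptic at $q_1,q_2$ with $B_1(G_z+\tilde G_z)B_2^*=\Oreg{}$. By Lemma \ref{lem:altdefwfb} applied to $G_z$ there are open neighborhoods $\Gamma_i'\ni q_i$ such that $B_1 G_z B_2^*=\Oreg{}$ for \emph{all} $B_i$ elliptic at $q_i$ with $\wf'(B_i)\subset\Gamma_i'$, and likewise open neighborhoods $\Gamma_i''\ni q_i$ with the analogous property for $\tilde G_z$. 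I would then choose any properly supported $B_i\in\Psi^0(M)$ elliptic at $q_i$ with $\wf'(B_i)\subset\Gamma_i'\cap\Gamma_i''$ (such $B_i$ always exist), so that both $B_1 G_z B_2^*$ and $B_1\tilde G_z B_2^*$ are $\Oreg{}$, of possibly different orders; adding them and invoking the order-bookkeeping of the first step yields $B_1(G_z+\tilde G_z)B_2^*=\Oreg{}$, hence $(q_1,q_2)\notin\wfl{}(G_z+\tilde G_z)$, which is the claim.

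I do not expect any genuine obstacle here: the entire point is to have the reformulation of Lemma \ref{lem:altdefwfb} at hand, since the bare definition only furnishes cutoffs elliptic at $q_i$ that may differ for $G_z$ and for $\tilde G_z$, whereas the sum must be tested with a \emph{single} pair of cutoffs. The only routine points to be careful with are taking minima of Sobolev orders and using that one can always produce a properly supported pseudo-differential operator elliptic at a prescribed point with operator wavefront set contained in an arbitrary prescribed neighborhood of that point.
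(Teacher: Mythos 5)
Your proof is correct and follows essentially the same route as the paper: both arguments invoke Lemma \ref{lem:altdefwfb} to obtain a single pair $B_1,B_2$ elliptic at $q_1,q_2$ that simultaneously tests $G_z$ and $\tilde G_z$, and then conclude by adding. The extra bookkeeping about differing Sobolev orders is harmless but unnecessary, since the wavefront set of order $s$ is defined through $\Oreg{}$ of that same fixed order.
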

 \beproof If $(q_1,q_2)\notin \wfl{}(G_z)$ and $(q_1,q_2)\notin\wfl{}(\tilde G_z)$ then by Lemma \ref{lem:altdefwfb} we can choose $B_1,B_2$ elliptic at resp.~$q_1,q_2$ such that  $$B_1 G_z B_2^* \mbox{ and }  B_1 \tilde G_z B_2^* \mbox{ are both } \Oreg{}.$$ Hence  $B_1 (G_z+\tilde G_z) B_2^*$ are $\Oreg{}$ and thus $(q_1,q_2)\notin\wfl{}(G_z+\tilde G_z)$.\qed
 
 \begin{proposition} Suppose $\wfl{}(G_z)=\emptyset$. Then $G_z=\Oreg{}$.
 \end{proposition}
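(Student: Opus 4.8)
The plan is to reduce the statement to a microlocal partition of unity argument over compact subsets of $M$. First I would unwind the definitions. Writing $G_z=\Onorm{}$ with a fixed gain of regularity $s_0$ (i.e.\ $h(z)^{-1}G_z$ is uniformly bounded $H^l_\c(M)\to H^{l+s_0}_\loc(M)$ for all $l$), we may assume $s_0<s$, else there is nothing to prove. Since $H^l_\c(M)$ carries the inductive limit topology and the topology of $H^{l+s}_\loc(M)$ is generated by the seminorms associated with compact sets, proving $G_z=\Oreg{}$ is equivalent to proving $\chi_1 G_z\chi_2=\Oreg{}$ for every pair $\chi_1,\chi_2\in\ccf(M)$. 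So I would fix such $\chi_1,\chi_2$ from the outset and denote by $K_j=\supp\chi_j$ the corresponding compacta.

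Next I would exploit $\wfl{}(G_z)=\emptyset$. Every point $(q_1,q_2)$ of the compact set $(\cosb|_{K_1})\times(\cosb|_{K_2})$ lies outside $\wfl{}(G_z)$, so by Lemma \ref{lem:altdefwfb} it has a product neighborhood $U_1(q_1,q_2)\times U_2(q_1,q_2)$ with the property that $B_1 G_z B_2^*=\Oreg{}$ for all properly supported $B_i\in\Psi^0(M)$ elliptic at $q_i$ with $\wf'(B_i)\subset U_i(q_1,q_2)$. A routine microlocal cutoff step removes the ellipticity hypothesis: given $B_1$ with $\wf'(B_1)\subset U_1(q_1,q_2)$ arbitrary, I would pick $E_1\in\Psi^0(M)$ elliptic at $q_1$ with $\wf'(E_1)\subset U_1(q_1,q_2)$ and $E_1=\one$ microlocally on $\wf'(B_1)$, so that $B_1(\one-E_1)$ is smoothing and $B_1 G_z B_2^*=B_1(E_1 G_z B_2^*)+(B_1(\one-E_1))G_z B_2^*$; both summands are $\Oreg{}$ because order-zero operators preserve $\Oreg{}$ and a properly supported smoothing operator composed with the $\Onorm{}$ family $G_z$ is $\Oreg{}$. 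Iterating on the second slot yields: $B_1 G_z B_2^*=\Oreg{}$ for \emph{all} properly supported $B_i\in\Psi^0(M)$ with $\wf'(B_i)\subset U_i(q_1,q_2)$.

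Then I would extract a finite subcover in two nested stages. For fixed $q_1$, cover the compact set $\cosb|_{K_2}$ by finitely many $U_2(q_1,q_2^{(i)})$ and set $V_1(q_1)=\bigcap_i U_1(q_1,q_2^{(i)})$, a neighborhood of $q_1$; then cover $\cosb|_{K_1}$ by finitely many $V_1(q_1^{(k)})$, $k=1,\dots,N$, keeping for each $k$ its associated finite cover $\{U_2(q_1^{(k)},q_2^{(k,i)})\}_i$ of $\cosb|_{K_2}$. Choose a microlocal partition of unity $\one=\sum_k A_1^{(k)}+A_1^{(0)}$ subordinate to $\{V_1(q_1^{(k)})\}$ with $A_1^{(0)}$ microlocally trivial over $K_1$ (i.e.\ $\wf'(A_1^{(0)})\cap\cosb|_{K_1}=\emptyset$), and for each $k$ a second partition $\one=\sum_i A_{2,k}^{(i)}+A_{2,k}^{(0)}$ subordinate to $\{U_2(q_1^{(k)},q_2^{(k,i)})\}$ with $A_{2,k}^{(0)}$ microlocally trivial over $K_2$. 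Inserting these into $\chi_1 G_z\chi_2=\chi_1(\sum_kA_1^{(k)}+A_1^{(0)})G_z(\sum_iA_{2,k}^{(i)}+A_{2,k}^{(0)})\chi_2$ gives a finite sum of terms $(\chi_1A_1^{(k)})G_z(A_{2,k}^{(i)}\chi_2)$, each of the form $B_1G_zB_2^*$ with $\wf'(B_1)\subset V_1(q_1^{(k)})$ and $\wf'(B_2)\subset U_2(q_1^{(k)},q_2^{(k,i)})$, hence $\Oreg{}$ by the second paragraph, plus terms containing $A_1^{(0)}$ or $A_{2,k}^{(0)}$. Since $\chi_1A_1^{(0)}$ and $A_{2,k}^{(0)}\chi_2$ have smooth properly supported Schwartz kernels — their only possible singularities, on the diagonal over $K_1$ resp.\ $K_2$, are excluded by the $\wf'$ conditions — these remaining terms are compositions of a properly supported smoothing operator with $G_z$ and order-zero operators, hence $\Oreg{}$. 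Summing, $\chi_1G_z\chi_2=\Oreg{}$, and as $\chi_1,\chi_2$ were arbitrary, $G_z=\Oreg{}$.

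The one genuinely delicate point is the bookkeeping of the covering: one cannot take the two partitions of unity independently on the two factors, because a cross-term $(\chi_1A_1^{(k)})G_z(A_2^{(j)}\chi_2)$ with $j\neq k$ need not be microlocalized inside any good product neighborhood. The two-stage extraction above — outer cover of $\cosb|_{K_1}$ by the $V_1(q_1^{(k)})$, and for each $k$ an inner cover of $\cosb|_{K_2}$ matched to $V_1(q_1^{(k)})$ — is precisely what forces the correct pairing of microsupports and is where I expect the main care to be needed; everything else (the microlocal cutoff lemma, boundedness of order-zero operators on Sobolev spaces, and the mapping properties of smoothing operators) is standard, along the lines of \cite[\S5.1]{holographic}.
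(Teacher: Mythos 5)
Your proof is correct, and while its overall strategy is the paper's (compactness of the cosphere bundle over compact sets, a microlocal decomposition, and disposal of remainders as properly supported smoothing operators composed with the $\Onorm{}$ family), it differs in two points worth recording. First, the paper reduces to cutoffs $\chi_i\in\cf_\c(M)$ equal to $1$ near single points $x_i$, forms $B_i=\sum_j B_{i,q_j}^*B_{i,q_j}$ elliptic on the fibers $\cosba{x_i}$, and concludes via microlocal parametrices $B_i^\inv$ with remainders $R_i$ microsupported away from $\cosba{x_i}$; you work instead with arbitrary compactly supported $\chi_1,\chi_2$ and conclude with microlocal partitions of unity $\one=\sum_k A_1^{(k)}+A_1^{(0)}$, so the parametrix step is replaced by your $A^{(0)}$-remainders — both routes are fine, and your ellipticity-removal cutoff step correctly substitutes for Lemma \ref{lem:altdefwfb} being stated only for elliptic microlocalizers. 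Second, and more substantively, the two-stage extraction of the cover that you single out as the delicate point (for fixed $q_1$, cover $\cosb|_{K_2}$ and intersect the corresponding first-slot neighborhoods to get $V_1(q_1)$, and only then cover $\cosb|_{K_1}$) addresses something the paper's own proof passes over: the definition of $\wfl{}(G_z)$ furnishes the operators only pairwise, so after taking \emph{independent} finite subcovers in the two factors the cross terms $B_{1,q_j}G_zB_{2,q'_l}^*$ are not immediately controlled, and a nested covering (or an equivalent refinement via Lemma \ref{lem:altdefwfb}) is exactly what legitimizes them. So the extra bookkeeping you insist on is not superfluous caution but the precise repair of the one step the paper states too quickly.
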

 \beproof  It suffices to show that for any $x_1,x_2\in M$ there exists $\chi_1,\chi_2\in\cf_\c(M)$ with $\chi_i\equiv 1$ near $x_i$ such that $\chi_1  G_z \chi_2=\Oreg{}$.
 
 By definition of $\wfl{}(G_z)$, for any $q,q'\in \cosb$ there exist $B_{1,q},B_{2,q'}\in\Psi^0(M)$ elliptic at resp.~$q$, $q'$, such that $B_{1,q} G_z B_{2,q'}^*=\Oreg{}$. Let $\Gamma_{1,q}$ be the set on which $B_{1,q}$ is elliptic. 
 
  Then $\{ \Gamma_{1,q} \,  | \, q\in \cosba{x_1} \}$ is an open cover of $\cosba{x_1}$. By compactness we can find a finite subcover $\{ \Gamma_{1,q_j}\}_{j=1}^{N}$. Then $B_1=\sum_{j} B_{1,q_j}^*B_{1,q_j}\in\Psi^0(M)$ is elliptic on $\cosba{x_1}$. In a similar way we construct $B_2= \sum_{l} B_{2,q'_l}^* B_{2,q'_l}\in \Psi^0(M)$ elliptic on  $\cosba{x_2}$. This gives
 \[
 B_1 G_z B_2^* = \textstyle\sum_{j,l}  B_{1,q_j}^*B_{1,q_j} G_z B_{2,q'_l}^* B_{2,q'_l}= \Oreg{}
 \]
 using that the sum is finite. 
 
 We can find a microlocal parametrix of $B_1$ and $B_2$, i.e.~$B_i^\inv\in\Psi^0(M)$ such that $R_1=\one-B_1^{\inv} B_1 $ and $R_2=\one-B_2 B_2^{\inv}$ satisfy $\wf'(R_i)\cap \cosba{x_i} =\emptyset$. This implies that there is a neighborhood $O_i$ of $x_i$ in $M$ such that $\wf'(R_i)\cap \cosba{O_i}=\emptyset$. Let $\chi_i\in\cf_\c(M)$ be such that $\supp \chi_i\subset O_i$ and $\chi_i\equiv 1$ near $x_i$. We have
 \[
 \bea
  \chi_1 G_z\chi_2 &=\chi_1 B_1^\inv (B_1 G_z  B_2^*) B^{\inv*}_2 \chi_2+ \chi_1 R_1 G_z B_2^* B^{\inv*}_2 \chi_2\\
  & \phantom{=}\, + \chi_1 B_1^\inv B_1  G_z R_2^* \chi_2 +  \chi_1 R_1 G_z R_2^* \chi_2,
 \eea 
 \]
 where all the summands are $\Oreg{}$, hence $\chi_1 G_z\chi_2\in \Oreg{}$. \qed
 
 \begin{lemma}\label{lemadj} If $G_z=\Onorm{}$ then $(q_1,q_2)\in \wfl{}(G_z)$ if and only $(q_2,q_1)\in \wfl{}(G_z^*)$.
 \end{lemma}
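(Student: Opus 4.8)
The plan is to read the conclusion off directly from the symmetry of the definition of $\wfl{}(\cdot)$ under taking adjoints. Two ingredients recorded earlier in the section will be used: first, that $G_z=\Oreg{}$ implies $G_z^*=\Oreg{}$ with the \emph{same} order $s$ (this is just the duality $H^l_\c(M)\to H^{l+s}_\loc(M)$ dualizing to $H^{-l-s}_\c(M)\to H^{-l}_\loc(M)$, uniformly in $z$, and in particular $\wfl{}(G_z^*)$ is defined since $G_z=\Onorm{}$ implies $G_z^*=\Onorm{}$); second, that $B\mapsto B^*$ maps properly supported operators in $\Psi^{0}(M)$ to properly supported operators in $\Psi^{0}(M)$ and preserves ellipticity at any $q\in\cosb$, since the principal symbol of $B^*$ is the complex conjugate of that of $B$ and hence is non-zero exactly where that of $B$ is.

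Concretely, first I would unwind the hypothesis: if $(q_1,q_2)\notin\wfl{}(G_z)$, then by definition there are $B_1,B_2\in\Psi^{0}(M)$, elliptic at $q_1$ resp.~$q_2$, with $B_1 G_z B_2^*=\Oreg{}$. Taking adjoints of this uniformly bounded family gives $B_2\, G_z^*\, B_1^*=(B_1 G_z B_2^*)^*=\Oreg{}$, using invariance of the class $\Oreg{}$ under adjunction. Now $B_2\in\Psi^{0}(M)$ is elliptic at $q_2$ and $B_1^*=(B_1)^*$ with $B_1\in\Psi^{0}(M)$ elliptic at $q_1$, so this is precisely a witness to $(q_2,q_1)\notin\wfl{}(G_z^*)$. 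The converse is obtained by running the identical argument with $G_z^*$ in place of $G_z$, using $(G_z^*)^*=G_z$.

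I do not anticipate a genuine obstacle here: the statement is a bookkeeping exercise in the definitions, entirely parallel in spirit to Lemmas \ref{lem:altdefwfb}--\ref{wfs}. The only two points that deserve an explicit line are the invariance of the order $s$ under adjunction and the stability of pointwise ellipticity under $B\mapsto B^*$, and both are standard facts about the calculus $\Psi^{\bullet}(M)$ that have already been used tacitly above.
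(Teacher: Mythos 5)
Your proof is correct and is essentially the paper's argument: take the formal adjoint of the witness $B_1 G_z B_2^*=\Oreg{}$ to get $B_2\,G_z^*\,B_1^*=\Oreg{}$, which is exactly a witness for $(q_2,q_1)\notin\wfl{}(G_z^*)$, and run the same argument with $(G_z^*)^*=G_z$ for the converse. The extra remark about ellipticity being preserved under $B\mapsto B^*$ is harmless but not even needed, since the definition's sandwich already places the star on the second factor.
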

 \beproof If  $B_1 G_z  B_2^* = \Oreg{}$ then its formal adjoint $B_2^* G_z^*  B_1$ is  $\Oreg{}$ as well, where $B_2^*$ is elliptic at $q_2$ and $B_1^*$ is elliptic at $q_1$. \qeds
 
 \begin{lemma}\label{lem:composition} Let $G_{1,z}=\Onormsh{}(h_1(z))$ and $G_{2,z}=\Onormsh{(h_2(z))}$ and suppose that the operators $G_{2,z}$ are proper{ly supported} for all $z\in Z$. Then the composition $G_{1,z}  G_{2,z}= \Onormsh{}(h_1 h_2(z))$ is well-defined and satisfies
 \beq\label{lklklklklk}
 \wfl{6}(G_{1,z}G_{2,z}) \subset \wfl{4}(G_{1,z}) \circ \wfl{5}(G_{2,z}),
 \eeq
 where the composition of $\Gamma_1,\Gamma_2\subset  \cosb\times\cosb$ is defined by
 $$
 \Gamma_1\circ \Gamma_2 = \{ (q_1,q_2)\in \cosb\times\cosb \st \exists q\in\cosb \mbox{\,s.t.\,}  (q_1,q)\in \Gamma_1, \,   (q,q_2)\in \Gamma_2  \}.
 $$
 \end{lemma}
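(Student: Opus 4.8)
The plan is to prove the two assertions of Lemma \ref{lem:composition} — well-definedness of the composition with the stated norm bound, and the wavefront containment \eqref{lklklklklk} — essentially by unwinding the definitions and reducing to the corresponding statements for single operators, in the spirit of the proof of Lemma \ref{lem:altdefwfb} above. First I would record that since $G_{2,z}$ is properly supported, it maps $\cE'(M)\to\cE'(M)$, so the composition $G_{1,z}G_{2,z}:\cE'(M)\to\cD'(M)$ makes sense pointwise in $z$; moreover, for any $l\in\rr$, writing $G_{1,z}=\Oregsh{s_1}(h_1(z))$ and $G_{2,z}=\Oregsh{s_2}(h_2(z))$ for some $s_1,s_2\in\rr$, proper support lets us compose the uniform mapping bounds $H^l_\c(M)\to H^{l+s_2}_\loc(M)$ (for $G_{2,z}$, up to the factor $h_2(z)$) and $H^{l+s_2}_\c(M)\to H^{l+s_2+s_1}_\loc(M)$ (for $G_{1,z}$, up to $h_1(z)$) by inserting a cutoff $\chi\in\cf_\c(M)$ that is $\equiv 1$ on the relevant compact set; this yields $G_{1,z}G_{2,z}=\Oregsh{s_1+s_2}(h_1h_2(z))$, hence $G_{1,z}G_{2,z}=\Onormsh{}(h_1h_2(z))$, so the left-hand side of \eqref{lklklklklk} is well-defined.

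For the wavefront containment, suppose $(q_1,q_2)\notin\wfl{4}(G_{1,z})\circ\wfl{5}(G_{2,z})$; I must show $(q_1,q_2)\notin\wfl{6}(G_{1,z}G_{2,z})$. For every $q\in\cosb$, either $(q_1,q)\notin\wfl{4}(G_{1,z})$ or $(q,q_2)\notin\wfl{5}(G_{2,z})$. In the first case pick, using Lemma \ref{lem:altdefwfb}, a neighborhood $\Gamma_q$ of $q$ and an operator $C_q\in\Psi^0(M)$ elliptic at $q$ with $\wf'(C_q)\subset\Gamma_q$ such that $B_{1,q_1}G_{1,z}C_q^*=\Oregsh{4}(h_1(z))$ for a suitable $B_{1,q_1}$ elliptic at $q_1$; in the second case pick $\Gamma_q\ni q$ and $C_q\in\Psi^0(M)$ elliptic at $q$ with $\wf'(C_q)\subset\Gamma_q$ such that $C_qG_{2,z}B_{2,q_2}^*=\Oregsh{5}(h_2(z))$ for a suitable $B_{2,q_2}$ elliptic at $q_2$. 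The sets $\{\Gamma_q\}_{q\in\cosb}$ cover $\cosb$; by compactness of $\cosb$ over any compact subset of $M$ one could try a global finite subcover, but it is cleaner to fix compact neighborhoods and argue as in the proof that $\wfl{}(G_z)=\emptyset$ implies $G_z=\Oreg{}$ — I will cover $\cosb$ (restricted over the supports entering the computation) by finitely many $\Gamma_{q_j}$, form elliptic operators $C_j$ with $\wf'(C_j)\subset\Gamma_{q_j}$, and construct a single $C\in\Psi^0(M)$, elliptic everywhere, of the form $C=\sum_j C_j^*C_j+$ (a smoothing completion), so that $\one-C$ and $\one-C C^{\inv}$ can be routed into microlocal parametrices $C^{\inv}$ with $\wf'$ disjoint from the relevant sets.

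Then the key algebraic step is the decomposition, for $B_1$ elliptic at $q_1$ and $B_2$ elliptic at $q_2$ with sufficiently small $\wf'$,
\begin{equation*}
B_1 G_{1,z}G_{2,z} B_2^* = B_1 G_{1,z} C^{\inv} C\, G_{2,z} B_2^* + B_1 G_{1,z}(\one - C^{\inv}C) G_{2,z} B_2^*,
\end{equation*}
where the second term is $\Oreg{}$ for any order because $\wf'(\one-C^{\inv}C)$ is empty (so $\one-C^{\inv}C\in\Psi^{-\infty}(M)$), while in the first term one writes $C=\sum_j C_j^*C_j$ (modulo smoothing) and splits the sum: for each $j$, $B_1 G_{1,z}C_j^* = \Oreg{}(h_1(z))$ by construction in the ``first case'' indices $j$, and $C_j G_{2,z}B_2^*=\Oreg{}(h_2(z))$ in the ``second case'' indices $j$ — and since every $q_j$ falls into at least one case, each summand $B_1 G_{1,z}C_j^* C_j G_{2,z}B_2^*$ is a product of one $\Onormsh{}(h_1(z))$ factor with one $\Onormsh{}(h_2(z))$ factor, hence (using proper support of $G_{2,z}$ and $C_j$ to compose) is $\Oreg{}(h_1h_2(z))$. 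Summing the finitely many terms gives $B_1 G_{1,z}G_{2,z}B_2^*=\Oreg{}(h_1h_2(z))$, i.e.~$(q_1,q_2)\notin\wfl{6}(G_{1,z}G_{2,z})$. The main obstacle I anticipate is purely bookkeeping: matching Sobolev orders through each composition so the final order is uniform (independent of $z$), and carefully handling the compactness/cover argument over a noncompact $M$ — exactly the same subtlety that forces the ``operator wavefront set'' formalism in the first place and that is dealt with in \cite{holographic}; there is no genuine analytic difficulty beyond what Lemma \ref{lem:altdefwfb} and the preceding propositions already supply.
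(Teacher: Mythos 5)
Your argument is correct and is essentially the paper's proof: the paper simply inserts a locally finite microlocal partition of unity $\sum_k B_k^*B_k=\one$ between the two factors, so that $A_1 G_{1,z}G_{2,z}A_2^*=\sum_k (A_1 G_{1,z}B_k^*)(B_k G_{2,z}A_2^*)$ and, with $\wf'(B_k)$ small, each summand pairs one regularizing factor (weight $h_1$ or $h_2$) with a merely bounded one, exactly as in your case split. Your detour through an elliptic $C=\sum_j C_j^*C_j$, its parametrix and the smoothing remainder $\one-C^{\inv}C$ realizes the same decomposition, at the minor extra cost of observing that $\wf'(C_j C^{\inv *})\subset\wf'(C_j)$ so that Lemma \ref{lem:altdefwfb} still controls $B_1 G_{1,z}C^{\inv}C_j^*$, and of taking the finite cover only over the compact region dictated by the proper supports (rather than asking for a globally elliptic $C$ on the noncompact $M$).
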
 
 \begin{proof}   For all $A_1,A_2\in\Psi^0(M)$,
 \beq\label{eq:temokn}
 A_1 G_{1,z}G_{2,z} A_2^*= \textstyle\sum_{k} (A_1   G_{1,z} B_{k}^*) (B_k G_{2,z} A_{2}^*), 
 \eeq
 where   $B_k\in\Psi^0(M)$ is an arbitrary family such that $\sum_k  B_k^* B_k =\one$ as a locally finite sum. By taking $\wf'(B_k)$  sufficiently small and using \eqref{eq:temokn} we obtain \eqref{lklklklklk}.
 \end{proof} 
 
 Let us now explain the relation with more standard notions which will be used in later sections.
 
 \bed\label{def:o} Let $X$ be a smooth (boundaryless) manifold and let $\varlambda\subset T^*X\setminus\zero$ be conic.  Let $\{u_z\}_{z\in\complex}$ be a family of distributions on $X$. We write $u_z=\Olambda{}$ iff for all $A\in \Psi^0(X)$ satisfying $\wf'(A) \cap  \varlambda=\emptyset$ we have $A u_z = \Osmooth{}$. 
 \eed
 
 \bed\label{anewdef}
 Let $\kappa :T^*M\setminus\zero \to  \cosb$ be the quotient map for the $\rr_{>0}$ action by fiberwise dilations.  For each conic set $\varlambda\subset T^*(M\times M)\setminus\zero$ we define
 $$
 \varlambda'=\{  (\kappa(x_1;\xi_1), \kappa(x_1;-\xi_2))    \st (x_1,x_2;\xi_1,\xi_2)\in \varlambda, \, \xi_1\neq 0, \ \xi_2\neq 0  \},
 $$ 
which is a subset of $\cosb\times \cosb$.\eed
 
 \bel\label{ditoop} Suppose $\varlambda\subset T^*(M\times M)\setminus\zero$ is conic and $G_z=\Onorm{}$. If the associated family of Schwartz kernels satisfies $G_z(\cdot)=\Olambda{}$ then $\wfl{}(G_z)\subset \varlambda' $
 for all $s\in\rr$.
 \eel 
 \beproof {For ease of notation we  identify $T^*M\setminus\zero$ with   $\cosb$  using the quotient map $\kappa$.  Let $q=(x_1,x_2; \xi_1,\xi_2)\in T^*(M\times M)\setminus \Lambda$ with $\xi_1\neq 0$ and $\xi_2\neq 0$, and  let $\Gamma_i$, $i=1,2$,  be a small conic neighborhood of $(x_i;\xi_i)$, to be fixed later on. 
   Let $B_i\in\Psi^0(M)$ be elliptic at $(x_i;\xi_i)$, with $\wf'(B_i)\subset \Gamma_i$.  Let  $A\in\Psi^0(M\times M)$ be elliptic  on $\Gamma_1\times \Gamma_2$ and with symbol vanishing in a conical neighborhood of $\zero\times T^*M$ and $T^*M\times \zero$. This implies that $A(B_1\otimes B_2)\in \Psi^0(M)$  and that $A(B_1\otimes B_2)$ is elliptic at $q$. Since  $G_z(\cdot)=\Olambda{}$ and $q\notin \varlambda$,  we can take $\Gamma_1,\Gamma_2$ such that $\wf'(A(B_1\otimes B_2))$ is in a small enough neighborhood of $q$ so that $A(B_1\otimes B_2) G_z(x,y)=\Osmooth{}$. By ellipticity of $A$ on $\Gamma_1\times \Gamma_2$,  this  implies $(B_1\otimes B_2) G_z(x,y)=\Osmooth{}$, where $B_1,B_2$ acts on the first, resp.~second variable of the Schwartz kernel of $G_z$. Hence $B_1 G_z \bar{B}_2^*=\Oreg{}$ for all $s\in\rr$, where $\bar {B}_2^*\in \Psi^0(M)$ is defined via complex conjugation of the Schwartz kernel of  ${B}_2^*$ and is in consequence  elliptic at $(x_2;-\xi_2)$.  This implies $((x_1;\xi_1), (x_2;-\xi_2)) \notin \wfl{}(G_z)$ for all $s\in\rr$ as claimed. } \qeds
 
 \subsection{Uniform parametrices}\label{ss:unif} After these general considerations we return to the setting of the operator $P-z$, though for the moment the only relevant assumption is that $P$ is of real principal type.  
 
 Recall that $\fibinf$ is identified with $\cosb$ over the interior $M$ of $\M$, and in this sense the characteristic set of $\square_g-z$ over $M$ is
 $$
 \Sigma\defeq \Sigma_z  \cap \cosb
 $$ 
and does not depend on $z$. Let us denote by $t\mapsto \Phi_t$ the bicharacteristic flow in $\Sigma$. For $q_1,q_2\in \cosb$, we write
 $$
q_1\sim q_2 \ \  (\mbox{resp.}   \ q_1 \prec  q_2,  \mbox{ or } q_1 \succ  q_2)
 $$
 if $q_1,q_2\in \Sigma$ and $\Phi_t (q_1)=q_2$ for some $t\in \rr$ (resp.~$t> 0$, or $t<0$). If $q_1\sim q_2$ we denote by $\gamma_{q_1\sim q_2}$ {the closed bicharacteristic segment in $\Sigma$  from $q_1$ to $q_2$}.

We consider families of operators $\{G_z\}_{z\in \complex}$  parametrized by some $\complex\subset \cc$. If the reference weight $h(z)$ is identically $1$ we simply write $\wfl{0}(G_z)$ for the uniform Sobolev wavefront set instead of $\wf^{\,(s)}_1(G_z)$. Other particularly useful weights are $h(z)=\module{\Im z}^{-\12}$, $h(z)=\bra  \Im z\ket^{-\12}$ and $h(z)=\bra z\ket^{-\12}$. 

We state below a variant of H\"ormander's propagation of singularities theorem { for the uniform operator wavefront set. For convenience, we formulate it here  as a corollary of propagation estimates in weighted Sobolev spaces  recalled in   \sec{ss:pe} in our setting.   Note that the statement is  valid in greater generality (aspects at infinity of $M$ being irrelevant), as one can give a  direct proof by adapting   Hörmander's positive commutator estimates \cite[\S6.5]{Hormander1971} along the lines of  Vasy's work \cite{vasyessential} to account for the case  $\Im z \neq 0$.  }

 
 \begin{proposition}\label{opprop} Let {$Z\subset \{ \Im z \geqslant 0\}$}. Assume $G_z=\Onorm{0}$, and suppose   $(P-z)G_z=\Onorm{0}$ and
 \beq\label{eqt1}
 (q_1,q_2)\in \wfl{0}(G_z)\setminus \wfl[s-1]{0}\big((P-z)G_z\big).
 \eeq
Then $q_1\in \Sigma$,  and   $(q_1',q_2)\in\wfl{0}(G_z)$ for all $q_1'$ such that {$q_1'\prec q_1$} provided that $(q,q_2)\notin\wfl[s-1]{0}\big((P-z)G_z\big)$ for all $q\in \gamma_{q_1\sim  q_1'}$. Similarly, if $G_z(P-z)=\Onorm{0}$ then
 \beq\label{eqt2}
 (q_1,q_2)\in \wfl{0}(G_z)\setminus \wfl[s-1]{0}\big(G_z(P-z)\big),
 \eeq
then $q_2\in \Sigma$, and $(q_1,q_2')\in \wfl{0}(G_z)$   for all $q_2'$ such that {$q_2'\succ q_2$} provided that $(q_1,q)\notin\wfl[s-1]{0}\big(G_z(P-z)\big)$ for all $q\in \gamma_{q_2\sim  q_2'}$.
 \end{proposition}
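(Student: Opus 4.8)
The plan is to deduce Proposition~\ref{opprop} from the propagation of singularities estimate in weighted Sobolev spaces (Proposition~\ref{pos}), exactly as one deduces H\"ormander's theorem from positive commutator estimates, but keeping careful track of the uniformity in $z$ encoded in the symbol $\Onorm{0}$. First I would test the operator wavefront set by pairing $G_z$ on the left and right with elliptic pseudo-differential operators $B_1, B_2$ of order zero, supported in small conic neighbourhoods $\Gamma_1,\Gamma_2$ of $q_1',q_2$ respectively (using Lemma~\ref{lem:altdefwfb} to reduce to such $B_i$). The statement $(q_1',q_2)\in\wfl{0}(G_z)$ then amounts to showing $B_1 G_z B_2^*=\Oreg{0}$, i.e.\ a uniform bound $H^l_{\mathrm c}\to H^{l+s}_{\mathrm loc}$. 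The key observation is that for fixed test element, $u_z := G_z B_2^* f$ is a family of distributions with $(P-z)u_z = \big((P-z)G_z\big)B_2^* f$, so by the hypothesis \eqref{eqt1} the family $u_z$ has no singularity (in the uniform sense, Definition~\ref{def:o}) in a neighbourhood of the bicharacteristic segment $\gamma_{q_1\sim q_1'}$ coming from the right-hand side $(P-z)G_z$, and some control near $q_1$ itself coming from $(q_1,q_2)\in\wfl{0}(G_z)$ (equivalently, microlocal $H^{l+s}$ regularity of $u_z$ near $q_1$).

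The ellipticity step handles the claim $q_1\in\Sigma$: if $q_1\notin\Sigma$ then $P-z$ is microlocally elliptic at $q_1$ (for $\Im z\ge 0$, ellipticity off $\Sigma$ at fibre infinity is preserved, since the characteristic set in $\cosb$ is $z$-independent), so the elliptic estimate of the $\mathrm{sc}$-calculus gives $B_1 G_z B_2^*=\Oreg{0}$ directly from $(P-z)G_z=\Onorm[s-1]{0}$ near $q_1$, contradicting $(q_1,q_2)\in\wfl{0}(G_z)$. Having localised to $\Sigma$, the heart of the argument is a real-principal-type propagation estimate: I would apply Proposition~\ref{pos} (the $z$-dependent variant of H\"ormander's propagation theorem, valid for $\Im z\ge 0$ with the flow in $\Sigma$) to $u_z$, propagating the microlocal $H^{l+s}$ control from $q_1$ backward along the flow to $q_1'$ (note the convention $q_1'\prec q_1$ means $q_1'$ is reached from $q_1$ going backward, consistent with propagation toward the sources/in the direction of decreasing flow parameter). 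The hypothesis that $(q,q_2)\notin\wfl[s-1]{0}\big((P-z)G_z\big)$ for all $q\in\gamma_{q_1\sim q_1'}$ guarantees the forcing term $(P-z)u_z$ is microlocally $H^{l+s-1}$ uniformly along the whole segment, which is precisely what the propagation estimate needs as input; a standard compactness argument covers the compact segment $\gamma_{q_1\sim q_1'}$ by finitely many small propagation steps. Uniformity in $z$ is inherited because each propagation estimate (Proposition~\ref{pos}) holds with a constant uniform in $z\in Z$, and the number of steps is fixed.

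For the ``dual'' statement \eqref{eqt2}, I would invoke Lemma~\ref{lemadj}: $(q_1,q_2)\in\wfl{0}(G_z)$ iff $(q_2,q_1)\in\wfl{0}(G_z^*)$, and $G_z(P-z)=\Onorm{0}$ translates into $(P^*-\bar z)G_z^* = \Onorm{0}$; since $P$ is formally self-adjoint this is $(P-\bar z)G_z^*$, and the sign of $\Im z$ and the direction of the flow both get reversed, so applying the already-proven first half to $G_z^*$ and translating back via Lemma~\ref{lemadj} yields exactly the propagation $q_2'\succ q_2$ asserted. The main obstacle I anticipate is bookkeeping rather than conceptual: making the reduction from ``operator wavefront set'' statements to ``distributional wavefront set of $u_z=G_zB_2^*f$'' statements fully rigorous requires handling the possible singularities of the Schwartz kernel of $G_z$ at $\zero\times T^*M$ and $T^*M\times\zero$ (the whole point of the operator-wavefront-set formalism), so one must argue that testing against $B_2^*$ with $\wf'(B_2)$ away from the zero section suffices, and that the resulting estimate on $u_z$ can be ``transposed'' back to a uniform operator bound on $B_1G_zB_2^*$; this is where one uses that the microlocal estimates are uniform over the relevant compact pieces of $\cosb$ and that only finitely many covering cutoffs enter. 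A secondary subtlety is ensuring the propagation estimate genuinely holds for all $\Im z\ge 0$ (not just $\Im z>0$) with $z$-uniform constants, but this is exactly the content of the propagation estimates recalled from \cite{vasyessential} in \sec{ss:pe}, so I would simply cite Proposition~\ref{pos} at that point.
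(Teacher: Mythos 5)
Your outline has a genuine logical inversion at its core, in the propagation step. The conclusion to be proved is that $(q_1',q_2)$ \emph{belongs} to $\wfl{0}(G_z)$, i.e.\ that the singularity at $(q_1,q_2)$ forces a singularity at every earlier point $q_1'\prec q_1$; but you write that this ``amounts to showing $B_1 G_z B_2^*=\Oreg{}$'', which is exactly the opposite (such an estimate would prove $(q_1',q_2)\notin\wfl{0}(G_z)$), and you then claim to have ``control near $q_1$ coming from $(q_1,q_2)\in\wfl{0}(G_z)$''. Membership in the uniform operator wavefront set is the \emph{absence} of microlocal control, so there is no regularity at $q_1$ to propagate. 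The statement has to be proved by contraposition, which is what the paper does: assume $(q_1',q_2)\notin\wfl{0}(G_z)$, pick $B_1'$ elliptic at $q_1'$ (supported away from $\p\M$) and $B_2$ elliptic at $q_2$ with $B_1' G_z B_2^*\cU$ uniformly bounded, apply Proposition \ref{pos} to each $u\in G_z B_2^*\cU$ with $A_2=B_1'$, $A_1$ elliptic at $q_1$ and $B$ elliptic on $\gamma_{q_1\sim q_1'}$ (the hypothesis on $\wfl[s-1]{0}\big((P-z)G_z\big)$ along the segment providing the forcing-term bound), and conclude microlocal regularity at $q_1$ — contradicting \eqref{eqt1}.

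This also exposes your second error, the direction of propagation. For $\Im z\geqslant 0$, Proposition \ref{pos} propagates regularity \emph{forward} only: the control condition \eqref{eq:forward} requires $q\succ q'$ with $q'\in\elll_\sc(A_2)$, and the $(\Im z)^{\12}$ term has a favourable sign only in that direction (it is then simply dropped). You propose to propagate ``backward from $q_1$ to $q_1'$''; backward propagation is not available uniformly for $\Im z>0$ — this is precisely why the proposition is stated with $q_1'\prec q_1$ rather than $q_1'\sim q_1$, and why the remark following it says the two-sided version only holds when $Z\subset\{\Im z=0\}$. In the correct contrapositive argument the forward direction from $q_1'$ to $q_1$ is exactly what is needed, so no backward estimate is required. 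Your remaining ingredients — the reduction via Lemma \ref{lem:altdefwfb}, keeping $\wf'(B_2)$ away from the zero section, the ellipticity step giving $q_1\in\Sigma$, and the adjoint argument via Lemma \ref{lemadj} for the second half (which is also the paper's route, the flow direction reversing because $\Im\bar z\leqslant 0$) — are sound, but the propagation step as you set it up would prove the wrong statement from a false premise.
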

 \beproof For the first statement, suppose $(q_1',q_2)\notin \wfl{0}(G_z)$. Then by definition there exist $B_1',B_2\in\Psi^0(M)$ elliptic at respectively $q_1',q_2$ such that for any bounded subset $H^l_\c(M)$, the set $B_1' G_z B_2^* \cU$ is uniformly bounded in $H^{l+s}_\loc(M)$. {Without loss of generality we can assume $B_1'$ to be supported away from $\p\M$, so that $B_1'\in\Psi^0_\sc(M)$ and $\wf'_\sc(B_1')\cap \basinf = \emptyset$ (see \sec{ss:scattering}).  We apply Proposition \ref{pos}  to each $u\in G_z B_2^* \cU$. More precisely,    we take $\ell$ arbitrary, $A_2=B_1'$, and $B,A_1\in \Psi^0(M)$ supported away from $\p\M$ such that:  $A_1$ is elliptic at $q_1$, $\wf'(A_1)$ is a small neighborhood of $q_1$,  and $B$  is elliptic on $\gamma_{q_1\sim q_1'}$ (note that in view of $A_1,A_2,B$ being supported away from  $\p\M$, this simply corresponds to  propagation of singularities at fiber infinity $\cosb$).  In consequence (using the sign condition $\Im z \geqslant 0$ to  get rid of the  $(\Im z )^\12    \norm{A_1 u }_{s-\12,\ell+\12}$  term in the estimate), we obtain that $A_1 G_z B_2^* \cU$ is bounded in $H^{l+s}_\loc(M)$. Hence, $(q_1,q_2)\notin\wfl{0}(G_z)$.} This proves the first statement. 
 
The second statement follows by applying {the analogue of the first statement for $\Im z \leqslant 0$} to  the adjoint families $G_z^*$ and $(P-z)^*$ and then using  Lemma \ref{lemadj}.
 \qeds
 
 {Note that if $Z\subset \{\Im z =0\}$, then by considering $-P$ instead of $P$ one obtains propagation in the other direction, and in consequence, in that case $q_1'\prec  q_1$ and $q_2'\succ q_2$ can be replaced by $q_1'\sim q_1$ and $q_2'\sim q_2$ in the statement of Proposition \ref{opprop}. }
 
 \begin{remark}\label{extraremark} In \eqref{eqt1}  and  \eqref{eqt2} the set $\wfl{0}(G_z)$ can be replaced by
$$
\wfl{0}(G_z)\cup \wfl[s-\12]{2}(G_z),
  $$
  and therefore by $\wfl[s-\12]{3}(G_z)$. 
  This is a consequence of the fact that for $\Im z\neq 0$, the propagation estimates become stronger{: indeed, instead of getting rid of the term $(\Im z )^\12    \norm{A_1 u }_{s-\12,\ell+\12}$  in the proof of Proposition \ref{opprop}, we can use it get an for estimate for $\norm{A_1 u }_{s-\12}$}.
 \end{remark}

  We use the notation $q_i=(x_i;\xi_i)$ for points in $\cosba{x_i}$. The \emph{enlarged diagonal} in $\cosb\times\cosb$ is the set 
  $$
{\ediag}\defeq \{ (q_1,q_2)\in \cosb\times \cosb \, | \, x_1=x_2\}.
  $$
{By abuse of notation, in later sections the image of $\ediag$ under the identification of  $\cosb$ with $T^*M\setminus \zero$ will also be denoted by $\ediag$  (note that by definition it does not contain the zero section of $T^*(M\times M)$). }
  
  \begin{definition}\label{defpara} We say that $G_z$ is a  \emph{uniform parametrix of order $s\in\rr$}  {in $M_0\subset M$} (more precisely, a right parametrix) for the family $\{(P-z)\}_{z\in Z}$ if  $G_z=\Onorm{1}$ and
 \beq\label{defptx}
   (P-z)G_z =\one+R_z {\mbox{ on  } M_0},
 \eeq
 for some  $R_z=\Oreg{0}$.  We say that $G_z$ is a \emph{uniform local parametrix of order $s\in\rr$} if \eqref{defptx} holds true for some $R_z=\Onorm{0}$ satisfying merely 
  \beq
\wfl[s-\12]{0}(R_z)\cap U=\emptyset,
  \eeq
  where $U$ is some neighborhood of $\ediag$. 
  \end{definition}

\begin{proposition}\label{uniqueness} Suppose that $G_z$ is a uniform local parametrix of order $s$ in $M$ which satisfies
\beq\label{feynwf}
\wfl{0}(G_z)\subset \{  (q_1,q_2) \in \Sigma\times \Sigma \, | \, q_1 {\succ} q_2 \}\cup \ediag.
\eeq
If $\widetilde G_z$ is a local uniform parametrix of order $s$ which also satisfies \eqref{feynwf}, then
\beq\label{coln}
\wfl{3}(\widetilde G_z-G_z) \cap U=\emptyset
\eeq
for some neighborhood $U$ of $\ediag$.
\end{proposition}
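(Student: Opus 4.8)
Write $E_z\defeq \widetilde G_z-G_z$. From $(P-z)G_z=\one+R_z$ and $(P-z)\widetilde G_z=\one+\widetilde R_z$ one gets $(P-z)E_z=\widetilde R_z-R_z\eqdef R_z'$, and since $G_z,\widetilde G_z=\Onorm{1}$ while $R_z,\widetilde R_z=\Onorm{0}$ have $\wfl[s-\12]{0}(R_z)$ and $\wfl[s-\12]{0}(\widetilde R_z)$ each disjoint from a neighborhood of $\ediag$, we have $E_z=\Onorm{1}$, $R_z'=\Onorm{0}$, and $\wfl[s-\12]{0}(R_z')\cap U_1=\emptyset$ (hence also $\wfl[s-1]{0}(R_z')\cap U_1=\emptyset$) for some neighborhood $U_1$ of $\ediag$. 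By Lemma \ref{wfs}, the Feynman bound \eqref{feynwf} for $G_z$ and $\widetilde G_z$ gives
$$
\wfl{0}(E_z)\ \subset\ \{(q_1,q_2)\in\Sigma\times\Sigma\st q_1\succ q_2\}\cup\ediag\eqdef C_F ,
$$
and by Remark \ref{extraremark} the same inclusion holds with $\wfl[s-\12]{3}(E_z)$ on the left (using that $\Im z>0$ on $Z$, so that the sharper propagation estimates are available).

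The plan is to prove $\wfl[s-\12]{3}(E_z)\cap U=\emptyset$ for a sufficiently small neighborhood $U\subset U_1$ of $\ediag$, arguing by contradiction. Suppose $(q_1,q_2)\in\wfl[s-\12]{3}(E_z)\cap U$. If $q_1\notin\Sigma$, then the parameter-dependent elliptic estimate for $P-z$ away from $\Sigma$ lets $E_z$ inherit near $(q_1,q_2)$ the microlocal regularity and the $\bra z\ket$-decay of $R_z'=(P-z)E_z$, which is regular there since $(q_1,q_2)\in U\subset U_1$; this contradicts $(q_1,q_2)\in\wfl[s-\12]{3}(E_z)$, so $q_1\in\Sigma$. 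By the Feynman inclusion and $U$ small, either $(q_1,q_2)\in\ediag$, or $q_1\succeq q_2$ along a bicharacteristic segment whose base point stays close to $x_2$. In all cases I would propagate backward in the first variable: a short backward bicharacteristic segment $\gamma$ issued from $q_1$ moves the base point away from $x_1$ (over the interior $M$, bicharacteristics are lifts of null geodesics and radial directions play no role), so the set $\{(q,q_2)\st q\in\gamma\}$ stays in $U_1$, where $R_z'$ is regular; hence Proposition \ref{opprop}, in the form sharpened to the weight $\bra z\ket^{-\12}$ by Remark \ref{extraremark}, gives $(q_1',q_2)\in\wfl[s-\12]{3}(E_z)$ for every $q_1'\prec q_1$ on $\gamma$.

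The contradiction is then extracted from the one-sidedness of $C_F$. For such a backward point one has $x_1'\neq x_2$, so $(q_1',q_2)\notin\ediag$, whence $q_1'\succ q_2$, i.e.\ $q_1'\sim q_2$. If the starting point lay on $\ediag$ (so $x_1=x_2$), letting $q_1'\to q_1$ forces $q_1\sim q_2$ with $x_1=x_2$ and $q_1\neq q_2$, i.e.\ a null geodesic loop through $x_1$; this is a closed causal curve and is excluded by global hyperbolicity, so this alternative does not occur. In the remaining case $q_1\succeq q_2$ along a short bicharacteristic (the true diagonal $q_1=q_2$ being the degenerate instance), continuing the backward propagation within $U_1$ — shrink $U$ so that the whole segment from $q_2$ to $q_1$, and a little beyond, lies in $U_1$, which is possible because near a point short bicharacteristics do not recur — drives $q_1'$ strictly past $q_2$, producing $q_1'\prec q_2$ with $x_1'\neq x_2$; such a point lies in neither $\ediag$ nor $\{q_1\succ q_2\}$, contradicting $(q_1',q_2)\in\wfl[s-\12]{3}(E_z)\subset C_F$. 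Hence $\wfl[s-\12]{3}(E_z)\cap U=\emptyset$, which is \eqref{coln}.

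The principal obstacle — and what distinguishes this from the classical Duistermaat--Hörmander uniqueness proof for Feynman parametrices — is that for $\Im z>0$ the sign condition yields only \emph{one-directional} propagation of the operator wavefront set (Proposition \ref{opprop}), so one cannot appeal to two-sided flow invariance; the argument must instead be arranged so that backward propagation \emph{alone}, combined with the asymmetric shape of $C_F$, already forces emptiness near $\ediag$. A secondary technical point is to run both the elliptic and the propagation steps in the sharper, $z$-dependent form (Remark \ref{extraremark} and the parameter-dependent elliptic estimate) so as to obtain the improved weight $\bra z\ket^{-\12}$ rather than merely $\equiv 1$, and to shrink $U$ so that the finitely many short bicharacteristic segments one traverses remain in the region $U_1$ where $(P-z)E_z$ is controlled.
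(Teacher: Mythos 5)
Your overall strategy coincides with the paper's: assume a point of the weighted wavefront set of $E_z=\widetilde G_z-G_z$ near $\ediag$, use the one--directional propagation (Proposition \ref{opprop}, available since $\Im z\geqslant 0$) to move the first variable backward to a point off the enlarged diagonal and, when $q_1\sim q_2$, strictly before $q_2$, and contradict the one--sided shape of the Feynman set, with global hyperbolicity excluding null geodesic loops. However, there is a genuine flaw in the weight bookkeeping at the step where the contradiction is drawn. You assert that $\wfl[s-\12]{3}(E_z)\subset C_F$ ``by Remark \ref{extraremark}'' and you contradict membership of the propagated point $(q_1',q_2)$ in \emph{that} set. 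Remark \ref{extraremark} says nothing of the sort: it only weakens the \emph{hypothesis} of the propagation statement \eqref{eqt1}--\eqref{eqt2}, it does not upgrade the assumed bound \eqref{feynwf}, which constrains only the weight-one sets $\wfl{0}(G_z)$ and $\wfl{0}(\widetilde G_z)$. Nothing in the hypotheses controls the $\bra z\ket^{-\12}$-weighted wavefront sets of $G_z$ and $\widetilde G_z$ individually (away from the diagonal the errors $R_z,\widetilde R_z$ are completely uncontrolled, and gaining the extra decay for the difference near the diagonal is precisely what the proposition is supposed to prove), so the inclusion you invoke at $(q_1',q_2)$ — a point \emph{off} $\ediag$ — is unavailable, and the contradiction as written does not close.

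The repair is immediate and turns your argument into the paper's: Proposition \ref{opprop} together with Remark \ref{extraremark} concludes, from $(q_1,q_2)$ in the enlarged (weighted) set, that the propagated point satisfies $(q_1',q_2)\in\wfl{0}(E_z)$, i.e.\ membership at weight one; the contradiction should then be run against $\wfl{0}(E_z)\subset C_F$, which you did establish correctly via Lemma \ref{wfs}. With that substitution your case analysis (off-diagonal, $q_1\nsim q_2$ via transitivity, $q_1\sim q_2$ via propagating past $q_2$, loops excluded by global hyperbolicity) goes through. One further point: your justification that the backward segment from $q_1$ past $q_2$ keeps the pairs $(q,q_2)$ inside the region where $(P-z)E_z$ is controlled — ``short bicharacteristics do not recur'' — does not address the actual danger, namely long bicharacteristics whose endpoints both project close to $x_2$ (near-loops); the correct tool is strong causality, a consequence of global hyperbolicity, which lets one shrink the neighborhood so that any such segment stays, in the base, within a prescribed neighborhood of $x_2$. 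The paper is terse on this point as well, but the reason you give is not the right one.
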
 \beproof There exists a neighborhood $U$ of $\ediag$ such that 
$$
\bea
\wfl[s-\12]{0}\big(  (P-z) (\widetilde G_z - G_z)\big)\cap U=\emptyset.
\eea
$$
Suppose $(q_1,q_2)\in\wfl{3}(\widetilde G_z - G_z)\cap U$. Then by Proposition \ref{opprop} and Remark \ref{extraremark},
$q_1\in\Sigma$  and $(q_1',q_2)\in\wfl{0}(\widetilde G_z - G_z)$  for some $q_1'\sim q_1$ with $x_1'\neq x_2$, and such that $q_1'{\prec} q_2$ if $q_1\sim q_2$. On the other hand, \eqref{feynwf} and the analogous assumption for $\widetilde G_z$ implies that $q_1'{\succ}  q_2$ or $x_1'=x_2$, which gives a contradiction.  This proves \eqref{coln}. \qeds

 \subsection{Global hyperbolicity}  From now on we make the additional assumption that $(M,g)$ is a \emph{global hyperbolic spacetime}. 

Recall that $(M,g)$ is a spacetime if it is equipped  with a time orientation.  It is a \emph{globally hyperbolic spacetime} (or in short, globally hyperbolic space) if in addition it admits a  Cauchy surface, i.e., a  closed subset of $M$ which is intersected exactly once by each maximally extended time-like curve. By a result of Geroch \cite{geroch} and Bernal--S\'anchez \cite{bernal1,bernal2}, there exists an ($n-1$)-dimensional smooth manifold $\Cauchy$ and an isometric diffeomorphism $\varphi: M\to \rr\times\Cauchy$ such that
\beq\label{ortdiff}
\varphi^* g = c^{2}(t,y) dt^2- h_t(y)dy^2,
\eeq
where $c\in \cf(M)$, $c>0$, $\rr\ni t \mapsto h_t(y)dy^2$ is a smooth family of Riemannian metrics, and  for all $t_0\in\rr$, $\{t_0\}\times \Cauchy$ is a smooth space-like Cauchy surface in $\rr\times\Cauchy$.

\subsection{Uniform parametrix construction} \label{ss:firstparametrix}
We will prove the existence of a \emph{Feynman parametrix} in the sense of Duistermaat--H\"ormander, which has a special form and is uniform in $z$. 

Thanks to \eqref{ortdiff} we can work on the $n$-dimensional manifold $\rr\times\Cauchy\cong M$ with coordinates denoted by $x=(t,y)$. We will need a $t$-dependent variant of the parameter-dependent pseudo-differential calculus developed by Shubin \cite{shubin}. 

Let $Z\subset\cc$. Let $U\subset \rr^{n-1}$ be an open set and $s\in\rr$.  Recall that the symbol space  $S^s(T^*U)$ consists of functions $a(y,\eta)\in\cf(T^* U)$ such that
$$
(1+\module{\eta})^{-s+\module{\beta}} \p_y^\alpha \p_\eta^\beta a(y,\eta) \mbox{ is bounded on } U \times \rr^{n-1}
$$
for all $\alpha,\beta\in\nn_{\geqslant0}^{n-1}$. We denote by  $\smiz{s}$  the space of functions $a(t,z,y,\eta)$ such that $a(t,z_0,y,\eta)\in \cf(\rr\times T^* U)$ for each fixed $z_0\in \complex$, and
\beq\label{defshub1}
\big((1+\module{\eta}+\module{z}^{\frac{1}{2}}\!\big)^{-s+\module{\beta}}\p_t^\gamma \p_y^\alpha \p_\eta^\beta a(t,z,y,\eta) \mbox{ is bounded on } I\times Z\times U \times \rr^{n-1}
\eeq
for all $\gamma\in\nn_{\geqslant0}$, all $\alpha,\beta\in\nn_{\geqslant0}^{n-1}$ and all intervals $I\Subset \rr$.   Note that taking the square root of $\module{z}$  is natural  from the point of view of the spectral theory of \emph{second order} elliptic differential operators.  The space $\smix{s}$ is defined by replacing \eqref{defshub1} by the property that
\beq\label{defshub2}
(1+\module{\eta})^{-s+\module{\beta}}\p_t^\gamma \p_y^\alpha \p_\eta^\beta a(t,z,y,\eta) \mbox{ is bounded on } I\times Z \times U \times \rr^{n-1}
\eeq
for all $\gamma\in\nn_{\geqslant0}$, all $\alpha,\beta\in\nn_{\geqslant0}^{n-1}$ and all intervals $I\Subset \rr$. Thus, elements of $\smix{s}$ depend on $z\in \complex$, but only in a very mild way, which is why we do not indicate it in the notation explicitly.

Recall that all pseudodifferential operators in $\Psi^s(\Cauchy)$ can be obtained first by reduction to the case of an open set $U\subset \rr^{n-1}$ (using a partition of unity subordinated to a locally finite cover by charts),  then by quantization of elements of $S^s(T^*U)$, and finally by adding the ideal of smoothing operators.  By applying  an analogous  procedure to $t$ and $z$-dependent elements of $S^s(T^*U)$ we obtain classes of $t$ and $z$-dependent pseudo-differential operators on $\Cauchy$. We  denote by $\miz{s}$ the class obtained from elements of $\smiz{s}$ {(plus the ideal of $t,z$-depended smoothing of operators, bounded with all derivatives in $t$  and rapidly decaying in $\module{z}$)}, and by $\mix{s}$ the class obtained from elements of $\smix{s}$ {(plus the ideal of $t,z$-depended smoothing of operators, bounded with all derivatives in $t$  and bounded  in $\module{z}$, denoted by $\miz{-\infty}$ by abuse of notation)}.  We say that $A$ is \emph{properly supported} if there exists a closed set  $K\subset Y\times Y$ with proper projections on each factor of $Y\times Y$ and such that the Schwartz kernel of $A$ is supported in $K$ for all $t\in\rr$, $z\in\complex$.

By the exact $t$-dependent analogue of the proofs in \cite[\S{9}]{shubin} we can show properties of the $\miz{s}$ and $\mix{s}$ classes under composition and taking adjoints. In particular, for all $s_1,s_2\in\rr$, for properly supported operators we have
$$
A\in\miz{s_1}, \ B\in\miz{s_2} \\ \Rightarrow \ AB\in\miz{s_1+s_2}.
$$
Of particular use for us are operators in $\miz{s}$ with symbols that are one-step poly-homogeneous in $(\eta,z^{\12})$ (for $\module{\eta}+\module{z}^{\12}\geqslant 1$), see \cite[\S{9.1}]{shubin}. We say that such an operator $A$ is \emph{elliptic with parameter} if it is properly supported and 
$$
a_s(t,z,y,\eta)\neq  0  \ \mbox{ if } \ \module{\eta}+\module{z}^{\12}  \neq 0,
$$
where $a_s$ is the leading order term in the poly-homogeneous expansion. Standard poly-homogeneous expansion arguments can be used to show that if $A\in\miz{s}$ is elliptic with parameter, then it has a parametrix in $\miz{-s}$, which is also elliptic with parameter, and the error is in $\miz{-\infty}$.

\begin{example}[{cf.~\cite[Ex.~9.1]{shubin}}] If $L(t)$ is a second order differential operator on $\Cauchy$ with coefficients in $\cf(\rr;\cf(Y))$, then the leading order term in the poly-homogeneous expansion of the symbol of $L(t)-z$ is simply $\sigma_{\pr}(L(t))-z$, where $\sigma_\pr(L(t))$ is the principal symbol in the usual $\Psi^s(\Cauchy)$ sense. Therefore, $L(t)-z$ is elliptic with parameter if 
$\sigma_\pr(L(t))$ does not intersect $Z$  at $\{ \module{\eta} =1\}$.
\end{example}

It  is also occasionally useful to work with pseudo-differential operators of order not consistent with the order of decay in $z$. Namely, we write $R(t,z)\in\mimiz{s_1}{s_2}$ if 
$$
R(t,z)=\sum_{i=1}^k R_{1,i}(t,z) R_{2,i}(t,z) 
$$
for some $k\in\nn_{>0}$ and $R_{1,i}\in \mix{s_1}$, $R_{2,i}\in \miz{s_2}$, $i=1,\dots,k$. Using the trivial inclusion $\miz{s}\subset\mix{s}$ for $s<0$ whenever needed, we can show that for all $s_1,r_1,r_2\in \rr$ and all $s_2<0$, 
\beq\label{compopdo}
\bea
A\in\mimiz{s_1}{s_2}, \ B\in\mimiz{r_1}{r_2} \\ \Rightarrow \ AB\in\mimiz{s_1+s_2+r_1}{r_2}, \\
A\in\mimiz{s_1}{s_2}, \ B\in\miz{s_2} \\ \Rightarrow \ [A,B]\in\mimiz{s_1+s_2-1}{s_2}.
\eea
\eeq

\begin{notation} We denote by $\Sigma^+$ and $\Sigma^-$  the two connected components of $\Sigma$, distinguished by the property that within $\Sigma^\pm$, bicharacteristics flow in the past/future direction. \end{notation}

\begin{proposition}\label{painful} Assume global hyperbolicity. Let $Z\subset\cc$ be an angle in the upper half-plane $\{\Im z\geqslant 0\}$ with  vertex at the origin. {Let $M_0\subset M$ be an open subset such that for each $t\in \rr$,  $\varphi^* M_0 \cap (\{ t \} \times Y)$ is included in a compact set.} Then for all $s\geqslant 0$, $\{(P-z)\}_{z\in Z}$ has a uniform parametrix $G_z$ of  order $s\in\rr$ {in $M_0$} of the form $G_z=G_z^+ + G_z^-$, where $G_z^\pm=\Onorm{12}$ satisfies the property that for each  $f\in \cE'(M)$  there exists a Cauchy surface $t_0\in\rr$ such that
\beq\label{supopo}
\supp G_{z}^{\pm}f \subset (\varphi^{-1})^*\{ \pm  t\geqslant  \pm t_0\},
\eeq
and furthermore,
\beq\label{halfwf}
\wfl{12}(G_z^\pm)\subset \{ (q_1,q_2)\in \Sigma^\mp\times \Sigma^\mp \, | \, q_1{\succ} q_2  \}\cup \ediag.
\eeq
\end{proposition}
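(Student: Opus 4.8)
The plan is to construct $G_z^\pm$ by a time-dependent factorization of $P-z$ in Shubin's parameter-dependent calculus, solving the resulting first-order evolution equations for their ``half-densities'' of singularities. First I would write $P$ in the global hyperbolic splitting $M\cong\rr_t\times Y$ from \eqref{ortdiff}, so that $P-z=c^{-2}\big(D_t^2 + \text{lower order in }D_t - c^2 z + L(t)\big)$ modulo conjugation by powers of $c$ and first-order terms, where $L(t)$ is a second-order elliptic operator on $Y$ (the spatial part). Since $Z$ is an angle in $\{\Im z\ge 0\}$ with vertex at the origin, the symbol $\sigma_\pr(L(t))-c^2z$ does not meet $Z$ on $\{|\eta|=1\}$ for $z$ bounded away from $0$, and more importantly $L(t)-c^2 z$ is elliptic with parameter in $\miz{2}$ in the sense of the Example following the $\mix{s}$ definition. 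Hence it has a square root $\Lambda(t,z)\in\miz{1}$, elliptic with parameter, and one can factorize
\beq\label{eq:factorplan}
P-z = c^{-2}\big(D_t - \Lambda_-(t,z) \big)\big(D_t - \Lambda_+(t,z)\big) + R(t,z),
\eeq
with $\Lambda_\pm \in \miz{1}$ having $\pm$-definite leading symbol $\pm\Lambda$ (so that $D_t-\Lambda_\pm$ is, microlocally, characteristic on $\Sigma^\mp$) and a remainder $R\in\mimiz{1}{-1}$ by the composition rules \eqref{compopdo}. This is the exact $t$-dependent analogue of the Gérard--Wrochna approximate diagonalization, and the key new point is that the error has a \emph{negative} order of decay in $|z|$.

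Next I would solve the two decoupled half-wave equations: for each sign, $U^\pm(t,t';z)$ is the propagator of $\p_t - i\Lambda_\pm(t,z)$, constructed by a standard Picard iteration / Duhamel argument in the $\miz{}$-calculus (this is where one uses that $D_t-\Lambda_+$ is a first-order hyperbolic operator with principal symbol controlling the bicharacteristic flow, giving the support property by finite propagation-speed-type estimates for the evolution of wavefront sets). One then sets, schematically,
\beq\label{eq:Gzdef}
G_z^+ f(t) = i\int_{-\infty}^{t} U^+(t,t';z)\,\tilde f(t')\,dt',\qquad
G_z^- f(t) = i\int_{t}^{+\infty} U^-(t,t';z)\,\tilde f(t')\,dt',
\eeq
where the cutoff in $t'$ (using that $\varphi^*M_0$ meets each time slice in a compact set, together with the local nature of ``uniform parametrix in $M_0$'') makes the integrals converge and keeps everything properly supported; the choice of integration endpoints $\pm\infty$ versus a fixed $t_0$ is dictated by \eqref{supopo}. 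The forward/backward support statement \eqref{supopo} is then immediate from the domain of integration plus the support properties of $U^\pm$. For the wavefront bounds \eqref{halfwf}, I would combine (i) the fact that $U^\pm$ propagates singularities along the Hamiltonian flow of $\pm\Lambda$, which foliates $\Sigma^\mp$; (ii) the source term in Duhamel sits on the diagonal; and (iii) the one-sided integration in $t'$, which forces the $+$ component to move a point forward along the flow ($q_1\succ q_2$) within $\Sigma^\mp$ and dually for $-$. Uniformity in $z$ of all Sobolev mapping properties ($G_z^\pm=\Onorm{12}$, i.e.\ gain of $\tfrac12$ derivative with the weight $\langle z\rangle^{-1/2}$) follows because $\Lambda_\pm\in\miz{1}$ and the Duhamel integral against a bounded-interval $t'$ trades the $|z|^{1/2}$ built into the $\miz{}$ symbol estimates for the claimed weight; one quantifies this exactly as in Shubin \cite{shubin}, Ch.~9, adapted to the $t$-parameter.

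Finally I would verify the parametrix identity: applying $P-z$ to $G_z=G_z^++G_z^-$ and using \eqref{eq:factorplan}, the leading $D_t^2$ and $\Lambda$ terms produce $\one$ on $M_0$ (the jump in the Duhamel kernel at $t=t'$ gives the delta), while everything else is absorbed into $R_z$ coming from: the remainder $R(t,z)\in\mimiz{1}{-1}$ in \eqref{eq:factorplan} composed with $G_z^\pm$ (which by \eqref{compopdo} and $G_z^\pm\in\Onorm{12}$ lands in $\Oreg{0}$, i.e.\ the weight $\langle z\rangle^{-1/2}$ with a genuine gain), the conjugation errors by powers of $c$, and the $\mix{}$-type commutators. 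Hence $(P-z)G_z=\one+R_z$ on $M_0$ with $R_z=\Oreg{0}$, so $G_z$ is a uniform parametrix of order $s$ for every $s\ge 0$ (the order $s$ is free because smoothing errors are genuinely smoothing; one only needs the weight to be as claimed). I expect the main obstacle to be \textbf{the factorization step near $z=0$ and at low spatial frequencies}: the square root $\Lambda(t,z)$ of $L(t)-c^2z$ degenerates where the full symbol $|\eta|^2_{h_t}-c^2z$ approaches $0$, which is exactly the characteristic set; handling this cleanly requires doing the factorization only \emph{microlocally away from} $\Sigma$ in the elliptic region and, near $\Sigma$, being careful that $Z$ being an angle with vertex at the origin (rather than a region bounded away from $0$) still keeps $L(t)-c^2z$ elliptic-with-parameter in the relevant homogeneous sense — this is where the precise definition of $\smiz{s}$ with the $|z|^{1/2}$ weight, and the restriction to $\Im z\ge 0$, are essential, and where the bookkeeping of which component $\Sigma^\pm$ each factor sees must be done with care to land the wavefront set in \eqref{halfwf} and not its mirror image.
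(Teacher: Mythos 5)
Your overall strategy (time-dependent approximate factorization of $P-z$ in the parameter-dependent calculus, one-sided solvability of the resulting first-order factors, Egorov-type propagation for the wavefront bounds) is the same as the paper's, but the central algebraic step is wrong as written, and it is exactly the step that produces both the parametrix identity and the claimed mapping properties. If $G_z^+$ and $G_z^-$ are one-sided (retarded/advanced) inverses of the two first-order factors $D_t-\Lambda_+$ and $D_t-\Lambda_-$ coming from a \emph{single} factorization $P-z=c^{-2}(D_t-\Lambda_-)(D_t-\Lambda_+)+R$, then the sum $G_z^++G_z^-$ is not a parametrix of $P-z$: applying $(D_t-\Lambda_-)(D_t-\Lambda_+)$ to $G_z^+$ leaves the first-order operator $D_t-\Lambda_-$, not $\one$, and for $G_z^-$ the factors appear in the wrong order, so the ``jump at $t=t'$'' produces, to leading order, the operator $\Lambda_+-\Lambda_-\approx 2\Lambda$ rather than the identity. (In the model $D_t^2-\lambda^2$ the sum of the two one-sided kernels is $i e^{i\lambda|t-t'|}$, and $(D_t^2-\lambda^2)\, i e^{i\lambda|t-t'|}=2\lambda\,\delta(t-t')$.) Consequently $(P-z)G_z-\one$ contains a first-order term growing like $|z|^{1/2}$, which is not an admissible error. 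The repair --- and this is what the paper does --- needs two ingredients you are missing: (i) a \emph{second} factorization with the factors in the opposite order, $P-z=c^{-2}(D_t+\tilde B)(D_t-\tilde A)+\tilde R$, so that both $(P-z)G_z^+$ and $(P-z)G_z^-$ can be computed with controlled remainders, the two first-order leftovers combining to $\tilde B+A\approx 2\Lambda$; and (ii) post-composition of the resulting difference of one-sided inverses with an elliptic-with-parameter parametrix $(\tilde B+A)^{\inv}\in\miz{-1}$. It is precisely this division by $\approx 2\Lambda$ that yields the extra half derivative and the weight $\langle z\rangle^{-1/2}$ claimed for $G_z^\pm$; the propagators themselves are only uniformly bounded on Sobolev spaces and gain nothing, so your heuristic that the Duhamel integral ``trades $|z|^{1/2}$ for the weight'' has no basis.

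A second, smaller but genuine gap: the uniform-in-$z$ solvability of the first-order Cauchy problems is not ``standard Picard iteration''. For $\Im z>0$ the operators $\Lambda_\pm(t,z)$ are far from self-adjoint (their imaginary parts grow with $\Im z$), and energy estimates uniform in $z\in Z$ require a one-sided bound on the imaginary part of the relevant factor; this holds only because $\Im(c^2z)\geqslant 0$ and only for the correct pairing of factor and time direction (the paper solves a retarded problem for $D_t+B$ and an advanced one for $D_t-\tilde A$, after reducing to a uniform calculus on $\rr^d$ using the compactness of the time slices of $M_0$, and verifies the lower bound on $-\Im B$ via Hörmander's well-posedness theorem). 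Your construction never checks this sign condition, and with the wrong retarded/advanced assignment the constants blow up as $\Im z\to+\infty$, destroying the uniformity on which the contour-integral applications depend. By contrast, your closing worry about the square root degenerating ``at the characteristic set'' is not an issue: ellipticity with parameter of the spatial operator only requires its leading symbol to be nonzero for $|\eta|+|z|^{1/2}\neq 0$ with $z$ in the angle $Z$, which holds since the spatial symbol is real and nonnegative; the characteristic set of $P-z$ involves the dual time variable and is exactly what the factorization is designed to split, not an obstruction to it.
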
\beproof  A straightforward computation shows that the differential operator 
\beq 
Q(t,z)\defeq -c^2(t) (\varphi^* (P-z))
\eeq
is of the form
$$
Q(t,z)= D_t^2 + Q_0(t) D_t -Q_2(t,z),
$$
where $Q_0(t)=\p_t(c^{-1}(t) \left|h(t)\right|^{\12})\in\mix{0}$ is a multiplication operator and 
$$
Q_2(t,z)=  c(t) \left|h(t)\right|^{-\12}  \sum_{i,j=1}^{n-1}D_i c(t)h(t)^{ij}  \left|h(t)\right|^{\12}\!D_j- z \, c^2(t)  \in\miz{2}
$$
is elliptic with parameter. Our proof is divided into several steps.
  
\step{1} We claim that for each $i\in\nn_{\geqslant 0}$ there exists $A_i(t,z),B_i(t,z)\in\miz{1}$, each of them elliptic with parameter, and $R_i(t,z)\in\mimiz{1-i}{0}$, such that
\beq\label{facto}
Q(t,z)=(D_t - A_i(t,z))(D_t+B_i(t,z)) +   R_{i}(t,z).
\eeq
We show this inductively by adapting the arguments in \cite[\S23.2]{HormanderIII} and \cite[\S6]{GOW}  to our setting. 

 Namely, suppose that \eqref{facto} holds true for some $i\in\nn_{\geqslant 0}$. We set then
 \begin{gather}\label{parco}
 C_i \defeq -  R_i (A_i +B_i)^{\inv}, \quad
 L_{i}  \defeq R_i ( \one - (A_i +B_i)^{\inv}(A_i +B_i)),\\ \label{parco2}
 R_{i+1}  \defeq  [C_i,D_t]+[A_i, C_i ] + C_i^2 + L_i,      
 \end{gather}
 where $(A_i +B_i)^{\inv}\in\miz{-1}$  is an elliptic parametrix of $A_i +B_i$, and the dependence on $t,z$ is disregarded in the notation. Using \eqref{compopdo}, we obtain 
 $$
 C_i,  L_{i} , R_{i+1}  \in  \mimiz{1-(i+1)}{0}.
 $$
These operators are defined in (\ref{parco})--(\ref{parco2}) in such way that they satisfy the identities
 $$
 C_i B_i + R_i =  -C_i A_i + L_i, \quad C_i (D_t-A_i) + L_i = (D_t-A_i-C_i) R_{i+1},
$$
 which entail
$$
(D_t - A_i)(D_t+B_i) +   R_{i}=(D_t - A_i-C_i)(D_t+B_i+C_i) +   R_{i+1}.
$$
Thus, by setting $A_{i+1}\defeq A_{i}+C_i\in \miz{1}$ and $B_{i+1}\defeq B_{i}+C_i \in \miz{1}$ we conclude that \eqref{facto} holds true for $i+1$ in place of $i$.

Now, to show \eqref{facto} it remains to check the induction hypothesis $i=0$. To that end we set 
$$
A_0(t,z)\defeq (Q_2(t,z))^\hinv-\12 Q_0(t), \quad B_0(t,z)\defeq (Q_2(t,z))^\hinv+\12 Q_0(t),
$$
where $Q_2^\hinv$ is an approximate square root obtained from the poly-homogeneous expansion of $Q_2$ in the parameter-dependent sense. By construction, $Q_2^\hinv,A_0,B_0\in \miz{1}$. Furthermore,
$$
Q(z,t)=(D_t - A_0(t,z))(D_t+B_0(t,z)) +   R_{0}(t,z),
$$
where
\beq\label{rotoe}
R_0=\12 [Q_0,D_t]  +\frac{1}{4}Q_0^2 + \big[Q_2^\hinv,D_t\big]  \mbox{ mod } \miz{-\infty}.
\eeq
We want to show that $R_0\in\mimiz{1}{0}$. The first two terms on the l.h.s.~of \eqref{rotoe} clearly belong to that space as they  are $z$-independent. The third term equals 
\beq\label{poek}
\bea
i\big[\p_t,Q_2^\hinv\big]&=-\12\p_t(Q_2)Q_2^{\mhinv} \mbox{ mod } \miz{-\infty}, \\
&=-\12\p_t(Q_2)U^{-1}UQ_2^{\mhinv} \mbox{ mod } \miz{-\infty}, 
\eea
\eeq
where $Q_2^{\mhinv}\in \miz{-1}$ is an elliptic parametrix of $Q_2^{\hinv}$ and  $U\in \mix{1}$ is chosen elliptic and invertible. Then $U^{-1}\in\mix{-1}$ and  we have 
$$
(\p_t Q_2)U^{-1}\in \mix{1}, \quad  UQ_2^{\mhinv} \in \miz{0}, 
$$
where the second estimate is  crude, but sufficient for our purpose. We conclude that the operator in \eqref{poek} is in $\mimiz{1}{0}$ as requested.

\step{2}   We have proved in \emph{Step 1} that there exist  $A,B\in\miz{1}$ elliptic with parameter, and $R\in\mimiz{-s-1}{0}$ such that 
\beq\label{facto1}
Q(t,z)=(D_t - A(t,z))(D_t+B(t,z)) +   R(t,z).
\eeq
We can repeat the construction in \emph{Step 1} with the rôle of $A_0$ and $B_0$ reversed to  obtain $\tilde A,\tilde B\in\miz{1}$ elliptic with parameter, and $\tilde R\in\mimiz{-s-1}{0}$ such that
\beq\label{facto2}
Q(t,z)=(D_t+\tilde B(t,z))(D_t - \tilde A(t,{z})) +   \tilde R(t,{z}).
\eeq
{For the purpose of proving our main assertion, without loss of generality  we can assume that $Y=\rr^d$, and that  $\tilde A(t,z),B(t,z)\in C^\infty(\rr;\Psi^1_{{Z}}(\rr^d))$ take value in the uniform pseudo-differential class  on $\rr^d$. In fact, at each time $t\in \rr$ we can modify the definition of $Q(t,z)$ outside of a compact set and then apply the construction from \emph{Step 1} to the modified operator. This has the effect of adding   an extra error term on the r.h.s.~of \eqref{facto1} and \eqref{facto2}, but this error term vanishes outside of $M_0$ and  for this reason it will  be of no relevance in the rest of the proof (as we only want a parametrix in $M_0$).  We choose to  disregard it for the sake of simplicity of notation.  }

{In the simplified situation with $Y=\rr^d$, we want to show that $D_t - \tilde A(t,z)$ has an advanced inverse acting on $H^{-N}_\c(\rr\times \Cauchy)$, denoted in the sequel by $U_{\tilde A}^-(z)$, and  $D_t+B(t,z)$ has a retarded inverse $U_{-B}^+(z)$. This means  that $U_{\tilde A}^-(z)$ and $U_{-B}^+(z)$ are left inverses on $H^{-N}_\c(\rr\times \Cauchy)$ of the respective operators,  and for all $f\in H^{-N}_\c(\rr\times \Cauchy)$ there exists $t_0\in\rr$ such that
\beq\label{prom0}
\supp U_{\tilde A}^-({z}) f \subset \{t\leqslant t_0 \}, \quad \supp U_{-B}^+({z}) f \subset \{t\geqslant t_0 \}.
\eeq
Modulo a reparametrisation of the  time interval, the existence of $U_{-B}^+(z)$ follows from well-posedness of the inhomogenous Cauchy problem for $t\in [0,T]$ (with $T>0$ arbitrary) shown in \cite[Thm.~23.1.4]{HormanderIII}, provided the assumptions are verified.  To that end we need to check the boundedness from below condition:
\beq\label{eq:condi}
-\Im B(t,z)  \geqslant  -C \one \mbox{ on }  H^1(\rr^d)
\eeq
for all $t\in [0,T]$, where the constant  $C >0$  depends only on $T$ (we use here the notation $\Im B=\12 (B+B^*)$ and $\Re B=\12 (B-B^*)$). Indeed, by a direct computation we find 
\beq\label{eq:condi2}
\Im B=  C^*_{1/4}  (\Im Q )C_{{1/4}}+ C_0,
\eeq
where  $C_{{1/4}}(t,z) \in C^\infty(\rr;\Psi^{1/4}_{{Z}}(\rr^d))$ is an approximate square root of $\Re Q^{\hinv}(t,z)$ and $C_0(t,z)\in C^\infty(\rr;\Psi^{0}_{{Z}}(\rr^d))$, with values in the uniform pseudo-differential class on $\rr^d$. In view of  
$$
-\Im Q (t,z) = - (\Im z)  c^2(t)\geqslant 0,
$$
\eqref{eq:condi2} implies  \eqref{eq:condi} with $C=  \sup_{z\in Z}\sup_{t\in [0,T] }\| C_0(t,z) \|<+\infty$. The existence of  $U_{\tilde A}^-({z})$ is shown analogously, with  obvious sign changes\footnote{Note that for the advanced problem we need to solve the inhomogeneous Cauchy problem backwards, so the analogue of \eqref{eq:condi} has reversed sign, hence the necessity of considering $\tilde A$ in place of $-B$.}. Furthermore, thanks to the fact that the constant $C$ does not depend on $z$, the proof of \cite[Thm.~23.1.4]{HormanderIII} implies that}
\beq\label{prom}
U_{\tilde A}^-({z})= \Oreg[]{0}, \quad U_{-B}^+({z})=\Oreg[]{0},
\eeq
where  the notation refers to mapping properties $H^l_\c(\rr\times\Cauchy)\to H^l_\loc(\rr\times \Cauchy)$ for all $l\in\rr$, uniformly in $z$.

Next, using \eqref{facto1}--\eqref{facto2} we compute
$$
\bea
Q \big(  U_{\tilde A}^- - U_{-B}^+ \big)&=   \big((D_t+\tilde B)(D_t - \tilde A)+  \tilde R\big) U_{\tilde A}^- - \big((D_t - A)(D_t+B) +   R\big)U_{-B}^+\\
&=    \tilde B+A +  \tilde R U_{\tilde A}^- +R U_{-B}^+.
\eea
$$
If now $(\tilde B+A)^{\inv}\in\miz{-1}$ is an elliptic parametrix of $\tilde B+A$, we conclude that 
\beq\label{prom2}
U_{\tilde A}^- (\tilde B+A)^{\inv} \mbox{ and } \, U_{-B}^+(\tilde B+A)^{\inv} \mbox{ are }\, \Oreg[]{1}.
\eeq
and
$$
Q\big(  U_{\tilde A}^- - U_{-B}^+ \big)(\tilde B+A)^{\inv}=\one + E,
$$
where $E = \Oreg{0}$. 

\step{3} The wavefront sets of \eqref{prom} can be estimated by a variant of Egorov's theorem. More precisely, let us first show
\beq\label{tempokj}
\wfl{0}\big(U_{-B}^+(z))\big)\subset \{ (q_1,q_2)\in \Sigma^- \times \Sigma^-  \, | \, q_1\sim  q_2  \}\cup \ediag
\eeq
Let $q_1,q_2\in \cosb$ with base points $(t_1,y_1)\neq (t_2,y_2)$. If $q_1\notin \Sigma^-$ or $q_2\notin \Sigma^-$ then $(q_1,q_2)\notin \wfl{0}\big(U_{-B}^+(z)\big)$ by microlocal ellipticity. Consider now $q_1=(t_1,y_1;\tau_1,\eta_1)\in \Sigma^-$. By the arguments in the proof of \cite[Thm.~23.1.4]{HormanderIII} there exists $S(t)\in\mix{0}$ such that
\beq\label{commut}
[D_t+B(t,z),S(t)]\in \mix{-\infty}, 
\eeq
$S(t_1)\in \Psi^0(Y)$ is elliptic at $(y_1;\eta_1)$ and $\wf'(S(t))$ is a neighborhood of $\Phi^{t-t_1}\big((y_1;\eta_1)\big)$.
Consider the tensor product operator $S\otimes \one$ acting on $M=\mathbb{R}\times Y$~\footnote{Note that the tensor product of two pseudodifferential operators is not necessarily in the usual calculus.}. Furthermore, if $q_1\nsim q_2$ then that neighborhood  can be chosen in such way that $\left(S\otimes \one\right) S_2^*\in \Psi^{-\infty}(M)$ is smoothing for some $S_2\in\Psi^0(M)$ elliptic at $q_2$ since $\eta_2\neq 0 $ with the symbol of $S_2$ vanishing in some conical neighborhood of $\eta=0$  by~\cite[Thm.~18.1.35 p.~94]{HormanderIII}. In view of $q_1\in \Sigma^-$ we have $\eta_1\neq 0$ and therefore we can find $T\in\Psi^0(M)$ such that $S_1\defeq T \circ \left(S\otimes \one\right)\in \Psi^0(M)$ and $S_1$ is elliptic at $q_1$ again by~\cite[Thm.~18.1.35 p.~94]{HormanderIII}, the symbol of $T$ is also chosen to vanish in some conical neighborhood of $\eta=0$~\footnote{We have followed the method of H\"ormander~\cite[p.~390]{HormanderIII} to convert space to spacetime wavefront bounds.}.
Using \eqref{prom}, \eqref{commut} and the fact that $\left(S\otimes \one\right) S_2^*$ is smoothing, we obtain
$$
\bea
S_1 U_{-B}^+(z) S_2^* = T\left(S\otimes \one\right) U_{-B}^+(z) S_2^*& = T  U_{-B}^+(z) \left(S\otimes \one\right) S_2^*+ {T   [\left(S\otimes \one\right),U_{-B}^+(z)]} S_2^* \\ & = \Oreg{0}.
\eea
$$
Since $S_i$ is elliptic at $q_i$ this shows that $(q_1,q_2)\notin \wfl{0}\big(U_{-B}^+(z)\big)$, and in this way  we get    \eqref{tempokj}.

Using the support properties \eqref{prom0} we can improve on $\eqref{tempokj}$ and eliminate points $(q_1,q_2)$ in the wavefront {set} such that, writing $q_i=(x_i;\xi_i)$, $x_1$ is in the past of $x_2$. We can also observe that for $(q_1,q_2)\in \Sigma^+\times \Sigma^+$ (resp.~$\Sigma^-\times \Sigma^-$) with $q_1\sim q_2$,    $x_1$ is in the past of $x_2$ (resp.~in the future) if and only if $q_1{\succ}  q_2$. Therefore, we obtain that
$$
\wfl{0}\big(U_{-B}^+(z)\big)\subset \{ (q_1,q_2)\in \Sigma^-\times \Sigma^- \, | \, q_1{\succ} q_2  \}\cup \ediag.
$$
In an analogous way we prove
$$
\wfl{0}\big(U_{\tilde A}^-({z})\big)\subset \{ (q_1,q_2)\in \Sigma^+\times \Sigma^+ \, | \, q_1{\succ} q_2  \}\cup \ediag.
$$
Since $(\tilde B+A)^{\inv}\in\miz{-1}$ it follows that 
\beq\label{halfwf2}
\bea
\wfl{12}\big(U_{-B}^+ (\tilde B+A)^{\inv}\big)\subset \{ (q_1,q_2)\in \Sigma^-\times \Sigma^- \, | \, q_1{\succ} q_2  \}\cup \ediag, \\
\wfl{12}\big(U_{\tilde A}^- (\tilde B+A)^{\inv}\big)\subset \{ (q_1,q_2)\in \Sigma^+\times \Sigma^+ \, | \, q_1{\succ} q_2  \}\cup \ediag.
\eea
\eeq

We have therefore constructed a parametrix with properties \eqref{prom0}, \eqref{halfwf2} and \eqref{prom2} analogous to the ones in the statement of the proposition, but for the auxiliary operator $Q(t,z)$ instead of $P-z$.

\step{4} It now remains to reformulate the parametrix construction in terms of $P-z$. Recall that $P$ and $Q$ are related by $Q=-c^2  (\varphi^* (P-z))$, so by setting
$$
\bea
G_z^-&\defeq  - (\varphi^{-1})^* \big(U_{\tilde A}^-({z})(\tilde B+A)^{\inv}(t,z)\big) c(t)^{-2},\\
G_z^+&\defeq - (\varphi^{-1})^* \big(-U_{-B}^+({z})(\tilde B+A)^{\inv}(t,z)\big)c(t)^{-2},
\eea
$$
and $G_z\defeq G_z^+ + G_z^{-}$ we obtain a parametrix for $P-z$ with the desired properties.  
  \qed

 \subsection{Wavefront set of the resolvent}\label{ss:wavefront}   We now proceed to estimate the uniform wavefront set of $(P-z)^{-1}$. Recall that $\gamma_\varepsilon$ is the contour in the complex upper half-plane defined in \sec{ss:restocp}, cf.~Figure \ref{fig:contour}. 

\begin{lemma}\label{lem:pika} Assume that $(M,g)$ is non-trapping and $\varepsilon>0$.  Then  $\{ (P-z)^{-1}\}_{z\in\gamma_\varepsilon}$ satisfies 
\beq\label{resolventbehaves}
(P-z)^{-1}=\Onorm{1}.
\eeq
Assuming in addition non-trapping at energy $\sigma=m^2\neq 0$ and injectivity,  \eqref{resolventbehaves} holds  also true for $\{ (P-z)^{-1}\}_{z\in\gamma_0}$.
\end{lemma}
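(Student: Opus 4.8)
The plan is to read the assertion off the uniform Fredholm estimates already at hand --- Propositions \ref{prop1} and \ref{prop2} along $\gamma_\varepsilon$, and in addition Proposition \ref{prop3} near $z=0$ along $\gamma_0$ --- and to transcribe them into the operator-class notation of \sec{ss:unif}. Consider first $\gamma_\varepsilon$ with $\varepsilon>0$. Since the two unbounded branches of $\gamma_\varepsilon$ make angles $\theta$ and $\pi-\theta$ with the real axis, while on its central part the ratio $\Im z/\module{\Re z}$ stays bounded below by a positive constant, for $c\in\open{0,\tan\theta}$ small enough one has $\gamma_\varepsilon\subset Z\cap\{\module z\geqslant\varepsilon'\}$ with $Z=\{\Im z\geqslant c\module{\Re z}\}$ and $\varepsilon'=\inf_{\gamma_\varepsilon}\Im z>0$; in particular $(P-z)^{-1}$ is bounded on $L^2(M)$ for $z\in\gamma_\varepsilon$, with norm at most $(\Im z)^{-1}$. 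Fixing $\ell\in\cf(\co)$ monotone in $\Sigma_0$ with $\ell_->0$ and $\ell_+<-\tfrac12$ (which exists by the non-trapping hypothesis), I would apply Proposition \ref{prop1} --- equivalently Proposition \ref{prop2} with $N=0$, cf.\ Remark \ref{remdisc} --- to $u=(P-z)^{-1}f$ for $f\in L^2_{\rm c}(M)$; this is legitimate since $u\in L^2(M)=\Hsc{0,0}$, and the term $\err{u}$ is absorbed via $\norm{u}=\norm{(P-z)^{-1}f}\leqslant(\Im z)^{-1}\norm{f}$. Keeping only the $(\Im z)^{\tfrac12}$-weighted term on the left, dividing by $(\Im z)^{\tfrac12}$ and using $\Im z\geqslant\varepsilon'$ on $\gamma_\varepsilon$, this produces, for every $s\in\rr$ and uniformly in $z\in\gamma_\varepsilon$,
\[
\norm{(P-z)^{-1}f}_{s-\tfrac12,\ell+\tfrac12}\;\leqslant\;C_\varepsilon\,(\Im z)^{-\tfrac12}\big(\norm{f}_{s-1,\ell+1}+\norm{f}\big).
\]

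Combined with the continuous inclusions $H^l_{\rm c}(M)\hookrightarrow\Hsc{l,\ell'}$ --- valid for any $\ell'\in\cf(\co)$, since elements of $H^l_{\rm c}(M)$ vanish near $\p\M$ --- and $\Hsc{s,\ell}\hookrightarrow H^s_{\rm loc}(M)$ from \sec{ss:jj}, together with $\Im z=\module{\Im z}$ on $\gamma_\varepsilon$, the displayed estimate amounts to $(P-z)^{-1}=\Onorm{1}$ with a fixed Sobolev gain, the negative Sobolev orders being covered by duality. For $\gamma_0$ the only new feature is that $\Im z$ vanishes at $z=0$, where $(P-z)^{-1}$ is to be read as the limiting operator $(P-i0)^{-1}$; I would split $\gamma_0$ into $\gamma_0\cap\{\module z\geqslant\delta\}$ and $\gamma_0\cap\{\module z<\delta\}$ for small $\delta>0$. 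On the first piece $\Im z\geqslant\delta\sin\theta>0$ and $\gamma_0\cap\{\module z\geqslant\delta\}\subset Z\cap\{\module z\geqslant\delta\}$, so the argument above applies verbatim; on the second piece one invokes, under the additional hypotheses of non-trapping at energy $\sigma=m^2$ and injectivity, the limiting absorption estimate $\norm{(P-z)^{-1}f}_{s,\ell}\leqslant C\norm{f}_{s-1,\ell+1}$ of Proposition \ref{prop3} for $\module z<\delta$, $\Im z\geqslant0$ (with $\ell$ as in Hypothesis \ref{hyp3}), which is a gain of one derivative with $\cO(1)$ operator norm and in particular gives $(P-z)^{-1}=\Onorm{1}$ there. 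Patching the two pieces (keeping the smaller gain) yields the claim along $\gamma_0$.

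I do not expect a substantial obstacle: Lemma \ref{lem:pika} is in essence a reformulation of Propositions \ref{prop1}, \ref{prop2} and \ref{prop3} in the formalism of \sec{ss:unif}, the hard analysis residing entirely in those statements. The points requiring care are the geometric check that $\gamma_\varepsilon$, and $\gamma_0$ away from $z=0$, lies in a region $Z\cap\{\module z\geqslant\delta\}$ on which the sectorial hypothesis of Proposition \ref{prop1} holds; the harmless absorption of the $\err{\cdot}$ term using self-adjointness of $P$; and --- along $\gamma_0$ --- the treatment of a neighbourhood of $z=0$, where one must fall back on the stronger limiting-absorption estimate, which is precisely where the non-trapping-at-$m^2$ and injectivity hypotheses enter.
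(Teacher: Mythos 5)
Your argument is correct and follows essentially the same route as the paper's proof: both rest on the uniform weighted estimate of Proposition \ref{prop2} with $N=0$ (Remark \ref{remdisc}, i.e.~\eqref{soboma}), converted via the inclusions $H^l_{\rm c}(M)\subset \Hsc{l,L}$ and $\Hsc{l+\frac12,-\epsilon}\subset H^{l+\frac12}_{\rm loc}(M)$ into bounds of size $\cO(|\Im z|^{-\frac12})$, with negative orders handled by duality and with Proposition \ref{prop3} supplying the control near $z=0$ on $\gamma_0$. The only detail you leave implicit is that the duality step requires the same estimate for the adjoint $(P-\bar z)^{-1}$, i.e.~for $\Im z<0$, which holds by the same propositions after replacing $P$ by $-P$ (exchanging $L_+$ and $L_-$), as the paper notes explicitly.
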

\beproof By \eqref{soboma} with $N=0$, $(P-z)^{-1}$ and $(P-\bar{z})^{-1}$ are  $\cO(\left|\Im z\right|^{-\12})$ for $z\in\cc\setminus \rr$ as bounded maps $H^l_{\c}(M)\to H^{l+\12}_{\loc}(M)$ for all $l\geqslant 0$.  The analogous claim for $l$ negative follows by duality. Finally, the $\gamma_0$ case is shown similarly using \eqref{est3p} to get control in  $z$ down to the real axis.  \qeds

We now state the key lemma. 

\begin{lemma}\label{keylemma} Assume that $(M,g)$ is globally hyperbolic, non-trapping, and let $\varepsilon>0$. {For each bicharacteristic $\gamma$ there exists $U_\gamma\subset M$  such that} if  $G_z$,  $z\in Z$, is  a uniform parametrix {in $U_\gamma$} as in Proposition \ref{painful}, then for all  $s\in\rr$
$$
\wfl{12}\big((P-z)^{-1}-G_{z}\big) \cap (\gamma \times \cosb) \subset \ediag.
$$
\end{lemma}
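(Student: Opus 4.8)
The plan is to compare $(P-z)^{-1}$ with the uniform parametrix $G_z = G_z^+ + G_z^-$ from Proposition \ref{painful}, using the radial estimates near the sources/sinks $L_\pm$ to fix the behaviour at the boundary, and then propagate along the flow through the interior. First I would note that $(P-z)(P-z)^{-1} = \one$ and $(P-z)G_z = \one + R_z$ with $R_z = \Oreg{0}$ \emph{in} $U_\gamma$, so that $F_z \defeq (P-z)^{-1} - G_z$ satisfies $(P-z)F_z = -R_z = \Oreg{0}$ microlocally over $U_\gamma$, and likewise $F_z(P-z)^* = \Oreg{0}$ microlocally. By Lemma \ref{lem:pika}, $(P-z)^{-1} = \Onorm{1}$, and $G_z^\pm = \Onorm{12}$, so $F_z = \Onorm{1}$ and Proposition \ref{opprop} (together with Remark \ref{extraremark}, which upgrades the order to $s$ starting from $\wfl[s-\frac12]{3}$) applies.

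**The key steps, in order.** (1) \emph{Radial points.} The decisive input is that both $(P-z)^{-1}$ and $G_z$ have the \emph{same} behaviour at the radial sets $L_\pm$. For $(P-z)^{-1}$ this comes from the higher/lower decay radial estimates recalled after Proposition \ref{prop1}: $(P-z)^{-1}f$ has extra decay at the sources $L_-$ and is controlled at the sinks $L_+$, uniformly in $z\in\gamma_\varepsilon$. For $G_z^\pm$, the support property \eqref{supopo} and the half-wavefront bound \eqref{halfwf} force the analogous structure — in particular $G_z^\pm$ has no wavefront set over a neighborhood of the appropriate radial set once one localizes near $\gamma$. Hence one can arrange that $\wfl{12}(F_z)$ avoids (a neighborhood in $\cosb\times\cosb$ of) $L_\pm \times \cosb$ over $U_\gamma$; concretely, I would set up the weighted Sobolev estimate with a weight $\ell$ monotone along the flow, $\ell_- > -\frac12$, $\ell_+ < -\frac12$, so that the radial estimate is a source-type estimate at $L_-$ and a sink-type estimate at $L_+$, giving an a priori bound on $F_z$ microlocalized near $L_-$ with no loss, and the usual propagation elsewhere. (2) \emph{Propagation through the interior.} Using that $(P-z)F_z = \Oreg{0}$ microlocally over $U_\gamma$ and that the wavefront set is already controlled near the source $L_-$, apply the propagation of singularities of Proposition \ref{opprop}/Remark \ref{extraremark} along the bicharacteristic $\gamma$: singularities flow from $L_-$ into the interior and out to $L_+$, so if $F_z$ is regular (in the $\wfl{}$ sense) at the incoming radial set, it remains regular all along $\gamma$, except possibly over the diagonal $\ediag$ where the error $R_z$ itself may not be smoothing off-diagonal is not an issue — the point is that we only claim the estimate modulo $\ediag$. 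One runs the same argument for the adjoint $F_z^*$ to control the second variable, then combines via Lemma \ref{lemadj}. (3) \emph{Choice of $U_\gamma$.} Here $U_\gamma$ is taken to be a neighborhood of the projection to $M$ of the closure of $\gamma$ together with enough room near $\pM$ that the parametrix of Proposition \ref{painful} exists there (so that $\varphi^* U_\gamma \cap (\{t\}\times Y)$ is relatively compact); then $R_z = \Oreg{0}$ genuinely holds over $U_\gamma$ and the whole bicharacteristic $\gamma$ lies in the region where the comparison is valid.

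**Main obstacle.** The hard part is step (1): matching the behaviour of $(P-z)^{-1}$ and $G_z$ precisely at the radial sets $L_\pm$, \emph{uniformly in $z$ along $\gamma_\varepsilon$}, and checking that the parametrix $G_z$ from Proposition \ref{painful} — which was built purely from the time-dependent factorization and the retarded/advanced inverses $U_{-B}^+, U_{\tilde A}^-$, objects adapted to fiber infinity $\fibinf$ and carrying no a priori information at base infinity $\basinf$ — nonetheless inherits the correct decay/growth at $L_\pm$ once one knows its wavefront set is of Feynman type over the interior. The resolution is that near $L_-$ (a source in $\basinf$) the \emph{source} radial estimate is a self-improving a priori estimate: any $u\in\Hsc{s',\ell'}$ with $\ell'_->-\frac12$ solving $(P-z)u$ smooth near $L_-$ automatically lies in $\Hsc{s,\ell}$ there with $\ell_->-\frac12$, \emph{without} needing a control term at $L_-$. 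Thus $F_z = (P-z)^{-1} - G_z$, which a priori lies in $\Hsc{s',\ell'}$-type spaces with barely-better-than-$L^2$ decay at $L_-$ (using Lemma \ref{lem:pika} and the mapping properties of $G_z$), is forced to be regular near $L_-$; at $L_+$ the \emph{sink} estimate propagates this from $L_-$ out. The delicate bookkeeping is ensuring all constants are uniform for $z$ on the non-compact contour $\gamma_\varepsilon$ — this is exactly what the uniform formulations of Propositions \ref{prop1}, \ref{opprop} and of Proposition \ref{painful} are designed to provide, so the argument is essentially a matter of assembling these uniform estimates in the right order rather than proving anything genuinely new analytically.
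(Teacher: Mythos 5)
There is a genuine gap, and it is located exactly where you place the ``main obstacle'': step (1) cannot be carried out for the \emph{full} difference $F_z=(P-z)^{-1}-G_z$. First, a point of bookkeeping: the radial sets $L_\pm$ live at base infinity $\basinf$, not in $\cosb$, so ``$\wfl{12}(F_z)$ avoids $L_\pm\times\cosb$'' is not the right currency; what the higher decay radial estimate (Proposition \ref{radial1}) actually needs as input is above-threshold decay ($\ell'_->-\frac{1}{2}$) of $F_z u$ in weighted scattering spaces near $L_-$. For $(P-z)^{-1}u$ this comes from Proposition \ref{prop1}, but for $G_z u$ you have nothing: the only known mapping property is the interior one, $G_z^\pm=\Onorm{12}$, i.e.\ $H^l_\c(M)\to H^{l+s}_\loc(M)$, which carries \emph{no} information at $\basinf$, so your assertion that $F_z$ ``a priori lies in $\Hsc{s',\ell'}$-type spaces with barely-better-than-$L^2$ decay at $L_-$ using Lemma \ref{lem:pika} and the mapping properties of $G_z$'' is unfounded. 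Moreover, by \eqref{supopo} each half $G_z^\pm u$ vanishes only near \emph{one} temporal end, so near each of the two components $L_-^\pm=L_-\cap\Sigma^\pm$ of the sources exactly one of the halves is nonzero and uncontrolled at base infinity: the sum $G_z$ is controlled at neither component, and the radial estimate cannot be applied to $F_z$ at all. (At the sinks $L_+$ the weights are below threshold, so no free regularity is available there either --- nor is it needed.) A symptom that the architecture is off: since $(P-z)F_z=-R_z$ is uniformly regularizing on $U_\gamma$, if your step (1) did hold, forward propagation would give a bound with \emph{no} diagonal exception over $\gamma\times\cosb$, strictly stronger than the lemma; the $\ediag$ term never arises in your scheme, whereas it is essential in the statement.

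The paper's proof hinges on a device absent from your proposal: compare the resolvent with each half \emph{separately}. Near the component $L_-^\mp$ lying over the far past/future, both the compactly supported $u$ and $G_z^\pm u$ vanish identically, so $v=\big((P-z)^{-1}-G_z^\pm\big)u$ is there an exact uniform solution of $(P-z)v=0$ with above-threshold decay inherited from the resolvent via Proposition \ref{prop1}; Proposition \ref{radial1} then removes the scattering wavefront at $L_-^\mp$ uniformly in $z$, and propagation of singularities together with non-trapping gives $\wfl{1}\big((P-z)^{-1}-G_z^\pm\big)\subset\Sigma^\pm\times\cosb$ over $\gamma\times\cosb$. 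Adding $G_z^\mp$ back via \eqref{halfwf} yields, for each sign, the bound $(\Sigma^\pm\times\cosb)\cup\ediag$ for the full difference, and the conclusion follows by intersecting the two bounds, using $\Sigma^+\cap\Sigma^-=\emptyset$; this final intersection is precisely what produces the containment in $\ediag$. A secondary issue: Proposition \ref{painful} provides only a \emph{right} parametrix, so your claim that $F_z(P-z)^*=\Oreg{0}$ (needed for the adjoint/second-variable propagation) is not justified; the paper sidesteps this by propagating only in the first variable and leaving the second factor unconstrained as $\cosb$ until the intersection step.
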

\beproof {Let $U_\gamma$ be a small enough neighborhood of the base projection of $\gamma$ so that for each $t\in \rr$,  $\varphi^* U_\gamma \cap (\{ t \} \times Y)$ is included in a compact set. Then Proposition   \ref{painful} yields a uniform parametrix  $G_z=G_z^++G_z^-$  in $U_\gamma$, where $G_z^\pm$ solves a pseudo-differential retarded/advanced problem.}  Let $L_{-}^\pm=L_-\cap \Sigma^\pm$ be the future/past component of the sources $L_-$. By Fredholm estimates, i.e.~Proposition \ref{prop1}, for any $s\in\rr$ and any bounded subset $\cU\subset H^{s-1}_{\rm c}(M)$, the set $(P-z)^{-1} \cU$ is uniformly bounded in $\Hsc{s,\ell}$ for arbitrary $s\in\rr$ and for some $\ell$ with $\ell_->-\12$, thus in particular with $\ell>-\12$ in a neighborhood of $L_{-}^\mp$. By support properties of $G_{z}^{\pm}$, i.e.~by \eqref{supopo}, $G_{z}^{\pm}\cU$ {vanishes in the far past/future.  Therefore,} $G_{z}^{\pm}\cU$ is uniformly bounded in $\Hsc{s,\ell}$ (after possibly modifying the definition of $\ell$ outside of a neighborhood of $L_{-}^\pm$).

  We can therefore apply the higher decay radial estimate (Proposition \ref{radial1}) to the family $(P-z)^{-1}-G_{z}^{\pm}$, which is a uniform bi-solution of $(P-z)$ microlocally near ${\gamma\cap}\Sigma^\mp$.  This allows us to  conclude that 
$
B^{\pm} \big( (P-z)^{-1}-G_{z}^{\pm} \big)  \cU
$
is $\cO_{\cf}(\bra z\ket^{-\12})$ for some $B^\pm\in \Psi^{0,0}_\sc(M)$ elliptic on $L_{-}^\mp$.  Thus, { in $\gamma \times \cosb$}, $L_{-}^\mp\times \cosb$ is disjoint from $\wfl{1}\big( (P-z)^{-1}-G_{z}^{\pm} \big)$.  By the non-trapping assumption and propagation of singularities, the whole flowout of $L_{-}^\mp\times \cosb$  (in the first variable) within $\Sigma^\mp$ is disjoint from $\wfl{1}\big( (P-z)^{-1}-G_{z}^{\pm} \big)$  { in $\gamma \times \cosb$}. This means that 
$$
\wfl{1}\big( (P-z)^{-1}-G_{z}^{\pm} \big)\subset \Sigma^\pm\times \cosb
$$ 
 { in $\gamma \times \cosb$}. We now combine this with \eqref{halfwf} to conclude:
$$
\bea
\wfl{12}\big( (P-z)^{-1}-G_{z}\big)&\subset \wfl{12}\big( (P-z)^{-1}-G_z^\pm\big)+\wfl{12}\big(G_{z}^{\mp} \big)\\ &\subset (\Sigma^\pm \times\cosb)\cup \ediag
\eea
$$
{ in $\gamma \times \cosb$}. Since $\Sigma^+\cap \Sigma^-=\emptyset$, this implies the assertion of the lemma.
\qed

\begin{theorem}\label{thm:wf} Assume that $(M,g)$ is non-trapping,  globally hyperbolic, and let $\varepsilon>0$. Then for any $s\in\rr$,  the family  $\{(P-z)^{-1}\}_{z\in \gamma_\varepsilon}$ satisfies
\beq\label{feynwf2}
\wfl{12}  \big( ( P-z)^{-1} \big)\subset \{  (q_1,q_2)\in \Sigma\times \Sigma \, | \, q_1 {\succ} q_2 \}\cup \ediag.
\eeq
Moreover, suppose that $H_z$ is a local uniform parametrix of order $s$ for $P-z$ in the sense of Definition \ref{defpara}, and $H_z$ also satisfies \eqref{feynwf2}. Then for all $x\in M$ there exists $\chi\in \ccf$ with $\chi(x)=1$ such that  
\beq\label{feynwf3} 
\chi (P-z)^{-1} \chi  = \chi  H_z \chi + \Oreg{12}.
\eeq
\end{theorem}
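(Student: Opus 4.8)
The plan is to obtain both assertions by combining Lemma~\ref{keylemma}, the structural properties \eqref{supopo}--\eqref{halfwf} of the parametrix of Proposition~\ref{painful}, and the uniqueness result Proposition~\ref{uniqueness}; no new microlocal estimates are needed. For the uniform wavefront bound \eqref{feynwf2}, I would first use Lemma~\ref{lem:pika} to know that $(P-z)^{-1}=\Onorm{1}$ for $z\in\gamma_\varepsilon$, so that the uniform operator wavefront set is defined, and fix an angle $Z$ in $\{\Im z\geqslant 0\}$ with vertex at the origin with $\gamma_\varepsilon\subset Z$. Let $(q_1,q_2)\in\wfl{12}\big((P-z)^{-1}\big)$. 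If $q_1\notin\Sigma$, then $P-z$ is microlocally elliptic at $q_1$ and $(P-z)(P-z)^{-1}=\one$, so the elliptic part of Proposition~\ref{opprop} (together with Remark~\ref{extraremark}, available since $\Im z\geqslant\varepsilon/2$ on $\gamma_\varepsilon$) gives $(q_1,q_2)\notin\wfl{12}\big((P-z)^{-1}\big)$, a contradiction; hence $q_1\in\Sigma$. Now pick a bicharacteristic $\gamma\ni q_1$ and let $U_\gamma$, $G_z=G_z^{+}+G_z^{-}$ ($z\in Z$) be the neighbourhood and uniform parametrix in $U_\gamma$ supplied by Lemma~\ref{keylemma} and Proposition~\ref{painful}, so that $\wfl{12}\big((P-z)^{-1}-G_z\big)\cap(\gamma\times\cosb)\subset\ediag$, while \eqref{halfwf} and Lemma~\ref{wfs} give $\wfl{12}(G_z)\subset\{(q,q')\in\Sigma\times\Sigma\mid q{\succ}q'\}\cup\ediag$ (the components $\Sigma^{\pm}$ do not mix, since $q{\succ}q'$ forces $q,q'$ onto the same bicharacteristic). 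By Lemma~\ref{wfs} again, $\wfl{12}\big((P-z)^{-1}\big)\subset\wfl{12}\big((P-z)^{-1}-G_z\big)\cup\wfl{12}(G_z)$, and intersecting with $\gamma\times\cosb\ni(q_1,q_2)$ yields $q_1{\succ}q_2$ or $(q_1,q_2)\in\ediag$. Since every point of $\Sigma$ lies on some bicharacteristic and $q{\succ}q'$ entails $q,q'\in\Sigma$, this is \eqref{feynwf2}.

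For the second assertion, the key observation is that $(P-z)^{-1}$ is itself a uniform local parametrix of order $s$ for $P-z$ in the sense of Definition~\ref{defpara}: it is $\Onorm{1}$ by Lemma~\ref{lem:pika}, and $(P-z)(P-z)^{-1}=\one$, so one may take the error $R_z\equiv 0$, whose uniform wavefront set is empty; and by the first part it satisfies \eqref{feynwf2}, which is exactly the Feynman condition \eqref{feynwf}. By hypothesis $H_z$ is also a uniform local parametrix of order $s$ satisfying \eqref{feynwf2}. Since $\Im z\geqslant\varepsilon/2>0$ everywhere on $\gamma_\varepsilon$, the sharpened propagation of Remark~\ref{extraremark} is at our disposal, so Proposition~\ref{uniqueness}, applied with $G_z=(P-z)^{-1}$ and $\widetilde G_z=H_z$, produces a neighbourhood $U$ of $\ediag$ in $\cosb\times\cosb$ with $\wfl{12}\big((P-z)^{-1}-H_z\big)\cap U=\emptyset$.

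It then remains to localise near the diagonal. Fixing $x\in M$, we have $\cosba{x}\times\cosba{x}\subset\ediag\subset U$, so the uniform wavefront set of $(P-z)^{-1}-H_z$ is empty over $\cosba{x}\times\cosba{x}$. The proof of the (unlabelled) Proposition following Lemma~\ref{wfs}, run with $x_1=x_2=x$, then produces --- out of this local emptiness, using a microlocal partition of unity on $\cosba{x}$ and microlocal parametrices --- a cutoff $\chi\in\ccf$ with $\chi\equiv 1$ near $x$, hence $\chi(x)=1$, for which $\chi\big((P-z)^{-1}-H_z\big)\chi=\Oreg{12}$, which is \eqref{feynwf3}.

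The real content is already contained in Lemma~\ref{keylemma} and Proposition~\ref{painful} (the radial and propagation estimates, the $t$-dependent pseudodifferential factorisation of $P-z$, and the retarded/advanced inverses of the resulting first-order factors), so what remains here is essentially bookkeeping. The one point that needs attention is the order and weight of the uniform wavefront set throughout the argument: it is precisely the strict positivity $\Im z\geqslant\varepsilon/2>0$ on $\gamma_\varepsilon$, and the ensuing availability of the improved estimates of Remark~\ref{extraremark}, that lets the elliptic step and Proposition~\ref{uniqueness} deliver the conclusion at the order appearing in \eqref{feynwf2}--\eqref{feynwf3}. Patching together the different local parametrices $G_z^\gamma$ as $\gamma$ ranges over all bicharacteristics is harmless, because the final conclusions for $(P-z)^{-1}$ never refer to $G_z^\gamma$.
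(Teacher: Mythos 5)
Your proposal is correct and takes essentially the same route as the paper: \eqref{feynwf2} is deduced from Lemma \ref{keylemma} applied to all bicharacteristics together with the parametrix bound \eqref{halfwf} (summed via Lemma \ref{wfs}), and \eqref{feynwf3} from Proposition \ref{uniqueness} applied to $(P-z)^{-1}$ (viewed as a uniform parametrix with $R_z=0$) and $H_z$. The extra details you supply — the elliptic step ruling out $q_1\notin\Sigma$ via Proposition \ref{opprop}, and the localization producing the cutoff $\chi$ from the emptiness of the wavefront set near $\ediag$ — are exactly the steps the paper's terse proof leaves implicit, and your remarks on matching the weights in $z$ are consistent with the paper's own bookkeeping.
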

\beproof The estimate \eqref{feynwf2} follows now directly from Lemma \ref{keylemma} {applied to all bicharacteristics $\gamma$} and  the fact that 
\beq\label{halfwfr2}
\wfl{12}(G_z)\subset \{ (q_1,q_2)\in \Sigma\times \Sigma\, | \, q_1{\succ} q_2  \}\cup \ediag
\eeq
by Proposition \ref{painful}. The second assertion follows directly from  \eqref{feynwf2} and Proposition \ref{uniqueness}.
\qeds
 
We will  show  that  a local uniform parametrix of arbitrarily high order can be obtained by a $z$-dependent variant of the Hadamard parametrix construction. 

The result {\eqref{feynwf3}} is satisfactory for many purposes, we remark however that  it does not give stronger decay of the error term on the r.h.s.~even if $(P-z)H_z-\one$ has better decrease in $z$. {For this reason} in \sec{sec:gluing} we will use a more precise composition argument. 

\ber  Assuming in addition non-trapping and injectivity at energy $\sigma=m^2\neq 0$,  Lemma \ref{keylemma}  and Theorem \ref{thm:wf}  hold also true for $\gamma_0$ instead of $\gamma_\varepsilon$.
\eer

\ber\label{rem:vector1} All the results in \secs{sec:complexpowers}{sec:wf}  generalize in a straightforward way to the case when $P$ is a principally scalar wave operator on a finite-dimensional Hermitian bundle $E$, provided that $P$ is formally self-adjoint for the  canonical scalar product induced by the Hermitian form on fibers and by the volume form. We stress that this requires to have a scalar product which is in particular \emph{positive}.
In more general situations such as the wave equation on tensors, the propagation estimates need to be modified, see e.g.~\cite{Hintz2017}. 
\eer
\section{The elementary family \texorpdfstring{$\Fs{z}$}{F(z)}}\label{sec:elementary}

\subsection{Definition of the family \texorpdfstring{$\Fs{z}$}{F(z)}}\label{ss:elementary} In this section we  define a family $\Fse{z}$ of distributions on $\rr^n$ which is the first ingredient in the Hadamard parametrix construction. We analyze its regularity properties and its dependence on the complex parameter $z$.  More precisely, we control the wavefront set uniformly in $z$ along the contour $\gamma_\varepsilon$ defined in \sec{ss:restocp}. We also study the {H\"older} regularity asymptotically in the parameter $z$ on the upper half-plane $\{\Im z>0\}$ down to $z\in \mathbb{R}\setminus \{0\}$.

Let $\cv\in\cc$. When writing  complex powers we always use the usual branch of the $\log$ defined on $\mathbb{C}\setminus \opencl{-\infty,0}$.
For $\Im z> 0$, we define the distribution in the $x\in \mathbb{R}^{n}$ variable 
\begin{equation}\label{e:defF}
\Fs{z}=\frac{\Gamma(\cv+1)}{(2\pi)^{n}} \int e^{i\left\langle x,\xi \right\rangle}\left(\vert\xi\vert_\eta^2-z \right)^{-\cv-1}d^{n}\xi
\end{equation}
in the sense of an inverse Fourier transform, where $\eta=dx_0^2-(dx_1^2+\cdots+dx_{n-1}^2)$ is the flat Minkowski metric, and $
\vert\xi\vert_\eta^2 = -\xi \cdot \eta^{-1} \xi=  -\xi_0^2+\sum_{i=1}^{n-1}\xi_i^2
$ is defined for convenience with a \emph{minus} sign. The distribution \eqref{e:defF} is Lorentz invariant.

Next, we extend the definition \eqref{e:defF} to $z\in\rr\setminus\{0\}$. To that end we   define  the family of distributions $\left(\vert\xi\vert_\eta^2-z-i0 \right)^{-\cv-1}$ corresponding to taking the limit of  $\left(\vert\xi\vert_\eta^2-z\right)^{-\cv-1}$ as $\Im z\rightarrow 0^+$. More precisely, denoting $Q(\xi)=\vert\xi\vert_\eta^2$, for $z\in \mathbb{R}$  we define as in \cite[III, \S2.4]{Gelfand-ShilovI},
$$ 
\left(Q(\xi)-z-i0\right)^{-\cv}=\lim_{\varepsilon\rightarrow 0^+} (Q(\xi)-z-i\varepsilon)^{-\cv},
$$
considered first as a distribution on $\rr^n\setminus \{0\}$.

\begin{prop}\label{prop:flatfeynm}
The family of distributions $\{ \left(Q(\xi)-i0\right)^{-\cv}\}_{\cv\in \mathbb{C}}$ is well-defined
on $\mathbb{R}^{n}\setminus \{0\}$ by pull-back. It extends homogeneously to
 $\mathbb{R}^{n}$ as a meromorphic family in $\cv\in \mathbb{C}$
with simple poles contained in $\mathbb{N}+\frac{n}{2}$. The residues at the poles are distributions 
supported at $0\in \mathbb{R}^{n}$. 

On the other hand, if $z\in \mathbb{R}\setminus \{0\}$, then
$\{ \left(Q(\xi)-z-i0\right)^{-\cv}\}_{\cv\in \mathbb{C}}$ is a {holomorphic} family of distributions on $\rr^n$.
\end{prop}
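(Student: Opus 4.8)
The whole proposition rests on one elementary principle, which I will use three times. \emph{Principle.} If $X$ is a smooth manifold and $F\in\cf(X;\rr)$ has $dF\neq 0$ at every point of $F^{-1}(0)$, then $\{(F-i0)^{-\cv}\}_{\cv\in\cc}$, defined as $\lim_{\varepsilon\to 0^+}(F-i\varepsilon)^{-\cv}$, is a well-defined \emph{holomorphic} family of distributions on $X$. To see this I would cover $X$ by $\{F\neq 0\}$ and a tubular neighbourhood $V$ of the smooth hypersurface $F^{-1}(0)$ on which $F$ is a submersion: on $\{F\neq 0\}$ the limit is the smooth function equal to $F^{-\cv}$ where $F>0$ and to $\module{F}^{-\cv}e^{i\pi\cv}$ where $F<0$, holomorphic (in fact entire) in $\cv$; on $V$, $(F-i0)^{-\cv}$ is the pull-back $F^{*}\!\big((t-i0)^{-\cv}\big)$ of the corresponding one-variable distribution, which is an entire $\cD'(\rr)$-valued family by \cite[III, \S2.4]{Gelfand-ShilovI}. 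The two descriptions agree on the overlap, so a partition of unity glues them into a distribution on $X$, holomorphic in $\cv$. The key consequence is that the \emph{only} mechanism that can produce poles in $\cv$ is a failure of the condition $dF\neq 0$ on $F^{-1}(0)$; in particular the one-variable family $(t-i0)^{-\cv}$ carries no poles of its own.

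First I would apply this with $X=\rr^n\setminus\{0\}$ and $F=Q$, where $dQ_\xi$ is proportional to $\eta^{-1}\xi$ and hence non-zero for $\xi\neq 0$: this produces a holomorphic family $(Q-i0)^{-\cv}$ on $\rr^n\setminus\{0\}$, homogeneous of degree $-2\cv$ and invariant under $\xi\mapsto-\xi$. To extend it across the origin I fix $\chi\in\cf(\rr^n)$ with compact support and $\chi\equiv 1$ near $0$; the contribution of $(1-\chi)f$ to any pairing is entire in $\cv$ by the previous step, while for $\chi f$, passing to polar coordinates $\xi=r\theta$ and using homogeneity gives, for $\Re\cv<\frac{n}{2}$,
\beq\label{eq:mellinplan}
\big\langle (Q-i0)^{-\cv},\,\chi f\big\rangle=\int_0^{\infty} r^{\,n-1-2\cv}\,g(r)\,dr,\qquad g(r)=\big\langle (Q-i0)^{-\cv}\big|_{S^{n-1}},\,\chi(r\,\cdot)f(r\,\cdot)\big\rangle.
\eeq
Here $(Q-i0)^{-\cv}|_{S^{n-1}}$ is again a holomorphic family of distributions, by the Principle applied to $F=Q|_{S^{n-1}}$ on $X=S^{n-1}$: one checks in one line that $d(Q|_{S^{n-1}})$ cannot vanish where $Q|_{S^{n-1}}=0$, since $dQ_\theta\in N^{*}_\theta S^{n-1}$ would force $\theta$ to be an eigenvector of $\eta^{-1}$ and hence $Q(\theta)\neq 0$. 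Consequently $g\in\cf([0,\infty))$ with compact support, and \eqref{eq:mellinplan} continues meromorphically in $\cv$ by Taylor-expanding $g$ at $r=0$: the monomial $r^{k}$ produces a \emph{simple} pole at $\cv=\frac{n+k}{2}$ whose residue is proportional to the $k$-th Taylor polynomial of $f$ at $0$ paired with $(Q-i0)^{-(n+k)/2}|_{S^{n-1}}$, hence a distribution supported at $\{0\}$. This already yields a meromorphic family on $\rr^n$ with simple poles in $\frac{n}{2}+\frac{1}{2}\nn_{\geqslant 0}$ and residues supported at $0$; finally, since $(Q-i0)^{-\cv}$ is even in $\xi$, the residue pairing vanishes identically when $k$ is odd, so the poles are in fact confined to $\frac{n}{2}+\nn_{\geqslant 0}=\mathbb{N}+\frac{n}{2}$. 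The extension is homogeneous of degree $-2\cv$ away from the poles: it is so on $\rr^n\setminus\{0\}$, and the residual $0$-supported terms in \eqref{eq:mellinplan} are homogeneous of the matching degree.

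For $z\in\rr\setminus\{0\}$ the assertion is immediate from the same Principle, now applied directly on $X=\rr^n$ to $F=R_z:=Q-z$: because $z\neq 0$, the zero set $R_z^{-1}(0)=\{Q=z\}$ does not meet the origin, and on it $dR_z=dQ\neq 0$; hence there is no obstruction anywhere and $\{(Q-z-i0)^{-\cv}\}_{\cv\in\cc}$ is a holomorphic family of distributions on all of $\rr^n$, with no poles. The only genuinely delicate point I anticipate is the precise determination of the pole set in the first part: the two facts that make it come out as $\mathbb{N}+\frac{n}{2}$ rather than the naive $\frac{n}{2}+\frac{1}{2}\nn_{\geqslant 0}$ are that $(t-i0)^{-\cv}$ carries no poles of its own — so the origin of $\rr^n$ is the sole source of singularities — and the parity cancellation of the residues at the half-integer shifts; simplicity of the poles and the $0$-support of the residues then fall out of \eqref{eq:mellinplan}, and the remaining steps (partitions of unity, the submersion computations, and the Taylor/Mellin continuation) are routine bookkeeping.
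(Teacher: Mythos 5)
Your proof is correct, and it reaches the conclusion by a genuinely different route in the key middle step. The paper's own argument also begins with H\"ormander's pull-back theorem applied to $(t-i0)^{-\cv}$ along $Q$ on $\mathbb{R}^n\setminus\{0\}$ (your ``Principle'' is a self-contained reformulation of exactly that, including the gluing over $\{Q\neq 0\}$), but for the extension across the origin it simply invokes homogeneity of degree $-2\cv$ together with H\"ormander's Theorems 3.2.3--3.2.4 and then cites Gelfand--Shilov for the precise facts that the poles are simple, lie in $\mathbb{N}+\frac{n}{2}$, and have residues given by derivatives of $\delta_0$. You instead derive this pole structure by hand: restriction of the family to the unit sphere (correctly justified by checking that $d(Q|_{S^{n-1}})$ cannot vanish on the cone), a polar/Mellin representation of the pairing, Taylor expansion of the angular pairing at $r=0$, and the evenness of $(Q-i0)^{-\cv}$ to remove the would-be poles at $\frac{n+k}{2}$ with $k$ odd. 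What this buys is a self-contained proof with an explicit description of the residues; what the paper's route buys is brevity and, as a by-product of the pull-back theorem, the wavefront bound it uses later. Two points in your argument deserve a line more care: (i) the validity of your polar-coordinate formula, i.e.\ that the angular distribution appearing there really is $(Q|_{S^{n-1}}-i0)^{-\cv}$ --- easiest to see either from the regularization $(Q-i\varepsilon)^{-\cv}$ with $\varepsilon>0$, or from the standard bijection between homogeneous distributions on $\mathbb{R}^n\setminus\{0\}$ and distributions on $S^{n-1}$; and (ii) homogeneity of the extension at non-pole values, which is cleanest by analytically continuing the scaling identity from the half-plane $\Re\cv<\frac{n}{2}$, rather than via your remark about the $0$-supported correction terms (at generic $\cv$ those are homogeneous of degree $-n-k$, not $-2\cv$, so that remark as stated does not do the job). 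The second half of your argument, for $z\in\mathbb{R}\setminus\{0\}$, coincides with the paper's.
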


\begin{proof} The meromorphic family of 
distributions $(t-i0)^{-\cv}$ in $\pazocal{S}^\prime(\mathbb{R})$ has singular support only at $t=0$. 
Observe that along the cone $Q=0$, we have
$dQ(\xi)\neq 0$ when $\xi\neq 0$. Therefore,
the pull-back $Q^*(t-i0)^{-\cv}$ is well-defined on $\mathbb{R}^{n}\setminus\{0\}$
with wavefront set contained in $\{ (\xi;\widehat{\xi}) \, | \, Q(\xi)=0, \, \widehat{\xi}=\tau dQ(\xi), \ \tau<0  \}$ by the pull-back theorem \cite[Thm.~8.2.4 p.~263]{H}, see also~\cite[(8.2.6) p.~265]{H}.
The distribution $Q^*(t-i0)^{-\cv}$ is homogeneous of degree $-2\cv$ hence
by  \cite[Thm.~3.2.3 p.~75]{H}, it has a unique extension as a holomorphic family of distributions
in $\cv\in \mathbb{C}\setminus \{0,1,\dots,n,\dots\}$ defined on $\mathbb{R}^{n}$. {It has poles which are} 
contained  in $\{0,1,\dots,n,\dots\}$ by \cite[Thm.~3.2.4]{H}. 
Furthermore, \cite[III, \S2.4]{Gelfand-ShilovI} tells us that the poles are simple, they  are actually contained in $\mathbb{N}+\frac{n}{2}$~\cite[p.~275]{Gelfand-ShilovI} and the residues are derivatives of $\delta_{0}$~\cite[$(1), (1)^\prime$ p.~276]{Gelfand-ShilovI}. 

In the case of $\left(Q(\xi)-z-i0\right)^{-\cv}_{\cv\in \mathbb{C}}$, we  start from the holomorphic family of distributions 
$(t-z-i0)^{-\cv}$ which has singular support at $t=z\neq 0$. The difference is that  the pull-back by the map 
$$  \mathbb{R}^{n}\ni\xi\mapsto Q(\xi)-z\in \mathbb{R} $$
can be applied everywhere since for all $\xi$ such that $Q(\xi)-z=0$ we have $dQ(\xi)\neq 0$.
\end{proof}

\begin{coro}
By inverse Fourier transform, 
$$
\Fs{z}=\frac{\Gamma(\cv+1)}{(2\pi)^{n}} \int e^{i\left\langle x,\xi \right\rangle}\left(\vert\xi\vert_\eta^2-i0-z \right)^{-\cv-1}d^{n}\xi
$$
is a well-defined family of distributions on $\rr^n$, holomorphic in $\cv\in\cc\setminus\{-1,\dots,-k,\dots\}$ for $z\in \{\Im z\geqslant 0\}\setminus \{0\}$.
\end{coro}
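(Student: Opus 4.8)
The plan is to deduce the statement from the second part of Proposition \ref{prop:flatfeynm} --- applied with $\cv+1$ in place of $\cv$ --- by applying the inverse Fourier transform and keeping track of the scalar prefactor $\Gamma(\cv+1)/(2\pi)^{n}$. For $z\in\{\Im z\geqslant 0\}\setminus\{0\}$ that proposition provides a family $\cv\mapsto\bigl(Q(\xi)-z-i0\bigr)^{-\cv-1}$ of distributions in the $\xi$ variable which is holomorphic on all of $\cc$ (there are no poles, since $z\neq 0$), and which for $\Im z>0$ reduces to the smooth function $(\vert\xi\vert_\eta^2-z)^{-\cv-1}$ occurring in \eqref{e:defF} --- the $-i0$ prescription being then vacuous, since $\Im(\vert\xi\vert_\eta^2-z)=-\Im z\neq 0$ keeps the argument off the negative real axis where the branch cut of $\log$ lies. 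It then remains to verify that this family takes values in $\pazocal{S}'(\rr^{n})$, so that the inverse Fourier transform $\mathcal{F}^{-1}$ may be applied.

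The step requiring genuine care, which I expect to be the main obstacle, is the temperedness of $\bigl(Q(\xi)-z-i0\bigr)^{-\cv-1}$ for real $z\neq 0$. For $\Im z>0$ it is immediate: $(\vert\xi\vert_\eta^2-z)^{-\cv-1}$ is smooth in $\xi$ and bounded in modulus by $e^{\pi\vert\Im\cv\vert}\bigl\vert\vert\xi\vert_\eta^2-z\bigr\vert^{-\Re\cv-1}$, with $\bigl\vert\vert\xi\vert_\eta^2-z\bigr\vert\geqslant\vert\Im z\vert$ from below and $\leqslant C(1+\vert\xi\vert)^{2}$ from above, giving polynomial bounds uniform for $\cv$ in compact sets, and likewise for all $\xi$-derivatives (each one costs at worst an extra polynomial factor and a shift in the exponent). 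For $z\in\rr\setminus\{0\}$ the distribution is conormal to the non-compact hyperboloid $\{Q(\xi)=z\}$, along which $dQ\neq 0$ while $\vert dQ(\xi)\vert\sim\vert\xi\vert$; temperedness then follows by noting that, for $k$ so large that $\Re\cv+1-k<1$, the distribution $\bigl(Q(\xi)-z-i0\bigr)^{-\cv-1+k}$ is a locally integrable function whose $L^{1}$-mass on unit balls grows at most polynomially with the distance to the origin (a one-dimensional estimate transverse to $\{Q=z\}$, using $\vert dQ\vert\sim\vert\xi\vert$ there), hence is tempered, and that $\bigl(Q(\xi)-z-i0\bigr)^{-\cv-1}$ is obtained from it by a $\xi$-differential operator of order $k$ with polynomial coefficients. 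Alternatively one may compare with the homogeneous distribution $(Q(\xi)-i0)^{-\cv-1}$ near infinity, the difference being --- up to a compactly supported term --- of relative size $O(\vert\xi\vert^{-2})$; or simply note that the Gelfand--Shilov and H\"ormander pull-back constructions used in the proof of Proposition \ref{prop:flatfeynm} are already carried out inside $\pazocal{S}'$.

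Given temperedness, the remainder is formal. The map $\mathcal{F}^{-1}\colon\pazocal{S}'(\rr^{n})\to\pazocal{S}'(\rr^{n})$ is linear and continuous, and for each $\varphi\in\pazocal{S}(\rr^{n})$ the map $\cv\mapsto\bigl\langle\mathcal{F}^{-1}\bigl((Q(\xi)-z-i0)^{-\cv-1}\bigr),\varphi\bigr\rangle=\bigl\langle(Q(\xi)-z-i0)^{-\cv-1},\mathcal{F}^{-1}\varphi\bigr\rangle$ is holomorphic on $\cc$ by the previous step; hence $\cv\mapsto\mathcal{F}^{-1}\bigl((Q(\xi)-z-i0)^{-\cv-1}\bigr)$ is an entire family of tempered distributions on $\rr^{n}$. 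Multiplying by the scalar $\Gamma(\cv+1)/(2\pi)^{n}$, which is holomorphic on $\cc\setminus\{-1,-2,\dots\}$ with at worst simple poles at the negative integers, shows that $\Fs{z}$ is a well-defined family of (tempered) distributions on $\rr^{n}$, holomorphic in $\cv\in\cc\setminus\{-1,\dots,-k,\dots\}$, and consistent both with \eqref{e:defF} when $\Im z>0$ and with the $z\to\rr$ limit used to define the $-i0$ distributions. This is the assertion.
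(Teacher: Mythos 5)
Your argument is correct and takes essentially the same route the paper intends implicitly: deduce the corollary from Proposition \ref{prop:flatfeynm} (with $\cv+1$ in place of $\cv$) by applying the inverse Fourier transform, the only poles in $\cv$ coming from the scalar factor $\Gamma(\cv+1)$. The temperedness verification you supply (smooth, polynomially bounded symbol with lower bound $\module{\Im z}$ for $\Im z>0$; conormal/local-integrability estimates plus reduction by a polynomial-coefficient differential operator for real $z\neq 0$) is precisely the detail the paper leaves implicit, and it is sound.
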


Thus, to regulate the infrared poles of the family $\Fs{0}$ one can  
introduce a mass $m>0$ and consider $\Fs{-m^2}$.

\subsection{H\"older estimate on \texorpdfstring{$\Fse{z}$}{F(z)}}

For large $\Re\cv$,
$\Fse{z}$ has Fourier transform $(Q-z)^{-\cv-1}$, which has good decay at infinity except along the light-cone, so the pressing question is can we control $\Fse{z}$ in Sobolev or H\"older spaces of high regularity? 
The answer is yes, but
the price to pay
is that we need to lose in terms of the decay in $z$.  
\emphasize{We trade decay in $z$ for regularity in the $x\in \mathbb{R}^{n}$ variable}.

\subsubsection{Estimates on $(Q-z)^{-\cv}$ as distributions.}
\label{ss:estimates1}
We first discuss the case of  $(Q(\xi)-z)^{-\cv}$ for integer $\cv\in \mathbb{N}$. 
 We start from the family
$\log(t-i\varepsilon)$ for $\varepsilon>0$, which is a well-defined distribution on $\mathbb{R}$.
Uniformly in $\varepsilon$, we have the estimate:
$\module{\left\langle \log(t-i\varepsilon),\varphi \right\rangle}\leqslant C_K\Vert\varphi\Vert_{L^2(\mathbb{R})}$
for all test functions $\varphi$ supported in a fixed compact set $K$. 
It follows that for all test functions $\varphi$ supported in a fixed compact set $K$ and for all integer $\cv\in \mathbb{N}$:
$$ \bea 
\module{ \left\langle  (t-i\varepsilon)^{-\cv},\varphi \right\rangle} =C_{\cv}\module{ \left\langle \partial_t^\cv \log(t-i\varepsilon),\varphi \right\rangle} 
=
C_{\cv}\module{ \left\langle  \log(t-i\varepsilon),\partial_t^\cv\varphi \right\rangle }
\leqslant CC_{\cv}\Vert\varphi\Vert_{H^{{\cv}}(\mathbb{R})} 
\eea $$
where the estimate still holds uniformly in $\varepsilon>0$. 
For large $\Im z>0$ and $\varphi$ supported in a fixed compact set $K$:
$$ \bea 
\module{ \left\langle  (t-z)^{-\cv},\varphi \right\rangle }&=\module{ \left\langle  (t-\Re z-i\Im z)^{-\cv},\varphi \right\rangle}\\
&=
\module{ \left\langle  (t-i\Im z)^{-\cv},\varphi(.-\Re z) \right\rangle } 
\leqslant \module{\Im z}^{-\cv} C_{K,\cv}\Vert\varphi\Vert_{L^2(\mathbb{R})}. 
\eea $$
The case of small $\Im z$ is handled by the previous estimates. 
So in general, for $\varphi\in C^\infty_\c(\mathbb{R}^{n})$ supported in a fixed compact set $K$, we have the estimate
\begin{eqnarray}\label{ineq:est1}
\module{ \left\langle  (t-z)^{-\cv},\varphi \right\rangle }
\leqslant \left(1+ \module{\Im z}\right)^{-\cv} C_K\Vert \varphi \Vert_{H^\cv(\mathbb{R})}
\end{eqnarray}
where $C_K$ does not depend on $z$ on the upper half-plane.
As before, let $Q$ be the quadratic form of signature $(n-1,1)$ for the Minkowski metric and let $\cv\in \mathbb{N}$. The pull-back
$Q^*(t-z)^{-\cv}=(Q-z)^{-\cv} $ is well-defined as a distribution of order $\cv$ in $\pazocal{D}^\prime(\mathbb{R}^{n}\setminus \{0\})$, uniformly in $\Im z> 0$ since
$dQ(\xi)\neq 0$ for all $\xi\neq 0$.
It follows
that for any compactly supported function $\chi$ supported in a compact {set} $K$ which does not intersect $0$, we have
$ \module{\left\langle  (Q(\xi)-z)^{-\cv},\chi\right\rangle}
\leqslant \left(1+ \module{\Im z}\right)^{-\cv} C_K \Vert \chi\Vert_{{H^{\cv}}(\mathbb{R}^n)}  $
where the pull-back is well-defined.
For non-integer $\cv$, it suffices to start from $(t-z)^{-\cv}$ which is well-defined in $L^1_{\loc}(\mathbb{R})$ for $\Re\cv<1$ hence defines a
holomorphic family of distributions of order $0$ in $\pazocal{D}^\prime(\mathbb{R})$ in the half-plane $\Re\cv<1$.
This description is \emphasize{uniform} in $z\in \{\Im z>0\}$. 
Then, to extend to all $\cv\in \mathbb{C}\setminus \mathbb{Z}$, for $k < \Re\cv<k+1 $, we use successive integration by parts:
$$
  (t-z)^{-\cv}= \frac{1}{(-\cv+k)\dots(-\cv+1) } \p_t^k (t-z)^{-\cv+k}
  $$
for $k=\plancher{\Re\cv}$, which shows that the l.h.s.~is a well-defined holomorphic family of distributions of order $k$,  
\emphasize{uniformly} in $z\in \{\Im z>0\}$. Again by pull-back, this shows that 
for any compactly supported function $\chi$ supported in a compact {set} $K$ which does not meet $0$, we have
$$ \module{\left\langle  (Q(\xi)-z)^{-\cv},\chi\right\rangle}
\leqslant \left(1+ \module{\Im z}\right)^{-\Re\cv} C_K \Vert \chi\Vert_{H^{\plancher{\Re\cv}}(\mathbb{R}^n)}   $$
where the pull-back is well-defined.

\subsubsection{The H\"older--Zygmund estimate on $\Fse{z}$.}
\label{ss:holderestimates}

In this paragraph, we  deal with  Euclidean harmonic analysis
of the holomorphic family $\Fse{z}\in \pazocal{D}^\prime(\mathbb{R}^n)$.

Recall that the Littlewood--Paley decomposition starts from a partition 
of unity $1= \chi_0+\sum_{j=0}^\infty \chi(2^{-j}.)$.
A function $u$ belongs to the \emph{Zygmund class} $\pazocal{C}^r(\rr^n)$~\cite[p.~294]{meyer1981regularite}~\cite[\S 8.6 p.~201]{Hormander-97}~\cite[\S 8 p.~40]{taylorpartial3} iff
\begin{equation}\label{plfj}
 \Vert \chi_0(2^{-j}\sqrt{-\Delta}) u\Vert_{L^\infty}+\sup_{j} 2^{jr} \Vert \chi(2^{-j}\sqrt{-\Delta}) u\Vert_{L^\infty}<+\infty,
\end{equation}
and this also defines a Banach norm $\Vert .\Vert_{\pazocal{C}^r}$ on $\pazocal{C}^r(\rr^n)$ (if $r\geqslant 0$ is not an integer then
$\pazocal{C}^r(\rr^n)$ coincides with the usual H\"older class). The local version of $\pazocal{C}^r(\rr^n)$ is denoted by $\pazocal{C}^r_\loc(\rr^n)$. The equivalence of \eqref{plfj} with a Fourier transform characterization is recalled in \sec{sss:Holderfourierdecay} in the appendix.

We will use the dyadic decomposition
to analyze the family of distributions $\Fse{z}$. 
For $\psi\in C^\infty_\c(\mathbb{R}^n)$, we estimate the norm of $\psi\chi(2^{-j}\sqrt{-\Delta}) F_\cv $ for $\Im(z)\geqslant 0$,  namely:
\beq\label{thenormofjd}
\bea
&\Big\| \psi(x) \int_{\mathbb{R}^n} (Q(\xi)-z)^{-\cv-1} \chi(2^{-j}\vert\xi\vert) e^{ix.\xi}  d^n\xi\Big\|_{L^\infty_x} \\  &=
2^{jn}\Big\| \psi(x) \int_{\mathbb{R}^n} 2^{-2j(\cv+1)}(Q(\xi)-2^{-2j}z)^{-\cv-1} \chi(\Vert\xi\Vert) e^{i(2^jx).\xi}  d^n\xi\Big\|_{L^\infty_x}.
\eea
\eeq
Note the important $2^{-2j}z$ term which explains why at high frequencies, even if $z$ has large imaginary part,
the dyadic scaling will push $2^{-2j}z$ arbitrarily close to the real axis so that for large $j$,
$(Q(\xi)-2^{-2j}z)^{-\cv}$ behaves more and more like the distribution $(Q(\xi)-i0)^{-\cv}$.
 For $k=\plancher{\Re\cv}+1$,  by  \eqref{thenormofjd} we find:
$$\bea
\Vert \chi(2^{-j}\sqrt{-\Delta}) \Fse{z}\psi\Vert_{L^\infty}
&=2^{j(n-2\Re\cv-2)}\Vert \psi(x) \int_{\mathbb{R}^n} (Q(\xi)+2^{-2j}z)^{-\cv-1} \chi(\Vert\xi\Vert) e^{i(2^j x).\xi}  d^n\xi\Vert_{L_x^\infty}\\ 
&\leqslant 
2^{j(n-2\Re\cv-2)}\left(1+ 2^{-2j}\module{\Im z}\right)^{-\Re\cv-1} \fantom \times C \sup_{x\in \supp \psi} \Vert \chi(\Vert \xi\Vert) e^{i(2^jx).\xi}\Vert_{H^k_\xi(\mathbb{R}^n)} \\
&\leqslant C_1
2^{j(n-2\Re\cv-2)}\left(1+ 2^{-2j}\module{\Im z}\right)^{-\Re\cv-1} (1+2^jR)^k\\
&\leqslant C_2 2^{j(n-2\Re\cv+k-2)}\left(1+ 2^{-2j}\module{\Im z}\right)^{-\Re\cv-1}.
\eea $$
In the last two inequalities, we made crucial use of the fact 
that $\chi$ is supported in a  compact ball $\{\vert \xi \vert\leqslant R\}$
and also that the support of $\psi$ is compact so that we  have the simple bound
$\sup_{x\in \supp \psi} \Vert\chi(\Vert\xi\Vert) e^{i(2^jx).\xi}\Vert_{H^k_\xi(\mathbb{R}^n)} \lesssim  (1+2^jR)^k$. 

Let us now {interpolate} the above inequality to show the interplay between decay in $\Im z$ and also decay in the dyadic scaling, which expresses H\"older regularity.
Choose some $a\in [0,1]$, then we get
$$ \bea 
\Vert \chi(2^{-j}\sqrt{-\Delta}) \Fse{z} \psi\Vert_{L^\infty}&\leqslant C_2 2^{j(n-2\Re\cv+k-2)}\left(1+ 2^{-2j}\module{\Im z}\right)^{-\Re\cv-1}\\ &\leqslant  
C_2 2^{j(n-2\Re\cv+k-2)}2^{2ja(\Re\cv+1)}\left(2^{2j}+ \module{\Im z}\right)^{-a(\Re\cv+1)} \fantom \times \left(1+ 2^{-2j}\module{\Im z}\right)^{-(1-a)(\Re\cv+1)}\\
&\leqslant C_2 2^{j(n-2\Re\cv+k-2+2a(\Re\cv+1))}\left(1+ \module{\Im z}\right)^{-a(\Re\cv+1)}. 
\eea $$

To estimate the low energy part $\psi\chi_0(\sqrt{-\Delta}) F_\cv$, we first need that  $\Im(z)\geqslant 0$, $\vert z\vert\geqslant \varepsilon>0$
to avoid the infrared pole in the massless case when $\cv$ is an {integer}. However, for $\cv\in \mathbb{C}\setminus(\mathbb{N}+\frac{n}{2})$, we can still {let $\varepsilon \rightarrow 0$ which means the real part of $z$ is allowed to vanish}. The element 
$\lim_{\Im z\rightarrow 0^+,\vert z\vert\geqslant \varepsilon} (Q(.)-z)^{-\cv-1}$
extends as a distribution weakly homogeneous of degree $-2\cv-2$
hence it extends by~\cite[\S 2]{Meyer} as a distribution of order $p=\plancher{2\Re\cv+2-n}+1$. This implies that: 
$$ \bea 
\Big\|  \psi(x) \int_{\mathbb{R}^n} (Q(\xi)-z)^{-\cv-1} \chi_0(\xi) e^{ix.\xi}  d^n\xi\Big\| _{L^\infty_x} 
\leqslant  C\Vert\psi \Vert_{L^\infty}(1+\module{\Im z})^{-\Re\cv-1} \sup_{x\in \supp \psi}\Vert \chi_0 e^{ix.\xi}  \Vert_{C^{p}_\xi(\mathbb{R}^n)}.
\eea $$

Now we can conclude the H\"older  regularity estimates of our family $\Fse{z}$ for $Im(z)\geqslant 0$, $\vert z\vert\geqslant \varepsilon>0$:
$$ \bea 
\Vert \Fse{z}\psi \Vert_{\pazocal{C}^\varalpha(\mathbb{R}^n)}&=\sup_{j\in \mathbb{N}} 2^{j\varalpha}\Vert \chi(2^{-j}\sqrt{-\Delta}) \Fse{z}\psi\Vert_{L^\infty}+\Vert \chi_0(\sqrt{-\Delta}) \Fse{z}\psi\Vert_{L^\infty}\\
&\leqslant C\left(1+ \module{\Im z}\right)^{-a(\Re\cv+1)} 
\eea $$ 
if $n-2\Re\cv+k-2+2a(\Re\cv+1)+\varalpha\leqslant 0 $, hence if $\varalpha\leqslant (2-2a)(\Re\cv+1)-k-n$. 
So if we want high H\"older regularity, we have to choose large $\Re\cv$.

{The proof also shows that for $\Re\cv>L, L\in \mathbb{R}$, the series $$\sum_{j=1}^\infty \chi(2^{-j}\sqrt{-\Delta}) \Fse{z}\psi+\chi_0(\sqrt{-\Delta}) \Fse{z}\psi$$ converges absolutely in $\pazocal{C}^{ (2-2a-1)(L+1)-n}(\mathbb{R}^n) $ where each term is \emphasize{holomorphic} in $\alpha$. Therefore $\Fse{z}\psi$ is holomorphic in $\cv$ valued in the Banach space $ \pazocal{C}^{ (2-2a-1)(L+1)-n}(\mathbb{R}^n)$.}

We  summarize the estimates as follows for $\cv\in\cc\setminus\{-1,\dots,-k,\dots\}$.

\begin{prop}
Let $k=\plancher{\Re\cv}+1$ and $\Fse{z}\in \pazocal{D}^\prime(\mathbb{R}^n)$ as defined in (\ref{e:defF}).
{For all $\varepsilon>0$, if $z\in \{\Im z\geqslant 0,\, \vert z\vert\geqslant \varepsilon\}$ then 
$\Fse{z}\in \pazocal{C}_{\loc}^{ (2-2a)(\Re\cv+1)-k-n}(\rr^n)$ with  decay in $z$ of order $\pazocal{O}((1+\module{\Im z})^{-a(\Re\cv+1)})$ for $a\in [0,1]$.}

 For $\Re\cv>L, L\in \mathbb{R}_{>-1}$, the family $\Fse{z} $ is holomorphic in $\cv\in \{\Re\cv>L\}$ with values in the Fr\'echet space $\pazocal{C}_{\loc}^{  (2-2a-1)(L+1)-n}(\rr^n)$, with decay in 
$z$ of order $\pazocal{O}(\module{\Im z}^{-a{(L+1)}})$ for $a\in [0,1]$. 
\end{prop}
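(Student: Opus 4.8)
The proposition is essentially a clean restatement of the estimates derived line by line in the preceding subsections, so the proof is mostly a matter of bookkeeping: combining the dyadic (Littlewood--Paley) bounds on the high-frequency pieces $\chi(2^{-j}\sqrt{-\Delta})\Fse{z}\psi$ with the single bound on the low-frequency piece $\chi_0(\sqrt{-\Delta})\Fse{z}\psi$, and then summing. First I would fix a cutoff $\psi\in\ccf(\rr^n)$ and recall the two ingredients established above: (i) for $k=\plancher{\Re\cv}+1$ and every $a\in[0,1]$,
$$
\|\chi(2^{-j}\sqrt{-\Delta})\Fse{z}\psi\|_{L^\infty}\leqslant C\, 2^{j(n-2\Re\cv+k-2+2a(\Re\cv+1))}\,(1+\module{\Im z})^{-a(\Re\cv+1)},
$$
uniformly in $j\in\nn$ and in $z$ with $\Im z\geqslant 0$; and (ii) for $z$ with $\Im z\geqslant 0$, $\module{z}\geqslant\varepsilon>0$ (or, when $\cv\notin\nn+\tfrac n2$, for $\Im z\geqslant0$ with $\module{z}$ allowed to go to $0$),
$$
\|\chi_0(\sqrt{-\Delta})\Fse{z}\psi\|_{L^\infty}\leqslant C\,(1+\module{\Im z})^{-\Re\cv-1},
$$
where the constant absorbs $\sup_{x\in\supp\psi}\|\chi_0 e^{ix\cdot\xi}\|_{\pazocal C^p_\xi}$ with $p=\plancher{2\Re\cv+2-n}+1$.

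**Assembling the H\"older norm.** Recall from \eqref{plfj} that membership in $\pazocal C^\varalpha_\loc$ with control of the norm amounts to bounding $\sup_j 2^{j\varalpha}\|\chi(2^{-j}\sqrt{-\Delta})\Fse{z}\psi\|_{L^\infty}$ together with the low-frequency term. Plugging (i) into this, the quantity $2^{j\varalpha}\|\chi(2^{-j}\sqrt{-\Delta})\Fse{z}\psi\|_{L^\infty}$ is bounded by $C\,2^{j(n-2\Re\cv+k-2+2a(\Re\cv+1)+\varalpha)}(1+\module{\Im z})^{-a(\Re\cv+1)}$, which is uniformly bounded in $j$ precisely when the exponent is $\leqslant0$, i.e.\ when $\varalpha\leqslant(2-2a)(\Re\cv+1)-k-n$. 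Taking $\varalpha$ equal to this threshold value gives the first assertion, with the claimed decay $\pazocal O((1+\module{\Im z})^{-a(\Re\cv+1)})$; the low-frequency contribution (ii) decays at least as fast and so does not degrade the bound. The restriction $z\in\{\Im z\geqslant0,\ \module z\geqslant\varepsilon\}$ enters only through (ii), to avoid the infrared pole of $\Fse{0}$ at integer $\cv$.

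**Holomorphy in $\cv$.** For the second assertion I would fix $L>-1$ and work in the half-plane $\Re\cv>L$. On compact subsets of this half-plane $k=\plancher{\Re\cv}+1$ is locally constant away from the integers, and (i)--(ii) show that each term $\chi(2^{-j}\sqrt{-\Delta})\Fse{z}\psi$ and $\chi_0(\sqrt{-\Delta})\Fse{z}\psi$ is holomorphic in $\cv$ (the Fourier integrand $(Q(\xi)-z)^{-\cv-1}$ being entire in $\cv$ away from the explicitly excluded negative integers, and the cutoffs being $\cv$-independent), with $\pazocal C^{\beta}$-norms summable in $j$ provided $\beta<(2-2a)(\Re\cv+1)-k-n$; choosing $\beta=(2-2a-1)(L+1)-n$ and using $\Re\cv>L$, $k\leqslant\Re\cv+1$ makes the geometric series in $j$ converge, uniformly on compacta in $\cv$. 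A uniform limit of holomorphic Banach-space-valued maps being holomorphic, $\Fse{z}\psi$ is holomorphic in $\cv\in\{\Re\cv>L\}$ with values in $\pazocal C^{(2-2a-1)(L+1)-n}_\loc(\rr^n)$, and the stated decay $\pazocal O(\module{\Im z}^{-a(L+1)})$ follows by taking the supremum of the per-$j$ decay rates. Since $\psi$ was an arbitrary compactly supported cutoff, all statements hold in the local spaces $\pazocal C^{\cdot}_\loc(\rr^n)$ as claimed.

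**Main obstacle.** None of the hard analytic work remains at this stage — the light-cone singularity, the crucial rescaling $z\mapsto 2^{-2j}z$, and the trade-off between decay in $\Im z$ and regularity have all been carried out in \S\ref{ss:estimates1}--\S\ref{ss:holderestimates}. The only genuinely delicate point to get right is the accounting of parameters: one must check that the exponent of $2^j$ in the dyadic sum is controlled uniformly for $\cv$ ranging over the whole half-plane $\{\Re\cv>L\}$ (not just near a single value), which forces the slight loss from $(2-2a)(\Re\cv+1)-k-n$ down to $(2-2a-1)(L+1)-n$ and requires $L>-1$ so that $\Re\cv+1>0$; and one must keep track of where the hypothesis $\module z\geqslant\varepsilon$ is needed (only for integer $\cv$, to control $\chi_0(\sqrt{-\Delta})\Fse{z}\psi$) versus where it can be dropped (for $\cv\notin\nn+\tfrac n2$, using the weak-homogeneity extension of \cite{Meyer}).
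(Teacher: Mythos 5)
Your proof is correct and is essentially the paper's own argument: the proposition is proved there by exactly this bookkeeping, namely the dyadic bound with threshold $\varalpha\leqslant(2-2a)(\Re\cv+1)-k-n$, the low-frequency bound (which is the only place where $\vert z\vert\geqslant\varepsilon$ is used), and holomorphy via absolute convergence of the Littlewood--Paley series of $\cv$-holomorphic terms in the fixed H\"older space pegged to $L$. The one place where your accounting (like the paper's own one-line remark) is loose is the uniformity in $\cv$ for $a>\tfrac12$: keeping the same interpolation parameter $a$ the summed exponent is only bounded by $(2a-1)(\Re\cv-L)$, which is not negative for large $\Re\cv$, so there one should instead apply the block estimate with the rescaled parameter $a'=a(L+1)/(\Re\cv+1)$ (or deduce holomorphy from the $a=0$ estimate and the stated decay from a separate application) --- a cosmetic fix that does not change the approach or the result.
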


As expected, we always have to trade regularity for decay in $\Im z$.

\subsection{Microlocal estimates}

To prove  microlocal bounds, we will need to represent the 
distribution $\Fse{z}$ as the sum of two oscillatory integrals which account for  the high  (${\rm UV}$) versus low (${\rm IR}$) frequency parts. Both require a careful treatment of the $\Im z\rightarrow 0^+$ limit.  

\subsubsection{Oscillatory integral representation formula}

We first prove the following important technical lemma.
 
\begin{lemm}\label{l:oscilldec}
Let $\psi\in C^\infty_\c(\mathbb{R};[0,1])$ be such that $\psi=1$ near $0$.
For $ -1< \Re\cv<0$ and $\Im z>0$, we have
$$
 \Fse{z} = I_{{\rm IR}}+I_{{\rm UV}},
$$
where
$$ \bea 
I_{{\rm IR}} &= \frac{e^{-i(\cv+1)\frac{\pi}{2}}}{(4\pi i)^{\frac{n}{2}}(-1)^{\frac{n-1}{2}}}\int_0^\infty    e^{u\frac{Q(x)}{4i}} e^{i\frac{z}{u}} \psi(u)  u^{\frac{n}{2}-\cv-2}du ,\\
 I_{{\rm UV}}&= \frac{e^{-i(\cv+1)\frac{\pi}{2}}}{(4\pi i)^{\frac{n}{2}}(-1)^{\frac{n-1}{2}}}\int_0^\infty   e^{u\frac{Q(x)}{4i}} e^{i\frac{z}{u}} (1-\psi)(u)  u^{\frac{n}{2}-\cv-2}du. 
\eea $$
Furthermore, in the sense of distributions $\pazocal{D}^\prime(\mathbb{R}^n\setminus\{0\})$, the $\cv,z$-dependent oscillatory integral  $I_{{\rm UV}}$  extends uniquely  to a holomorphic family in $\cv\in\cc$, uniformly in $z\in \{\Im z\geqslant 0\}$. The term $I_{{\rm IR}}$ extends uniquely as a distribution in $\pazocal{D}^\prime(\mathbb{R}^n)$,  depending holomorphically in 
$\cv$ in the half-plane $\Re\cv<\frac{n}{2}-1$, uniformly in $z\in \{\Im z\geqslant 0\}$.
\end{lemm}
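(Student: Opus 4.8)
The plan is to reduce $\Fse{z}$ to a one-dimensional oscillatory integral in a proper-time parameter and then analyse the two ends of that integral separately. \emph{Step 1 (the oscillatory integral identity).} For $\Im z>0$ the point $\vert\xi\vert_\eta^2-z$ stays in the open lower half-plane, away from the branch cut of the principal power, so I would rewrite $(\vert\xi\vert_\eta^2-z)^{-\cv-1}$ via the contour-rotated Schwinger representation --- legitimate precisely because $\Im z>0$ --- which expresses it as $\Gamma(\cv+1)^{-1}$ times a fixed unimodular constant times $\int_0^\infty\tau^{\cv}e^{i\tau z}e^{-i\tau\vert\xi\vert_\eta^2}\,d\tau$, a $\tau$-integral absolutely convergent at $\tau=0$ because $\Re\cv>-1$ and at $\tau=\infty$ because $\Im z>0$. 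Inserting this into \eqref{e:defF} and exchanging the $\tau$- and $\xi$-integrations --- which I would justify by first inserting a Gaussian regulariser $e^{-\epsilon\vert\xi\vert^2}$, so the double integral converges absolutely and Fubini applies, and then removing it by dominated convergence in $\tau$ --- I evaluate the resulting genuine Gaussian $\xi$-integral by the Fresnel formula, producing a factor $\propto\tau^{-n/2}e^{Q(x)/(4i\tau)}$ up to a fixed phase. After the substitution $u=1/\tau$ this gives $\Fse{z}(x)=c\int_0^\infty u^{\frac{n}{2}-\cv-2}e^{iz/u}e^{u\frac{Q(x)}{4i}}\,du$ with the constant $c$ of the Lemma, and splitting the $u$-integral with $\psi$ and $1-\psi$ yields $\Fse{z}=I_{\rm IR}+I_{\rm UV}$. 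Both pieces make sense for $-1<\Re\cv<0$ and $\Im z>0$: the factor $e^{iz/u}$ decays super-exponentially as $u\to0^+$, so $I_{\rm IR}$ converges, while $I_{\rm UV}$ is, away from the origin, a convergent oscillatory integral by the estimate of Step 2.

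\emph{Step 2 (extension of $I_{\rm UV}$).} Since $1-\psi$ vanishes near $u=0$, the only end to control is $u\to+\infty$. Testing against $\varphi\in C^\infty_\c(\rr^n\setminus\{0\})$ and using that $dQ$ vanishes only at the origin, so $Q$ has no critical point on $\supp\varphi$, non-stationary phase gives $\big\vert\int e^{u\frac{Q(x)}{4i}}\varphi(x)\,dx\big\vert\leqslant C_N(1+u)^{-N}$ for every $N$, with $C_N$ bounded by finitely many $C^k$-seminorms of $\varphi$ and by the distance from $\supp\varphi$ to $0$. Hence
$$
\langle I_{\rm UV},\varphi\rangle=c\int_0^\infty\Big(\int e^{u\frac{Q(x)}{4i}}\varphi(x)\,dx\Big)e^{iz/u}(1-\psi)(u)\,u^{\frac{n}{2}-\cv-2}\,du
$$
converges absolutely for all $\cv\in\cc$, defines an element of $\pazocal{D}^\prime(\rr^n\setminus\{0\})$, is holomorphic in $\cv$ (differentiation under the integral sign, or Morera), and is bounded uniformly for $z\in\{\Im z\geqslant 0\}$ since $\vert e^{iz/u}\vert=e^{-\Im z/u}\leqslant 1$ there; continuity up to $\{\Im z=0\}$ and holomorphy on $\{\Im z>0\}$ follow likewise by dominated convergence, $1/u$ being bounded on $\supp(1-\psi)$. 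Uniqueness of the extension in $z$ is automatic: a function holomorphic on $\{\Im z>0\}$ and continuous on $\{\Im z\geqslant 0\}$ is determined by its boundary values.

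\emph{Step 3 (extension of $I_{\rm IR}$).} Now $\psi$ is compactly supported with $\psi\equiv 1$ near $0$, so the only end to control is $u\to0^+$. For $\Im z\geqslant 0$ one has $\vert e^{iz/u}\vert\leqslant 1$ and $\vert e^{u\frac{Q(x)}{4i}}\vert=1$, so the integrand is bounded by $C\,u^{\Re(\frac{n}{2}-\cv-2)}$ on $\supp\psi$, which is integrable iff $\Re(\tfrac{n}{2}-\cv-2)>-1$, i.e.\ $\Re\cv<\tfrac{n}{2}-1$. On that half-plane the $u$-integral converges absolutely and uniformly in $z\in\{\Im z\geqslant 0\}$; differentiating in $x$ only brings down non-negative powers of $u$, which improve the integrability at $0$, so $I_{\rm IR}$ is a well-defined (in fact $C^\infty$) element of $\pazocal{D}^\prime(\rr^n)$, holomorphic in $\cv$ on $\{\Re\cv<\tfrac{n}{2}-1\}$, and the $z$-extension is unique as in Step 2.

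\emph{Main obstacle.} The delicate step is Step 1: the Fourier integral \eqref{e:defF} is only conditionally convergent, the symbol $(\vert\xi\vert_\eta^2-z)^{-\cv-1}$ failing to decay along the light cone for any $\cv$, so one genuinely works in $\pazocal{S}^\prime(\rr^n)$ and must keep track of the branch cuts of the fractional powers while rotating contours and commuting the $\tau$- and $\xi$-integrations. A clean route is to establish the identity first for $\Re\cv$ large, where after the Gaussian regularisation every step is an absolutely convergent manipulation, and then to continue analytically in $\cv$ into the strip $-1<\Re\cv<0$; equivalently one may read the formula off the explicit Fresnel kernel of the free wave operator $\square_\eta$ on $\rr^n$ via the spectral theorem, using $\Fse{z}=\Gamma(\cv+1)(\square_\eta-z)^{-\cv-1}\delta_0$. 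Once the identity is established, Steps 2 and 3 are routine non-stationary phase and dominated-convergence arguments.
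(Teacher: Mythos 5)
Your proposal follows essentially the same route as the paper's proof: the Schwinger (proper-time) representation of $(Q(\xi)-z)^{-\cv-1}$ for $\Im z>0$, Plancherel together with the Fourier transform of complex Gaussians to pass to the $e^{uQ(x)/4i}$ representation, the substitution $u=1/\tau$, the cutoff splitting into $I_{\rm IR}+I_{\rm UV}$, non-stationary phase (integration by parts with an operator built from $\nabla Q$, valid on $\rr^n\setminus\{0\}$) for the UV part, and the direct bound $u^{\frac{n}{2}-\Re\cv-2}$ with $\module{e^{iz/u}}\leqslant 1$ for the IR part. Your Gaussian-regulariser justification of the Fubini step and the fallback of first working at large $\Re\cv$ and continuing analytically in $\cv$ are only minor technical variants of what the paper does (the paper performs the exchange in the region $\Re\cv>\frac{n}{2}-1$), so the argument is correct and matches the paper's proof.
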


The difficulty is in proving that the two oscillatory integrals
on the r.h.s.~have well-defined distributional limits for all $\cv\in\cc$. 

\begin{refproof}{Lemma \ref{l:oscilldec}}
We start from an elementary
representation formula for $\Im z>0$:
$$ \bea 
(Q(\xi)-z)^{-\cv-1}
=\frac{e^{-i(\cv+1)\frac{\pi}{2}}}{\Gamma(\cv+1)}\int_0^\infty  e^{-iu(Q(\xi)-z)} u^{\cv}du,
\eea $$
where $Q$ is the quadratic form of signature $(n-1,1)$.
When $\Im z>0$ and $\Re\cv>-1$, the right hand side converges
absolutely and is holomorphic in the variable $\cv$.
Let $\varphi$ be a  Schwartz  function. We 
study the following integral:
$$ \bea 
\int_0^\infty u^\cv\left(\int_{\mathbb{R}^n}e^{-iu(Q(\xi)-z)}\widehat{\varphi}(\xi)d^n\xi\right) du
=\int_0^\infty\int_{\mathbb{R}^n} \varphi(x)   \frac{(2\pi)^n e^{\frac{Q(x)}{4ui}}}{ (4\pi i)^{\frac{n}{2}} (-1)^{\frac{n-1}{2}}} e^{iuz} u^{\cv-\frac{n}{2}} d^nx \,du
\eea $$
where we used the Plancherel formula and the Fourier transform of complex Gaussians to obtain the last equality.
The integral on the r.h.s.~is well-defined for $\Re\cv-\frac{n}{2}>-1$.
So after change of variables in $u$, we get another oscillatory integral representation
for $\Re\cv>\frac{n}{2}-1$, $\Im z>0$:
\begin{equation}\label{e:oscillatory1}
\int_0^\infty u^\cv\left(\int_{\mathbb{R}^n}e^{-iu(Q(\xi)-z)}\widehat{\varphi}(\xi)d^n\xi\right) du
=C(\cv)\int_0^\infty  \left\langle e^{u\frac{Q(.)}{4i}},\varphi\right \rangle e^{i\frac{z}{u}} u^{\frac{n}{2}-\cv-2}du,
\end{equation}
where we absorbed all normalizations in the holomorphic constant $C(\cv)$ for simplicity, since they
play no rôle in the oscillatory bounds. Then we just use the test function $\psi$ to cut the integration in $u\in \mathbb{R}_{\geqslant 0}$ in two parts to separate the {\rm IR} and {\rm UV} problems:
$$ \bea 
\underset{\text{{\rm UV} part}}{\underbrace{\frac{e^{-i(\cv+1)\frac{\pi}{2}}}{(4\pi i)^{\frac{n}{2}}(-1)^{\frac{n-1}{2}}}\int_0^\infty   \left\langle e^{u\frac{Q(.)}{4i}},\varphi\right \rangle e^{i\frac{z}{u}} (1-\psi(u))  u^{\frac{n}{2}-\cv-2}du }}
\\+ \underset{\text{{\rm IR} part}}{\underbrace{\frac{e^{-i(\cv+1)\frac{\pi}{2}}}{(4\pi i)^{\frac{n}{2}}(-1)^{\frac{n-1}{2}}}\int_0^\infty   \left\langle e^{u\frac{Q(.)}{4i}},\varphi\right \rangle e^{i\frac{z}{u}} \psi(u)  u^{\frac{n}{2}-\cv-2}du}}.
\eea $$
The {\rm IR} part is well-defined for all values of $\cv$ and all $z$ s.t.~$\Im z>0$ since $e^{i\frac{z}{u}}=\pazocal{O}(u^\infty)$ as $u\rightarrow 0^+$. Another observation is that we can take the $\Im z\rightarrow 0^+$ limit
when $\Re\cv<\frac{n}{2}-1$, since $u^{\frac{n}{2}-\cv-2}$ is Riemann integrable near $u=0$ so there is no problem to let $\Im z\rightarrow 0^+$ and there are no constraints on the real part $\Re z$.

 Now we need to justify that the integral representation of the {\rm UV} part 
is well-defined as a distribution 
on the half-plane $\Im z\rightarrow 0^+$ which is holomorphic in 
$\cv\in \mathbb{C}$. 
For $\Re\cv>-1$ and $\Im z>0$ and for any test function $\varphi\in C^\infty_\c(\mathbb{R}^n\setminus\{0\})$:
$$ \bea 
\int_0^\infty   \left\langle e^{u\frac{Q(.)}{4i}},\varphi\right \rangle e^{i\frac{z}{u}} (1-\psi)(u) u^{\frac{n}{2}-\cv-2}du  &= \int_0^\infty \left\langle e^{\frac{uQ(.)}{4i}+i\frac{z}{u}}   , \left(\t L\right)^N\varphi\right\rangle (1-\psi)(u) u^{\frac{n}{2}-\cv-2}du 
\eea $$
where $L=\frac{(4i)\left\langle\nabla Q,\nabla\right\rangle }{u\Vert\nabla Q\Vert^2 }$ 
is a well-defined differential operator since the phase $dQ\neq 0$ on $\mathbb{R}^n\setminus\{0\}$, $N$ is an arbitrary integer
and the integral is holomorphic in $\cv$ on the half--plane
$\Re\cv>N+\frac{n}{2}-1$ since $\left(\t L\right)^N\varphi=\pazocal{O}(u^{-N})$ uniformly in $z\in \{\Im z\geqslant 0\}$.
\end{refproof}

Next, we make an observation on the large $\Im z$ behaviour of $\Fse{z}$ outside $\{0\}\subset \mathbb{R}^n$ which follows from the oscillatory integral representation.

\begin{lemm}\label{l:Fsoutsidediag}
For all $\cv\in\cc$, for all $\varphi \in C^\infty_\c(\mathbb{R}^n\setminus \{0\})$ and all $\Im z>0$, we have
$\langle \Fse{z},\varphi\rangle=\pazocal{O}\big(\module{\Im z}^{-\infty} \big)$.
\end{lemm}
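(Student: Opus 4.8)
The plan is to exploit the oscillatory integral representation established in Lemma \ref{l:oscilldec}. For $\varphi\in C^\infty_\c(\mathbb{R}^n\setminus\{0\})$ and $-1<\Re\cv<0$ we have $\langle\Fse{z},\varphi\rangle=\langle I_{{\rm IR}},\varphi\rangle+\langle I_{{\rm UV}},\varphi\rangle$, and the first step is to treat each piece separately. For the ${\rm IR}$ term, recall that the integration variable $u$ ranges over $\supp\psi$, which is a bounded neighborhood of $0$; there $e^{i z/u}$ provides rapid decay but we must be careful since $1/u\to+\infty$. The key is that for fixed $\cv$ with $\Re\cv<\frac{n}{2}-1$, after pairing with $\varphi$ (which kills the singularity of $\langle e^{uQ(\cdot)/4i},\varphi\rangle$ since $\varphi$ vanishes near $0$, so this pairing is actually smooth and bounded in $u$), one has $|e^{iz/u}|=e^{-\Im z/u}$, and $\int_0^\infty e^{-\Im z/u}\psi(u)u^{\frac{n}{2}-\Re\cv-2}\,du$ can be bounded by $\pazocal{O}(\module{\Im z}^{-\infty})$. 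Indeed on the support of $\psi$, writing $u\in(0,C]$, for any $N$ we have $e^{-\Im z/u}\le (N/(e\,\Im z))^N u^N\cdot\sup$, i.e.\ $e^{-\Im z/u}\le C_N \Im z^{-N}u^N$; substituting gives a bound $C_N\Im z^{-N}\int_0^C u^{\frac{n}{2}-\Re\cv-2+N}\,du<\infty$ for $N$ large, which is $\pazocal{O}(\Im z^{-N})$ for every $N$.

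For the ${\rm UV}$ term, the range is $u\in\supp(1-\psi)\subset[c,+\infty)$ for some $c>0$, so there is no small-$u$ problem; instead we integrate by parts in $u$. Writing $L=\frac{4i\langle\nabla Q,\nabla\rangle}{u\|\nabla Q\|^2}$ as in the proof of Lemma \ref{l:oscilldec}, applying $\transp L$ repeatedly to $\varphi$ we can render the $u$-integrand absolutely integrable and, crucially, the factor $e^{iz/u}$ still contributes $|e^{iz/u}|=e^{-\Im z/u}\le 1$ on $[c,\infty)$ but in fact for large $\Im z$ one instead integrates by parts in $u$ using that $\partial_u e^{iz/u}=-iz u^{-2}e^{iz/u}$: each such integration by parts produces a factor $z^{-1}$ (up to terms that are again of the same type with higher powers of $u^{-1}$ and lower powers of $z$), and since on $[c,\infty)$ the remaining $u$-dependent factors, after enough applications of $\transp L$ to $\varphi$, are integrable with bounds uniform in $z\in\{\Im z\ge 0\}$, we gain an arbitrary negative power of $z$, hence of $\Im z$ on the relevant part of the contour. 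So $\langle I_{{\rm UV}},\varphi\rangle=\pazocal{O}(\module{z}^{-\infty})=\pazocal{O}(\module{\Im z}^{-\infty})$ on any region where $\module{z}\gtrsim\Im z$, which covers the large-$\Im z$ regime.

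The final step is to remove the restriction $-1<\Re\cv<0$. Since both $\Fse{z}$ (by the Corollary to Proposition \ref{prop:flatfeynm}) and the bounds obtained are, for fixed test function $\varphi\in C^\infty_\c(\mathbb{R}^n\setminus\{0\})$, holomorphic in $\cv$ (the ${\rm UV}$ piece is holomorphic in all of $\cc$ uniformly, and the ${\rm IR}$ piece for $\Re\cv<\frac{n}{2}-1$; together with the global holomorphy of $\Fse{z}$ away from $\cv\in-\mathbb{N}_{>0}$ and away from the poles this suffices), the estimate $\langle\Fse{z},\varphi\rangle=\pazocal{O}(\module{\Im z}^{-\infty})$ extends to all $\cv\in\cc\setminus(-\mathbb{N}_{>0})$, and actually to all $\cv$ away from the poles by examining residues (which are supported at $0$, hence annihilated by $\varphi$). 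Alternatively, for general $\cv$ one can directly use the representation $\Fse{z}=C(\cv)\int_0^\infty\langle e^{uQ(\cdot)/4i},\varphi\rangle e^{iz/u}u^{\frac{n}{2}-\cv-2}\,du$ valid after enough integrations by parts in $u$ (combining the ${\rm IR}$ rapid-decay argument near $u=0$ with the ${\rm UV}$ argument near $u=\infty$) and run the same two estimates simultaneously.

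The main obstacle is the bookkeeping at the two ends of the $u$-integral: near $u=0$ one relies on the Gaussian-type decay $e^{-\Im z/u}$ together with the harmless polynomial weight $u^{\frac{n}{2}-\cv-2}$ (which is only Riemann-integrable for $\Re\cv<\frac{n}{2}-1$, forcing either that restriction or an integration by parts in $u$ using $\partial_u$ of $e^{iz/u}$ to improve the exponent), while near $u=\infty$ one needs the phase $e^{iuQ(x)/4i}$ paired against $\varphi$ to be handled by $L$, and one must check that all constants in these $\transp L$-integrations by parts are uniform in $z$ over the upper half-plane — this is precisely the content already extracted in the proof of Lemma \ref{l:oscilldec}, so the work here is to combine those uniform bounds with the extra decay coming from $\partial_u e^{iz/u}=-izu^{-2}e^{iz/u}$.
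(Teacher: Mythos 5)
Your proposal is correct and, in outline, follows the paper: it starts from the decomposition $\Fse{z}=I_{{\rm IR}}+I_{{\rm UV}}$ of Lemma \ref{l:oscilldec}, and your treatment of the infrared piece is the same as the paper's (exponential decay of $e^{-\Im z/u}$ on the bounded set $\supp\psi$); in fact your elementary inequality $e^{-\Im z/u}\leqslant C_N(\Im z)^{-N}u^{N}$ handles every $\cv$ at once, so the restriction $\Re\cv<\frac{n}{2}-1$ you mention is not needed there. Where you genuinely diverge is the ultraviolet piece: the paper never integrates by parts in $u$; after applying $({}^{t}L)^{N}$ it simply bounds $\module{e^{iz/u}}=e^{-\Im z/u}$ and rescales $u\mapsto u\,\Im z$, so the decay $\module{\Im z}^{\frac{n}{2}-\Re\cv-2-N}$ comes directly from the $u^{-N}$ gain produced by the $x$-integrations by parts. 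Your alternative — non-stationary phase in $u$ via $\partial_u e^{iz/u}=-izu^{-2}e^{iz/u}$ — also works and yields decay in $\module{z}\geqslant\Im z$, but note a bookkeeping slip: each such integration by parts multiplies by $u^{2}/z$ and the $u$-derivative of $\langle e^{uQ(\cdot)/4i},\cdot\rangle$ gains nothing, so the large-$u$ behaviour is \emph{worsened} by up to two powers of $u$ per step (not ``higher powers of $u^{-1}$'' as you wrote); this is harmless only because, as you indicate, one fixes the target power $M$ of $z^{-1}$ first and chooses $N\gtrsim\frac{n}{2}-\Re\cv+2M$ applications of ${}^{t}L$ in advance, the constants being uniform in $z$ since the pairings are bounded by $L^{1}$-norms of fixed test functions and the factor $(1-\psi)$ kills all boundary terms. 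The paper's rescaling argument avoids this bookkeeping entirely. Finally, on extending from $-1<\Re\cv<0$ to all $\cv$: for $\Im z>0$ both oscillatory integrals are entire in $\cv$ and the identity with $\Fse{z}$ extends by analytic continuation in $\cv$, so your detour through residues (Proposition \ref{prop:flatfeynm} concerns the boundary value $z\in\mathbb{R}$ and is not needed here) can be dropped; your alternative suggestion — estimate the full oscillatory representation directly for every $\cv$ — is exactly what the paper does.
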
 
\begin{proof}
Let $\varphi\in C^\infty_\c(\mathbb{R}^n\setminus \{0\})$.
For the {\rm UV} part, outside $x=0$, set $L=4i\frac{\left\langle\nabla Q,\nabla\right\rangle}{u\Vert\nabla Q\Vert^2}$, then:
$$ \bea 
\bigg|\int_0^\infty   \left\langle e^{u\frac{Q(.)}{4i}},\varphi\right \rangle e^{i\frac{z}{u}}& (1-\psi(u))  u^{\frac{n}{2}-\cv-2}du\bigg| \\
&=\module{\int_0^\infty   \left\langle e^{u\frac{Q(.)}{4i}}, \left(\t L\right)^N\varphi\right \rangle e^{i\frac{z}{u}} (1-\psi(u))  u^{\frac{n}{2}-\cv-2}du}\\
&\leqslant C \int_0^\infty e^{-\frac{\Im z}{u}}  (1-\psi(u)) \underset{=\pazocal{O}(u^{\frac{n}{2}-\Re\cv-2-N})}{\underbrace{\Vert \left(\t L\right)^N\varphi \Vert_{L^\infty(\mathbb{R}^n)} u^{\frac{n}{2}-\Re\cv-2}}}du\\ 
&\leqslant  C \module{\Im z}^{\frac{n}{2}-\Re\cv-2-N} \int_0^\infty e^{-u^{-1}}  (1-\psi(u\Im z)) u^{\frac{n}{2}-\Re\cv-2-N} du,
\eea $$
where the integral on the r.h.s.~is uniformly bounded as $\Im z\rightarrow +\infty$. Therefore
we get that $I_{{\rm UV}}=\pazocal{O}_{\pazocal{D}^\prime(\mathbb{R}^n\setminus \{0\})}(\module{\Im z}^{-\infty})$~\footnote{To get large decay in $\Im z$ the distribution is viewed as an element of high order, we need to differentiate the test function $\varphi$ many times.}.

For the infrared part, one immediately deduces from the integral representation that
$$ \module{\int_0^\infty   \left\langle e^{u\frac{Q(.)}{4i}},\varphi\right \rangle e^{i\frac{z}{u}} \psi(u)  u^{\frac{n}{2}-\cv-2}du}\leqslant C\Vert \varphi\Vert_{L^\infty}\int_0^\infty \psi(u) e^{-\frac{\Im z}{u}}  u^{\frac{n}{2}-\Re\cv-2}du, $$ hence
$I_{{\rm IR}}=\pazocal{O}_{\pazocal{D}^\prime(\mathbb{R}^n)}(\module{\Im z}^{-\infty})$~\footnote{Here it is a distribution of order $0$
for all orders of decay in $\Im z$.}.
\end{proof}

\subsubsection{Bounds on the semi-norms $\Vert .\Vert_{N,V,\chi}$.}

Recall that for a closed conic set $\Gamma\subset \coto{\mathbb{R}^{n}}$, the topology of $\pazocal{D}^\prime_\Gamma$ is given by the continuous seminorms $\Vert .\Vert_{N,V,\chi}$,
\begin{equation}
\Vert t\Vert_{N,V,\chi}=\sup_{\xi\in V} (1+\Vert\xi\Vert)^N \vert\widehat{t\chi}(\xi)\vert, 
\end{equation}
where $\supp(\chi)\times V\cap \Gamma=\emptyset$, plus the weak or strong topology of distributions. Note that throughout the paper, we typically control the size of distributions in a stronger topology than the weak or strong topology on $\pazocal{D}^\prime$ because we operate with  H\"older norms.
 
We need to estimate the seminorms $\Vert .\Vert_{N,V,\chi}$ for $\Fse{z}$ 
uniformly in $z\in \{\Im z>0\}$ in the upper half-plane. We will also need to control these seminorms down to
$\Im z\rightarrow 0^+$ with $\Re z\neq 0$.
Now we can bound the wavefront set of $\Fse{z}$ using the oscillatory integral representation of Lemma~\ref{l:oscilldec} which involves oscillatory integrals with complex phase. For $\Im z>0$, they have exponential decay and the oscillatory integrals are well-defined for all $\cv\in \mathbb{C}$. 
But when $\Im z\rightarrow 0^+$, 
they converge to some oscillatory integrals with real phase so we can control the integration 
in $u$ for $\Im z\rightarrow 0^+$ only when $\Re\cv<\frac{n}{2}$. 

\step{1 (ultraviolet part)} We first deal with the {\rm UV} part
$\int_0^\infty  e^{u\frac{Q(x)}{4i}} e^{i\frac{z}{u}} (1-\psi)(u)  u^{\frac{n}{2}-\cv-2}du$.
Assume that $\supp \chi\times V$ does not meet 
$\{(x;\xi) \,|\, Q(x)=0, \, \xi=\tau dQ, \, \tau<0 \}\cup T^*_{0}\mathbb{R}^n$ which in particular 
implies that $0\notin \supp \chi$. 
Our proof is inspired by~\cite[Thm.~0.5.1 p.~38, Thm.~0.4.6 p.~34]{sogge2017fourier}.
We choose some smooth $\psi$ and a smooth bump function $\beta$ supported in $[\frac{1}{2},2]$
s.t. $\psi{(u)} +\sum_{j=0}^\infty \beta(2^{-j}u)=1$; this is a dyadic partition of unity. Set $\beta_j(.)=\beta(2^{-j}.)$. 
Then for $\xi\in V$, we need to consider the series:
$$ \bea 
\sum_{j=0}^\infty \int_0^\infty \left(\int_{\mathbb{R}^n}\chi(x)  e^{-i\left(\langle\xi,x\rangle +\frac{uQ(x)}{4}\right)} d^nx\right) e^{iu^{-1}z}  \beta_j(u)u^{\frac{n}{2}-\cv-2}du.
\eea $$
We fix $j$ and rewrite one term of the series after change of variables:
$$ \bea 
&\int_0^\infty \left(\int_{\mathbb{R}^n}\chi(x)  e^{-i\left(\langle\xi,x\rangle +\frac{uQ(x)}{4}\right)} d^nx\right) e^{iu^{-1}z}  \beta_j(u)u^{\frac{n}{2}-\cv-2}du\\
&= 2^{j(\frac{n}{2}-\cv-1)} \int_0^\infty {\left(\int_{\mathbb{R}^n}\chi(x)  e^{-i\left(\langle\xi,x\rangle +\frac{2^j uQ(x)}{4}\right)} d^nx\right)} e^{i2^{-j}u^{-1}z}  \beta_1(u)u^{\frac{n}{2}-\cv-2}du.
\eea $$
The phase function 
$
\phi(x,\xi,j,u)=\langle\xi,x\rangle +\frac{2^juQ(x)}{4} 
$
is non-degenerate since
$ d_x\phi= \xi+2^ju\frac{dQ(x)}{4} \in C^\infty(\mathbb{R}^n\times V\times \open{0,+\infty};\mathbb{R}^n) $
never vanishes because $\xi\in V$ does not meet $\mathbb{R}_{<0}dQ(x)$ for all $x$ in the support 
of the test function $\chi$.
Define the differential operator $\pazocal{L}=\frac{1+ \langle\nabla_x\phi,\nabla_x \rangle }{1+\langle\nabla_x\phi,\nabla_x\phi \rangle}$.
Observe that the term
$ \langle\nabla_x\phi,\nabla_x\phi \rangle $ in the denominator is bounded from below by
$$\bigg\|\xi+2^ju\frac{dQ(x)}{4}\bigg\|^2\geqslant C(\Vert\xi\Vert+2^j)^2$$
for some $C>0$ uniformly in $\xi\in V$ and $u\in [1,4]$ since $u$ lives in the support of $\beta_1$ and $dQ(x)\neq 0$ because $0\notin \supp \chi$.
By $3N$ integration by parts w.r.t.~$\pazocal{L}$ as in~\cite[Lem.~0.4.7, p.~35]{sogge2017fourier}, since $j\geqslant 1$, we get the bound: 
$$ \sup_{u\in \open{0,+\infty}}\bigg|\int_{\mathbb{R}^n}\chi(x)  e^{-i\left(\langle\xi,x\rangle +\frac{2^j uQ(x)}{4}\right)}  \beta_1(u)  d^n x \bigg|\leqslant C\left(\Vert\xi\Vert+2^j \right)^{-3N}\leqslant C(1+\Vert \xi\Vert)^{-N}2^{-j2N} . $$
Therefore for $j\geqslant 1$,
$$ \bea 
\bigg| 2^{j(\frac{n}{2}-\cv-1)}  \int_0^\infty \left(\int_{\mathbb{R}^n}\chi(x)  e^{-i\left(\langle\xi,x\rangle +\frac{2^j uQ(x)}{4}\right)} d^nx\right) e^{-2^{-j}u^{-1}z}  \beta_1(u)u^{\frac{n}{2}-\cv-2}du\bigg| & \\
\leqslant  C(1+\Vert \xi\Vert)^{-N}\left(\int_1^4 e^{-2^{-j}u^{-1}\Im z}u^{\frac{n}{2}-\Re\cv-2}du \right)  2^{j(\frac{n}{2}-\Re\cv-1-2N)}.  &
\eea $$
Using the elementary estimates:
$$ \bea 
\int_1^4 e^{-2^{-j}u^{-1}\Im z}u^{\frac{n}{2}-\Re\cv-2}du &= \int_{\frac{1}{4}}^1 e^{-2^{-j}u\Im z}u^{\Re\cv-\frac{n}{2}}du
\leqslant C_1  \int_{\frac{1}{4}}^1 e^{-2^{-j}u\Im z} du \\
&\leqslant  \frac{3}{4}C_1 e^{-\frac{\Im z}{42^j}}\leqslant C_{2,N} 
\left(1+\frac{\module{\Im z}}{2^j}\right)^{-N}\\ &\leqslant C_{2,N} 
2^{jN}\left(1+\module{\Im z}\right)^{-N} 
\eea $$
and combining with the above stationary phase estimate, we deduce that
$$ \bea 
\bigg| \int_0^\infty \left(\int_{\mathbb{R}^n}\chi(x)  e^{-i\left(\langle\xi,x\rangle +\frac{uQ(x)}{4}\right)} d^nx\right) e^{iu^{-1}z}  \beta_j(u)u^{\frac{n}{2}-\cv-2}du\bigg| & \\[1mm]
\leqslant C_{3,N}(1+\Vert \xi\Vert)^{-N}(1+\module{\Im z})^{-N}2^{j(\frac{n}{2}-\Re\cv-1-N)}. & 
\eea $$

Now, for all $N>\frac{n}{2}-\Re\cv-1$, the series in $j$ converges absolutely and yields an estimate of the form
$$ \bea 
\bigg| \sum_{j=1}^\infty \int_0^\infty \left(\int_{\mathbb{R}^n}\chi(x)  e^{-i\left(\langle\xi,x\rangle +\frac{uQ(x)}{4}\right)} d^nx\right) e^{iu^{-1}z}  \beta_j(u)u^{\frac{n}{2}-\cv-2}du \bigg|
 \leqslant C_{3,N}(1+\Vert \xi\Vert)^{-N}(1+\module{\Im z})^{-N}.
\eea $$

\step{2 (infrared part)} To conclude the estimate, we still need to deal 
with the infrared part $$ \left\langle I_{{\rm IR}},\varphi e^{i\left\langle\vareta,.\right\rangle}\right\rangle=\frac{ e^{-i(\cv+1)\frac{\pi}{2}}}{(4\pi i)^{\frac{n}{2}}(-1)^{\frac{n-1}{2}}}\int_0^\infty   \left\langle e^{\frac{Q(.)}{4iu}},\varphi e^{i\left\langle\vareta,.\right\rangle}\right \rangle e^{izu} \psi(u^{-1})  u^{\cv-\frac{n}{2}}du $$ for $\vareta\in V$ and $\varphi\in C^\infty_\c(\mathbb{R}^n)$, where we did a variable change $u\mapsto u^{-1}$. 
We first assume that $\Im z\geqslant \varepsilon >0$.
The function $e^{\frac{Q(x)}{4iu}}\varphi(x)$ is smooth in $x$ uniformly in $u\in \supp \psi(u^{-1})$.
Therefore $\vert\left\langle e^{\frac{Q(.)}{4iu}},\varphi e^{i\left\langle\vareta,.\right\rangle}\right \rangle\vert\leqslant C_N (1+\Vert\vareta\Vert)^{-N}$ for all $N\in \mathbb{N}$.
If $\Im z>0$, then $u\in \mathbb{R}\mapsto e^{izu} \psi(u^{-1})  u^{\cv-\frac{n}{2}} $ is Riemann integrable on $\mathbb{R}$ hence we immediately find that for all $N$:
$$ \bea 
\module{\left\langle I_{{\rm IR}},\varphi e^{i\left\langle\vareta,.\right\rangle}\right\rangle}&\leqslant\vert C(\cv)\vert\int_0^\infty   \module{\left\langle e^{\frac{Q(.)}{4iu}},\varphi e^{i\left\langle\vareta,.\right\rangle}\right \rangle } e^{-\Im zu} \psi(u^{-1})  u^{\Re\cv-\frac{n}{2}}du\\
&=\pazocal{O}\left( e^{-\Im z\frac{\delta}{2}}\Vert \vareta\Vert^{-N} \right)=\pazocal{O}\left( \module{\Im z}^{-N} \Vert \vareta\Vert^{-N} \right),
\eea $$
where $\delta>0$ is such that $[0,\delta]\cap \supp(\psi(u^{-1}))=\emptyset $. Now when $\Re\cv<\frac{n}{2}-1$, then the above bound holds true uniformly on $\{\Im z\geqslant 0\}$ since $\psi(u^{-1})  u^{\Re\cv-\frac{n}{2}}$ is Riemann integrable
and 
$$ \bea 
\int_0^\infty   \module{\left\langle e^{\frac{Q(.)}{4iu}},\varphi e^{i\left\langle\vareta,.\right\rangle}\right \rangle } e^{-\Im zu} \psi(u^{-1})  u^{\Re\cv-\frac{n}{2}}du&\leqslant  C_N(1+\Vert\vareta\Vert)^{-N}e^{-\frac{\delta}{2}\Im z}\underset{<+\infty}{\underbrace{\int_0^\infty\psi(u^{-1})  u^{\Re\cv-\frac{n}{2}}du}}\\
&\leqslant  C_{2,N}(1+\Vert\vareta\Vert)^{-N}(1+\module{\Im z})^{-N} .
\eea $$

\step{3 (conclusion)}
Let 
\beq\label{eq:deflbo}
\varlambda_0=\{ (x;\xi)\,|\,\xi=\tau dQ(x), \, Q(x)=0, \, \tau<0 \}\cup (\coto[0]{\mathbb{R}^n})\subset T^*\mathbb{R}^n.
\eeq
For all $\chi\in C^\infty_\c(\mathbb{R}^n\setminus \{0\})$ and all cones $V$ s.t.~$\supp \chi \times V$ does not meet 
$\{(x;\xi) \,|\, Q(x)=0, \ \xi=\tau dQ, \ \tau<0 \}\cup (\coto[0]{\mathbb{R}^n})$, for $\Im z\geqslant \varepsilon>0$,
we deduce an estimate of the form
$$\vert\pazocal{F}\left( \Fse{z}\chi \right)(\xi)\vert \leqslant C(1+\Vert\xi\Vert)^{-N}\module{\Im z}^{-N} $$
uniformly in $\xi\in V$, where $\pazocal{F}$ denotes the Fourier transform.
In other words, in terms of the continuous seminorms $\Vert.\Vert_{N,V,\chi}$ of the $\pazocal{D}^\prime_{\varlambda_0}$ topology, the above estimate reads
$\Vert  \Fse{z}\Vert_{N,V,\chi}\leqslant C \module{\Im z}^{-N} $ for   $\Im z\geqslant \varepsilon>0$.
When $\Re\cv< \frac{n}{2}-1 $, we have a stronger estimate which holds true on $\Im z> 0$:
\begin{eqnarray}\label{ineq:fsmuloc1}
\Vert \Fse{z} \Vert_{N,V,\chi}\leqslant C\left(1+\module{\Im z}\right)^{-N}.
\end{eqnarray}

Combining this with the H\"older estimates of \sec{ss:holderestimates} and Lemma~\ref{l:Fsoutsidediag}, we get the following result.

\begin{prop}\label{p:fsholder}
Let $\varlambda_0=\{ (x;\xi) \,|\, \xi=\tau dQ(x), \, Q(x)=0, \, \tau< 0 \}\cup (\coto[0]{\mathbb{R}^n})$.
Then:
\ben
\item\label{sholder1} The family 
$\left(1+\module{\Im z}\right)^{\Re\cv+1}  \Fse{z}$, $\Im z\geqslant 0$, $\vert z\vert\geqslant \varepsilon>0$, is bounded in $\pazocal{D}^\prime_{\varlambda_0}(\mathbb{R}^n)$.
\item\label{sholder2}  For all $\varepsilon>0$ the family 
$\module{\Im z}^{N}  \Fse{z}$,  ${\Im z\geqslant \varepsilon}$, is bounded in $\pazocal{D}^\prime_{\varlambda_0}(\mathbb{R}^n\setminus \{0\})$ for all $N\in \mathbb{N}$.
\item\label{sholder3} If $\Re\cv<\frac{n}{2}-1$ then the family $\left(1+\module{\Im z}\right)^{\Re\cv+1}  \Fse{z}$, $\Im z\geqslant 0$, is bounded in $\pazocal{D}^\prime_{\varlambda_0}(\mathbb{R}^n)$.
\item\label{sholder4} If $\Re\cv<\frac{n}{2}-1$ then the family 
$\left(1+\module{\Im z}\right)^{N} \Fse{z}$, $\Im z\geqslant 0$,  is bounded in $\pazocal{D}^\prime_{\varlambda_0}(\mathbb{R}^n\setminus \{0\})$ for all $N\in \mathbb{N}$. 
\een
\end{prop}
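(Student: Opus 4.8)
The statement packages the estimates of \sec{ss:holderestimates} and the microlocal bounds above into the topology of $\pazocal{D}^\prime_{\varlambda_0}$, so the plan is essentially to assemble pieces. Recall that, for $\Omega=\rr^n$ or $\rr^n\setminus\{0\}$, a family $\{u_z\}$ is bounded in $\pazocal{D}^\prime_{\varlambda_0}(\Omega)$ exactly when it is bounded in the strong topology of $\pazocal{D}^\prime(\Omega)$ \emph{and}, for every $\chi\in C^\infty_{\rm c}(\Omega)$ and every closed cone $V$ with $\supp\chi\times V\cap\varlambda_0=\emptyset$, the seminorms $\|u_z\|_{N,V,\chi}$ are bounded for all $N\in\nn$. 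I would check these two kinds of bounds separately for $u_z=w(z)\Fse{z}$, with $w(z)=(1+\module{\Im z})^{\Re\cv+1}$ in the first and third assertions and $w(z)=\module{\Im z}^{N}$, resp.~$w(z)=(1+\module{\Im z})^{N}$, in the second and fourth.

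\emph{Microlocal seminorms.} Here I would quote the bounds on $\|\cdot\|_{N,V,\chi}$ derived above from the splitting $\Fse{z}=I_{{\rm IR}}+I_{{\rm UV}}$ of Lemma \ref{l:oscilldec}. The ultraviolet contribution obeys, for all $\cv$ and uniformly on $\{\Im z\geqslant 0\}$, a bound of the form $C(1+\module{\Im z})^{-N}(1+\Vert\xi\Vert)^{-N}$ for $\xi\in V$; the infrared contribution obeys the analogue with $\module{\Im z}^{-N}$ on $\{\Im z\geqslant\varepsilon\}$ for general $\cv$, and the sharper \eqref{ineq:fsmuloc1} on $\{\Im z\geqslant 0\}$ once $\Re\cv<\frac n2-1$. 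This already gives the third and fourth assertions on the level of seminorms (one has $\|\Fse{z}\|_{N,V,\chi}\leqslant C(1+\module{\Im z})^{-N}$ for every $N$ on the closed half-plane, stable under $w(z)$), and likewise the second assertion on $\{\Im z\geqslant\varepsilon\}$. For the first assertion the only region not immediately covered is $\{0\leqslant\Im z<\varepsilon,\ \module{z}\geqslant\varepsilon\}$, where $w(z)$ is bounded; there the ultraviolet part is harmless, and for the infrared part I would discard the already-covered set $\{\Im z\geqslant\frac\varepsilon2\}$ and be left with $\{0\leqslant\Im z\leqslant\frac\varepsilon2,\ \module{z}\geqslant\varepsilon\}$, on which necessarily $\module{\Re z}\geqslant\frac{\sqrt3}2\varepsilon$, so that the phase $\Re z/u$ in the $u$-integral defining $I_{{\rm IR}}$ is non-stationary and repeated integration by parts in $u$ near $u=0$ yields the needed bound, uniformly in $\Re z$. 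Uniformity in $\Re z$ throughout is automatic since only $\module{e^{iz/u}}=e^{-\Im z/u}$ enters the moduli of the $u$-integrands.

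\emph{Strong topology of $\pazocal{D}^\prime$.} Near the origin I would invoke the H\"older--Zygmund estimates of \sec{ss:holderestimates} with the interpolation parameter $a=1$, which turns the entire decay budget into powers of $\Im z$: for every $\psi\in C^\infty_{\rm c}(\rr^n)$ one gets $\|\Fse{z}\psi\|_{\pazocal{C}^{-k-n}}\leqslant C(1+\module{\Im z})^{-(\Re\cv+1)}$ with $k=\plancher{\Re\cv}+1$, valid on $\{\Im z\geqslant 0,\ \module{z}\geqslant\varepsilon\}$ in general and on $\{\Im z\geqslant 0\}$ when $\Re\cv<\frac n2-1$. Since $\pazocal{C}^{-k-n}_{\loc}\hookrightarrow\pazocal{D}^\prime$ continuously, this makes $(1+\module{\Im z})^{\Re\cv+1}\Fse{z}$ strongly bounded in $\pazocal{D}^\prime(\rr^n)$, which is the $\pazocal{D}^\prime$-part of the first and third assertions. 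Away from the origin I would use Lemma \ref{l:Fsoutsidediag}, namely $\langle\Fse{z},\varphi\rangle=\pazocal{O}(\module{\Im z}^{-\infty})$ for $\varphi\in C^\infty_{\rm c}(\rr^n\setminus\{0\})$, uniformly over bounded sets of such $\varphi$ on $\{\Im z\geqslant\varepsilon\}$ (and, combining with the $a=1$ estimate localized off the origin to handle bounded $\Im z$, on $\{\Im z\geqslant 0\}$ when $\Re\cv<\frac n2-1$); this beats any polynomial weight and gives the $\pazocal{D}^\prime(\rr^n\setminus\{0\})$-part of the second and fourth assertions.

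Putting the two kinds of bounds together on the respective parameter ranges proves the four claims. The main obstacle, I expect, is not any single estimate — those are all available from the preceding subsections — but rather the correct book-keeping of the two alternative hypotheses: $\module{z}\geqslant\varepsilon$, which keeps one away from the infrared pole of $\Fse{z}$ at $z=0$ so that the seminorm and H\"older bounds survive the limit $\Im z\to 0^+$ for fixed $\Re z\neq0$ (as in the case split above), versus $\Re\cv<\frac n2-1$, which makes the infrared $u$-integrals absolutely convergent down to the real axis for \emph{all} $z$ in the closed half-plane. In both regimes one has to ensure the constants are uniform in $\Re z$; besides the elementary remark on $\module{e^{iz/u}}$, this can be seen from the homogeneity relation $\Fse{z}(x)=\lambda^{n-2\cv-2}\Fse{z/\lambda^2}(\lambda x)$ combined with the continuity of $z\mapsto\Fse{z}$ into $\pazocal{D}^\prime_{\varlambda_0}$ down to $\rr\setminus\{0\}$, which reduces $\module{z}\to\infty$ to a compact arc.
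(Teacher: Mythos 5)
Your proposal is correct and follows essentially the same route as the paper: the statement is obtained by splitting boundedness in $\pazocal{D}^\prime_{\varlambda_0}$ into (i) boundedness in the distribution topology, supplied by the H\"older--Zygmund estimates of \sec{ss:holderestimates} with $a=1$ together with Lemma \ref{l:Fsoutsidediag} off the origin, and (ii) boundedness of the seminorms $\Vert\cdot\Vert_{N,V,\chi}$, supplied by the ${\rm UV}/{\rm IR}$ decomposition of Lemma \ref{l:oscilldec} and the bounds culminating in \eqref{ineq:fsmuloc1} — exactly the combination the paper invokes.

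The one place where you genuinely deviate is the strip $\{0\leqslant \Im z<\varepsilon,\ \vert z\vert\geqslant\varepsilon\}$ for general $\cv$ in assertion \eqref{sholder1}: the seminorm estimates preceding the proposition only cover $\Im z\geqslant\varepsilon$ (or all of $\{\Im z\geqslant 0\}$ when $\Re\cv<\frac n2-1$), and the paper completes the picture for arbitrary $\cv$ down to the real axis only afterwards, via the Bernstein--Sato functional equation \eqref{e:bernsteinsatoFs} (see Theorem \ref{t:fsanalyticflat}). Your alternative — exploiting that $\vert\Re z\vert$ is bounded below there, so that the phase $e^{iz u^{-1}}$ (equivalently $e^{izu}$ after the substitution $u\mapsto u^{-1}$, with $\vert z\vert\geqslant\varepsilon$) is non-stationary and repeated integration by parts in $u$ in the infrared integral gives uniform bounds — is a legitimate, more hands-on substitute for that region; it buys self-containedness at the level of this proposition, while the paper's functional-equation argument is what ultimately propagates the estimate in $\cv$ by steps of one and is needed anyway for the later sections. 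Your closing remarks on uniformity in $\Re z$ (only $\vert e^{iz/u}\vert=e^{-\Im z/u}$ enters the moduli) and the scaling relation are consistent with how the paper's bounds are derived.
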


Using the notation introduced in Definition \ref{def:o},  the statement \eqref{sholder1} is equivalent to
$\Fse{z}=\pazocal{O}_{\pazocal{D}^\prime_{\varlambda_0}}(\module{\Im z}^{-\Re\cv-1})$ in $\Im z>0$, and we can rephrase \eqref{sholder2}--\eqref{sholder4} similarly.

\subsection{The holomorphic family of distributions \texorpdfstring{$\Fse{z}$}{F(z)} for \texorpdfstring{$\Re\cv\geqslant 0$}{Re alpha>0}}

We need to verify algebraic relations satisfied by the 
holomorphic family of Lorentz invariant distributions $\Fse{z}\in \pazocal{D}^\prime(\mathbb{R}^n)$ 
which will appear in the asymptotic expansion of Feynman powers.   

Let $\gamma_\varepsilon$ be the contour in the upper half-plane introduced 
in \sec{ss:restocp}.  We will need the following lemma when inserting the parametrix for $(P-z)^{-1}$ in contour integrals along $\gamma_\varepsilon$.  It is precisely the $\Fse{z}$ family of distributions that will contribute to the singularities near the diagonal of the  Schwartz kernel of the complex powers.

\begin{lemm}\label{l:holofamilyFm}
Let $\Fse{z}\in \pazocal{D}^\prime(\mathbb{R}^n)$, $\cv\in \mathbb{C}$, $\Im z>0$ be the family of distributions defined by \eqref{e:defF}. 
For all $\varm\in\nn$, {$m\in \mathbb{R}$, $\varepsilon>0$}, they satisfy the contour integral identity:
\begin{equation}
\boxed{\frac{1}{2\pi i}\int_{\gamma_\varepsilon} (z{\pm i\varepsilon})^{-\cv} 
\Fse[\varm]{ { z-m^2 }}dz=\frac{(-1)^\varm\Gamma(-\cv+1)}{\Gamma(-\cv-\varm+1)\Gamma(\cv+\varm)}  \Fse[\cv+\varm-1]{-m^2\mp i\varepsilon}.}
\end{equation} 
where both sides converge in $\pazocal{D}^\prime(\mathbb{R}^{n})$ for $\Re\cv>0$.
For $\square_\eta=\eta^{ij}\partial_{x^i}\partial_{x^j}$, we also have the relation
\begin{equation}
\boxed{(\square_\eta-z) \Fse{z}=\cv \Fse[\cv-1]{z}.}
\end{equation}
\end{lemm}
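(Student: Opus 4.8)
The plan is to prove the two boxed identities separately, starting with the simpler differential relation and then turning to the contour-integral identity. For the relation $(\square_\eta-z)\Fse{z}=\cv\,\Fse[\cv-1]{z}$, I would argue by Fourier transform on the half-plane $\Im z>0$, where $\Fse{z}$ has the genuinely convergent representation \eqref{e:defF}. Under $\pazocal{F}$, the operator $\square_\eta=\eta^{ij}\p_{x^i}\p_{x^j}$ acts as multiplication by $-\vert\xi\vert_\eta^2$ (with the sign convention chosen in \sec{ss:elementary} so that $\square_\eta$ corresponds to $-(-\vert\xi\vert_\eta^2)=\vert\xi\vert_\eta^2$ acting on the symbol — one must be careful here with the minus sign in the definition of $\vert\xi\vert_\eta^2$). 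Then
\[
\pazocal{F}\big((\square_\eta-z)\Fse{z}\big)=\big(\vert\xi\vert_\eta^2-z\big)\cdot\frac{\Gamma(\cv+1)}{(2\pi)^n}\big(\vert\xi\vert_\eta^2-z\big)^{-\cv-1}
=\frac{\Gamma(\cv+1)}{(2\pi)^n}\big(\vert\xi\vert_\eta^2-z\big)^{-\cv}=\cv\,\pazocal{F}\big(\Fse[\cv-1]{z}\big),
\]
using $\Gamma(\cv+1)=\cv\,\Gamma(\cv)$. Since both sides are holomorphic in $\cv$ by Proposition \ref{prop:flatfeynm} and its corollary, and agree for $\Im z>0$, the identity extends to $\Im z\geqslant 0$, $z\neq 0$, by analytic continuation; no boundary value subtlety arises because the multiplier $\vert\xi\vert_\eta^2-z$ is a genuine smooth function times the distribution.

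For the contour-integral identity, the key computational input is to interchange the contour integral over $\gamma_\varepsilon$ with the inverse Fourier transform, reducing everything to the scalar identity
\[
\frac{1}{2\pi i}\int_{\gamma_\varepsilon}(z\pm i\varepsilon)^{-\cv}\,\big(\vert\xi\vert_\eta^2+m^2-z\big)^{-\varm-1}\,dz
\;=\;\frac{(-1)^\varm\Gamma(-\cv+1)}{\Gamma(-\cv-\varm+1)\Gamma(\cv+\varm)}\,\big(\vert\xi\vert_\eta^2+m^2\pm i0\big)^{-\cv-\varm},
\]
which would then be multiplied by $\Gamma(\varm+1)/(2\pi)^n$ and Fourier-inverted. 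To establish the scalar identity I would deform $\gamma_\varepsilon$ (Figure \ref{fig:contour}) to wrap around the branch cut of $(\vert\xi\vert_\eta^2+m^2-z)^{-\varm-1}$ emanating from $z=\vert\xi\vert_\eta^2+m^2$: the contributions from the arcs at infinity vanish for $\Re\cv>0$ (since $(z\pm i\varepsilon)^{-\cv}$ decays and the other factor decays like $\module{z}^{-\varm-1}$), so by Cauchy's theorem the integral equals $1/(2\pi i)$ times the integral of the discontinuity of $(z-w)^{-\varm-1}$ (with $w=\vert\xi\vert_\eta^2+m^2$) against $(z\pm i\varepsilon)^{-\cv}$ along $[w,+\infty)$. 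Writing the jump of $(z-w)^{-\varm-1}$ across the cut as a derivative of a delta-type distribution (i.e.\ $\frac{(-1)^\varm}{\varm!}\delta^{(\varm)}(z-w)$ up to $2\pi i$ factors, integrating the branch-cut discontinuity) and evaluating, one is left with a derivative in $w$ of the Beta-function integral $\int_w^\infty(z\pm i\varepsilon)^{-\cv}(z-w)^{\text{(power)}}\,dz$; the resulting Gamma-function prefactors are exactly $(-1)^\varm\Gamma(-\cv+1)/\big(\Gamma(-\cv-\varm+1)\Gamma(\cv+\varm)\big)$. To avoid sign-of-$\varepsilon$ errors it is cleanest to first do the case $m>0$ with the $-i\varepsilon$ regularization, where $w>0$ and the pole at $z=0$ coming from $(z\pm i\varepsilon)^{-\cv}$ sits strictly below $\gamma_\varepsilon$ (or is treated by the same branch-cut argument), and then extend to $m^2\leqslant 0$ and the limit $\varepsilon\to 0^+$ using the uniform estimates of Proposition \ref{p:fsholder}, which guarantee that the $\pazocal{D}'$-valued integrand is dominated uniformly and that the limit $\Fse[\cv+\varm-1]{-m^2\mp i0}$ exists for $\Re\cv>0$ (so $\Re(\cv+\varm-1)$ may be negative, but Proposition \ref{p:fsholder}\eqref{sholder3} with a mass, or the meromorphy statement, still applies away from the infrared poles).

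The main obstacle I anticipate is the rigorous justification of the Fubini-type interchange of $\int_{\gamma_\varepsilon}$ with the (inverse) Fourier transform and the contour deformation \emph{at the level of distributions in $x$}, uniformly in $z$ along the unbounded contour. This is precisely where Proposition \ref{p:fsholder} is essential: it provides the bound $\Fse[\varm]{z-m^2}=\pazocal{O}_{\pazocal{D}'_{\varlambda_0}}\big((1+\module{\Im z})^{-\varm-1}\big)$ together with the rapid decay $\pazocal{O}(\module{\Im z}^{-\infty})$ away from $0\in\rr^n$ from Lemma \ref{l:Fsoutsidediag}, so that the $\pazocal{D}'$-valued integrand is dominated by an integrable function of $z$ along $\gamma_\varepsilon$ (using $\Re\cv>0$ to kill the $\module{z}^{-\Re\cv}$ factor at large $\module{z}$ and the behaviour of $\gamma_\varepsilon$ near its two rays $e^{i(\pi-\theta)}\rr_+$, $e^{i\theta}\rr_+$). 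Granting dominated convergence in $\pazocal{D}'(\rr^n)$, one tests against $\varphi\in C_\c^\infty(\rr^n)$, moves the pairing inside the $z$-integral, and then the problem genuinely reduces to the scalar complex-analysis computation above. The secondary nuisance is bookkeeping the branch of the $\log$ (fixed on $\cc\setminus\opencl{-\infty,0}$) consistently for $(z\pm i\varepsilon)^{-\cv}$, for $(\vert\xi\vert_\eta^2+m^2-z)^{-\varm-1}$, and for the limiting $(\,\cdot\mp i0)^{-\cv-\varm}$; I would pin this down by first verifying the identity for $\cv$ a positive integer (where $(z-i\varepsilon)^{-\cv}$ has no branch issues and the $z$-integral is a genuine residue/derivative computation), and then invoke analyticity in $\cv$ on $\{\Re\cv>0\}$ — both sides being holomorphic there by Proposition \ref{prop:flatfeynm} — to conclude in general.
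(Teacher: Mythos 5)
Your overall architecture is the paper's: prove the second boxed identity by Fourier multiplication (this part of your plan is fine and is exactly how the paper treats it), and reduce the contour identity, after exchanging the $z$-integral with the inverse Fourier transform, to a scalar identity in $z$ with parameter $w=\vert\xi\vert_\eta^2+m^2$, using $\Re\cv>0$ for decay along $\gamma_\varepsilon$. (The paper's justification of the interchange is lighter than what you propose: it never invokes Proposition \ref{p:fsholder}, but simply reads both sides as multiplication operators on $L^2(\mathbb{R}^n_\xi)$, proves the scalar identity uniformly for $\xi$ in a compact set so that the $z$-integral is norm convergent, and extends by density; your $\pazocal{D}'$-valued dominated-convergence route would also work but is heavier than necessary.)

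The genuine problems are in your central evaluation. First, since $\varm\in\nn$, the factor $(w-z)^{-\varm-1}$ is a rational function of $z$ with a pole of order $\varm+1$ at $z=w$; it has \emph{no} branch cut emanating from $w$ and no discontinuity across $[w,+\infty)$, so the mechanism you describe (wrapping a cut, integrating its discontinuity, a Beta-function integral in $w$) does not exist here. The correct, and much simpler, step — and the one the paper uses — is Cauchy's formula for derivatives: closing $\gamma_\varepsilon$ around the real pole $z=w$ (which lies on the opposite side of $\gamma_\varepsilon$ from the cut of the weight $(z\mp i\varepsilon)^{-\cv}$), the arcs vanish for $\Re\cv>0$ and one picks up $\frac{1}{\varm!}\partial_z^\varm (z\mp i\varepsilon)^{-\cv}\big|_{z=w}$, i.e.\ the factor $(-\cv)\cdots(-\cv-\varm+1)=\Gamma(-\cv+1)/\Gamma(-\cv-\varm+1)$. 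Your $\delta^{(\varm)}$ remark is morally this residue computation, but the argument as written is not correct. Second, your displayed scalar identity is wrong on two counts: the constant should be $\frac{(-1)^\varm}{\varm!}\frac{\Gamma(-\cv+1)}{\Gamma(-\cv-\varm+1)}$ (the $\Gamma(\cv+\varm)$ only reappears when you re-express the result through $\Fse[\cv+\varm-1]{\cdot}$, whose definition carries that factor, while the $\varm!$ is absorbed by $\Gamma(\varm+1)$ in $\Fse[\varm]{\cdot}$ — with your normalization the two sides do not match after multiplying by $\Gamma(\varm+1)/(2\pi)^n$), and the right-hand side is $(w\pm i\varepsilon)^{-\cv-\varm}$ at the \emph{fixed} $\varepsilon>0$ of the statement: no $\pm i0$ boundary value, no $\varepsilon\to0^+$ limit, and hence no infrared issue requiring Proposition \ref{p:fsholder}\eqref{sholder3} — the lemma is an identity at $\varepsilon>0$. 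Third, your fallback ``verify for $\cv$ a positive integer and conclude by holomorphy in $\cv$ on $\{\Re\cv>0\}$'' is not valid: the positive integers have no accumulation point in that half-plane (think of $\sin\pi\cv$), so agreement there does not force agreement; you would need a Carlson-type theorem with growth control, whereas the residue argument above works directly for every $\cv$ with $\Re\cv>0$, the only branch bookkeeping being that of $(z\mp i\varepsilon)^{-\cv}$, whose cut is separated by $\gamma_\varepsilon$ from the enclosed pole.
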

\begin{proof}
We claim that by density of compactly supported functions in $L^2(\mathbb{R}^{n})$ and  Cauchy residue formula,\footnote{Beware that our contour $\gamma_\varepsilon$ is oriented counterclockwise but we integrate against $(Q(\xi)-z)^{-1}$ instead of $(z-Q(\xi))^{-1}$.}
$$ \frac{1}{2\pi i}\int_{\gamma_\varepsilon} (z+i\varepsilon)^{-\cv-1}(Q(\xi)-z)^{-1}dz=(Q(\xi)+i\varepsilon)^{-\cv-1}:L^2(\mathbb{R}^{n})\to L^2(\mathbb{R}^{n}), $$ 
{where the l.h.s.~is norm convergent in  $\pazocal{B}(L^2(\mathbb{R}^{n}))$ when $\Re\cv>0$.}

Indeed, when we multiply $\frac{1}{2\pi i}\int_{\gamma_\varepsilon} (z+i\varepsilon)^{-\cv-1}(Q(\xi)-z)^{-1}dz$ by some compactly supported $\varphi(\xi)$, the values of $Q(\xi)$ when multiplied in $(Q(\xi)-z)^{-1}\varphi(\xi)$ lie in a compact $K\subset \mathbb{C}$. We can enclose $K$ with a large piece of $\gamma_\varepsilon\cap B(0,R)$, which we close up with an arc circle of the form $\{ Re^{i\theta} \,|\, \theta\in [-\omega,\omega] \}$. This arc has size $\sim R $ but the integrand on it decays like $R^{-\Re\cv-1}$ so this large portion tends to $0$ as $R\rightarrow +\infty$. The part $\gamma_\varepsilon\cap B(0,R)^\c$ on the complement of the ball of radius $R$ also decays when $R\rightarrow +\infty$. Therefore, Cauchy's formula tells us that
the identity 
$$ \frac{1}{2\pi i}\int_{\gamma_\varepsilon} (z+i\varepsilon)^{-\cv-1}(Q(\xi)-z)^{-1}dz=(Q(\xi)+i\varepsilon)^{-\cv-1}$$ 
holds true as operators acting on compactly supported smooth functions of $\xi$. But since these are dense in $L^2$, this identity extends in the sense of operators in
$\pazocal{B}(L^2(\mathbb{R}^{n}))$.
By inverse Fourier transform, this yields
$$
 \frac{1}{2\pi i}\int_{\gamma_\varepsilon} (z+i\varepsilon)^{-\cv-1}\Fse[0]{z}dz=\Gamma(\cv+1)^{-1}
\Fse{-i\varepsilon}.
$$

We have to extend the above discussion to the case 
$$ \frac{1}{2\pi i}\int_{\gamma_\varepsilon} (z+i\varepsilon)^{-\cv} \Fse[\varm]{z}dz$$
still in the region $\Re\cv>1$. 
Recall that in Fourier space: 
$${\cF{\Fse[\varm]{z}}}(\xi)=\varm!(Q(\xi)-z)^{-\varm-1}.$$
For every holomorphic $f$, Cauchy's formula says that
$\frac{i}{2\pi}\int_\gamma f(z)(z-z_0)^{-\varm-1}dz=\frac{f^{(\varm)}(z_0)}{\varm!}$ if $\gamma$ is a clockwise contour 
around $z_0$.
Therefore arguing as above yields:
$$ \bea  
 &\frac{1}{2\pi i}\int_{\gamma_\varepsilon} (z+i\varepsilon)^{-\cv} \varm!(Q(\xi)-z)^{-\varm-1}dz\\
&=(-1)^\varm \frac{i}{2\pi}\int_{\gamma_\varepsilon} (z+i\varepsilon)^{-\cv} \varm!(z-Q(\xi))^{-\varm-1}dz\\ 
&=(-1)^\varm(-\cv)\dots(-\cv-\varm+1)(Q(\xi)+i\varepsilon)^{-\cv-\varm}
\eea $$ 
where both sides converge when $\Re\cv>0$ as multiplication operators 
in $\pazocal{B}(L^2(\mathbb{R}^{n}))$.
By inverse Fourier transform and using the definition of $\Fse[\cv+\varm-1]{-i\varepsilon}$
yields
$$
\frac{1}{2\pi i}\int_{\gamma_\varepsilon} (z+i\varepsilon)^{-\cv} \Fse[\varm]{z}=\frac{(-1)^\varm\Gamma(-\cv+1)}{\Gamma(-\cv-\varm+1)\Gamma(\cv+\varm)}  \Fse[\cv+\varm-1]{-i\varepsilon},
$$
where the integral makes sense as a bounded operator on acting on $L^2(\mathbb{R}^{n})$.
\end{proof}

\subsubsection{Analytic continuation of the microlocal estimates and Bernstein--Sato polynomial.}
Our next goal is to prove an analytic continuation of the microlocal estimates on $\Fse{z}$ for all $\cv\in \mathbb{C}$ and $z\in \{\Im z\geqslant 0,\, \vert z\vert\geqslant {\varepsilon} > 0\}$. The idea is to prove the existence of a functional equation satisfied by $\Fse{z}$ involving  Bernstein--Sato polynomials~\cite{bernstein1971, bernstein1972, sabbah}.

If $z\neq 0$, observe that one has
$$ \bea 
\partial_{\xi_i}^2(Q(\xi)-z)^{-\cv}=-2\cv\,\eta_{ii} (Q(\xi)-z)^{-\cv-1}+4\cv(\cv+1)\eta_{ii}^2\,\xi_i^2(Q(\xi)-z)^{-\cv-2}, 
\eea $$
which implies after division by $\eta_{ii}$ and summation over $i$:
$$ 
\sum_{i=1}^n \frac{ \partial_{\xi_i}^2}{\eta_{ii}}(Q(\xi)-z)^{-\cv}=-2\cv (Q(\xi)-z)^{-\cv-1}+4\cv(\cv+1)Q(Q(\xi)-z)^{-\cv-2},
$$
hence
$$
\sum_{i=1}^n \frac{ \partial_{\xi_i}^2}{\eta_{ii}}(Q(\xi)-z)^{-\cv}=-2\cv (Q(\xi)-z)^{-\cv-1}+4\cv(\cv+1)(Q(\xi)-z)^{-\cv-1}+4\cv(\cv+1)z(Q(\xi)-z)^{-\cv-2},
$$
and consequently
$$
\left(Q(\xi)Q(\partial_\xi)+2\cv-4\cv(\cv+1)\right)(Q(\xi)-z)^{-\cv}=4\cv(\cv+1)z(Q(\xi)-z)^{-\cv-1}.
 $$
By inverse Fourier transform this yields the functional equation satisfied by the family
$\Fse{z}$
\begin{eqnarray}\label{e:bernsteinsatoFs}
\boxed{A(\cv,x,D_x)\Fse{z}=F_{\cv+1}(z,.),}
\end{eqnarray}
where $A$ is the differential operator with polynomial coefficients 
\begin{equation}\label{e:Pbernstein}
A(\cv,x,D_x)=\frac{\cv\left(Q(\partial_x^2)Q(x)+2(\cv+1)-4(\cv+1)(\cv+2)\right)}{4(\cv+1)(\cv+2)z} 
\end{equation}
which is holomorphic on the half--plane $\Re\cv>-1$.
Using the functional equation~(\ref{e:bernsteinsatoFs}), denoting by $\pazocal{F}$ the Fourier transform, we deduce the identity:
$$ \bea 
&\pazocal{F}\left(F_{\cv+1}(z,.)\chi \right)(\xi)= 
\int_{\mathbb{R}^n} \chi(x)e^{i\left\langle\xi,x\right\rangle} A(\cv,x,D_x)\Fs{z} d^nx\\
&=\int_{\mathbb{R}^n}  \Fs{z}  \, \t A(\cv,x,D_x)\left(\chi(x)e^{i\left\langle\xi,x\right\rangle}\right) d^nx\\
&=\sum \int_{\mathbb{R}^n}  \Fs{z}   \left(A_{(1)}(s,x,D_x)\chi(x)\right) \left(A_{(2)}(s,x,D_x)e^{i\left\langle\xi,x\right\rangle}\right) d^nx \\
&=\sum \int_{\mathbb{R}^n}  \Fs{z}   \left(A_{(1)}(s,x,D_x)\chi(x)\right) \left(A_{(2)}(s,x,\xi) e^{i\left\langle\xi,x\right\rangle}\right) d^nx
\eea $$
where we split the differential operator $\t A$ in two pieces in all possible ways using the Leibniz rule~\footnote{This can also be stated in terms of  
the coproduct $\Delta \,\t A=\sum A_{(1)}\otimes A_{(2)} $ on the coalgebra of differential operators with polynomial coefficients,  following Sweedler's notation.}. The above sum is finite and the degree of $A_{(2)}$ in both $x$ and $\xi$ is always less than $2$.  
 Therefore for all $z\in \{\Im z\geqslant 0, \,\vert z\vert\geqslant \varepsilon> 0\}$, for $\Re\cv<\frac{n}{2}-1$ and all $N_2\in \mathbb{N}$, we can bound the seminorms
of $\Fse[\cv+1]{z}$ in terms of those of $\Fse{z}$: 
$$ \bea 
&\Vert \Fse[\cv+1]{z} \Vert_{N,V,\chi}=\sup_{\xi\in V} \left(1+\Vert\xi\Vert\right)^N \vert \pazocal{F}\left(F_{\cv+1}(z,.)\chi \right)(\xi) \vert\\
&=\sup_{\xi\in V} \left(1+\Vert\xi\Vert\right)^N \bigg|\sum \int_{\mathbb{R}^n}  \Fs{z}   \left(A_{(1)}(s,x,D_x)\chi(x)\right) \left(A_{(2)}(s,x,\xi) e^{i\left\langle\xi,x\right\rangle}\right) d^nx \bigg|\\
&\leqslant  C\sum \Vert \Fse{z}  \Vert_{N+2,V,\chi_{1,2}}\leqslant C(1+\module{\Im z})^{-N_2} 
\eea $$
where all above sums are \emphasize{finite} and the smooth test functions $\chi_{1,2}$ depend on the operators
$A_{(1)},A_{(2)}$. 
Integrating these bounds, we propagate the microlocal estimates from the half-plane $\Re\cv<\frac{n}{2}-1$ to $\cv\in \mathbb{C}$. We deduce that for any continuous seminorm $\Vert.\Vert_{N,V,\chi}$ of $\pazocal{D}^\prime_{\varlambda_0}
$, for all $z\in \{\Im z\geqslant 0,\, \vert z\vert\geqslant {\varepsilon}> 0\}$ and all $\cv\in \mathbb{C}$,   $N_2\in \mathbb{N}$,
$$
 \Vert \Fse[\cv+1]{z} \Vert_{N,V,\chi}\leqslant C(1+\module{\Im z})^{-N_2}.
$$

We  summarize this in the next theorem, together with other results from this section.

\begin{thm}[Analytic properties of the family $F_\cv$]
\label{t:fsanalyticflat}
Let $k=\plancher{\Re\cv}+1$ and $\Fse{z}\in \pazocal{D}^\prime(\mathbb{R}^n)$ as defined in \eqref{e:defF}.
Then for $a\in [0,1]$ {and $\varepsilon>0$},
$\Fse{z}\in \pazocal{C}_{\loc}^{(2-2a)(\Re\cv+1)-k-n}(\rr^n)$ with  decay in {$z\in \{\Im z \geqslant 0, \,\vert z\vert\geqslant \varepsilon\}$} of order $\pazocal{O}({\left(1+ \module{\Im z}\right)^{-a(\Re\cv+1)}})$. 

Let $\varlambda_0=\{ (x;\xi)\,| \, \xi=\tau dQ(x), \, Q(x)=0, \, \tau<0 \}\cup (\coto[0]{\mathbb{R}^n})\subset T^*\mathbb{R}^n$.
For all $z\in \{\Im z\geqslant 0,\, \vert z\vert\geqslant {\varepsilon}>0  \}$ and all $\cv\in \mathbb{C}$, the family $(1+\module{\Im z})^{1+\Re\cv}\Fse{z}$
is bounded in $\pazocal{D}^\prime_{\varlambda_0}(\mathbb{R}^n)$.
Moreover, for every $N\in \mathbb{N}$ and $\varepsilon>0$,
$\big\{\module{\Im z}^N\Fse{z}\big\}_{z\in \{\Im z\geqslant \varepsilon\}}$ is bounded in $\pazocal{D}^\prime_{\varlambda_0}(\mathbb{R}^n\setminus \{0\})$.
\end{thm}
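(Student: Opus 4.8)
The plan is to assemble the theorem from the analytic facts already proved in this section and then to propagate the microlocal part from a half-plane in $\cv$ to all of $\cc$ by means of the functional equation \eqref{e:bernsteinsatoFs}. The standing restriction $\module z\geqslant\varepsilon$ plays two r\^oles: it lets the low-frequency ($\mathrm{IR}$) part of the oscillatory-integral representation of Lemma~\ref{l:oscilldec} be controlled down to the real axis, and it makes the differential operator $A(\cv,x,D_x)$ of \eqref{e:Pbernstein}, which carries a factor $z^{-1}$, bounded uniformly.

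The H\"older--Zygmund statement is exactly the output of the Littlewood--Paley analysis of \sec{ss:holderestimates}: the dyadic blocks $\chi(2^{-j}\sqrt{-\Delta})\Fse{z}\psi$ were bounded, after interpolation with parameter $a\in[0,1]$, by a constant times $2^{j(n-2\Re\cv+k-2+2a(\Re\cv+1))}(1+\module{\Im z})^{-a(\Re\cv+1)}$, and the low-frequency block by the extension bound for weakly homogeneous distributions; summing the geometric series, which converges precisely when the H\"older exponent is $\leqslant(2-2a)(\Re\cv+1)-k-n$, gives $\Fse{z}\psi\in\pazocal{C}^{(2-2a)(\Re\cv+1)-k-n}(\rr^n)$ with decay $\pazocal{O}((1+\module{\Im z})^{-a(\Re\cv+1)})$. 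So for this part one simply records the conclusion; no new argument is needed.

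For the microlocal statements I would take as base case the range $\Re\cv<\frac{n}{2}-1$, where item \eqref{sholder3} of Proposition~\ref{p:fsholder} gives that $(1+\module{\Im z})^{1+\Re\cv}\Fse{z}$ is bounded in $\pazocal{D}^\prime_{\varlambda_0}(\rr^n)$, and item \eqref{sholder2} together with Lemma~\ref{l:Fsoutsidediag} gives the off-diagonal refinement in $\pazocal{D}^\prime_{\varlambda_0}(\rr^n\setminus\{0\})$. To reach an arbitrary $\cv\in\cc$, use the functional equation $A(\cv,x,D_x)\Fse{z}=\Fse[\cv+1]{z}$: fixing a test pair $(\chi,V)$ with $(\supp\chi\times V)\cap\varlambda_0=\emptyset$ and transposing $A(\cv,x,D_x)$ onto $\chi(x)e^{i\langle\xi,x\rangle}$, the Leibniz rule writes the result as a finite sum in which the factor acting on the exponential contributes, at worst, a polynomial of degree at most two in $\xi$ (while the rest only modifies the test function); hence $\Vert\Fse[\cv+1]{z}\Vert_{N,V,\chi}$ is dominated by finitely many seminorms $\Vert\Fse{z}\Vert_{N+2,V,\chi'}$ with $\chi'\in C^\infty_\c(\rr^n\setminus\{0\})$ depending only on $A$. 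Since $A$ is bounded on $\{\module z\geqslant\varepsilon\}$, only finitely many powers of $(1+\module\xi)$ and no power of $\module z$ are lost, so iterating $\cv\mapsto\cv+1$ carries both estimates from $\{\Re\cv<\frac{n}{2}-1\}$ to all of $\cc$. The last assertion of the theorem is then the case $\Im z\geqslant\varepsilon$ of the off-diagonal estimate, obtained from item \eqref{sholder2} of Proposition~\ref{p:fsholder} and Lemma~\ref{l:Fsoutsidediag} in the base range and from the same bootstrap in general.

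I expect the only step requiring genuine care — and it is precisely the computation carried out just before the statement — to be this bootstrap: checking that the transpose/Leibniz expansion really yields control of the $\pazocal{D}^\prime_{\varlambda_0}$ seminorms that is \emph{uniform in $z$ on $\{\Im z\geqslant 0,\,\module z\geqslant\varepsilon\}$}, and that the shift $\cv\mapsto\cv+1$ iterates without loss in $\module{\Im z}$. The genuinely hard analytic input — the oscillatory-integral representation of Lemma~\ref{l:oscilldec}, with its delicate $\Im z\to 0^+$ limit, and the stationary-phase bounds behind Proposition~\ref{p:fsholder} — has already been established, so the proof of the theorem itself is essentially an assembly.
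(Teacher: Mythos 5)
Your proposal is correct and follows essentially the paper's own route: the theorem is stated as a summary of the section, with the H\"older--Zygmund part coming from the Littlewood--Paley computation of \sec{ss:holderestimates} and the microlocal part obtained from the seminorm estimates (valid directly for $\Re\cv<\frac{n}{2}-1$, resp.\ for $\Im z\geqslant\varepsilon$) and then propagated to all $\cv\in\cc$ on $\{\Im z\geqslant 0,\,\module{z}\geqslant\varepsilon\}$ via the Bernstein--Sato functional equation \eqref{e:bernsteinsatoFs}, transposing $A(\cv,x,D_x)$ and using the Leibniz splitting so that $\Vert\Fse[\cv+1]{z}\Vert_{N,V,\chi}$ is controlled by finitely many seminorms $\Vert\Fse{z}\Vert_{N+2,V,\chi'}$, uniformly in $z$ thanks to the $z^{-1}$ factor being harmless there. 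This is precisely the argument carried out in the subsection preceding the theorem, so no gap remains.
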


\section{Formal Hadamard parametrix for the resolvent}
\label{s:hadamardformal}

\subsection{Pull-back by exponential maps}

Next, we introduce the main ingredient in the construction of the formal Hadamard parametrix, namely, the pull-back of the distributions $\Fse{z}$ near the diagonal $\diag\subset M\times M$  using the exponential map.

\subsubsection{Moving frame}\label{movingframe} We use the notation $(x;v)$ for elements of $TM$, where $x\in M$ and $v\in T_xM$.
Let $\pazocal{N}$ be 
a neighborhood
of the zero section
$\zero $ in $TM$
for which 
the 
map
$(x;v)\in\pazocal{N}\mapsto ({x},\exp_x(v))\in M^2 $
is a local
diffeomorphism 
onto its image
($\exp_x:T_xM\to M$ is the exponential geodesic map).

The construction of the exponential
in the pseudo-Riemannian
setting is explained in detail in~\cite[App.~A]{Hormander-97}.
The subset 
$\pazocal{U}=\exp\pazocal{N}\subset M^2$
is a neighborhood
of $\diag$ and
the inverse map
$\pazocal{U}\ni (x_1,x_2) \mapsto (x_1;\exp_{x_1}^{-1}(x_2))\in\pazocal{N}$
is a well-defined diffeomorphism.
Let
$\Omega$ be an open subset of $M$
and let 
$(e_0,\dots,e_n)$ be \emphasize{a {time oriented} orthonormal moving frame}
on $\Omega$ 
(i.e.~$\forall x\in \Omega$, $g_x(e_\mu(x),e_\nu(x))=\eta_{\mu\nu}${, and $e_0$ is future directed}), 
and $(\varalpha^\mu)_\mu$ 
the corresponding 
orthonormal moving coframe.

\subsubsection{Pull-back}\label{sss:pullback} We denote by
$\epsilon_\mu$ the
canonical basis of
$\mathbb{R}^{n}$. The data of the orthonormal moving coframe $(\varalpha^\mu)_\mu$
allows us to define for $(x_1,x_2)\in\pazocal{U}$ the submersion
\begin{equation}\label{applipullback}
 G: (x_1,x_2)\mapsto 
G^\mu(x_1,x_2)\epsilon_\mu=
\underset{\in T_{x_1}^* M}{\underbrace{\varalpha^\mu_{x_1}}}\underset{\in T_{x_1}M}{\underbrace{(\exp_{x_1}^{-1}(x_2))}}\epsilon_\mu\in\mathbb{R}^{n}. 
\end{equation}
For any distribution $f$ in
$\pazocal{D}^\prime(\mathbb{R}^{n})$, 
the composition 
$\pazocal{U}\ni(x_1,x_2)\mapsto G^*f(x_1,x_2)$
defines the pull-back of $f$ on $\pazocal{U}\subset M^2$.
If $f$ is $O(1,n-1)_+^\uparrow$-invariant, 
then
the pull-back defined above
\emphasize{does not depend on the choice
of orthonormal moving frame} $(e_\mu)_\mu$ 
and is thus \emphasize{intrinsic}
(since all orthonormal moving frames 
are related by gauge transformations 
in $C^\infty(M;O(1,n-1)_+^\uparrow)$).

This 
allows us to {canonically}
pull-back
$O(1,n-1)_+^\uparrow$-invariant 
distributions to distributions
defined on a neighborhood $\pazocal{U}$
of $\diag$.

\begin{defi}\label{d:fsneardiag}
We apply this construction to the family 
$\Fse{z}\in \pazocal{D}^\prime\left(\mathbb{R}^{n}\right)$ 
constructed in Proposition \ref{prop:flatfeynm},
and we obtain 
the distribution
$\Fe{z}= G^*\Fse{z}\in \pazocal{D}^\prime(\pazocal{U})$.
\end{defi}

\begin{lemm}\label{Wavefrontpullback}
Let $(M,g)$ be a globally hyperbolic Lorentzian manifold, $\pazocal{U}$  the neighborhood of the diagonal $\diag\subset M\times M$
defined in \sec{movingframe}, $G:\pazocal{U}\to \mathbb{R}^n$  the map defined in  \eqref{applipullback} and let $\Fse{z}\in \pazocal{D}^\prime(\mathbb{R}^n)$ be the family of distributions defined in \eqref{e:defF}. Then 
the wavefront set 
of the distribution
$\Fe{z}=G^* \Fse{z}$ 
is contained in the \emph{Feynman wavefront}\footnote{Note that in the literature on Quantum Field Theory on curved spacetime, the opposite convention is often used for Feynman propagators and for the Feynman wavefront. } $\varlambda\subset (T^*M \setminus \zero)\times (T^*M \setminus \zero)$, defined by 
\beq\label{eq:wavefrontfeynman}
\varlambda'= \{  (q_1,q_2) \in \Sigma\times \Sigma \, | \, q_1 {\succ} q_2 \}\cup \ediag.
\eeq
using the notation introduced in Definition \ref{anewdef}.
\end{lemm}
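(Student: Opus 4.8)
The plan is to combine the wavefront bound on $\Fse{z}$ established in \sec{sec:elementary} with the pull-back theorem for the submersion $G$, and then to identify the resulting conic set using elementary geometry of the exponential map. By Theorem \ref{t:fsanalyticflat} one has $\Fse{z}\in\pazocal{D}^\prime_{\varlambda_0}(\rr^n)$ for every $\cv\in\cc$ and every $z$ with $\Im z\geqslant 0$, $\vert z\vert\geqslant\varepsilon>0$, i.e.\ its wavefront set is contained in $\varlambda_0=\{(x;\xi)\,|\,\xi=\tau dQ(x),\ Q(x)=0,\ \tau<0\}\cup(\coto[0]{\rr^n})$. Since $G\colon\pazocal{U}\to\rr^n$ as in \eqref{applipullback} is a submersion, the pull-back theorem (\cite[Thm.~8.2.4]{H}, used exactly as in the proof of Proposition \ref{prop:flatfeynm}) applies with no restriction and yields
\[
\mathrm{WF}(\Fe{z})=\mathrm{WF}(G^*\Fse{z})\subset G^*\varlambda_0:=\big\{\,((x_1,x_2);{}^{\mathsf t}dG_{(x_1,x_2)}\nu)\;\big|\;(G(x_1,x_2);\nu)\in\varlambda_0\,\big\}.
\]
As $\varlambda_0$ does not depend on $z$, this bound is automatically uniform along the contour $\gamma_\varepsilon$, and the whole problem reduces to the set-theoretic inclusion $(G^*\varlambda_0)'\subset\varlambda'$. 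Conceptually this should hold because $G$ is built from the exponential map, which carries the straight-line geodesics of the model $(\rr^n,\eta)$ to the geodesics of $(M,g)$ near the diagonal, so it intertwines the flat null-bicharacteristic structure underlying $\varlambda_0$ with the one defining $\varlambda$.

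I would verify the inclusion by treating the two pieces of $\varlambda_0$ separately. For the fibre $\coto[0]{\rr^n}$ over the origin: since $\exp$ is a local diffeomorphism, $G^{-1}(0)=\diag$ near the diagonal, so by the submersion property the pull-back of this fibre is exactly the punctured conormal bundle $N^*\diag\setminus\zero$, whose points have the form $(x,x;\zeta,-\zeta)$ with $\zeta\neq 0$; under the map of Definition \ref{anewdef} these are sent to $\{(\kappa(x;\zeta),\kappa(x;\zeta))\,|\,\zeta\neq 0\}\subset\ediag$. For the light-cone piece I would use that $Q\circ G(x_1,x_2)$ equals, up to a sign, $g_{x_1}\big(\exp_{x_1}^{-1}(x_2),\exp_{x_1}^{-1}(x_2)\big)$, i.e.\ (twice) Synge's world function, so that $\{Q\circ G=0\}$ is precisely the set of pairs $(x_1,x_2)$ joined by a null geodesic $\gamma$. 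By the chain rule the pull-back of $(G(x_1,x_2);\tau\,dQ(G(x_1,x_2)))$ is $((x_1,x_2);\tau\,d(Q\circ G)_{(x_1,x_2)})$, and the classical identities expressing $d_{x_1}(Q\circ G)$ and $d_{x_2}(Q\circ G)$ through the endpoint velocities of $\gamma$, together with the eikonal (Hamilton--Jacobi) equation for the world function, show that on $\{Q\circ G=0\}$ both covector components are nonzero and null, hence lie in $\Sigma$, and that they are the two endpoints of a single bicharacteristic segment of $\square_g$. Passing to $\cosb$ and applying Definition \ref{anewdef}, such a point is sent to a pair $(q_1,q_2)\in\Sigma\times\Sigma$ with $q_1\sim q_2$.

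It then remains to fix the orientation, namely to show that only the $q_1\succ q_2$ half occurs. This is forced by the constraint $\tau<0$ in $\varlambda_0$, which itself comes from the $-i0$ prescription in the definition of $\Fse{z}$; transported through the cotangent lift of $G$ and through the sign flip built into Definition \ref{anewdef}, and combined with the minus sign in the principal symbol $p_0$ of $\square_g$ (cf.\ \eqref{eqpri}), which makes the bicharacteristic flow $\Phi_t$ of \sec{ss:unif} the reverse of the cogeodesic flow, it singles out precisely $q_1\succ q_2$ (the convention opposite to the one usual in Quantum Field Theory, in accordance with the footnote to the statement). The time orientation enters here through the fact, recalled in \sec{movingframe}, that the moving frame is taken with $e_0$ future-directed, which is what makes this choice of half-cone globally consistent. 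I expect this last step --- the matching of sign and orientation conventions --- to be the only delicate point: it is elementary but several conventions (the $-i0$ prescription, the metric-versus-quadratic-form sign, the sign in $p_0$, and the convention of Definition \ref{anewdef}) have to be reconciled exactly, and an error in any one of them turns $\succ$ into $\prec$. Once this is done, combining the two pieces gives $(G^*\varlambda_0)'\subset\{(q_1,q_2)\in\Sigma\times\Sigma\,|\,q_1\succ q_2\}\cup\ediag=\varlambda'$, which is the assertion; all the analytic input has already been supplied by Theorem \ref{t:fsanalyticflat}.
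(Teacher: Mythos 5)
Your proposal follows essentially the same route as the paper's proof in \sec{ss:pullback}: the pull-back theorem applied to the uniform bound $\WF(\Fse{z})\subset\varlambda_0$, the identification of $Q\circ G$ with (twice) Synge's world function $\Gamma$ so that off-diagonal points of the pulled-back wavefront set carry covectors $\lambda\, d_{x_1}\Gamma$, $\lambda\, d_{x_2}\Gamma$ coparallel along the connecting null geodesic, and the fibre over $0$ producing the conormal bundle of $\diag$, hence $\ediag$. The one step you only sketch --- selecting the half $q_1\succ q_2$ --- is exactly the point the paper settles in one line via $\exp_{x_1}(\nabla_1\Gamma)=x_2$ together with the future-directedness of $e_0$, i.e.\ the mechanism you correctly identify.
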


The proof of Lemma \ref{Wavefrontpullback} will be given in \sec{ss:pullback} in the appendix.

We conclude this section by our main result on the regularity of the family $\mathbf{F}_\cv(z)=G^*F_\cv(z)\in \pazocal{D}^\prime_\varlambda(\pazocal{U})$
which follows from continuity in the normal topology~\cite[Prop.~5.1 p.~211]{Viet-wf2} of the pull-back $G^*:\pazocal{D}^\prime_{\varlambda_0}(\mathbb{R}^n)\to \pazocal{D}^\prime_\varlambda(\pazocal{U})$ and Proposition \ref{p:fsholder}.

\begin{prop}[Boundedness of family $\mathbf{F}_\cv$]\label{p:boundednessFs}
Let $(M,g)$ be a globally hyperbolic Lorentzian manifold,  $\pazocal{U}$  the neighborhood of the diagonal $\diag\subset M\times M$
defined in \sec{movingframe}, $G:\pazocal{U}\to \mathbb{R}^n$  the map defined by \eqref{applipullback} and let $\Fse{z}\in \pazocal{D}^\prime(\mathbb{R}^n)$ be the family of distributions \eqref{e:defF}.
For every  $\cv\in \mathbb{C}$, the family of distributions $\F{z}=\Fe{z}=G^*\Fse{z}$, 
{$z\in \{\Im z\geqslant 0,\, \vert z\vert\geqslant \varepsilon >0\}$},
has the property that
$ \left(1+\module{\Im z}\right)^{\Re\cv+1}\F{z}$ is bounded in $\pazocal{D}^\prime_\varlambda(\pazocal{U})$.
\end{prop}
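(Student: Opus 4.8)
The plan is to deduce the statement as a direct corollary of Proposition \ref{p:fsholder}\eqref{sholder1} by pulling back along the map $G$. The key point is that $G:\pazocal{U}\to\rr^n$ is a submersion, so the pull-back $G^*$ is continuous from $\pazocal{D}^\prime_{\varlambda_0}(\rr^n)$ to $\pazocal{D}^\prime_{\varlambda}(\pazocal{U})$ for the appropriate conic sets, and—crucially for our purposes—it is continuous \emph{for the normal topologies}, not merely sequentially continuous. This refined continuity is exactly what is needed to transport a \emph{bounded} family, and it is provided by \cite[Prop.~5.1 p.~211]{Viet-wf2}.

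First I would recall that by the definition of $\varlambda_0$ in \eqref{eq:deflbo} and the computation of $\wf(G^*\Fse{z})$ carried out in Lemma \ref{Wavefrontpullback}, we have $G^*\left(\pazocal{D}^\prime_{\varlambda_0}(\rr^n)\right)\subset \pazocal{D}^\prime_{\varlambda}(\pazocal{U})$ with $\varlambda$ the Feynman wavefront \eqref{eq:wavefrontfeynman}; indeed the wavefront set of a pull-back under a submersion is contained in the pull-back of the wavefront set of the original distribution, $\wf(G^*t)\subset G^*\wf(t)=\{(x;{}^tdG(x)\cdot\xi)\st (G(x);\xi)\in\wf(t)\}$, and one checks that $G^*\varlambda_0$ lands inside $\varlambda$. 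This is precisely the content of Lemma \ref{Wavefrontpullback}, which I may invoke.

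Next I would apply the boundedness transfer. By Proposition \ref{p:fsholder}\eqref{sholder1}, the family
$$
\big\{ \left(1+\module{\Im z}\right)^{\Re\cv+1}\Fse{z} \big\}_{\Im z\geqslant 0,\,\vert z\vert\geqslant\varepsilon}
$$
is bounded in $\pazocal{D}^\prime_{\varlambda_0}(\rr^n)$. Since $G^*:\pazocal{D}^\prime_{\varlambda_0}(\rr^n)\to\pazocal{D}^\prime_{\varlambda}(\pazocal{U})$ is continuous for the normal topologies by \cite[Prop.~5.1 p.~211]{Viet-wf2}, and continuous linear maps between locally convex spaces carry bounded sets to bounded sets, the image family
$$
\big\{ \left(1+\module{\Im z}\right)^{\Re\cv+1}\F{z} \big\}_{\Im z\geqslant 0,\,\vert z\vert\geqslant\varepsilon}
= \big\{ G^*\big(\left(1+\module{\Im z}\right)^{\Re\cv+1}\Fse{z}\big) \big\}_{\Im z\geqslant 0,\,\vert z\vert\geqslant\varepsilon}
$$
is bounded in $\pazocal{D}^\prime_{\varlambda}(\pazocal{U})$, which is the assertion.

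The main obstacle is not the pull-back estimate itself—which is essentially automatic once the continuity of $G^*$ in the normal topology is granted—but rather making sure that all the hypotheses feeding into it are in place: namely that the wavefront-set inclusion $G^*\varlambda_0\subset\varlambda$ holds globally on $\pazocal{U}$ (this is Lemma \ref{Wavefrontpullback}, whose own proof is deferred to the appendix and requires a careful geodesic/exponential-map computation relating the flat lightcone at $0$ to $\Sigma$ via $dG$), and that the quantitative seminorm bounds in the $\pazocal{D}^\prime_{\varlambda_0}$ topology from Proposition \ref{p:fsholder}\eqref{sholder1} are genuinely uniform in $z$ on the stated region. Once those two ingredients are taken as given, the present proposition follows in one line. $\qed$
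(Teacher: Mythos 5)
Your argument is correct and is essentially identical to the paper's own justification: the proposition is deduced from the uniform bounds of Proposition \ref{p:fsholder} in $\pazocal{D}^\prime_{\varlambda_0}(\mathbb{R}^n)$ together with the continuity of $G^*:\pazocal{D}^\prime_{\varlambda_0}(\mathbb{R}^n)\to\pazocal{D}^\prime_\varlambda(\pazocal{U})$ in the normal topology from \cite[Prop.~5.1 p.~211]{Viet-wf2}, continuity sending bounded families to bounded families. Your additional remark that the wavefront inclusion is supplied by Lemma \ref{Wavefrontpullback} matches the paper's setup, so there is nothing to add.
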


\subsubsection{Preliminary identities.}

Recall that our differential operator of interest is of the form
\beq\label{lkjlkjff}
P-z=(\partial_{x^j}g^{jk}\partial_{x^k}+m^2-z)+b^j \partial_{x^j}.
\eeq
In the formal calculus\,\footnote{We remark here that in applications it could be advantageous to make the connection with a systematic calculus tailored to computations near the diagonal, see \cite{Derezinski2020}.} used in the Hadamard parametrix construction,    the  part in parentheses on the r.h.s.~of \eqref{lkjlkjff} has weight $2$, in particular the parameter $z$ is included in the weight $2$ part.  

We first state the key identities  satisfied    by  the family $\Fse{z}$ on $\mathbb{R}^n$.

\begin{lemm}
For all $z\in \{\Im z\geqslant 0, z\neq 0\}$, the family
$\Fse{z}$ of distributions on $\rr^n$ satisfies the identities: 
\begin{eqnarray}\label{identity1}
(\eta^{\mu\nu}\partial_{x^\mu}\partial_{x^\nu}-z)\Fse{z}=\cv\Fse[\cv-1]{z} \text{ if }\cv\neq 0, \ (\eta^{\mu\nu}\partial_{x^\mu}\partial_{x^\nu}-z)F_0(z,\vert.\vert_g)=\delta_0,
\end{eqnarray}
\begin{eqnarray}\label{identity2}
2\partial_{x^\mu}\Fse{z}=\eta_{\mu\nu}x^\nu F_{\cv-1}(z,\vert.\vert_g).
\end{eqnarray}
\end{lemm}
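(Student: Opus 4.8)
The plan is to verify both identities directly from the Fourier integral representation \eqref{e:defF}, working on $\mathbb{R}^n$ with the flat Minkowski metric $\eta$, where everything reduces to elementary manipulations of the symbol $(\vert\xi\vert_\eta^2-z)^{-\cv-1}$ under the Fourier transform. Recall the sign convention $\vert\xi\vert_\eta^2=-\xi\cdot\eta^{-1}\xi$, so that $\eta^{\mu\nu}\partial_{x^\mu}\partial_{x^\nu}$ corresponds under Fourier transform to multiplication by $-(-\vert\xi\vert^2_\eta)=\vert\xi\vert_\eta^2$ with the conventions fixed in \sec{ss:elementary} (more precisely, $\widehat{\partial_{x^\mu}u}=i\xi_\mu\hat u$ gives $\widehat{\eta^{\mu\nu}\partial_{x^\mu}\partial_{x^\nu}u}=-(\eta^{\mu\nu}\xi_\mu\xi_\nu)\hat u=\vert\xi\vert_\eta^2\,\hat u$). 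Both sides of the identities are holomorphic families of distributions in $\cv$ for $\Im z>0$ by the Corollary after Proposition \ref{prop:flatfeynm}, so it suffices to prove the identities for $\Re\cv$ large, where the Fourier integrals converge absolutely, and then extend by analytic continuation; and by Lemma \ref{l:oscilldec} together with Proposition \ref{prop:flatfeynm} the identities then pass to the limit $\Im z\to 0^+$ for $z\neq 0$.

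For \eqref{identity1} with $\cv\neq 0$: applying $\eta^{\mu\nu}\partial_{x^\mu}\partial_{x^\nu}-z$ to \eqref{e:defF} brings down the factor $\vert\xi\vert_\eta^2-z$ inside the integral, which cancels one power of $(\vert\xi\vert_\eta^2-z)^{-\cv-1}$, leaving $\frac{\Gamma(\cv+1)}{(2\pi)^n}\int e^{i\langle x,\xi\rangle}(\vert\xi\vert_\eta^2-z)^{-\cv}d^n\xi$; rewriting $\Gamma(\cv+1)=\cv\,\Gamma(\cv)$ identifies this with $\cv\,\Fse[\cv-1]{z}$. For the case $\cv=0$: here $\Fse[0]{z}$ has Fourier transform $\Gamma(1)(\vert\xi\vert_\eta^2-z)^{-1}$, and applying the operator yields $\frac{1}{(2\pi)^n}\int e^{i\langle x,\xi\rangle}\,d^n\xi=\delta_0$; passing to the boundary value $z\to 0^+$ on the upper half plane gives the stated identity $(\eta^{\mu\nu}\partial_{x^\mu}\partial_{x^\nu}-z)F_0(z,\vert.\vert_g)=\delta_0$ (note that with $z=0$ the operator $\eta^{\mu\nu}\partial_{x^\mu}\partial_{x^\nu}$ is $\square_\eta$, so $F_0(\,\cdot\,,\vert.\vert_g)$ is precisely a Feynman fundamental solution of the flat wave operator; one should be mildly careful here that the Fourier integral is interpreted as the $i0$-prescription $(\vert\xi\vert_\eta^2-i0)^{-1}$ guaranteed by Proposition \ref{prop:flatfeynm}).

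For \eqref{identity2}: the cleanest route is to differentiate the representation formula with respect to $x^\mu$, bringing down a factor $i\xi_\mu$, and then observe that $i\xi_\mu\,(\vert\xi\vert_\eta^2-z)^{-\cv-1}=-\tfrac{1}{2\cv}\,\eta_{\mu\nu}\partial_{\xi_\nu}\big((\vert\xi\vert_\eta^2-z)^{-\cv}\big)$, using $\partial_{\xi_\nu}\vert\xi\vert_\eta^2=2\eta^{-1}_{\nu\rho}\xi^\rho$ contracted appropriately with the signature convention; integrating by parts in $\xi$ then converts the resulting $\partial_{\xi_\nu}e^{i\langle x,\xi\rangle}=ix^\nu e^{i\langle x,\xi\rangle}$ into a multiplication by $x^\nu$, and collecting the $\Gamma$-factors via $\Gamma(\cv+1)=\cv\,\Gamma(\cv)$ produces exactly $\eta_{\mu\nu}x^\nu F_{\cv-1}(z,\vert.\vert_g)$, up to tracking the factor of $2$ which matches the $2$ on the left. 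Alternatively one can bypass Fourier space entirely by noting that on $\mathbb{R}^n$ the distribution $\Fse{z}$ is (a pull-back of) $(Q(x)-z-i0)^{-\cdot}$-type data, so \eqref{identity2} is simply the chain rule $\partial_{x^\mu}Q(x)=2\eta_{\mu\nu}x^\nu$ applied to a function of $Q(x)$, combined with the elementary relation between consecutive members of the $\Fse{\cdot}$ family; I would present whichever is shorter. The main obstacle — really the only delicate point — is bookkeeping of the numerous constants and sign conventions ($\vert\cdot\vert_\eta^2$ carrying a minus, the raising/lowering of $\mu$ via $\eta$, the normalization $\Gamma(\cv+1)/(2\pi)^n$, and the $i0$-prescription hidden in the integral), which must be aligned consistently with \sec{ss:elementary}; none of the analytic steps are hard since absolute convergence for $\Re\cv\gg0$ makes every manipulation legitimate and analytic continuation in $\cv$ together with Lemma \ref{l:oscilldec} handles the passage to general $\cv$ and to the real axis.
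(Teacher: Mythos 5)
Your proposal is correct in substance, but its main route differs from the paper's. For \eqref{identity1} you argue on the Fourier side: $(\eta^{\mu\nu}\partial_{x^\mu}\partial_{x^\nu}-z)$ becomes multiplication by $Q(\xi)-z$, which cancels one power of the symbol, and $\Gamma(\cv+1)=\cv\Gamma(\cv)$ gives the shift $\cv\mapsto\cv-1$ (with $\delta_0$ in the case $\cv=0$); this is essentially how the paper obtains it as well, via the relation $(\square_\eta-z)\Fse{z}=\cv\Fse[\cv-1]{z}$ recorded in Lemma \ref{l:holofamilyFm}. For \eqref{identity2}, however, the paper does \emph{not} work in Fourier space: it differentiates the Schwinger-parameter representation from Lemma \ref{l:oscilldec}, uses the chain rule on the phase $e^{uQ(x)/4i}$ (with $\partial_{x^j}Q(x)=-2\eta_{ji}x^i$), and observes that the extra factor $u/4i$ shifts $u^{\frac{n}{2}-\cv-2}$ to $u^{\frac{n}{2}-\cv-1}$, i.e.\ produces exactly $\Fse[\cv-1]{z}$; the differentiation under the integral is justified for $\Im z>0$, $\Re\cv>\frac{n}{2}-2$, and the general case follows by analytic continuation in $\cv$ and weak convergence as $\Im z\to0^+$. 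Your Fourier-side computation (bringing down $i\xi_\mu$, writing $\xi_\mu(Q-z)^{-\cv-1}$ as a $\xi$-derivative of $(Q-z)^{-\cv}$, and integrating by parts against $e^{i\langle x,\xi\rangle}$) does reproduce \eqref{identity2} with the correct constants — I checked the index bookkeeping with $Q(\xi)=-\eta^{\rho\sigma}\xi_\rho\xi_\sigma$ — and your sketched alternative (chain rule applied to a function of $Q(x)$) is precisely the paper's argument, so either presentation is acceptable.

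Two points need repair. First, your justification "absolute convergence for $\Re\cv\gg 0$ makes every manipulation legitimate" is false in Lorentzian signature: since $Q$ is indefinite, $|Q(\xi)-z|$ stays of size $\Im z$ on the unbounded hypersurface $\{Q(\xi)=\Re z\}$, so the integral $\int|(Q(\xi)-z)^{-\cv-1}|\,d^n\xi$ diverges for \emph{every} $\cv$ — this lack of decay along the cone is exactly why the paper needs the oscillatory-integral machinery of \sec{sec:elementary}. The fix is easy but should be stated: for $\Im z>0$ the symbol $(Q(\xi)-z)^{-\cv-1}$ is a smooth bounded function, hence a tempered distribution, and the identities $(Q-z)\cdot(Q-z)^{-\cv-1}=(Q-z)^{-\cv}$ and $\partial_{\xi_\nu}(Q-z)^{-\cv}=-\cv(Q-z)^{-\cv-1}\partial_{\xi_\nu}Q$ hold pointwise, so all your manipulations are valid as operations in $\pazocal{S}^\prime(\rr^n)$ for every admissible $\cv$, with no analytic continuation needed for $\Im z>0$; the case $z\in\rr\setminus\{0\}$ then follows because $\Fse{z}$ is by definition the weak limit of $\Fse{z+i\varepsilon}$ and differentiation and multiplication by polynomials are weakly continuous. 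Second, your aside that $\Fse{z}$ "is (a pull-back of) $(Q(x)-z-i0)^{-\cdot}$-type data" in position space conflates the Fourier symbol with the kernel: in position space $\Fse{z}$ is a (Bessel-type) function of $Q(x)$ given by the oscillatory representation, not a complex power of $Q(x)-z$; the chain-rule argument survives, but identifying the result as $\Fse[\cv-1]{z}$ requires the explicit representation, which is where the $u$-power shift in the paper's computation comes in.
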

\begin{proof}
The first identity follows from Lemma~\ref{l:holofamilyFm}. 
The second identity follows from the representation formula 
from Lemma~\ref{l:oscilldec}, namely
$$\Fs{z}=
\frac{e^{-i(\cv+1)\frac{\pi}{2}}}{{(4\pi i)^{\frac{n}{2}}} (-1)^{\frac{n-1}{2}} }  \int_0^\infty  e^{{\frac{uQ(x)}{4i}+izu^{-1}}}  u^{\frac{n}{2}-\cv-2}du,$$ 
and we differentiate under the integral and use the chain rule to obtain the desired result.
In more detail, this reads~\footnote{{The minus sign comes from $Q(x)=-\eta_{ij}x^ix^j$.}}
{
$$ \bea 
&\partial_{x^j}\frac{e^{-i(\cv+1)\frac{\pi}{2}}}{ (4\pi i)^{\frac{n}{2}} (-1)^{\frac{n-1}{2}} }  \int_0^\infty  e^{\frac{uQ(x)}{4i}+iu^{-1}z}  u^{\frac{n}{2}-\cv-2}du\\
&=  -2\eta_{ji}x^i    \frac{e^{-i(\cv+1)\frac{\pi}{2}}}{ (4\pi i)^{\frac{n}{2}} (-1)^{\frac{n-1}{2}} }  \int_0^\infty  \frac{u}{4i} e^{\frac{uQ(x)}{4i}+iu^{-1}z}  u^{\frac{n}{2}-\cv-2}du\\
&= \frac{\eta_{ji}x^i}{2}    \frac{e^{i\frac{\pi}{2}}e^{-i(\cv+1)\frac{\pi}{2}}}{ (4\pi i)^{\frac{n}{2}} (-1)^{\frac{n-1}{2}} }  \int_0^\infty  e^{\frac{uQ(x)}{4i}+iu^{-1}z}  u^{\frac{n}{2}-\cv-1}du=\frac{\eta_{ji}x^i}{2} \Fs[\cv-1]{z}.
\eea $$
}
The fact that we can differentiate under the integral is justified 
for $\Im z>0$ and  $\Re\cv>\frac{n}{2}-2$ (this guarantees all integrals converge absolutely and we can differentiate under the integral argument) and the general result follows from analytic continuation
of the identity $ 2\partial_{x^j}\Fse{z}=\eta_{ji} x^i \Fse[\cv-1]{z}$ in $\cv$ and the weak convergence of both sides in the distribution sense when $\Im z\rightarrow 0^+$.
\end{proof}

\subsubsection{Identities in normal coordinates.}

We consider the family of distributions $\Fe{z}\in \pazocal{D}^\prime(\pazocal{U})$
 introduced in Definition~\ref{d:fsneardiag}, which plays the role of the buiding blocks of the parametrix.
 
The parametrix is constructed in normal charts. 
This means that
we fix a point $\varvarm\in M$, then we express the distribution $x\mapsto \mathbf{F}_\cv(z,\varvarm,x)$
in normal coordinates centered at $\varvarm$. The fact that we can freeze $\varvarm$ and view $x\mapsto \mathbf{F}_\cv(z,\varvarm,x)$ as a \emph{distribution of the second variable} $x$ comes from the wavefront set of 
$\mathbf{F}_\cv(z,.,.)\in \pazocal{D}^\prime(\pazocal{U})$, which is contained in $\varlambda\subset T^*\left(M\times M\right)$. Near the element $(\varvarm,\varvarm)$ on the diagonal, the set $\varlambda$ is close to the conormal $N^*\diag$ and therefore $\varlambda$ is locally 
transverse to the conormal $N^*\left( \{\varvarm\}\times M\right)=T_\varvarm^*M\times \zero $ of the submanifold $\{\varvarm\}\times M$ near the diagonal $(\varvarm,\varvarm)$. 
Hence, the pull-back theorem of H\"ormander allows us to restrict the distribution $\mathbf{F}_\cv(z,.,.)$ to 
$\{\varvarm\}\times M$, which means in practice that we freeze the variable $\varvarm$ and consider 
$\mathbf{F}_\cv(z,\varvarm,.)$ as a distribution of the second variable.
In the sequel, we work in normal coordinates centered at $\varvarm$.
\begin{defi}\label{d:Fsnormal}
Instead of using the rather heavy notation $T_{\varvarm}  M \supset U  \ni v \mapsto \mathbf{F}_\cv(z,\varvarm,\exp_\varvarm(v))$, we use the simplified notation
$\Feg{z}\in \pazocal{D}^\prime(U)$ to denote the distribution $T_{\varvarm}M \supset U \ni  v \mapsto \mathbf{F}_\cv(z,\varvarm,\exp_{\varvarm}(v))$, where
the notation $\vert y\vert^2_g$ is the pseudodistance squared of $y$ w.r.t.~$0\in T_{\varvarm}M$ which represents the point $\varvarm$ in the normal chart around $\varvarm$ and $g$ is the metric pulled-back on $T_{\varvarm}M$ by the exponential map. 
\end{defi}
The fundamental equation satisfied by the normal coordinates
reads~\cite[A 2.3 p.~271]{Hormander-97} 
\begin{eqnarray}\label{e:normalcoord}
\boxed{g_{jk}(x)x^k=g_{jk}(0)x^k=x^j}
\end{eqnarray}
and this very general result is valid in pseudo-Riemannian geometry.
This implies that $\vert y\vert^2_g=g_{jk}(0)y^jy^k=\eta_{jk}y^jy^k$.
The second key observation is  the statement of the next lemma. 

\begin{lemm}\label{l:identitiesnormal}
Let $\Feg{z}\in \pazocal{D}^\prime(U)$ be the family of distributions from Definition~\ref{d:Fsnormal}. In the normal coordinate system $(x^j)_{j=0}^n$ on $U$ defined in \eqref{e:normalcoord}, we have the
identities: 
\begin{eqnarray}
2g^{jk}(x)\partial_{x^k}\Fxg{z}=x^j\Fxg[\cv-1]{z},\\
\left(\partial_{x^j}g^{jk}\partial_{x^k}-z\right)\Fxg[0]{z}={ \module{g(x)}^{-\frac{1}{2}}}\delta_0(x), \\
\left(\partial_{x^j}g^{jk}\partial_{x^k}-z\right)\Fxg[\cv]{z}=\cv \Fxg[\cv-1]{z}.
\end{eqnarray}
\end{lemm}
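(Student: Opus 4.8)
The plan is to deduce the three identities in normal coordinates directly from the corresponding flat-space identities for the family $\Fse{z}$ on $\rr^n$, namely \eqref{identity1} and \eqref{identity2}, combined with the defining equation \eqref{e:normalcoord} of normal coordinates. The key point is that in the normal chart centered at $\varvarm$, the distribution $\Fxg{z}$ is by Definition~\ref{d:Fsnormal} nothing but $G^*\Fse{z}$ restricted to $\{\varvarm\}\times M$, and since $G(\varvarm, \exp_\varvarm(v)) = v$ in these coordinates (by construction of $G$ in \eqref{applipullback} using the orthonormal coframe at $\varvarm$, which coincides with the normal-coordinate differentials at the center), we simply have $\Fxg{z}(x) = \Fse{z}(x)$ as distributions in $x$, with the caveat that the metric $g$ now depends on $x$ away from $0$. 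So the identities to be proved are deformations of the flat ones in which $\eta$ is replaced by $g(x)$, and the job is to track exactly where the flat relations survive and where curvature corrections cancel.

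For the first identity, I would start from \eqref{identity2}, $2\partial_{x^\mu}\Fse{z} = \eta_{\mu\nu}x^\nu \Fse[\cv-1]{z}$, and contract with $g^{jk}(x)$. Using \eqref{e:normalcoord} in the form $g^{jk}(x)\eta_{k\nu}x^\nu = g^{jk}(x)g_{k\nu}(x)x^\nu = x^j$ — here I use that $\eta_{k\nu}x^\nu = g_{k\nu}(0)x^\nu = g_{k\nu}(x)x^\nu$, which is precisely \eqref{e:normalcoord} — we obtain $2g^{jk}(x)\partial_{x^k}\Fxg{z} = x^j \Fxg[\cv-1]{z}$. This is the cleanest of the three and essentially immediate once \eqref{e:normalcoord} is invoked.

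For the third identity (which contains the second as the special case $\cv=0$, modulo the $\delta_0$), the plan is to apply $\partial_{x^j}g^{jk}(x)\partial_{x^k} - z$ to $\Fxg{z}$ and reduce to the flat d'Alembertian identity \eqref{identity1}. Write $\partial_{x^j}(g^{jk}\partial_{x^k}\Fxg{z}) = \frac12 \partial_{x^j}(x^j \Fxg[\cv-1]{z})$ using the first identity just proved; expanding gives $\frac{n}{2}\Fxg[\cv-1]{z} + \frac12 x^j \partial_{x^j}\Fxg[\cv-1]{z}$, and by the first identity again (one index lower) $x^j\partial_{x^j}\Fxg[\cv-1]{z} = x^j g_{j\ell}(x) \cdot (\text{something})$ — more directly, apply \eqref{identity2} in flat form to get $x^j\partial_{x^j}\Fse[\cv-1]{z} = \frac12 \eta_{j\nu}x^j x^\nu \Fse[\cv-2]{z} = -\frac12 Q(x)\Fse[\cv-2]{z}$, and the Euler-type relation together with the scaling/homogeneity of $\Fse{z}$ closes the computation back to $\cv\Fse[\cv-1]{z} + z\Fse{z}$, i.e.\ \eqref{identity1}. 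The point is that in normal coordinates, $\module{g(x)}^{\frac12}$ and the Christoffel terms arrange so that $\partial_{x^j}g^{jk}\partial_{x^k}$ acting on a function of $\vert x\vert_g^2 = \eta_{jk}x^jx^k$ alone reproduces the flat $\eta^{\mu\nu}\partial_\mu\partial_\nu$ plus lower-order terms that are reabsorbed; I would verify this by the standard computation $\partial_{x^j}(g^{jk}x^k f) = \partial_{x^j}(x^j f)$ valid by \eqref{e:normalcoord} (no $\module{g}$ needed because the divergence identity $\partial_j(x^j) $ and $g^{jk}x^k = x^j$ conspire), which is exactly why the non-unimodular factor $\module{g(x)}^{-1/2}$ appears only in the $\cv=0$ case in front of $\delta_0$ (it is $\module{g(0)}^{-1/2}=1$ there, but the statement keeps it to match the volume density convention). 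The $\cv=0$ case then follows from the flat identity $(\eta^{\mu\nu}\partial_\mu\partial_\nu - z)\Fse[0]{z} = \delta_0$ together with the change-of-variables Jacobian for $\delta_0$ under the pull-back.

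I expect the main obstacle to be the bookkeeping in the third identity: one must be careful that $g^{jk}(x)$ genuinely depends on $x$ away from $0$, so $\partial_{x^j}g^{jk}(x)\partial_{x^k}$ is \emph{not} literally $\eta^{\mu\nu}\partial_\mu\partial_\nu$, and the reduction to \eqref{identity1} only works because $\Fxg{z}$ depends on $x$ solely through the quadratic form $\eta_{jk}x^jx^k$ (not through $g_{jk}(x)x^jx^k$ with the full $x$-dependent metric, which by \eqref{e:normalcoord} happens to coincide). Justifying the differentiation under the integral sign in the oscillatory representation for these manipulations requires $\Re\cv > \frac n2 - 2$ and $\Im z > 0$, after which one extends to all $\cv\in\cc$ and down to $\Im z \to 0^+$ by the analytic continuation and weak-continuity arguments already established in \sec{sec:elementary}; I would invoke Lemma~\ref{l:oscilldec} and Theorem~\ref{t:fsanalyticflat} for this and not redo it.
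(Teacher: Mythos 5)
Your proposal is correct and takes essentially the same route as the paper: everything reduces to the flat identities \eqref{identity1}--\eqref{identity2} via the normal-coordinate relation \eqref{e:normalcoord}, which allows replacing $g^{jk}(x)\partial_{x^k}$ by $\eta^{jk}\partial_{x^k}$ when acting on a function of $\eta_{jk}x^jx^k$ (in particular on $\Fxg{z}=\Fse{z}$ in the normal chart). The only cosmetic point is that the appeal to ``scaling/homogeneity'' in your third identity is superfluous: once $\partial_{x^j}\big(g^{jk}\partial_{x^k}\Fxg{z}\big)=\tfrac12\partial_{x^j}\big(x^j\Fse[\cv-1]{z}\big)=\eta^{jk}\partial_{x^j}\partial_{x^k}\Fse{z}$, the conclusion is literally \eqref{identity1}.
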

\begin{proof} The proof is completely analogous to the Riemannian case, see \cite[(17.4.2), (17.4.3) p.~31--32]{HormanderIII}.
The important property  used to derive these identities is 
that in the normal coordinate system, for any function $f(\vert.\vert_g^2)$ of the square geodesic length, 
we have
$$g^{jk}(x)\partial_{x^k}f(\vert x\vert_g^2)=g^{jk}(0)\partial_{x^k}f(\vert x\vert_g^2).$$ 
The proof of the first equation follows from the fact that
$$ \bea 
2g^{jk}(x)\partial_{x^k}\Fxg{z}=2g^{jk}(0)\partial_{x^k}\Fxg{z} =g^{jk}(0)\eta_{ki}x^i\mathbf{F}_{\cv-1}(z,\vert x\vert_g)=x^j\mathbf{F}_{\cv-1}(z,\vert x\vert_g)
\eea $$
where we used \eqref{identity2} and $g^{jk}(0)=\eta^{jk}$.
The second equation follows from the first equation, \eqref{identity1} and properties of the normal coordinate chart:
$$ \bea 
\left(\partial_{x^j}g^{jk}\partial_{x^k}-z\right)\mathbf{F}_0(z,\vert x\vert_g)={\module{g(x)}^{-\frac{1}{2}}}\delta_0(x).
\eea $$
\end{proof}

\subsection{Deriving the transport equations.}

Recall that $\square_g$ is the Lorentzian Laplace--Beltrami operator and $P=\square_g+m^2$ is the Klein--Gordon operator. The parametrix construction involves  transport equations because even though the operator $(\partial_{x^j}g^{jk}(x)\partial_{x^k}+m^2-z)$
 has a fundamental solution which is 
$\mathbf{F}_0(-m^2+z,\vert x\vert^2_g)$, 
the operator $P-z$ is  not necessarily of the form
$\left(\partial_{x^j}g^{jk}(x)\partial_{x^k}+m^2-z\right)$ but is rather given by the more general expression
$$ P-z = \partial_{x^j}g^{jk}(x)\partial_{x^k}+b^j(x)\partial_{x^j}+m^2-z $$
with a \emphasize{non-trivial subprincipal part}
$b^j(x)\partial_{x^j}$. This subprincipal part will be responsible for the appearance of the scalar curvature as we will
later see. 



For $\Im z>0$, let $f(z,.)$ be the unique Schwartz distribution such that 
$$\Fs[0]{z}=\frac{1}{(2\pi)^{n}} \int e^{i\left\langle x,\xi \right\rangle}\left(\vert\xi\vert_\eta^2-i0-z \right)^{-1}d^{n}\xi=f(z,\vert x\vert_\eta^2).$$
We have the following Fourier integral representation for $f$~\footnote{It is related to Bessel--Macdonald $K$ functions, sometimes called modified Bessel functions of the second kind.}:
\begin{equation}
f(z,q)=\frac{e^{-i\frac{\pi}{2}}}{(4\pi i)^{\frac{n}{2}} (-1)^{\frac{n-1}{2}} }  \int_0^\infty  e^{\frac{q}{4ui}+iuz}  u^{-\frac{n}{2}}du.
\end{equation} 
The existence of $f(z,.)\in \pazocal{D}^\prime(\mathbb{R})$ and of $f(z,\vert .\vert_g^2)\in \pazocal{D}^\prime(U)$ 
for $\Im z>0$ follows from the oscillatory integral proofs of Lemma~\ref{l:oscilldec}. 
We have the following key lemma, which again parallels the Riemannian case \cite[(17.4.5) p.~32]{HormanderIII}.
\begin{lemm}\label{l:pretransport1}
Let $\Feg{z-m^2}\in \pazocal{D}^\prime(U)$ be the family of distributions from Definition~\ref{d:Fsnormal},
in the normal coordinate system $(x^j)_{j=0}^n$ on $U\subset T_{\varvarm} M$ defined in \eqref{e:normalcoord},
and let  $\Re\cv>0$.
For any $u\in C^\infty(U)$, 
\begin{eqnarray}
\boxed{ (P-z)\left(u \mathbf{F}_{\cv}\right)={\cv} u\mathbf{F}_{\cv-1}+(Pu)\mathbf{F}_{\cv}+(hu+ 2 \rho u)\frac{\mathbf{F}_{\cv-1}}{2} .}
\end{eqnarray} 
where
\begin{equation}
h(x)=b^j(x)\eta_{jk}x^k\text{ and } 
\rho=x^k\partial_{x^k}.
\end{equation}
For $\cv=0$ and all $u_0(x)\in C^\infty(U)$,
\begin{eqnarray}\label{eq1:mainpart}
\boxed{ (P-z)u_0\mathbf{F}_0=u_0 { \module{g(x)}^{-\frac{1}{2}}}\delta_0(x)+(Pu_0)\mathbf{F}_0+2hu_0 f^\prime(z,\vert.\vert_g)+4x^j\frac{\partial u_0}{\partial x_j} f^\prime(z,\vert.\vert_g) }
\end{eqnarray} 
where $f^\prime$ is the distributional derivative of $f$.
\end{lemm}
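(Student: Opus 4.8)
The plan is to apply the operator $P-z$ directly to the product $u\,\mathbf{F}_\cv(z,\cdot)$ in normal coordinates, using the Leibniz rule and the three identities of Lemma~\ref{l:identitiesnormal}. First I would split $P-z=\bigl(\partial_{x^j}g^{jk}\partial_{x^k}+m^2-z\bigr)+b^j\partial_{x^j}$ as indicated in \eqref{lkjlkjff}, keeping in mind that in the formal Hadamard calculus the parameter $z$ sits in the weight-$2$ part, so that $\mathbf F_\cv$ is treated as a function of the weighted variable. Expanding $\bigl(\partial_{x^j}g^{jk}\partial_{x^k}+m^2-z\bigr)(u\mathbf F_\cv)$ by Leibniz gives three groups of terms: the term where both derivatives hit $\mathbf F_\cv$, which by the third identity of Lemma~\ref{l:identitiesnormal} (applied with $z$ replaced by $z-m^2$, so that $\partial_{x^j}g^{jk}\partial_{x^k}+m^2-z$ acts as $\partial_{x^j}g^{jk}\partial_{x^k}-(z-m^2)$) produces $\cv\,u\,\mathbf F_{\cv-1}$; the term where both derivatives hit $u$, which assembles into $(Pu)\mathbf F_\cv$ after recombining with the $b^j\partial_{x^j}u$ and $m^2 u$ contributions; and the cross term $2g^{jk}(\partial_{x^j}u)(\partial_{x^k}\mathbf F_\cv)$, to which I apply the first identity $2g^{jk}\partial_{x^k}\mathbf F_\cv=x^j\mathbf F_{\cv-1}$, yielding $x^j(\partial_{x^j}u)\mathbf F_{\cv-1}=(\rho u)\mathbf F_{\cv-1}$ with $\rho=x^k\partial_{x^k}$.

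Next I would handle the lower-order part $b^j\partial_{x^j}(u\mathbf F_\cv)$. Here the term $b^j(\partial_{x^j}u)\mathbf F_\cv$ is already absorbed into $(Pu)\mathbf F_\cv$, while the remaining piece $b^j u\,\partial_{x^j}\mathbf F_\cv$ is treated via identity~\eqref{identity2} (or rather its normal-coordinate counterpart $2\partial_{x^j}\mathbf F_\cv=\eta_{jk}x^k\mathbf F_{\cv-1}$ pulled back through $G$, valid because in normal coordinates $|x|_g^2=\eta_{jk}x^jx^k$ by \eqref{e:normalcoord}): this gives $\tfrac12 b^j u\,\eta_{jk}x^k\mathbf F_{\cv-1}=\tfrac12 h u\,\mathbf F_{\cv-1}$ with $h(x)=b^j(x)\eta_{jk}x^k$. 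Collecting everything, the cross term $(\rho u)\mathbf F_{\cv-1}$ and the $b^j$-contribution $\tfrac12 hu\,\mathbf F_{\cv-1}$ combine into $(hu+2\rho u)\tfrac{\mathbf F_{\cv-1}}{2}$, which is exactly the claimed boxed identity for $\Re\cv>0$. The restriction $\Re\cv>0$ (rather than $\Re\cv>\tfrac n2-2$ needed to differentiate under the integral naively) is obtained by analytic continuation in $\cv$, using that both sides are holomorphic families of distributions in $\pazocal{D}'(U)$ on $\{\Re\cv>0\}$ by Theorem~\ref{t:fsanalyticflat} and Proposition~\ref{p:boundednessFs}, together with weak convergence as $\Im z\to 0^+$.

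For the case $\cv=0$, the only change is that the ``both derivatives on $\mathbf F_0$'' term is now governed by the second identity of Lemma~\ref{l:identitiesnormal}, $\bigl(\partial_{x^j}g^{jk}\partial_{x^k}-(z-m^2)\bigr)\mathbf F_0(z-m^2,\cdot)=|g(x)|^{-1/2}\delta_0(x)$, so it contributes $u_0\,|g(x)|^{-1/2}\delta_0(x)$ instead of a multiple of $\mathbf F_{-1}$. The cross term becomes $x^j(\partial_{x^j}u_0)\mathbf F_{-1}$, and since $\mathbf F_{-1}(z,|\cdot|_g^2)=f'(z,|\cdot|_g^2)$ by the definition of $f$ and the relation $\mathbf F_\cv=f^{(\cv)}$ at nonpositive integer index (more precisely $\partial_q f(z,q)$ is the distribution representing $F_{-1}$, up to the normalization fixed in Lemma~\ref{l:oscilldec}), this rewrites as $4x^j\tfrac{\partial u_0}{\partial x_j}f'(z,|\cdot|_g)$ after the factor of $\tfrac14$ bookkeeping; similarly the $b^j$-term yields $2hu_0 f'(z,|\cdot|_g)$. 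This gives \eqref{eq1:mainpart}.

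The main obstacle I anticipate is purely bookkeeping: correctly tracking the numerical normalization constants that relate $\mathbf F_\cv$, $\mathbf F_{\cv-1}$, $f$ and $f'$ (in particular the factors of $2$ and $4$ and the $(4\pi i)^{-n/2}(-1)^{-(n-1)/2}$ prefactor from Lemma~\ref{l:oscilldec}), and verifying that the formal manipulation of differentiating the oscillatory integral under the integral sign is legitimate — this is where one must restrict to $\Re\cv>\tfrac n2-2$ and $\Im z>0$ before continuing analytically, exactly as in the last lemma of \sec{sec:elementary}. Since the Riemannian prototype is \cite[(17.4.5) p.~32]{HormanderIII}, the structure of the computation is known; the only genuinely new point is carrying the spectral parameter $z$ through the weight-$2$ slot and checking it causes no trouble, which it does not because $z$ enters $\mathbf F_\cv$ only through the combination dictated by the identities of Lemma~\ref{l:identitiesnormal}.
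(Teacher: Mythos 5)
Your proposal is correct and follows essentially the same route as the paper: a direct Leibniz expansion of $(P-z)(u\mathbf{F}_\cv)$ into the $(Pu)\mathbf{F}_\cv$ term, the term where $\partial g\partial-(z-m^2)$ hits $\mathbf{F}_\cv$, the cross term handled by $2g^{jk}\partial_{x^k}\mathbf{F}_\cv=x^j\mathbf{F}_{\cv-1}$, and the $b^j$-term handled by $2\partial_{x^j}\mathbf{F}_\cv=\eta_{jk}x^k\mathbf{F}_{\cv-1}$, with the $\cv=0$ case using the delta identity and the chain rule through $f'$. The normalization bookkeeping you flag (the factors $2$, $4$ and the role of $f'$ in place of the ill-defined $\mathbf{F}_{-1}$) is exactly how the paper closes the argument.
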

\begin{proof}
By definition and using all identities from Lemma~\ref{l:identitiesnormal}:
$$ \bea 
(P-z)\left(u \mathbf{F}_\cv\right)&=(Pu) \mathbf{F}_\cv+u\left(\left(\partial_{x^k}g^{jk}(x)\partial_{x^k}-z\right) \mathbf{F}_\cv\right)+2(\partial_{x^j}u) g^{jk}(x)\left(\partial_{x^k} \mathbf{F}_\cv\right)+ub^j(x)\left(\partial_{x^j}\mathbf{F}_\cv\right)\\
&=(Pu) \mathbf{F}_\cv+u \cv\mathbf{F}_{\cv-1}+(x^j\partial_{x^j}u)  \mathbf{F}_{\cv-1}+\frac{ub^j(x)\eta_{jk}x^k}{2}  \mathbf{F}_{\cv-1}
\eea $$
since 
$$ 
{2g^{jk}(x)\partial_{x^k}\mathbf{F}_\cv=  x^j\mathbf{F}_{\cv-1},}
$$
which implies
$$
 2\partial_{x^i}\mathbf{F}_\cv=2g_{ik}(x)g^{kj}(x)\partial_{x^j}\mathbf{F}_\cv= g_{ik}(x) x^k\mathbf{F}_{\cv-1}= g_{ik}(0) x^k\mathbf{F}_{\cv-1}=\eta_{ik}x^k\mathbf{F}_{\cv-1}.
$$
The second equation is obtained in the same way.
\end{proof}


The existence of $f^\prime(z,\vert .\vert_g)$ follows from the same arguments as in the proof of Lemma
~\ref{l:oscilldec}.

\subsubsection{Parametrix from transport equations.}
In this paragraph we  construct the formal parametrix in the normal coordinate chart $U\subset T_{\varvarm}M$ centered around $\varvarm\in M$.
 We start from equation (\ref{eq1:mainpart}). We need to solve away the term
in front of $f^\prime$ which reads
$ 4x^j\frac{\partial u_0}{\partial x_j}+2hu_0$, so we must look for $u_0\in C^\infty(U)$ solution of the first \emphasize{transport equation}
\begin{equation}\label{eq1:transport}
2\rho u_0+hu_0=0, 
\end{equation}
with initial condition $u_0(0)=1$.
So we see immediately that there is a potential problem since there is still a term
$(Pu_0)\mathbf{F}_0$ which is singular. To kill the singular term $(Pu_0)\mathbf{F}_0$,
we look for 
$u_1\in C^\infty(U)$ satisfying
\begin{equation}
\rho u_1 + u_1 +\frac{h}{2}u_1 =-Pu_0,
\end{equation}
since for such pair of smooth functions $(u_0,u_1)\in C^\infty(U)^2$, we would immediately find that 
$$ \bea 
(P-z)\left(u_0\mathbf{F}_0+ u_1 \mathbf{F}_1\right)&=u_0 { \module{g}^{-\frac{1}{2}}}\delta_0(x)+(Pu_0)\mathbf{F}_0+ u_1 \mathbf{F}_{0}+(Pu_1)\mathbf{F}_1+(hu_1+ 2 \rho u_1)\frac{\mathbf{F}_{0}}{2}\\
&={ \module{g}^{-\frac{1}{2}}}\delta_0(x)+(Pu_1)\mathbf{F}_1.
\eea $$

Applying the above algorithm recursively,  
we see that at order $N$  we have to look for a parametrix $H_N(z)$ of the form
\begin{eqnarray}
\boxed{H_N(z)=\sum_{k=0}^N u_k \Feg[k]{z-m^2} \in \pazocal{D}^\prime(U) }
\end{eqnarray}
where the sequence of functions 
$(u_k)_{k=0}^\infty$ in $C^\infty(U)$ solves the well-known \emphasize{hierarchy of transport equations}
\begin{eqnarray}
\boxed{2k u_k+hu_k+2\rho u_k+2Pu_{k-1}=0}
\end{eqnarray}
where for $k=0$, we choose the convention that $u_{k-1}=0$.
This would kill all terms in front of $f^\prime,\mathbf{F}_0,\dots,\mathbf{F}_{N-1}$
therefore in the normal chart near $x$, $H_N(z,.)$ satisfies the equation:
\begin{equation}
\left(P-z\right)H_N(z,.)={\module{g}^{-\frac{1}{2}}}\delta_0+(Pu_N)\mathbf{F}_N.
\end{equation}
{Note that the solutions $(u_k)_{k=0}^\infty$ of the transport equations do not depend on $z$ or on the mass term $m$. This dependence is absorbed in the distributions $\Feg[k]{z-m^2}$.}

The next lemma is fully analogous to \cite[Lem.~17.4.1 p.~33]{HormanderIII}:
\begin{lemm}
The hierarchy of transport equations always has solutions in $C^\infty(U)$ where $U\subset T_{\varvarm}M $ is any open neighborhood of $0\in T_{\varvarm}M$
such that $\exp_{\varvarm}:U\to M$ is \emphasize{injective}. 
\end{lemm}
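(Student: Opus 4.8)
The plan is to solve the transport hierarchy recursively along radial geodesics emanating from $\varvarm$, exploiting the fact that the operator $\rho=x^k\partial_{x^k}$ is the generator of dilations in normal coordinates, which makes each transport equation a linear ODE along each ray. First I would rewrite the $k$-th transport equation $2\rho u_k + (2k+h)u_k = -2Pu_{k-1}$ using polar coordinates $x = r\omega$ with $r\geqslant 0$ and $\omega$ in the unit ``sphere'' (more precisely, a section of rays through $0$), so that $\rho = r\partial_r$ and the equation becomes, for each fixed $\omega$,
\begin{equation}
2r\frac{d}{dr}u_k(r\omega) + \big(2k + h(r\omega)\big) u_k(r\omega) = -2 (Pu_{k-1})(r\omega).
\end{equation}
This is a first-order linear ODE in $r$ with a regular singular point at $r=0$; the indicial exponent is $-k\leqslant 0$, and the source term $-2Pu_{k-1}$ is smooth (by the inductive hypothesis that $u_{k-1}\in C^\infty(U)$), so one must check that the solution extends smoothly through $r=0$ rather than picking up a $r^{-k}$ singularity. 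The standard device, exactly as in \cite[Lem.~17.4.1]{HormanderIII}, is to solve for $r^k u_k$ or equivalently to use the integrating factor; concretely one sets
\begin{equation}
u_k(r\omega) = r^{-k} e^{-\frac12\int_0^r h(t\omega)\frac{dt}{t}} \Big( c_k(\omega) - \int_0^r s^{k-1} e^{\frac12\int_0^s h(t\omega)\frac{dt}{t}}(Pu_{k-1})(s\omega)\, ds \Big),
\end{equation}
and the point is that $h(x)=b^j(x)\eta_{jk}x^k$ vanishes at $x=0$, so $h(t\omega)/t$ is bounded and the exponential factors are smooth and nonvanishing near $r=0$.

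The key step — and the one I expect to be the main obstacle — is verifying the smoothness and well-definedness of $u_k$ at the origin and, more subtly, its smoothness as a function of $\omega$ and hence jointly in $x$ (not merely its existence ray-by-ray). For $k=0$ the equation $2\rho u_0 + h u_0 = 0$ with $u_0(0)=1$ is solved explicitly by $u_0(x) = \exp\!\big(-\tfrac12\int_0^1 h(tx)\tfrac{dt}{t}\big)$, which is manifestly smooth in $x\in U$ because $h$ vanishes at $0$ and $U$ is star-shaped with respect to $0$ (this is where injectivity of $\exp_{\varvarm}$ on $U$, hence the possibility of choosing $U$ star-shaped, enters). For $k\geqslant 1$ one must choose the free constant in the general solution so that the would-be $r^{-k}$ term is absent; this forces $c_k(\omega)$ to vanish and leaves
\begin{equation}
u_k(r\omega) = - r^{-k} e^{-\frac12\int_0^r h(t\omega)\frac{dt}{t}} \int_0^r s^{k-1} e^{\frac12\int_0^s h(t\omega)\frac{dt}{t}}(Pu_{k-1})(s\omega)\, ds,
\end{equation}
and one checks that after the substitution $s = r\sigma$ the prefactor $r^{-k}$ is cancelled, giving $u_k(x) = -\int_0^1 \sigma^{k-1}(\cdots)(\sigma x)\,d\sigma$ with a smooth integrand, hence $u_k\in C^\infty(U)$. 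The joint smoothness in $x$ then follows by differentiating under the integral sign, using that $Pu_{k-1}\in C^\infty(U)$ by induction.

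To organize this cleanly I would first record the star-shapedness of $U$ (choosing $U$ inside the domain of injectivity of $\exp_{\varvarm}$, which is permitted by the statement), then prove the $k=0$ case by the explicit formula, then run the induction: assuming $u_0,\dots,u_{k-1}\in C^\infty(U)$, the source $-2Pu_{k-1}$ is smooth, and the integral formula above produces $u_k\in C^\infty(U)$ solving the $k$-th transport equation. A minor point to address is uniqueness of the smooth solution (the homogeneous equation $2\rho u + (2k+h)u = 0$ has only the $r^{-k}\times(\text{nonvanishing})$ solutions, none of which is smooth at $0$ for $k\geqslant 1$, so the smooth solution is unique; for $k=0$ the normalization $u_0(0)=1$ pins it down). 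No genuinely hard analysis is involved — the whole content is the regular-singular-point bookkeeping — which is why I expect the writeup to be short and to mirror \cite[Lem.~17.4.1]{HormanderIII} almost verbatim, the only Lorentzian-specific remark being that nothing in the argument uses the signature of $g$, only the normal-coordinate identity \eqref{e:normalcoord} and the vanishing of $h$ at the origin.
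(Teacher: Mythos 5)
Your proof is correct and takes essentially the same route as the paper, which offers no independent argument but defers to H\"ormander's Lemma 17.4.1 — precisely your scheme of integrating the radial ODE with the integrating factor $r^{k}\exp\bigl(\tfrac12\int_0^r h(t\omega)\tfrac{dt}{t}\bigr)$, discarding the $r^{-k}$ indicial solution, and rescaling $s=r\sigma$ to exhibit joint smoothness (your $u_0$ formula indeed reproduces $\vert g(0)\vert^{1/4}\vert g(x)\vert^{-1/4}$). The star-shapedness you single out is harmless: the integral formulas define the $u_k$ on the full domain of $\exp_{\varvarm}$, which is automatically star-shaped about $0$, and one then restricts to the given $U$.
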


The formulation of H\"ormander is practical  for proving estimates and fairly general, but to extract the scalar curvature we will later have to specialize  to the case of the pseudo-Riemannian Laplace--Beltrami operator.

\subsubsection{Going back to a neighborhood of the diagonal $\diag$.}

For the moment we have constructed a parametrix $T_{\varvarm} M \supset U \ni x \mapsto H_N(z,x)$ around some fixed $\varvarm\in M$. Now we need to treat $\varvarm$ as a parameter and prove that
everything depends nicely on $\varvarm\in M$. First, observe that the solutions $(u_k)_k$ of the transport equations
are smooth in $C^\infty(U)$. Recall that $(\varalpha^\mu)_\mu$ is the coframe
from \sec{movingframe} and $\pazocal{U}\subset M\times M$ is a neighborhood of the diagonal $\diag\subset M\times M$.
Therefore,  $\pazocal{U} \ni (x_1,x_2) \mapsto u_k(\varalpha(\exp_{x_1}^{-1}(x_2))) $ is smooth in both arguments by composition and smoothness of the inverse exponential map on $\pazocal{U}$. The distributions $\mathbf{F}_{\cv}(z,.)=G^*\Fse{z}\in \pazocal{D}^\prime(\pazocal{U})$ are also well-defined on the neighborhood 
$\pazocal{U}$ of the diagonal (with wavefront set in $\varlambda$).   
Therefore the parametrix $$H_N(z,x_1,x_2)=\sum_{k=0}^N u_k(\varalpha(\exp_{x_1}^{-1}(x_2)))G^*F_\cv(z-m^2,x_1,x_2)$$ describes in fact an element of $\pazocal{D}^\prime_\varlambda(\pazocal{U})$.
For the sake of brevity, by slight abuse of notation we simply write 
$u_k$  for the solution of the transport equation (the inverse exponential map is dropped), and the parametrix $H_N(z,x)$ depending on the variable $x$ in normal chart around $\varvarm$ or its pull-back $H_N(z,\varalpha(\exp_{x_1}^{-1}(x_2)))$ on $\pazocal{U}$ are both denoted by $H_N$. So from now on, one should always be aware that all objects are defined in terms of the exponential map. With these conventions the \emph{Hadamard parametrix} $H_N(z,.)$ reads
\begin{equation}
\boxed{H_N(z,.)=\sum_{k=0}^N u_k \Feg[k]{z-m^2}\in \pazocal{D}^\prime(\pazocal{U}).}
\end{equation}

{In the sequel, we shall use the notation $\delta_\Delta$ for the distribution defined locally by pull-back as
$\delta_\Delta=G^*\delta_0$ which we can extend globally by partition of unity. By construction, $\delta_\Delta $ is a conormal distribution supported by the diagonal $\Delta\subset M\times M$ and $\delta_\Delta(x,y)\module{g}^{-\frac{1}{2}} $ is the Schwartz kernel of the identity map~\footnote{We need to multiply by $\module{g}^{-\frac{1}{2}}$ in order to take into account integration against the volume element.}.}


\section{The Hadamard parametrix approximates the resolvent}\label{sec:gluing}

\subsection{Summary} From now on, we assume that $(M,g)$ is globally hyperbolic with non-trapping Lorentz scattering metric $g$.

The goal of this section is to prove that the formal parametrix $H_N(z,.)$ constructed in \sec{s:hadamardformal} truly approximates the
resolvent $(P-z)^{-1}$  in the functional space $\pazocal{D}^\prime_\varlambda(\pazocal{U})$ of distributions
defined near the diagonal, whose wavefront set is the Feynman wavefront $\varlambda$.

By Lemma~\ref{l:holderpullback} proved in the appendix
which shows that the H\"older regularity of $\Fse{z}$ is preserved under pull-back by $G$ 
and Theorem~\ref{t:fsanalyticflat} giving the microlocal properties of the family $\Fse{z}$, combined with Proposition~\ref{p:boundednessFs}, we have the following bounds.
\begin{lemm}\label{l:regFs}
Let $\pazocal{U}\subset M\times M$ be the neighborhood of the diagonal $\diag\subset M\times M$ as defined in \sec{movingframe}, and let $a\in [0,1]$. The
 family of distributions $\braket{\Im z}^{\Re\cv+1}\mathbf{F}_{\cv}(z,.)$ is bounded in $\pazocal{D}_\varlambda^\prime\left( \pazocal{U}\right)$ and the family $\braket{\Im z}^{a(\Re\cv+1)}\mathbf{F}_{\cv}(z,.)$ is bounded in
$\pazocal{C}^{\varalpha}_{\loc}\left( \pazocal{U}\right)$ uniformly in {$z\in \{\Im z\geqslant 0, \,\vert z\vert\geqslant \varepsilon>0\}$} for all 
$\varalpha\leqslant (2-2a)(\Re\cv+1)-k-n$ where $k=\plancher{\Re\cv}+1$.

Moreover, for every $N\in \mathbb{N}$ and $\varepsilon>0$,
$(\module{\Im z}^N\mathbf{F}_{\cv}(z,.))_{z\in \Im z>\varepsilon}$ is bounded in $\pazocal{D}_\varlambda^\prime\left( \pazocal{U}\setminus \diag\right)$
uniformly in $\cv\in \mathbb{C}$. If $z\in \{\Im z\geqslant 0, \,\Re z\geqslant m^2>0\}$, then
$(\braket{\Im z}^N\mathbf{F}_{\cv}(z,.))_{z\in \{\Im z\geqslant 0, {\vert z\vert\geqslant \varepsilon>0}\}}$ is bounded in $\pazocal{D}_\varlambda^\prime\left( \pazocal{U}\setminus \diag\right)$
uniformly in $\cv\in \mathbb{C}$.
\end{lemm}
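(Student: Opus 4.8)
The plan is to deduce Lemma~\ref{l:regFs} directly from the corresponding flat-space statements, transported to $\pazocal{U}$ via the submersion $G$. The key point is that $\mathbf{F}_\cv(z,.)=G^*F_\cv(z,.)$ by Definition~\ref{d:fsneardiag}, so every bound on $F_\cv(z,.)$ needs only to be pushed through the pull-back operation, with uniformity in $z$ coming for free because $G$ itself does not depend on $z$.

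First I would handle the $\pazocal{D}'_\varlambda(\pazocal{U})$ bound. By Lemma~\ref{Wavefrontpullback}, $\WF(G^*F_\cv(z,.))\subset\varlambda$ with $\varlambda'$ as in \eqref{eq:wavefrontfeynman}, so $G^*$ maps $\pazocal{D}'_{\varlambda_0}(\rr^n)$ continuously into $\pazocal{D}'_\varlambda(\pazocal{U})$ \emph{in the normal topology}, by the continuity result cited just before Proposition~\ref{p:boundednessFs} (\cite[Prop.~5.1 p.~211]{Viet-wf2}). Since $\langle\Im z\rangle^{\Re\cv+1}F_\cv(z,.)$ is bounded in $\pazocal{D}'_{\varlambda_0}(\rr^n)$ for $z\in\{\Im z\geqslant 0,\ |z|\geqslant\varepsilon\}$ by Theorem~\ref{t:fsanalyticflat} (equivalently Proposition~\ref{p:fsholder}\eqref{sholder1}; note $\langle\Im z\rangle\simeq 1+|\Im z|$ on this set), continuity of $G^*$ in the normal topology immediately gives boundedness of $\langle\Im z\rangle^{\Re\cv+1}\mathbf{F}_\cv(z,.)$ in $\pazocal{D}'_\varlambda(\pazocal{U})$. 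This is essentially the content of Proposition~\ref{p:boundednessFs} and I would simply cite it.

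Next I would treat the H\"older--Zygmund bound. Here the relevant input is the appendix lemma (referred to as Lemma~\ref{l:holderpullback}) asserting that pull-back by $G$ preserves H\"older--Zygmund regularity locally, i.e.~$G^*:\pazocal{C}^{\varalpha}_\loc(\rr^n)\to\pazocal{C}^{\varalpha}_\loc(\pazocal{U})$ is continuous. Combined with the flat estimate from Theorem~\ref{t:fsanalyticflat}, which says $\langle\Im z\rangle^{a(\Re\cv+1)}F_\cv(z,.)$ is bounded in $\pazocal{C}^{\varalpha}_\loc(\rr^n)$ for $\varalpha\leqslant(2-2a)(\Re\cv+1)-k-n$, $k=\plancher{\Re\cv}+1$, one gets the asserted boundedness of $\langle\Im z\rangle^{a(\Re\cv+1)}\mathbf{F}_\cv(z,.)$ in $\pazocal{C}^{\varalpha}_\loc(\pazocal{U})$. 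The only mild subtlety is that $G$ is only a local diffeomorphism onto its image (a submersion to $\rr^n$, not a diffeomorphism), but since $\mathbf{F}_\cv(z,\varvarm,x)$ depends on $(\varvarm,x)$ only through $G(\varvarm,x)\in\rr^n$, H\"older regularity in the $\rr^n$-variable transfers to joint H\"older regularity on $\pazocal{U}$; I would invoke the appendix lemma for this.

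Finally, for the away-from-diagonal estimates I would use Lemma~\ref{l:Fsoutsidediag} and its microlocal upgrade in Theorem~\ref{t:fsanalyticflat}: $|\Im z|^N F_\cv(z,.)$ is bounded in $\pazocal{D}'_{\varlambda_0}(\rr^n\setminus\{0\})$ for $z\in\{\Im z\geqslant\varepsilon\}$ and uniformly in $\cv$, and similarly $\langle\Im z\rangle^N F_\cv(z,.)$ is bounded when $\Im z\geqslant 0$, $\Re z\geqslant m^2>0$ — the latter because on that region $|z|\geqslant m^2$ forces the oscillatory-integral phases to decay even as $\Im z\to 0$, cf.~the discussion preceding Proposition~\ref{p:fsholder}. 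The complement $\pazocal{U}\setminus\diag$ is mapped by $G$ into $\rr^n\setminus\{0\}$ (since $G(x_1,x_2)=0$ iff $x_1=x_2$), so pull-back of distributions conormal to $\varlambda_0$ over $\rr^n\setminus\{0\}$ lands in distributions conormal to $\varlambda$ over $\pazocal{U}\setminus\diag$, again by \cite[Prop.~5.1 p.~211]{Viet-wf2} in the normal topology, with the $z$-weights carried along unchanged. The main obstacle, such as it is, is bookkeeping: making sure the normal topology on $\pazocal{D}'_{\varlambda_0}$ is the right framework for ``boundedness'' to pass through $G^*$ with uniform constants, and checking that the $\varlambda_0\to\varlambda$ correspondence under $G$ is exactly the one in Lemma~\ref{Wavefrontpullback} both near and away from the diagonal — but all of this is already assembled in the cited lemmas, so the proof is a short synthesis rather than a new argument.
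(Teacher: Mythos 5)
Your proposal is correct and takes essentially the same route as the paper: the lemma is obtained there exactly by combining Lemma \ref{l:holderpullback} (pull-back preserves H\"older--Zygmund regularity), Theorem \ref{t:fsanalyticflat} (the flat H\"older and microlocal estimates, including the off-diagonal $\pazocal{O}(\module{\Im z}^{-N})$ bounds), and Proposition \ref{p:boundednessFs} (continuity of $G^*$ in the normal topology on $\pazocal{D}'_{\varlambda_0}$), with the $z$-weights carried along unchanged. Your synthesis, including the observation that $G$ maps $\pazocal{U}\setminus\diag$ into $\rr^n\setminus\{0\}$ for the away-from-diagonal statements, matches the paper's argument.
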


\subsection{Resolvent approximation}

Let $\chi\in C^\infty(M\times M;[0,1])$ 
be  such that $\chi=1 $ near the diagonal $\diag\subset M\times M$ and 
$\chi(x,y)=0$ outside the neighborhood $\pazocal{U}$ defined in \eqref{applipullback}.
Recall that the Feynman wavefront set is defined in \eqref{eq:wavefrontfeynman}. 

We interpret the family
$$ \bea 
H_N(z-m^2,.)\chi=\sum_{k=0}^N u_k\mathbf{F}_k(z-m^2,.)\chi\in \pazocal{D}^\prime_{\varlambda}(\pazocal{U}),
\eea $$
as a family of Schwartz kernels, and we show that  near the diagonal, the corresponding family of operators is a parametrix  which approximates the resolvent $(P-z)^{-1}$.

Let $\tilde{\pazocal{U}}$  be  
a neighborhood of the 
diagonal $\diag$ such that $\chi|_{\tilde{\pazocal{U}}}=1$ and $\tilde{\pazocal{U}}\subset \pazocal{U}$.
Let 
$$
\varlambda_{\chi}=\{(x,y;\xi,\eta) \st (x,y;\xi,\eta)\in \varlambda, \, (x,y)\notin \tilde{\pazocal{U}}  \},
$$
so  $\varlambda_{\chi}=\varlambda\cap T^*\big((M\times M)\setminus\tilde{\pazocal{U}} \big)$ is a 
\emphasize{truncation of the Feynman wavefront set} $\varlambda$ where we removed the 
neighborhood $\tilde{\pazocal{U}}$
near the diagonal. 

\begin{lemm}\label{lemfirg}
Assume  $(M,g)$ is globally hyperbolic and set $P=\square_g+m^2$.
For every $\varalpha\in \mathbb{R}_{\geqslant 0}$, $p\in \mathbb{Z}_{\geqslant 0}$, $m\geqslant 0$ there exists $N$ large enough s.t.
for every $z\in\{\Im z\geqslant 0,\, \vert z\vert\geqslant \varepsilon>0\}$
\begin{eqnarray}
\left(P-z\right)\left(\sum_{k=0}^N u_k \mathbf{F}_k(z-m^2,.)\chi\right)={ \module{g}^{-\frac{1}{2}}}\delta_{\diag}+(Pu_N)\mathbf{F}_N(z-m^2,.)\chi+r_N(z),
\end{eqnarray}
where:
\ben
\item\label{manul1} ${\module{g}^{-\frac{1}{2}}}\delta_{\diag}\in \pazocal{D}^\prime(M\times M)$ is the Schwartz kernel of the identity map,
\item\label{manul2} $\bra z\ket^{p}(Pu_N)\mathbf{F}_N(z-m^2,.)\chi$ is bounded in $\pazocal{C}^\varalpha_{\loc}(\pazocal{U})$, 
\item\label{manul3}
$r_N(z,.)\in \pazocal{D}^\prime(M\times M)$ vanishes on $\tilde{\pazocal{U}}\subset\pazocal{U}$ and outside $\pazocal{U}\subset M\times M$. In particular, $r_N(z,.)$ is the Schwartz kernel of a family of {proper{ly supported } operators}. Furthermore,  $r_N(z,.)$  is bounded in $\pazocal{D}^\prime_{\varlambda_{\chi}}(M\times M)$
uniformly in $z\in \{\Im z\geqslant 0, \,\vert z\vert\geqslant \varepsilon>0\}$, and $ r_N(z,.)=\pazocal{O}_{\pazocal{D}^\prime_{\varlambda_{\chi}}}( \braket{\Im z}^{-\infty} )  .$
\een
\end{lemm}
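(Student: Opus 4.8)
The plan is to prove Lemma \ref{lemfirg} by directly applying the local parametrix identity established in \sec{s:hadamardformal} and then carefully tracking the error terms, using the regularity and decay estimates for the family $\mathbf{F}_\cv(z,.)$ collected in Lemma \ref{l:regFs}. First I would recall from the transport equation construction that in the normal coordinate chart around a frozen base point $\varvarm$, we have the exact identity $(P-z)H_N(z,.)= \module{g}^{-\frac{1}{2}}\delta_0+(Pu_N)\mathbf{F}_N(z-m^2,.)$, and that this identity descends to a neighborhood of the diagonal $\pazocal{U}$ via the pull-back by $G$ (exploiting that the wavefront set of $\mathbf{F}_\cv(z,.)$ lies in $\varlambda$, which is transverse to $N^*(\{\varvarm\}\times M)$). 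Multiplying by the cutoff $\chi$ and commuting $P-z$ past $\chi$ produces, in addition to $\chi$ times the exact identity, a commutator term $[P-z,\chi]H_N(z-m^2,.)=[\square_g,\chi]H_N(z-m^2,.)$ supported in $\supp d\chi\subset \pazocal{U}\setminus\tilde{\pazocal{U}}$; this is precisely the remainder $r_N(z,.)$. Thus \eqref{manul1} is immediate since $\chi=1$ near the diagonal, $\delta_{\diag}=G^*\delta_0$ is the Schwartz kernel of the identity after multiplication by $\module{g}^{-\frac12}$.

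For \eqref{manul2}, I would argue that $u_N\in C^\infty(\pazocal{U})$ is a fixed smooth function independent of $z$ and $m$, and $P$ is a second order differential operator with smooth coefficients, so $Pu_N\in C^\infty(\pazocal{U})$ is $z$-independent; multiplication by $\chi(Pu_N)$ preserves H\"older--Zygmund regularity and all bounds. By Lemma \ref{l:regFs} with $\cv=N$, the family $\braket{\Im z}^{a(N+1)}\mathbf{F}_N(z-m^2,.)$ is bounded in $\pazocal{C}^\varalpha_{\loc}(\pazocal{U})$ for all $\varalpha\leqslant (2-2a)(N+1)-k-n$ with $k=\plancher{N}+1=N+1$. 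Given target H\"older order $\varalpha$ and target power $p$, I choose $a\in(0,1]$ and then $N$ large enough so that simultaneously $a(N+1)\geqslant p$ and $(2-2a)(N+1)-(N+1)-n\geqslant \varalpha$, i.e.\ $(1-2a)(N+1)\geqslant \varalpha+n$; taking $a$ small (say $a=\frac14$) both conditions are met for $N$ sufficiently large. Since on the relevant range $\braket{\Im z}\sim (1+\module{\Im z})$ and $\module{z}\geqslant\varepsilon$, the polynomial weight $\braket{z}^p$ is controlled by $\braket{\Im z}^p$ up to constants along the contour of interest only where $\Im z$ is comparable to $\module z$ — more carefully I would note that Lemma \ref{lemfirg} as stated wants $\braket z\ket^p$, so I either restrict to the cone $Z$ where $\module z\lesssim\Im z$ (which is the situation used later when integrating over $\gamma_\varepsilon$) or, if full generality in $z$ is intended, replace $\braket z\ket^p$ by $\braket{\Im z}^p$; in any case the stated bound follows from the $\mathbf{F}_N$ estimate.

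For \eqref{manul3}, the remainder $r_N(z,.)=[\square_g,\chi]H_N(z-m^2,.)$ is a first-order differential operator (in the $\chi$ variable) with coefficients supported in $\supp d\chi$, applied to $\sum_{k=0}^N u_k\mathbf{F}_k(z-m^2,.)$. Since $\supp d\chi\cap \tilde{\pazocal{U}}=\emptyset$ and $\supp d\chi\subset\pazocal{U}$, the kernel $r_N(z,.)$ vanishes on $\tilde{\pazocal{U}}$ and outside $\pazocal{U}$, hence is properly supported (the support is compact in each fiber direction away from the diagonal). Away from the diagonal, Lemma \ref{l:regFs} gives that $\module{\Im z}^M\mathbf{F}_k(z-m^2,.)$ is bounded in $\pazocal{D}'_\varlambda(\pazocal{U}\setminus\diag)$ for every $M$ (and with $\braket{\Im z}^M$ in place of $\module{\Im z}^M$ when $\Re z\geqslant m^2>0$); differentiation by the first-order operator $[\square_g,\chi]$ and multiplication by smooth functions are continuous on $\pazocal{D}'_{\varlambda_\chi}$ and preserve these bounds, since $\varlambda_\chi$ is exactly the part of $\varlambda$ over $(M\times M)\setminus\tilde{\pazocal{U}}$. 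This yields both the uniform boundedness in $\pazocal{D}'_{\varlambda_\chi}(M\times M)$ and the decay $r_N(z,.)=\pazocal{O}_{\pazocal{D}'_{\varlambda_\chi}}(\braket{\Im z}^{-\infty})$. The main obstacle, and the point requiring the most care, is \eqref{manul2}: balancing the competition between H\"older regularity and decay in $\module{\Im z}$ inherent in the $\mathbf{F}_N$ family, i.e.\ verifying that a single choice of $N$ (depending on $\varalpha$ and $p$ but not on $z$) can be made to satisfy both inequalities on the interpolation parameter $a$ simultaneously — everything else is a routine consequence of the earlier lemmas and the support properties of $\chi$.
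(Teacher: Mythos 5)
Your proposal follows essentially the same route as the paper's proof: start from the exact transport-equation identity for $H_N$ from \sec{s:hadamardformal}, commute $P-z$ past the cutoff $\chi$ so that the remainder $r_N(z,.)$ consists of terms involving derivatives of $\chi$ supported in $\supp d\chi\subset \pazocal{U}\setminus\tilde{\pazocal{U}}$, and then invoke Lemma \ref{l:regFs} and Proposition \ref{p:boundednessFs} (with a small interpolation parameter $a$ and $N$ chosen large depending on $\varalpha$ and $p$) to obtain items \eqref{manul2} and \eqref{manul3}. Your caveat about $\braket{z}^{p}$ versus $\braket{\Im z}^{p}$ in item \eqref{manul2} is well taken: the paper's own proof likewise only records decay in $\braket{\Im z}$, and the stronger weight is effectively used only along contours such as $\gamma_\varepsilon$, where $\Im z\sim \module{z}$ for large $\module{z}$.
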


  Note that in the above formul\ae, the coefficients $(u_k)_{k=0}^\infty$ of the transport equations do not depend on the mass $m$ nor on the spectral parameter $z$.

\begin{refproof}{Lemma \ref{lemfirg}}
We start from the result from \sec{s:hadamardformal} which says that 
in the sense of distributions in $\pazocal{D}^\prime(\pazocal{U})$,
\beq \label{efps}
\left(P-z\right)\left(\sum_{k=0}^N u_k\mathbf{F}_k(z-m^2,.)\right)={\module{g}^{-\frac{1}{2}}}\delta_{\diag}+(Pu_N)\mathbf{F}_N(z-m^2,.),
\eeq
where $\mathbf{F}_N(z,.)\in \pazocal{C}^{(2-2a)(N+1)-N-n}_{\loc}(\cU)$ with decay in $z$ of the form $\braket{\Im z}^{-a(N+1)}$ for $a\in [0,1]$ by Lemma~\ref{l:regFs} and $\mathbf{F}_N(z,.)$ 
is bounded in $\pazocal{D}^\prime_\varlambda(\pazocal{U})$ by Proposition~\ref{p:boundednessFs}.
Now,
multiplying the parametrix defined near the diagonal by the cut--off
function $\chi$ creates an \emphasize{additional term}. 
Namely, it turns \eqref{efps} into:
$$ \bea 
\left(P-z\right)\left(\sum_{k=0}^N u_k \mathbf{F}_k(z-m^2,.)\chi\right)={\module{g}^{-\frac{1}{2}}}\delta_{\diag}+(Pu_N)\mathbf{F}_N(z-m^2,.)\chi+r_N(z,.) 
\eea $$
where $$r_N(z,.)=2\left\langle \nabla \chi, \nabla\left(\sum_{k=0}^N u_k \mathbf{F}_k(z-m^2,.)\chi\right)\right\rangle + (P\chi) \left(\sum_{k=0}^N u_k \mathbf{F}_k(z-m^2,.)\right).$$ 
The term $r_N(z,.)$ vanishes whenever either $\chi=1$ or $\chi=0$, since 
it involves products with {derivatives} of the diagonal cut-off function $\chi$. This means that $r_N(z,x,y)=0$ for $(x,y)\in \tilde{\pazocal{U}}$, which is near the diagonal $\diag$, and 
also $r_N(z,x,y)=0$ for $(x,y)\notin \pazocal{U}$. This implies 
that the Schwartz kernel $r_N(z,.)\in \pazocal{D}^\prime(M\times M)$ defines a \emphasize{proper operator}. 
Therefore, $r_N(z,.)$ is bounded in $\pazocal{D}^\prime_{\varlambda_{\chi}}$ with a bound of the form
$r_N(z,.)=\pazocal{O}_{\pazocal{D}^\prime_{\varlambda_{\chi}}(M\times M)}(\module{\Im z}^{-\infty})$ since it is defined 
in terms of the distributions
$\mathbf{F}_k(z,.)=\pazocal{O}_{\pazocal{D}^\prime_\varlambda(\pazocal{U}\setminus \diag)}(\module{\Im z}^{-\infty})$ restricted \emphasize{outside the diagonal} and because of its support that we just discussed.
\end{refproof}

We are now ready to conclude that near the diagonal, our parametrix is a good approximation of the  resolvent $(P-z)^{-1}$. 

We denote by $r_N(z)$ the operator with Schwartz kernel $r_N(z,.)$, and similarly for other Schwartz kernels. Recall that $\gamma_\varepsilon$ is the integration contour needed to define the complex powers, see \sec{ss:restocp} and Figure \ref{fig:contour} therein. 

\begin{proposition}\label{l:decompositionresolvent}
Assume that $(M,g)$ is a globally hyperbolic non-trapping Lorentzian scattering space and let $\varepsilon>0$. Set $P=\square_g+m^2$. For every $s\in \mathbb{R}_{\geqslant 0}$, $p\in \mathbb{Z}_{\geqslant 0}$, $m\geqslant0$, 
there exists $N$ large enough s.t.~uniformly in $z\in\gamma_\varepsilon$, we have the identity 
\begin{eqnarray}\label{eq:toinsert}
(P-z)^{-1}=\left(\sum_{k=0}^N u_k \mathbf{F}_k(z-m^2,.)\chi\right)+E_{N,1}(z)+E_{N,2}(z)
\end{eqnarray}
in the sense of operators $\Onorm{1}$, where $E_{N,1}(z)=(P-z)^{-1} r_N(z)$ satisfies
\beq\label{manixmanix}
\wfl{7}(E_{N,1}(z))\subset \varlambda_{\chi}',
\eeq
and $E_{N,2}(z)=(P-z)^{-1}(Pu_N)\mathbf{F}_N(z)\chi$ satisfies $
E_{N,2}(z)=\Oregsh({\braket{z}^{-p}})$. Furthermore if $m\neq0$ and assuming in addition injectivity and non-trapping at $\sigma=m^2$, then the estimates and \eqref{eq:toinsert} hold uniformly in $z\in \gamma_0$.
\end{proposition}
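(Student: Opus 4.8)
The plan is to decompose the resolvent by inserting the Hadamard parametrix near the diagonal via the algebraic identity from Lemma \ref{lemfirg}. Starting from
$$
(P-z)\Big(\sum_{k=0}^N u_k\mathbf{F}_k(z-m^2,.)\chi\Big)={\module{g}^{-\frac{1}{2}}}\delta_{\diag}+(Pu_N)\mathbf{F}_N(z-m^2,.)\chi+r_N(z),
$$
one applies $(P-z)^{-1}$ on the left. The term $(P-z)^{-1}\module{g}^{-\frac12}\delta_{\diag}$ is just $(P-z)^{-1}$ itself (since $\module{g}^{-\frac12}\delta_{\diag}$ is the Schwartz kernel of the identity), so rearranging yields precisely \eqref{eq:toinsert} with $E_{N,1}(z)=-(P-z)^{-1}r_N(z)$ and $E_{N,2}(z)=-(P-z)^{-1}(Pu_N)\mathbf{F}_N(z-m^2,.)\chi$ (signs absorbed into the definitions). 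The justification that this operator identity makes sense and that all compositions are well-defined rests on the fact that $r_N(z)$ and $(Pu_N)\mathbf{F}_N(z-m^2,.)\chi$ are properly supported (Lemma \ref{lemfirg}\eqref{manul3} and \eqref{manul2}), so the composition with $(P-z)^{-1}$ is defined by Lemma \ref{lem:composition}, and the parametrix sum itself is $\Onorm{1}$ since each $u_k\mathbf{F}_k(z-m^2,.)\chi$ is a bounded family in $\pazocal{D}'_\varlambda(\pazocal{U})$ hence $\Onorm{\infty}$ on the interior.

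First I would establish the claimed mapping and wavefront properties of $E_{N,1}(z)$. By Lemma \ref{lemfirg}\eqref{manul3}, $r_N(z)=\pazocal{O}_{\pazocal{D}'_{\varlambda_\chi}}(\braket{\Im z}^{-\infty})$ and is properly supported, so in particular $r_N(z)=\Oreg{s'}$ for every $s'$ along $\gamma_\varepsilon$; combined with $(P-z)^{-1}=\Onorm{1}$ from Lemma \ref{lem:pika}, the composition is $\Onorm{1}$. For the wavefront bound \eqref{manixmanix}, I would use Lemma \ref{lem:composition}: $\wfl{7}((P-z)^{-1}r_N(z))\subset\wfl{12}((P-z)^{-1})\circ\wfl{\,\cdot\,}(r_N(z))$. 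Now $\wfl{\,\cdot\,}(r_N(z))\subset\varlambda_\chi'$ by Lemma \ref{ditoop} since $r_N(z,.)=\pazocal{O}_{\pazocal{D}'_{\varlambda_\chi}}$, and $\wfl{12}((P-z)^{-1})$ is contained in the Feynman relation $\{q_1\succ q_2\}\cup\ediag$ by Theorem \ref{thm:wf}. The composition of the Feynman relation with $\varlambda_\chi'$ stays inside $\varlambda_\chi'$: since $\varlambda_\chi'$ consists of pairs $(q_1,q_2)\in\Sigma\times\Sigma$ with $q_1\succ q_2$ whose base points lie outside $\tilde{\pazocal{U}}$ in the first factor, composing on the left with another Feynman pair $(q_1',q_1)$ (with $q_1'\succ q_1$) gives $(q_1',q_2)$ with $q_1'\succ q_2$ by transitivity of the bicharacteristic order, and the base-point condition in the second factor is inherited — here I should be slightly careful about how the base-point exclusion transforms under composition, but since the exclusion in $\varlambda_\chi$ is on the \emph{pair} of base points being outside $\tilde{\pazocal{U}}$ and the relation is carried by bicharacteristics, one checks that $\varlambda\circ\varlambda_\chi'\subset\varlambda_\chi'$ by flowing out and using that $\tilde{\pazocal{U}}$ is a neighborhood of the diagonal. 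This gives \eqref{manixmanix}.

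Next I would treat $E_{N,2}(z)$. By Lemma \ref{lemfirg}\eqref{manul2}, for $N$ chosen large enough (depending on $s$ and $p$) the kernel $(Pu_N)\mathbf{F}_N(z-m^2,.)\chi$ is bounded in $\pazocal{C}^\varalpha_{\loc}(\pazocal{U})$ with $\varalpha$ as large as we want, with decay $\braket{z}^{-p}$; thus the associated operator, being properly supported with a $\pazocal{C}^\varalpha$ kernel, maps $H^l_{\c}(M)\to H^{l+\varalpha'}_{\loc}(M)$ for any $l$ and some $\varalpha'$ growing with $\varalpha$, with norm $\pazocal{O}(\braket{z}^{-p})$, i.e.~it is $\Oregsh{s'}(\braket{z}^{-p})$ for any prescribed $s'$. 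Composing with $(P-z)^{-1}=\Onorm{1}$ (which does not improve decay but also does not destroy it, since along $\gamma_\varepsilon$ we have $\braket{\Im z}^{-\frac12}\leqslant C$) gives $E_{N,2}(z)=\Oregsh{s}(\braket{z}^{-p})$ after choosing $N$ suitably; here I use that the $\braket{\Im z}^{-\frac12}$-type bound on $(P-z)^{-1}$ is harmless because along $\gamma_\varepsilon$ the imaginary part is bounded below, so $(P-z)^{-1}=\Onorm{1}$ is in fact $\Oreg{1}$ with a fixed constant, and the product $\braket{z}^{-p}\cdot C$ is still $\pazocal{O}(\braket{z}^{-p})$. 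The main obstacle here is bookkeeping: one must track how the regularity order $\varalpha$ of $\mathbf{F}_N$ (which by Lemma \ref{l:regFs} is $(2-2a)(N+1)-N-n$ for $a\in[0,1]$) translates into both the Sobolev order $s$ and the decay exponent $p$ simultaneously — taking $a$ small keeps regularity high but kills the $\braket{\Im z}$-decay of $\mathbf{F}_N$, whereas one needs $a$ bounded away from $0$ for the $\braket{z}^{-p}$ bound; the resolution is that the factor $(Pu_N)$ and the cutoff localize everything, and one uses the part \eqref{manul2} of Lemma \ref{lemfirg} which already packages the correct simultaneous statement after choosing $N=N(s,p,m)$ large.

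Finally, for the $\gamma_0$ statement under the extra hypotheses ($m\neq0$, injectivity and non-trapping at energy $\sigma=m^2$), the only change is that one needs the resolvent estimates and the Feynman wavefront bound uniformly down to the real axis. This is supplied by Lemma \ref{lem:pika} (which gives $(P-z)^{-1}=\Onorm{1}$ on $\gamma_0$ under exactly these hypotheses) and by the remark following Theorem \ref{thm:wf} (which extends \eqref{feynwf2} to $\gamma_0$), together with Lemma \ref{l:regFs} whose bounds on $\mathbf{F}_k(z-m^2,.)$ with the shifted argument hold on $\{\Im z\geqslant0,\ \Re z\geqslant m^2>0\}$, i.e.~exactly on $\gamma_0$ since on $\gamma_0$ we have $z-m^2\in\{\Im(z-m^2)\geqslant0,\ \module{z-m^2}\geqslant\varepsilon'\}$ for suitable $\varepsilon'>0$ once $m>0$ (the contour $\gamma_0$ passes through the origin, but $z-m^2$ then stays bounded away from $0$). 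With these inputs the same argument as above applies verbatim on $\gamma_0$, completing the proof. I expect the genuinely delicate point to be verifying $\varlambda\circ\varlambda_\chi'\subset\varlambda_\chi'$ — i.e.~that left-composition with the resolvent's Feynman wavefront does not push singularities back into the excluded neighborhood $\tilde{\pazocal{U}}$ of the diagonal — which uses global hyperbolicity and the structure of bicharacteristics in $\Sigma$ essentially.
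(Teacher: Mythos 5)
Your proposal follows essentially the same route as the paper's proof: apply $(P-z)^{-1}$ to the identity of Lemma \ref{lemfirg}, treat $E_{N,2}$ via the H\"older bound with $\braket{z}^{-p}$ decay from Lemma \ref{lemfirg}(2) composed with the uniformly bounded resolvent, and treat $E_{N,1}$ via Lemma \ref{ditoop}, Theorem \ref{thm:wf}, the composition Lemma \ref{lem:composition}, and the absorption property $\varlambda' \circ \varlambda_{\chi}' \subset \varlambda_{\chi}'$ obtained from transitivity of $\succ$. The only point where you are thinner than the paper is the conversion of the $\pazocal{C}^\varalpha$ kernel bound into the Sobolev mapping property (the paper does this through the embedding $\pazocal{C}^{a}_{\loc}\hookrightarrow H^{a+n-0}_{\loc}$, continuity of the exterior tensor product, and duality), but the substance of your argument is correct.
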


In particular, \eqref{manixmanix} implies that $E_{N,1}(z)$  is \emph{smooth near the diagonal}.

As  explained earlier, there are inevitable losses in decay in $z$ in the high regularity estimates, and the $\pazocal{O}(\bra z\ket^{-p})$ bound requires to choose $N$ extremely large. 

\begin{refproof}{Proposition \ref{l:decompositionresolvent}}
Recall that   $(P-z)^{-1}=\Onorm{1}$ along $\gamma_\varepsilon$ by Lemma \ref{lem:pika}.  
{  
 By \eqref{manul2} of Lemma \ref{lemfirg}, for every $(p,a)\in \mathbb{N}\times \mathbb{R}$, for $N$ large enough, {in $\cU$} we have $\braket{z}^{p} (Pu_N)\mathbf{F}_N(z)\chi\in H_{\loc}^{a+n-0}(M\times M)$ uniformly in $z$ along $\gamma_\varepsilon$ 
by the Sobolev embeddings $\pazocal{C}_{\loc}^{a} (M\times M)\hookrightarrow H_{\loc}^{a+n-0}(M\times M)$ recalled in Lemma~\ref{l:holdersobolev} in the appendix. 
For every $b\in \mathbb{R}$, $s\in \mathbb{R}_{\geqslant 0}$, the exterior product $ H^b_{\c}(M)\times H_\c^{-b-s}(M)\ni (v_1,v_2)\mapsto v_1 \otimes v_2\in H^{\inf(b,-b-s,-s)}_\c(M\times M)$
is linear continuous~\cite[Thm.~3.2 p.~140]{joshi}, therefore choosing $N$ large enough so that $a+n+\inf(b,-b-s,-s)>0 $, we find that
$$ 
\bea
 H^b_{\c}(M)\times H_\c^{-b-s}(M)\ni(v_1,v_2)\mapsto &\left\langle v_2, \braket{z}^{p}(Pu_N)\mathbf{F}_N(z)\chi v_1\right\rangle_{M}\\ &= \left\langle \braket{z}^{p}(Pu_N)\mathbf{F}_N(z)\chi, v_1\otimes v_2\right\rangle_{M\times M}
 \eea
  $$
is bilinear continuous by Sobolev duality uniformly in $z$ along $\gamma_\varepsilon$. 
Therefore we have (for $N$ large enough):
\beq\label{njn}
(Pu_N)\mathbf{F}_N(z)\chi = \Oregsh({\braket{z}^{-p}}).
\eeq
Since the operators in \eqref{njn} are proper{ly supported},
$$
E_{N,2}(z)=(P-z)^{-1}(Pu_N)\mathbf{F}_N(z)\chi=\Oregsh({\braket{z}^{-p}})$$ follows by composition.
}

Next, by \eqref{manul3} of Lemma \ref{lemfirg}, $r_N(z, .)=\pazocal{O}_{\pazocal{D}^\prime_{\varlambda_{\chi}}}( \braket{ z}^{-\infty} )$ along $\gamma_\varepsilon$. Consequently,  in terms of the operator wavefront set  we have $\wfl{7}(r_N(z))\subset \varlambda_{\chi}'$ by Lemma \ref{ditoop}. We also know from Theorem \ref{thm:wf} that
\beq
\wfl{12}  \big( ( P-z)^{-1} \big)\subset \varlambda'. 
\eeq
By the composition rule for operator wavefront sets, i.e., by Lemma \ref{lem:composition}, we obtain
\beq\label{manix1}
\wfl{7}(E_{N,1}(z))\subset \varlambda' \circ \varlambda_{\chi}'.
\eeq
It is easy to show using the {transitivity} of the $q_1{\succ} q_2$ relation that $\varlambda$ and $\varlambda_\chi$ satisfy the remarkable property
\beq\label{manix2} 
\varlambda' \circ \varlambda'\subset \varlambda' \mbox{ and }  \varlambda' \circ \varlambda_{\chi}'\subset \varlambda_{\chi}'.
\eeq
From \eqref{manix1} and the second property in \eqref{manix2} we conclude \eqref{manixmanix} immediately.  The case of $\gamma_0$ is fully analogous.
\end{refproof}

\section{Parametrix for complex powers and functions of the wave operator}


\subsection{The case of complex powers \texorpdfstring{$(P-i\varepsilon)^{-\cv}$}{}} \label{ss:cpparam} Our next objective is to study analytic properties of complex powers in a neighborhood $\pazocal{U}\subset M\times M$ of the diagonal  using the relationship between the resolvent and the Hadamard parametrix shown in Proposition \ref{l:decompositionresolvent}.

Let $\varepsilon>0$. We already know that the contour integral 
$$
(P-i\varepsilon)^{-{\cv}}=\frac{1}{2\pi i}\int_{\gamma_\varepsilon} (z-i\varepsilon)^{-\cv} (P-z)^{-1} dz
$$ makes sense as an operator
in  $\pazocal{B}(L^2(M))$ for $\Re\cv>0$. Using the decay along $\gamma_\varepsilon$ of the various terms stated in Lemma \ref{l:regFs} and Proposition \ref{l:decompositionresolvent}, we can insert the r.h.s.~of \eqref{eq:toinsert} into the above contour integral. For $\Re \cv>0$ this yields
\beq\label{cornell}
\bea 
\left(P-i\varepsilon \right)^{-\cv}&=\frac{1}{2\pi i}\int_{\gamma_{\varepsilon}}(z-i\varepsilon)^{-\cv} (P-z)^{-1} dz\\
&=\sum_{\varm=0}^N \chi u_\varm\frac{1}{2\pi i}\int_{\gamma_{\varepsilon}}(z-i\varepsilon)^{-\cv}  \mathbf{F}_\varm(z) dz+  \frac{i}{2\pi}\int_{\gamma_{\varepsilon}}(z-i\varepsilon)^{-\cv}\left(E_{N,1}(z)+E_{N,2}(z)\right)dz,
\eea 
\eeq
and this extends to $\cv\in\cc$ provided we check that the summands have an analytic continuation.

By Lemma ~\ref{l:holofamilyFm} and continuity of the pull-back by $G$, 
we know that
$$\frac{1}{2\pi i}\int_{\gamma_\varepsilon}(z-i\varepsilon)^{-\cv} \mathbf{F}_\varm(z,.) dz =\frac{(-1)^\varm\Gamma(-\cv+1)}{\Gamma(-\cv-\varm+1)\Gamma(\cv+\varm)}   \mathbf{F}_{\varm+\cv-1}(i\varepsilon,.)$$
which is a well-defined holomorphic family of distributions in $\pazocal{D}^\prime(\pazocal{U})$.
Therefore the finite sum
$\sum_{\varm=0}^N \chi u_\varm\frac{1}{2\pi i}\int_{\gamma_\varepsilon}(z+i\varepsilon)^{-\cv} \mathbf{F}_\varm(z,.) dz$ in fact reads
$$ \sum_{\varm=0}^N \chi u_\varm\frac{(-1)^\varm\Gamma(-\cv+1)}{\Gamma(-\cv-\varm+1)\Gamma(\cv+\varm)} \mathbf{F}_{\varm+\cv-1}(-i\varepsilon,.)$$
and is a well-defined holomorphic family of distributions in the parameter $\cv\in \mathbb{C}$.

The error term
$$
R_N(z,\cv)\defeq  \frac{i}{2\pi}\int_{\gamma_{\varepsilon}}(z-i\varepsilon)^{-\cv}\left(E_{N,1}(z)+E_{N,2}(z)\right)dz
$$
is smooth near the diagonal.

It follows that 
we have a decomposition of the Schwartz kernel of $(P-i\varepsilon)^{-\cv}$ which has to be understood in the sense of germs of distributions defined near the diagonal $\diag\subset M\times M$. The germ is the only information we need to take the diagonal restrictions: 
\begin{lemma} \label{l:decfeynmanpowers}  Let $(M,g)$ and $P$ be as in Proposition \ref{l:decompositionresolvent}. Then for every $\varalpha\in \mathbb{R}_{\geqslant 0}$, $p\in \mathbb{N}$, there exists $N\geqslant 0$ s.t.~we have the decomposition in $\pazocal{D}^\prime(\tilde{\pazocal{U}})$:
\begin{eqnarray}
(P-i\varepsilon)^{-\cv}=\sum_{\varm=0}^N u_\varm\frac{(-1)^\varm\Gamma(-\cv+1)}{\Gamma(-\cv-\varm+1)\Gamma(\cv+\varm)} \mathbf{F}_{\varm+\cv-1}(-m^2+i\varepsilon)+R_N(i\varepsilon,\cv) \\
{R_N(i\varepsilon,\cv)\in C^\varalpha(\pazocal{U})}
\end{eqnarray}
where the terms on the r.h.s.~depend 
holomorphically on $\cv$ in the half-plane $\Re\cv>-p$ and the r.h.s.~is well-defined as an element of $\pazocal{D}^\prime(\tilde{\pazocal{U}})$.
\end{lemma}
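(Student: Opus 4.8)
The plan is to assemble Lemma \ref{l:decfeynmanpowers} from the pieces already in place, essentially by chasing the contour integral identity \eqref{cornell} and tracking analytic continuation and regularity of each summand. Fix $\varalpha \in \rr_{\geqslant 0}$ and $p\in\nn$. First I would invoke Proposition \ref{l:decompositionresolvent} with these two parameters to pick $N$ large enough that the decomposition \eqref{eq:toinsert} of $(P-z)^{-1}$ holds uniformly along $\gamma_\varepsilon$, with $E_{N,2}(z)=\Oregsh(\bra z\ket^{-p})$ and $E_{N,1}(z)=(P-z)^{-1}r_N(z)$ smooth near the diagonal with $\wfl{7}(E_{N,1}(z))\subset\varlambda_\chi'$. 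Then I would start from the definition $(P-i\varepsilon)^{-\cv}=\frac{1}{2\pi i}\int_{\gamma_\varepsilon}(z-i\varepsilon)^{-\cv}(P-z)^{-1}dz$, valid in $\pazocal{B}(L^2(M))$ for $\Re\cv>0$, and justify substituting \eqref{eq:toinsert} under the integral sign: the Hadamard terms $\chi u_\varm\mathbf{F}_\varm(z-m^2,.)$ are controlled in $\pazocal{D}'_\varlambda(\pazocal{U})$ with the decay from Lemma \ref{l:regFs}, and the error terms decay along $\gamma_\varepsilon$ by Proposition \ref{l:decompositionresolvent}, so dominated convergence in the relevant topologies (operator norm for the $L^2$ part, then the finer H\"older/$\pazocal{D}'_\varlambda$ topologies near the diagonal) applies for $\Re\cv$ in a suitable range, giving \eqref{cornell} as an identity of Schwartz kernels in $\pazocal{D}'(\tilde{\pazocal{U}})$.

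Next I would handle the three types of summands separately. For the Hadamard sum, Lemma \ref{l:holofamilyFm} gives the exact contour identity $\frac{1}{2\pi i}\int_{\gamma_\varepsilon}(z\pm i\varepsilon)^{-\cv}\Fse[\varm]{z-m^2}dz=\frac{(-1)^\varm\Gamma(-\cv+1)}{\Gamma(-\cv-\varm+1)\Gamma(\cv+\varm)}\Fse[\cv+\varm-1]{-m^2\mp i\varepsilon}$ on $\rr^n$; pulling back by $G$ (using continuity of $G^*$ in the normal topology, as already used to define $\mathbf{F}_\cv(z,.)$) transfers this verbatim to $\pazocal{D}'(\pazocal{U})$, so each term equals $u_\varm\frac{(-1)^\varm\Gamma(-\cv+1)}{\Gamma(-\cv-\varm+1)\Gamma(\cv+\varm)}\mathbf{F}_{\varm+\cv-1}(-m^2+i\varepsilon,.)$, which is a holomorphic family of distributions in $\cv\in\cc$ by Theorem \ref{t:fsanalyticflat}. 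For the error $R_N(i\varepsilon,\cv)=\frac{i}{2\pi}\int_{\gamma_\varepsilon}(z-i\varepsilon)^{-\cv}(E_{N,1}(z)+E_{N,2}(z))dz$: the $E_{N,1}$ contribution is smooth near the diagonal since $\wfl{7}(E_{N,1}(z))\subset\varlambda_\chi'$ is disjoint from the enlarged diagonal over $\tilde{\pazocal{U}}$ (where $\chi\equiv 1$, so $\varlambda_\chi$ has been truncated away from a neighborhood of $\diag$), and the $E_{N,2}$ contribution lies in $H^{a+n-0}_\loc$ near the diagonal by the Sobolev embedding of $\pazocal{C}^a_\loc$ used in Proposition \ref{l:decompositionresolvent}, uniformly along $\gamma_\varepsilon$, so after taking $N$ large enough both are in $\pazocal{C}^\varalpha(\pazocal{U})$. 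Holomorphy of $R_N(i\varepsilon,\cv)$ in $\{\Re\cv>-p\}$ follows from differentiating under the integral, which is legitimate once the weight $\bra z\ket^{-p}$ beats the growth of $(z-i\varepsilon)^{-\cv}$ along $\gamma_\varepsilon$ for $\Re\cv>-p$.

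The decomposition is initially derived for $\Re\cv$ large (or $\Re\cv>0$), but once we know every summand on the right-hand side extends holomorphically to the half-plane $\{\Re\cv>-p\}$ as an element of $\pazocal{D}'(\tilde{\pazocal{U}})$, the identity propagates there by uniqueness of analytic continuation (pairing both sides with a fixed test function supported in $\tilde{\pazocal{U}}$ gives holomorphic functions of $\cv$ agreeing on an open set). This yields the stated decomposition. The main obstacle I anticipate is the bookkeeping of \emph{topologies} in which the contour-integral manipulations and dominated convergence are carried out: one must consistently work in $\pazocal{D}'_\varlambda(\pazocal{U})$ (resp.\ $\pazocal{C}^\varalpha_\loc$) near the diagonal rather than merely in $\pazocal{B}(L^2)$, and check that the substitution of \eqref{eq:toinsert} and the exchange of $\int_{\gamma_\varepsilon}$ with $G^*$ and with pairing against test functions are all justified by the uniform-in-$z$ bounds of Lemma \ref{l:regFs} and Proposition \ref{l:decompositionresolvent}; the regularity/decay trade-off forces $N$ to depend on both $\varalpha$ and $p$, and one should state clearly how large $N$ must be in terms of the Sobolev and H\"older indices appearing in Lemma \ref{l:holdersobolev} and Lemma \ref{l:regFs}.
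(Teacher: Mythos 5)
Your proposal is correct and follows essentially the same route as the paper: insert the decomposition of Proposition \ref{l:decompositionresolvent} into the contour integral over $\gamma_\varepsilon$, evaluate the Hadamard terms via Lemma \ref{l:holofamilyFm} and continuity of the pull-back $G^*$, control the error term through the wavefront bound on $E_{N,1}$ and the $\bra z\ket^{-p}$ decay of $E_{N,2}$, and extend from $\Re\cv>0$ to $\Re\cv>-p$ by holomorphy of each summand. The only additions are explicit bookkeeping (dominated convergence, pairing with test functions for the continuation) that the paper leaves implicit.
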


\begin{coro}\label{c:cororegfeynpowers}  
For $\Re\cv\geqslant n+\varalpha$ (where $n=\dim M$) and $\varalpha>0$, we find that
$(P-i\varepsilon)^{-\cv}$ has H\"older regularity $\pazocal{C}^{\varalpha}_{\loc}(\pazocal{U})$
in a neighborhood of the diagonal $\diag\subset M\times M$, and under the non-trapping and injectivity assumption at $\sigma=m^2\neq 0$  we get existence of  $\lim_{\varepsilon\rightarrow 0^+} (P-i\varepsilon)^{-\cv}$ in the sense of $\pazocal{C}^{\varalpha}_{\loc}(\pazocal{U})$.
\end{coro}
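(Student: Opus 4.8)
The plan is to derive Corollary \ref{c:cororegfeynpowers} directly from the decomposition in Lemma \ref{l:decfeynmanpowers} together with the H\"older estimates on the elementary family collected in Lemma \ref{l:regFs} (equivalently Theorem \ref{t:fsanalyticflat} and Proposition \ref{p:boundednessFs}). First I would fix $\varalpha>0$ and choose $N$ large enough so that Lemma \ref{l:decfeynmanpowers} applies with remainder $R_N(i\varepsilon,\cv)\in\pazocal{C}^\varalpha_{\loc}(\pazocal{U})$; one also needs $N$ large enough that $\Re\cv>-p$ with $p$ chosen below. Then it remains to control the finitely many terms $u_\varm\,\mathbf{F}_{\varm+\cv-1}(-m^2+i\varepsilon)$ for $\varm=0,\dots,N$. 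Since the $u_\varm$ are smooth (and $\chi u_\varm$ compactly supported in $\pazocal{U}$), multiplication by $u_\varm$ preserves $\pazocal{C}^\varalpha_{\loc}$, so it suffices to show $\mathbf{F}_{\varm+\cv-1}(z,.)\in\pazocal{C}^\varalpha_{\loc}(\pazocal{U})$ for each $\varm$, uniformly for $z=-m^2+i\varepsilon$ with $\varepsilon\geqslant0$ (resp.\ along the relevant parameter set).

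The key quantitative input is the H\"older part of Lemma \ref{l:regFs}: for $a\in[0,1]$, $\braket{\Im z}^{a(\Re\beta+1)}\mathbf{F}_\beta(z,.)$ is bounded in $\pazocal{C}^\varalpha_{\loc}(\pazocal{U})$ provided $\varalpha\leqslant (2-2a)(\Re\beta+1)-k-n$ with $k=\plancher{\Re\beta}+1$. I would apply this with $\beta=\varm+\cv-1$, $a=0$: then the constraint becomes $\varalpha\leqslant 2(\Re\varm+\Re\cv)-\plancher{\Re(\varm+\cv-1)}-1-n$. Using $\varm\geqslant0$ and $\plancher{\Re(\varm+\cv-1)}\leqslant \Re\varm+\Re\cv-1$, a sufficient condition is $\Re\cv\geqslant n+\varalpha$ (the worst case being $\varm=0$, where $\beta=\cv-1$, $\Re\beta+1=\Re\cv$, and one needs $2\Re\cv-\plancher{\Re\cv-1}-1-n\geqslant\varalpha$, which holds comfortably for $\Re\cv\geqslant n+\varalpha$). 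This proves that for $\Re\cv\geqslant n+\varalpha$ each summand lies in $\pazocal{C}^\varalpha_{\loc}(\pazocal{U})$, hence so does $(P-i\varepsilon)^{-\cv}$ near the diagonal, uniformly in $\varepsilon>0$.

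For the $\varepsilon\to0^+$ limit under the non-trapping and injectivity assumptions at $\sigma=m^2\neq0$, I would argue that each ingredient in the decomposition converges in $\pazocal{C}^\varalpha_{\loc}(\pazocal{U})$ as $\varepsilon\to0^+$. For the explicit terms this is the content of the holomorphic dependence of $\mathbf{F}_{\varm+\cv-1}(-m^2+i\varepsilon)$ on the parameter down to $\varepsilon=0$, which holds since $|{-m^2+i\varepsilon}|\geqslant m^2>0$ stays bounded away from the origin, so the family $\{\mathbf{F}_\beta(z,.):\Im z\geqslant0,\ |z|\geqslant m^2\}$ is continuous (indeed holomorphic in $z$ off the real axis and extends continuously by the estimates of \sec{ss:holderestimates}) with values in $\pazocal{C}^\varalpha_{\loc}$. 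For the remainder $R_N(i\varepsilon,\cv)$, convergence follows from Proposition \ref{l:decompositionresolvent} in the $\gamma_0$ version: there $E_{N,1}(z)$ and $E_{N,2}(z)$ are controlled uniformly along $\gamma_0$ (using Proposition \ref{prop3}, i.e.\ the limiting absorption principle), so the contour integral defining $R_N$ converges at $\varepsilon=0$ and depends continuously on $\varepsilon\in[0,\varepsilon_0]$; since $R_N(i\varepsilon,\cv)$ is smooth near the diagonal, a fortiori it converges in $\pazocal{C}^\varalpha_{\loc}$.

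The main obstacle is the bookkeeping of the two competing budgets — regularity $\varalpha$ versus decay in $\Im z$ — in the term with the lowest index $\varm=0$, i.e.\ making sure the single threshold $\Re\cv\geqslant n+\varalpha$ simultaneously (i) makes Lemma \ref{l:decfeynmanpowers} applicable with the right $N$ and $p$, and (ii) places $\mathbf{F}_{\cv-1}$ in $\pazocal{C}^\varalpha_{\loc}$ via Lemma \ref{l:regFs} with $a=0$. All the genuinely analytic work (oscillatory integral representations, Littlewood--Paley estimates, pull-back by $G$, limiting absorption) has already been done in the preceding sections; here it is only a matter of assembling it, so the proof is short. One should also remark that the $\gamma_\varepsilon$-statement requires no extra hypotheses, while the limit statement invokes the stronger assumptions precisely because only then is Proposition \ref{l:decompositionresolvent} available along $\gamma_0$.
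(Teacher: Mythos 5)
Your proposal is correct and follows essentially the same route as the paper: Corollary \ref{c:cororegfeynpowers} is deduced directly from the decomposition of Lemma \ref{l:decfeynmanpowers}, with the explicit terms $u_\varm\,\mathbf{F}_{\varm+\cv-1}(-m^2+i\varepsilon)$ placed in $\pazocal{C}^{\varalpha}_{\loc}(\pazocal{U})$ via the H\"older estimates of Lemma \ref{l:regFs} (taking $a=0$, worst case $\varm=0$, which is exactly where the threshold $\Re\cv\geqslant n+\varalpha$ comes from), and the $\varepsilon\to0^+$ limit handled by the uniform-down-to-the-real-axis versions of the estimates ($|z|\geqslant m^2>0$ for the $\mathbf{F}$'s, and the $\gamma_0$ case of Proposition \ref{l:decompositionresolvent} for the remainder). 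Your bookkeeping of the exponents matches the paper's, so there is nothing further to add.
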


\begin{rema}[Checking the combinatorial factors]
In the limit $\cv\rightarrow 1$, we get 
$$ \frac{(-1)^\varm\Gamma(-\cv+1)}{\Gamma(-\cv-\varm+1)\Gamma(\cv+\varm)}\rightarrow  1$$
since the poles of $\Gamma(-\cv+1)$ and $\Gamma(-\cv-\varm+1) $ compensate. Therefore,  Lemma \ref{l:decfeynmanpowers} is consistent with the formula $(P-i\varepsilon)^{-1}=\sum_{\varm=0}^N u_\varm   \mathbf{F}_{\varm}(i\varepsilon)+R_N(i\varepsilon,\cv)$ 
as expected.
\end{rema}

\subsection{The case of \texorpdfstring{$f(\frac{P+i\varepsilon}{\scalambda^2})$}{functions of P}} With the spectral action in mind we now discuss other functions of $P$. For that purpose it is actually slightly more convenient to work with $P+i\varepsilon$ instead of $P-i\varepsilon$, which in practice amounts to considering $-P$ instead of $P$. Note that $(P+i\varepsilon)^{-1}$ and the corresponding Hadamard parametrix have \emph{anti-Feynman} rather than Feynman wavefront set because the sinks and sources are interchanged, but this has no practical significance in the discussion below. 

\begin{defi} We denote by $S^{-\infty}_+(\mathbb{R})$ the set of Schwartz functions $f$ such that $\widehat{f}\in C^\infty_\c(\open{0,\infty})$. 
\end{defi}

First, observe that by the Paley--Wiener theorem, each $f\in S^{-\infty}_+(\mathbb{R})$ has a unique 
holomorphic extension to the upper half--plane $\{\Im z\geqslant 0\}$ and that the analytic extension, 
still denoted by $f$, has exponential decay when $\Im z\rightarrow +\infty$. Let us  also note that
$f(.+i\varepsilon)\in L^\infty(\mathbb{R})\cap C^\infty(\mathbb{R})$.
 Recall that the Mellin transform
of $\widehat f\in C^\infty_\c(\open{0,\infty})$
is by definition the function
$$ \bea 
\pazocal{M}\widehat{f}(\cv)=\int_0^\infty \tau^{\cv-1}\widehat f(\tau) d\tau.
\eea $$
By assumption on $\widehat{f}$, the Mellin transform $\pazocal{M}\widehat{f}$ has fast decay, and the Mellin inversion yields
$\widehat{f}(\tau)=\frac{1}{2\pi i}\int_{\Re\cv=c} \tau^{-\cv}\pazocal{M}\widehat{f}(\cv)ds$ where the integral is absolutely convergent 
uniformly in $\tau\in K\subset \open{0,+\infty}$ for any compact $K$.
By inverse Fourier transform of $\tau^{-\cv}_+$, for every $\varepsilon>0$, 
we have the formula:
$$ \bea 
f(t+i\varepsilon)=\frac{1}{2\pi i}\int_{\Re \cv=c} e^{i\cv\frac{\pi}{2}}(t+i\varepsilon)^{-\cv}\Gamma(\cv) \pazocal{M}\widehat{f}(\cv)d\cv.
\eea $$
The left hand side makes sense when $\varepsilon>0$ since $f$ has an analytic continuation to the upper half plane $\{\Im z\geqslant 0\}$. 
Note that for $\varepsilon>0$ the integral on the r.h.s.~converges absolutely and that 
for $t\in K$ in some compact $K\subset \mathbb{R}\setminus \{0\}$ away from $0$, the integral $$f(t)=\lim_{\varepsilon\rightarrow 0^+}\frac{1}{2\pi i}\int_{\Re \cv=c} e^{i\cv\frac{\pi}{2}}(t+i\varepsilon)^{-\cv}\Gamma(\cv) \pazocal{M}\widehat{f}(\cv)d\cv$$
also converges absolutely. 
This allows to give a representation formula for $f(P+i\varepsilon)$ involving the complex powers: 
\begin{eqnarray}\label{e:mellinf}
f(P+i\varepsilon)=\frac{1}{2\pi i}\int_{\Re\cv=c} e^{i\cv\frac{\pi}{2}}(P+i\varepsilon)^{-\cv}\Gamma(\cv) \pazocal{M}\widehat{f}(\cv)ds.
\end{eqnarray}
{As long as $\varepsilon>0$, the integral on the r.h.s.~is norm convergent in $\pazocal{B}(L^2(M))$ and the identity follows
 by  Borel functional calculus.} It should be noted that one can choose $c>0$ arbitrarily 
large on the r.h.s.; this does not affect 
the convergence properties. 

Recall $\pazocal{U}$ is  a  neighborhood of the diagonal $\diag\subset M\times M$.       Setting now $c>\dim(M)+\varalpha$ for $\varalpha>0$,
we know by Corollary~\ref{c:cororegfeynpowers} 
that the Schwartz kernel $(P+i\varepsilon)^{-\cv}$ belongs to $\pazocal{C}^\varalpha_{\loc}(\pazocal{U})$
uniformly in $\cv\in \{\Re\cv=c\}$. Note that to take the limit $\varepsilon\rightarrow 0^+$, we 
need to assume $m\neq 0$ and non-trapping and injectivity at $m^2$, in which case $\lim_{\varepsilon\rightarrow 0^+}(P+i\varepsilon)^{-\cv}=(P+i0)^{-\cv}$ has Schwartz kernel in
$\pazocal{C}^\varalpha_{\loc}(\pazocal{U})$
uniformly in $\cv\in \{\Re\cv=c\}$.

Therefore by 
the fast decay of $\pazocal{M}\widehat{f}(\cv)$ on 
the vertical line $\{\Re\cv=c\}$, 
we find that $f(P+i\varepsilon)$ 
has Schwartz kernel which belongs to $\pazocal{C}^\varalpha_{\loc}(\pazocal{U})$. If $m^2$ is non-trapping, the same result holds for
$\lim_{\varepsilon\rightarrow 0^+} f(P+i\varepsilon)=f(P+i0)$  
which implies we can take $\varepsilon\rightarrow 0^+$ on the r.h.s.~of 
\eqref{e:mellinf} which makes sense as Schwartz kernel near 
$\diag\subset M\times M$ and one can take the restriction on the diagonal 
as $f(P+i0)(x,x)$. 
If $\varepsilon>0$, we do not need the mass term
and both sides of \eqref{e:mellinf}
make sense as operators acting on $L^2$ whose Schwartz kernel is $\pazocal{C}^\varalpha$ 
near the diagonal $\diag$.
 
So if $m^2$ non-trapping, in the limit $\varepsilon\rightarrow 0^+$, 
we take the formula on the r.h.s.~of \eqref{e:mellinf} 
as definition
of $f(P+i0)$ and both sides are no longer 
viewed as operators but as
germs of Schwartz kernels defined near 
the diagonal $\diag\subset M\times M$:
\begin{eqnarray}\label{e:mellinf2}
f(P+i0)(.,.)\overset{\text{def}}{=}\lim_{\varepsilon\rightarrow 0^+}\frac{1}{2\pi i}\int_{\Re\cv=c} e^{i\cv\frac{\pi}{2}}(P+i\varepsilon)^{-\cv}(.,.)\Gamma(\cv) \pazocal{M}\widehat{f}(\cv)ds\in \pazocal{C}^\varalpha_\loc(\pazocal{U}).
\end{eqnarray}
Since $\varalpha$ and hence the parameter $c$ can be chosen arbitrarily large, 
we proved:
\begin{lemm}
Let $(M,g)$ be a globally hyperbolic non-trapping Lorentzian scattering space and let $\varepsilon>0$, $P=\square_g+m^2$, $m\geqslant 0$.
Then for all $f\in S^{-\infty}_+(\mathbb{R})$, 
$f(P+i\varepsilon):L^2(M)\to L^2(M)$ exists and has
\emphasize{smooth} Schwartz kernel in some neighborhood
$\pazocal{U}$ of the diagonal $\diag\subset M\times M$.

Moreover, if  $m\neq 0$ and assuming injectivity and  non-trapping at energy $\sigma=m^2$,  for all $f\in S^{-\infty}_+(\mathbb{R})$, 
the limit:
\begin{eqnarray}\label{eqthislimit}
f(P+i0)(.,.)\defeq\lim_{\varepsilon\rightarrow 0^+}\frac{1}{2\pi i}\int_{\Re\cv=c} e^{i\cv\frac{\pi}{2}}(P+i\varepsilon)^{-\cv}(.,.)\Gamma(\cv) \pazocal{M}\widehat{f}(\cv)ds
\end{eqnarray}
exists in $\pazocal{D}^\prime_\varlambda(M\times M)$ and is \emphasize{smooth} in a neighborhood
$\pazocal{U}$ of the diagonal $\diag\subset M\times M$.
\end{lemm}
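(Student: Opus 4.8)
The plan is to assemble the statement directly from the ingredients already established, using the Mellin representation \eqref{e:mellinf} as the backbone. First I would recall that for $f\in S^{-\infty}_+(\mathbb{R})$, the Paley--Wiener theorem gives a holomorphic extension of $f$ to $\{\Im z\geqslant 0\}$ with exponential decay as $\Im z\to+\infty$, so that $f(P+i\varepsilon)$ is a bounded operator on $L^2(M)$ by Borel functional calculus, and the Mellin inversion formula shows that \eqref{e:mellinf} holds as a norm-convergent integral in $\pazocal{B}(L^2(M))$ for any $c>0$. The point is that the contour $\{\Re\cv=c\}$ can be pushed arbitrarily far to the right at no cost, since $\pazocal{M}\widehat f(\cv)$ is rapidly decaying along vertical lines (because $\widehat f\in C^\infty_\c(\open{0,\infty})$) and $\Gamma(\cv)$ grows only at a controlled exponential rate along such lines by Stirling.

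Next I would invoke Corollary \ref{c:cororegfeynpowers}: for $\Re\cv\geqslant n+\varalpha$ with $\varalpha>0$, the Schwartz kernel of $(P+i\varepsilon)^{-\cv}$ lies in $\pazocal{C}^{\varalpha}_{\loc}(\pazocal{U})$ for a fixed neighborhood $\pazocal{U}$ of the diagonal, and — crucially — one can track from Lemma \ref{l:decfeynmanpowers} together with Proposition \ref{l:decompositionresolvent} (in its $P+i\varepsilon$ form, i.e.\ replacing $P$ by $-P$, which only swaps Feynman for anti-Feynman wavefront sets and is irrelevant here) that the $\pazocal{C}^{\varalpha}_{\loc}(\pazocal{U})$-norm of $(P+i\varepsilon)^{-\cv}(\cdot,\cdot)$ grows at most polynomially in $\module{\Im\cv}$ on the line $\{\Re\cv=c\}$. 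Indeed the explicit terms $u_\varm\,\mathbf{F}_{\varm+\cv-1}(-m^2+i\varepsilon)$ are controlled by Lemma \ref{l:regFs} (the H\"older regularity and its dependence on $\Re\cv$ being uniform in $\varepsilon>0$ by Theorem \ref{t:fsanalyticflat} and Proposition \ref{p:boundednessFs}), with the Gamma-factor prefactor contributing only a polynomial/exponential-in-$\module{\Im\cv}$ growth that is dominated by the decay of $\pazocal{M}\widehat f$; and the error $R_N(i\varepsilon,\cv)$ is smooth near the diagonal with the required control by Proposition \ref{l:decompositionresolvent}. Fixing $c>n+\varalpha$ and applying this term by term, the integrand in \eqref{e:mellinf} is absolutely integrable as a $\pazocal{C}^{\varalpha}_{\loc}(\pazocal{U})$-valued function of $\cv$, so $f(P+i\varepsilon)$ has Schwartz kernel in $\pazocal{C}^{\varalpha}_{\loc}(\pazocal{U})$; since $\varalpha$, and hence $c$, is arbitrary, the kernel is in fact $C^\infty$ near $\diag$. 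This gives the first assertion.

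For the limiting statement one adds the hypotheses $m\neq 0$, injectivity, and non-trapping at energy $\sigma=m^2$. Then Corollary \ref{c:cororegfeynpowers} gives existence of $\lim_{\varepsilon\to 0^+}(P+i\varepsilon)^{-\cv}(\cdot,\cdot)$ in $\pazocal{C}^{\varalpha}_{\loc}(\pazocal{U})$ for $\Re\cv=c$, and by Proposition \ref{l:decompositionresolvent} together with Lemma \ref{l:regFs} the convergence is uniform in $\cv$ on $\{\Re\cv=c\}$ with the same polynomial-in-$\module{\Im\cv}$ bound, uniformly down to $\varepsilon=0$, since the $\gamma_0$-contour estimates are available precisely under these hypotheses. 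Combined with the global wavefront control $\wfl{12}((P+i0)^{-1})\subset\varlambda'$ (the anti-Feynman version of Theorem \ref{thm:wf}), this lets one pass to the limit under the integral sign in \eqref{e:mellinf}, showing that $f(P+i0)(\cdot,\cdot)$ defined by \eqref{eqthislimit} exists in $\pazocal{D}'_\varlambda(M\times M)$ and is smooth near $\diag$. The main obstacle in writing this carefully is the bookkeeping of how the $\pazocal{C}^{\varalpha}_{\loc}(\pazocal{U})$-norm of $(P\pm i\varepsilon)^{-\cv}$ degrades as $\module{\Im\cv}\to\infty$: one must check, using the explicit Gamma-factor prefactors in Lemma \ref{l:decfeynmanpowers} and the $z$-uniform H\"older bounds of Theorem \ref{t:fsanalyticflat}, that this growth is beaten by the Schwartz decay of $\pazocal{M}\widehat f(\cv)$ along the vertical line, so that dominated convergence applies both in $\varepsilon$ and in the choice of contour; everything else is a routine composition of results already proved.
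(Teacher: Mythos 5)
Your proposal follows essentially the same route as the paper: the Mellin representation \eqref{e:mellinf} with the contour $\{\Re\cv=c\}$ pushed arbitrarily far right, the H\"older bounds on the kernel of $(P+i\varepsilon)^{-\cv}$ from Corollary \ref{c:cororegfeynpowers} (via Lemma \ref{l:decfeynmanpowers} and Proposition \ref{l:decompositionresolvent}) uniformly along the vertical line, the rapid decay of $\pazocal{M}\widehat f$ to justify integration in $\cv$, and the injectivity/non-trapping hypotheses at $\sigma=m^2$ to pass to the $\varepsilon\to 0^+$ limit. Your additional bookkeeping of the growth in $\module{\Im\cv}$ and the remark on the anti-Feynman wavefront set are refinements of, not departures from, the paper's argument.
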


\section{Diagonal restriction, poles and residues}

\subsection{Summary}  

We make the central observation that
$\Fso{z}=\mathbf{F}_{\cv}(z,x,x)$ for every $x\in M$ by construction, therefore to study  
the restriction on the diagonal we only need to study the analytic continuation in $\cv$ of $F_\cv(z, 0)$.

\subsection{Meromorphic continuation of \texorpdfstring{$\Fso{z}$}{F(z)}}

The value at $0\in \mathbb{R}^n$ of the distribution $\Fse{z}$ that we denote by $\Fso{z}$ is studied.
By the H\"older regularity shown in Proposition~\ref{p:fsholder}, $\Fse{z}\in \pazocal{C}^\varalpha(\rr^n)$ for $\varalpha\leqslant \Re\cv+1-n$, therefore
the value $\Fso{z}$ at $0$ is well-defined when $\Re\cv>n-1$ and depends holomorphically on $\cv\in \{\Re\cv>n-1\}$, and also the limit when $\Im z\rightarrow 0^+$, $\vert z\vert\geqslant m^2>0$ is well-defined.
We prove it admits an 
analytic continuation as a meromorphic function with a simple pole
at $\cv=\{\frac{n}{2}-1,\dots,\frac{n}{2}-k,\dots\}$. 

\subsubsection{A warm-up calculation.}
\label{sss:warmup}
The pole of $\Fso{z}$ comes from its  representation as an integral of symbols on 
cones, where the decay of the symbol approaches the critical dimension of the cone.
We will also have to take into account the $\Gamma$ function factor.  
The typical example reads:
$$ \bea 
\int_{\mathbb{R}^n}(\Vert\xi\Vert^2-z)^{-\cv}d^n\xi
\eea $$
where $\norm{\xi}$ is the Euclidean norm of $\xi$. We assume for the moment $z<0$ so that $z$ acts as a mass squared to regulate infrared divergences 
since we only want to deal with {\rm UV} problems.

To extract residues of such integrals, 
observe that
the poles of 
$$\int_{\mathbb{R}^n}(\Vert\xi\Vert^2-z)^{-\cv}d^n\xi=\frac{1}{\Gamma(\cv)} \int_0^\infty \left( \int_{\mathbb{R}^n} e^{-t(\Vert\xi\Vert^2-z)}d^n\xi\right) t^{\cv-1} dt$$
are 
the poles of
$$ \bea 
&\frac{1}{\Gamma(\cv)}  \int_0^1 \left( \int_{\mathbb{R}^n} e^{-t(\Vert\xi\Vert^2-z)}d^n\xi\right) t^{\cv-1} dt \\ &=\frac{(2\pi)^n}{\Gamma(\cv)(4\pi)^{\frac{n}{2}}}
 \sum_{k=0}^\infty \frac{z^k}{k!} \int_0^1  t^{\cv-\frac{n}{2}+k-1} dt=\frac{\pi^{\frac{n}{2}}}{\Gamma(\cv)}
 \sum_{k=0}^\infty \frac{z^k}{k!(\cv-\frac{n}{2}+k)}. 
\eea $$
All poles are simple and located at
$\cv=\frac{n}{2}, \dots, 1 $ and have $z^k$ in the {numerator}; there are compensations for $\cv\in -\mathbb{N}$ due to the $\Gamma$ factor. 
So the residue at $\cv=k$, $k\in \{1,\dots,\frac{n}{2}-1\}$
is
\begin{equation}
\Res_{\cv=k}\int_{\mathbb{R}^n}(\Vert\xi\Vert^2-z)^{-\cv}d^n\xi=\frac{z^{\frac{n}{2}-k}\pi^{\frac{n}{2}}}{(\frac{n}{2}-k)! \Gamma(k)}.
\end{equation}

\subsection{The Wick rotation by homological methods}\label{ss:Wick}

We  need to deal with similar integrals as in the above paragraph but with 
the Minkowski quadratic form instead of the Euclidean one. 
We present a geometric approach
to the analytic continuation of the residue which is close
to the Wick rotation in the physics literature 
but is fully rigorous. 

Consider $\mathbb{C}^n$ viewed as a K\"ahler manifold with coordinates $(z_1,\dots,z_n)$, and some complex parameter $u\in \mathbb{C}$ that will  take value in the upper half-plane $\{\Im u>0\}$. Set $Q(z)=\sum_{i=1}^n z_i^2$ and consider
the complex valued $n$-form
$$\omega_{\cv}=\left(\sum_{i=1}^n z_i^2-u\right)^{-\cv}dz_1\wedge\dots\wedge dz_n\in \Omega^{n,0}\left( U  ;\mathbb{C}\right) $$
which is well-defined on the Zariski open set $U=\{z\in\cc^n \st Q(z)-u\notin \opencl{-\infty,0}  \}$ since we chose the usual branch of the $\log$ which avoids 
the negative reals.

For all $\theta\in \open{-\frac{\pi}{2},\frac{\pi}{2}}$ let $P_\theta=\{ (e^{i\theta}z_1,\dots,z_n) \st (z_1,\dots,z_n)\in \mathbb{R}^n \}$,
considered as an oriented $n$--chain. 

\begin{prop}[Stokes' theorem]\label{p:stokes}
For all $\cv\in \mathbb{C}$,
the differential form $\omega_{\cv}$ is closed and 
for all $\theta\in \open{-\frac{\pi}{2},\frac{\pi}{2}}$ and all $\Re\cv>\frac{n}{2}$:
\begin{eqnarray}
\int_{P_\theta} \omega_{\cv}=\int_{P_0} \omega_{\cv}.
\end{eqnarray}
\end{prop}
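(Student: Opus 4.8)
The statement is an instance of Cauchy's theorem (or Stokes' theorem for holomorphic forms) applied to the family of $n$-chains $\{P_\theta\}_{\theta\in(-\pi/2,\pi/2)}$, which interpolate between the ``Euclidean'' contour $P_0=\mathbb{R}^n$ and the rotated contours $P_\theta$. The plan is to first verify that $\omega_\cv$ is closed on $U$, then construct an $(n+1)$-chain whose boundary is (up to the contributions at infinity) $P_{\theta}-P_0$, and finally show that the boundary terms at infinity vanish thanks to the decay of the integrand when $\Re\cv>\tfrac n2$.

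First I would check that $\omega_\cv$ is closed. Since $\omega_\cv=f(z)\,dz_1\wedge\cdots\wedge dz_n$ with $f$ holomorphic on the Zariski-open set $U$ (the branch of the power is well-defined there because $Q(z)-u\notin\opencl{-\infty,0}$ by construction), we have $d\omega_\cv=\bar\partial f\wedge dz_1\wedge\cdots\wedge dz_n=0$ as $\bar\partial f=0$, and the holomorphic $(n,0)$-form of top degree is automatically $\partial$-closed. Hence $d\omega_\cv=0$ on $U$. One should also note that for $\Im u>0$ (say $u=i\varepsilon$ or $u$ in the upper half-plane) and $z=(e^{i\theta}z_1',z_2',\dots,z_n')$ with $(z_1',\dots,z_n')\in\mathbb{R}^n$, one has $Q(z)-u=e^{2i\theta}z_1'^2+z_2'^2+\cdots+z_n'^2-u$, whose imaginary part is $\sin(2\theta)z_1'^2-\Im u$; for $\theta\in(-\pi/2,\pi/2)$ with $\theta\neq 0$ this expression stays away from $\opencl{-\infty,0}$ as long as $\Im u>0$, so each $P_\theta\subset U$ and the integrals $\int_{P_\theta}\omega_\cv$ are well-defined, and moreover for $\Re\cv>\tfrac n2$ they converge absolutely since $|Q(z)-u|^{-\Re\cv}$ decays like $|z|^{-2\Re\cv}$ at infinity along $P_\theta$ (here one uses that $|e^{2i\theta}z_1'^2+z_2'^2+\cdots+z_n'^2|\gtrsim |z'|^2$ uniformly for fixed $\theta$).

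Next I would apply Stokes' theorem on the region swept out between $P_0$ and $P_\theta$. Concretely, fix $\theta_0\in(-\pi/2,\pi/2)$ and consider the $(n+1)$-chain
$$
\Sigma=\{ (e^{i\phi}z_1',z_2',\dots,z_n') \st (z_1',\dots,z_n')\in\mathbb{R}^n,\ \phi\in[0,\theta_0] \},
$$
which lies in $U$ by the sign computation above. Its boundary is $\partial\Sigma=P_{\theta_0}-P_0$ together with a ``lateral'' piece at infinity. Truncating to $|z_1'|\leqslant R$ and letting $R\to\infty$, the lateral contribution is an integral of $\omega_\cv$ over a cylinder of radius $R$ in the $z_1$-direction, of size $O(R)\cdot O(R^{-2\Re\cv})=O(R^{1-2\Re\cv})$, which tends to $0$ precisely when $\Re\cv>\tfrac12$, hence a fortiori for $\Re\cv>\tfrac n2$. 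By $d\omega_\cv=0$ and Stokes, $\int_{P_{\theta_0}}\omega_\cv-\int_{P_0}\omega_\cv=\int_{\partial\Sigma}\omega_\cv=\int_\Sigma d\omega_\cv=0$, which is the claim.

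The main obstacle — though a fairly mild one — is the bookkeeping at infinity: one must set up the truncation carefully so that the lateral boundary is genuinely a smooth chain and estimate its $\omega_\cv$-integral uniformly in $\phi\in[0,\theta_0]$. A clean way to do this is to parametrize $\Sigma\cap\{|z_1'|=R\}$ explicitly by $(\phi,z_2',\dots,z_n')$, pull back $\omega_\cv$, and bound the resulting integrand by $C R\,(R^2+|z_2'|^2+\cdots+|z_n'|^2)^{-\Re\cv}$ using the lower bound on $|Q(z)-u|$, after which the $(z_2',\dots,z_n')$-integral converges and contributes an extra factor $R^{n-1-2\Re\cv}$, giving total size $O(R^{n-2\Re\cv})\to 0$ for $\Re\cv>\tfrac n2$. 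With this estimate in hand the homotopy argument is routine, and the restriction $\Re\cv>\tfrac n2$ in the statement is exactly what makes the boundary-at-infinity term disappear; the identity then propagates to all $\cv\in\mathbb{C}$ by the analytic continuation of both sides established in the previous subsections.
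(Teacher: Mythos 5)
Your proof is correct and takes essentially the same route as the paper: verify $d\omega_\cv=0$, apply Stokes' theorem to the region swept out by rotating the $z_1$-variable between $P_0$ and $P_\theta$, and show that the boundary contribution at infinity vanishes when $\Re\cv>\frac{n}{2}$ — the only (cosmetic) difference is that you truncate in $|z_1'|\leqslant R$ while the paper truncates in the ball $\Vert z\Vert\leqslant R$, giving lateral bounds $O(R^{n-2\Re\cv})$ and $O(R^{n-1-2\Re\cv})$ respectively, both of which tend to zero under the hypothesis. Note only that your intermediate claim that the lateral term is $O(R)\cdot O(R^{-2\Re\cv})=O(R^{1-2\Re\cv})$ (hence vanishing already for $\Re\cv>\frac12$) overlooks the unbounded $(z_2',\dots,z_n')$-integration on the cylinder; your final, corrected estimate $O(R^{n-2\Re\cv})$ is the right one, and it is exactly where $\Re\cv>\frac{n}{2}$ is used.
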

\begin{proof}
It is obvious that $d\omega_{\cv}=0$.
One has to be careful when applying Stokes' theorem since our ``cycles'' have their boundaries along sectors at infinity. Denote by 
$B(R)=\{\Vert z\Vert^2\leqslant R^2\}$ the ball of radius $R$ in $\mathbb{C}^n$.

After intersecting our chains $P_\theta$ with $B(R)$, the usual Stokes' theorem yields
$$ \bea 
\int_{P_\theta\cap B(R)} \omega_{\cv}-\int_{P_0\cap B(R)} \omega_{\cv}=\underset{=0}{\underbrace{\int_{D_\theta} d\omega_{\cv}}}-\int_{R_\theta}\omega_{\cv},
\eea $$
where $D_\theta$ is the angular sector  $D_\theta=\{ (z_1,\dots,z_n) \st 0\leqslant \arg(z_1)\leqslant \theta, \, (z_2,\dots,z_n)\in \mathbb{R}^n  \}\cap B(R)$ and $R_\theta=\{ (z_1,\dots,z_n) \st  0\leqslant \arg(z_1)\leqslant \theta,\, (z_2,\dots,z_n)\in \mathbb{R}^n , \, \Vert z\Vert=R \} $.
Let us bound the integral on the arc $R_\theta$,
$$ \bea 
\int_{R_\theta}\omega_{\cv}&= \int_{\{ \sum_2^nz_i^2\leqslant R^2 \}} \left( \int_{0}^\theta \left( e^{2ia}(R^2-\sum_{i=2}^n z_i^2-u)+ \sum_{i=2}^n z_i^2 \right)^{-\cv} ie^{ia}da \right) dz_2\dots dz_n\\
&\leqslant  C R^{n-1} R^{-2\Re\cv}
 \eea $$
which tends to $0$ as $R\rightarrow +\infty$. Since for all $\theta\in \open{-\frac{\pi}{2},\frac{\pi}{2}}$ the integral
$\int_{P_\theta} \omega_{\cv} $ converges absolutely when $\Re\cv>\frac{n}{2}$, we can take the limit $R\rightarrow +\infty$ which yields
\begin{eqnarray}
\lim_{R\rightarrow +\infty}\int_{P_\theta\cap B(R)} \omega_{\cv}=\lim_{R\rightarrow +\infty}\int_{P_0\cap B(R)} \omega_{\cv}.
\end{eqnarray}  
\end{proof}

It follows from
the identity $\int_{P_\theta} \omega_{\cv}=\int_{P_0} \omega_{\cv}$ for $\Re\cv>\frac{n}{2}$ as holomorphic functions, and from the fact that
$\int_{P_0} \omega_{\cv}$ is a meromorphic function with simple poles at
$\cv\in \{\frac{n}{2},\frac{n}{2}-1,\dots,1\}$, that both sides coincide
in the sense of meromorphic functions for all $\cv\in \mathbb{C}\setminus\{\frac{n}{2},\dots,1\}$
by analytic continuation in $\cv$.
Define the linear invertible holomorphic map $\Phi_\theta:(z_1,\dots,z_n)\mapsto (e^{i\theta}z_1,\dots,z_n)$.
Since $\Phi_\theta$ is invertible and does not reverse orientations, we get by the pull-back Theorem:
$$ \bea 
\int_{P_\theta}\omega_{\cv}=\int_{\Phi_\theta(P_0)}\omega_{\cv}=\int_{P_0}\Phi_\theta^*\omega_{\cv}.
\eea $$
Combining with the equality $\int_{P_\theta} \omega_{\cv}=\int_{P_0} \omega_{\cv} $, this means that
$$ \bea 
&\int_{\mathbb{R}^n}\bigg(\sum_{i=1}^n z_i^2-u\bigg)^{-\cv}dz_1\wedge\dots\wedge dz_n  =\int_{P_0} \omega_{\cv}=\int_{P_0}\Phi_\theta^*\omega_{\cv}\\ &=e^{i\theta} \int_{\mathbb{R}^n}\bigg( e^{2i\theta}z_1^2+ \sum_{i=2}^n z_i^2-u\bigg)^{-\cv}dz_1\wedge\dots\wedge dz_n. 
\eea $$
When $\theta\rightarrow -\frac{\pi}{2}^+$, the term $\left( e^{2i\theta}z_1^2+ \sum_{i=2}^n z_i^2-u\right)$ has non-positive 
imaginary part and the integrand  
$\left( e^{2i\theta}z_1^2+ \sum_{i=2}^n z_i^2-u\right)^{-\cv}$ converges to 
$(-z_1^2+  \sum_{i=2}^n z_i^2-u-i0)^{-\cv}$ in the sense of distributions by Lemma~\ref{l:wick} proved in the appendix. By weak homogeneity at infinity and using a Littlewood--Paley 
decomposition {$1=\sum_j\chi_0(\vert z\vert ) +\psi(2^{-j}\vert z\vert)$ as in \sec{ss:holderestimates}}, 
one can show that:
\begin{eqnarray}\label{e:wick}
\lim_{\theta\rightarrow -\frac{\pi}{2}^+}\int_{\mathbb{R}^n}\bigg( e^{2i\theta}z_1^2+ \sum_{i=2}^n z_i^2-u\bigg)^{-\cv}dz_1\wedge\dots\wedge dz_n =\int_{\mathbb{R}^n} (-z_1^2+  \sum_{i=2}^n z_i^2-u-i0)^{-\cv}d^nz,
\end{eqnarray}
where the bound 
$$\bea
&\sup_{\theta\in \opencl{-\frac{\pi}{2},0}} \bigg| \int_{\mathbb{R}^n}\bigg( e^{2i\theta}z_1^2+ \sum_{i=2}^n z_i^2-u\bigg)^{-\cv} \psi(2^{-j}\vert z\vert) dz_1\wedge\dots\wedge dz_n \bigg|\\
&=\sup_{\theta\in \opencl{-\frac{\pi}{2},0}} 2^{j(n-2\Re\cv)} \bigg| \bigg\langle (e^{2i\theta}z_1^2+ \sum_{i=2}^n z_i^2-\frac{u}{2^{2j}}-i0)^{-\cv},\psi \bigg\rangle \bigg|  \leqslant C 2^{j(n-2\Re\cv)} 
\eea
$$
ensures that both sides of the above equality (\ref{e:wick}) can be written as convergent series and both sides are \emphasize{holomorphic} in $\cv$ when $\Re\cv>\frac{n}{2}$.

By Proposition~\ref{p:stokes} and the warm-up calculation~\sec{sss:warmup},
for all $\theta\in \opencl{-\frac{\pi}{2},0}$, 
$$\int_{\mathbb{R}^n}\bigg( e^{2i\theta}z_1^2+ \sum_{i=2}^n z_i^2-u\bigg)^{-\cv}dz_1\wedge\dots\wedge dz_n=e^{-i\theta}\int_{P_0}\omega_\alpha$$ extends as a meromorphic function in $\cv$ with simple poles at
$\cv=\frac{n}{2},\dots,1$. Hence the limit on the l.h.s of equality~(\ref{e:wick}) which equals $i\int_{P_0}\omega_\alpha$ also does.
Therefore the r.h.s of equation~(\ref{e:wick}) equals $i\int_{P_0}\omega_\alpha$ which is meromorphic with simple poles at $\cv=\frac{n}{2},\dots,1$, this finally yields

$$ \bea 
\boxed{\Res_{\cv=k}\int_{\mathbb{R}^n} \bigg(-z_1^2+  \sum_{i=2}^n z_i^2-u-i0\bigg)^{-\cv}d^nz=i \Res_{\cv=k}\int_{\mathbb{R}^n}\bigg(\sum_{i=1}^n z_i^2-u\bigg)^{-\cv}d^nz=\frac{i\pi^{\frac{n}{2}}{u}^{\frac{n}{2}-k}}{\Gamma(k) (\frac{n}{2}-k)! }. }
\eea $$

\subsubsection{Conclusion and  structure of  residues}

Therefore, we can go back to the residue of the diagonal restriction of $\mathbf{F}_{\cv}(z,x,x)$, for all $z\neq 0$, $\Im z>0$, 
we find that
$$ \bea 
 \Res_{\cv=\frac{n}{2}-\varm}\Gamma(\cv+\varm)^{-1} \mathbf{F}_{\cv+\varm-1}(\pm z,x,x)&=\frac{1}{(2\pi)^n}\Res_{\cv=\frac{n}{2}-\varm} \int_{\mathbb{R}^n}(Q(\xi)\mp(z+i0))^{-\cv-\varm}d^n\xi \\
  &= \pm\frac{ i }{2^n\pi^{\frac{n}{2}} (\frac{n}{2}-1)!}.
\eea $$
The poles of $\Gamma(\cv+\varm)^{-1}\mathbf{F}_{\cv+\varm-1}(\pm z,x,x)$ occur at $\cv=\{1-\varm,\dots,\frac{n}{2}-\varm\}$. For $j\in \{1-\varm,\dots,\frac{n}{2}-\varm\}$:
\begin{eqnarray*}
 \Res_{\cv=j}\Gamma(\cv+\varm)^{-1} \mathbf{F}_{\cv+\varm-1}(\pm z,x,x)&=& \pm \frac{i(\pm z)^{\frac{n}{2}-\varm-j}}{\Gamma(j+\varm)2^n\pi^{\frac{n}{2}}(\frac{n}{2}-\varm-j)! }.
\end{eqnarray*}

 Applying this result to the 
parametrix of the Feynman powers, 
we note that $$\frac{\Gamma(-\cv+1)}{\Gamma(-\cv-k+1)}=(-\cv)\dots (-\cv-k+1),$$
which means that
this term has no poles 
for every $\cv\in \mathbb{C}$ and \emphasize{does not contribute} to the residues of $(P\pm i\varepsilon)^{-\cv}(x,x)$.
We would like to study the residue in the variable $s$ of $\left(P\pm i\varepsilon\right)^{-\cv}$
at $p=\frac{n}{2}, \frac{n}{2}-1, \dots$ in two cases.

\case{1} When $p=\frac{n}{2},\dots,1$, we find that for $\cv$ near $p$
$$ \bea 
(P\pm i\varepsilon)^{-\cv}=\sum_{k=0}^\infty \underset{{\neq 0}}{\underbrace{(-1)^k(-\cv)\dots (-\cv-k+1) }}\underset{\text{simple poles at }\{1-k,\dots,\frac{n}{2}-k\}}{\underbrace{\Gamma(\cv+k)^{-1} \mathbf{F}_{\cv+k-1}( -m^2 \mp i\varepsilon ,x,x)}} 
\eea $$
which implies that
$$ \bea 
\Res_{\cv=p}(P\pm i\varepsilon)^{-\cv}=\sum_{k=0}^{\frac{n}{2}-p}(-1)^k(-p)\dots (-p-k+1)  \Res_{\cv=p}\Gamma(\cv+k)^{-1} \mathbf{F}_{\cv+k-1}( -m^2 \mp i\varepsilon ,x,x).
\eea $$
The only residue which is independent of $\varepsilon,m$ reads:
$$ \bea 
&\Res_{\cv=\frac{n}{2}-k} (-1)^k \frac{ u_k(x,x)\mathbf{F}_{k+s-1}(-m^2 \mp i\varepsilon,x,x)\Gamma(-\cv+1)}{\Gamma(\cv+k)\Gamma(-\cv-k+1)}\\
&=\mp u_k(x,x)(k-\frac{n}{2})\dots (1-\frac{n}{2})\frac{ (-1)^{k}i }{2^n\pi^{\frac{n}{2}} \Gamma(\frac{n}{2})}\\
&=\mp  u_k(x,x)\frac{(\frac{n}{2}-1)! }{(\frac{n}{2}-k-1)!}\frac{ i }{2^n\pi^{\frac{n}{2}} \Gamma(\frac{n}{2})}\\
&=\mp  \frac{ u_k(x,x)}{(\frac{n}{2}-k-1)!}\frac{ i }{2^n\pi^{\frac{n}{2}}}.  
\eea $$

\case{2} When $p\leqslant 0$, we find for $\cv$ near $p$ that:
$$ \bea 
(P\pm i\varepsilon)^{-\cv}=\sum_{k=0}^\infty \underset{\text{simple zeroes for }k\geqslant 1-p }{(-1)^k\underbrace{ (-\cv)\dots (-\cv-k+1) }}\underset{\text{simple poles at }\{1-k,\dots,\frac{n}{2}-k\}}{\underbrace{\Gamma(\cv+k)^{-1} \mathbf{F}_{\cv+k-1}(-m^2 \mp i\varepsilon,x,x)}} 
\eea $$
therefore
$$ \bea 
\Res_{\cv=p}(P\pm i\varepsilon)^{-\cv}=\sum_{k=0}^{-p}(-1)^k (-p)\dots (-p-k+1) \Res_{\cv=p}\underset{\text{no poles at }s=p}{\underbrace{\Gamma(\cv+k)^{-1} \mathbf{F}_{\cv+k-1}(-m^2 \mp i\varepsilon,x,x)}} 
=0
\eea $$
where $\Gamma(\cv+k)^{-1}\mathbf{F}_{\cv+k-1}$ has
no poles at $\cv=p$ because $p+k\leqslant 0$ and 
$\Gamma(\cv+k)^{-1}
F_{\cv+k-1}$ has no poles in the region $\Re\cv+k\leqslant 0$.

\begin{rema}
We recover the well-known fact that on a compact Riemannian manifold, if $0\notin\sp(-\Delta_g)$ then the meromorphic continuation of $\{ \Re \cv \gg 0 \} \ni\cv\mapsto \Tr((-\Delta_g)^{-\cv})$ has no pole at $\cv=0,1,\dots,k,\dots$. 
\end{rema}

In summary, we have proved the following result. 

\begin{thm}\label{mainmainthm}
Let $(M,g)$ be a globally hyperbolic  non-trapping Lorentzian scattering space of even dimension $n$ and let {$P=\square_g$}. Then 
the Schwartz kernel $K_s(.,.) $ of $ (P\pm i\varepsilon)^{-\cv} $ exists as a family of distributions near the diagonal depending holomorphically in $\cv$ on the half-plane $\Re\cv>-1$.
Its restriction on the diagonal
$K_\cv(x,x)$ exists and is holomorphic for $\Re\cv>\frac{n}{2} $, and it extends as a meromorphic 
function of $\cv$ with simple poles along the arithmetic progression
$\{\frac{n}{2}, \frac{n}{2}-1,\dots,1\}$. Furthermore,
$$
 \bea 
\lim_{\varepsilon\rightarrow 0^+} \Res_{\cv=\frac{n}{2}-\varm}(P\pm i\varepsilon)^{-\cv}(x,x)= 
\mp\frac{ i \,u_\varm(x,x)}{2^n\pi^{\frac{n}{2}}(\frac{n}{2}-\varm-1)!}.
\eea $$
\end{thm}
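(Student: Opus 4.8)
The plan is to assemble Theorem \ref{mainmainthm} from the ingredients already in place, namely the local decomposition of the Schwartz kernel of $(P\pm i\varepsilon)^{-\cv}$ near the diagonal (Lemma \ref{l:decfeynmanpowers}), the H\"older regularity estimates for the family $\mathbf{F}_{\cv}(z,.)$ (Lemma \ref{l:regFs}, Theorem \ref{t:fsanalyticflat}), and the residue computation for $\Fso{z}$ obtained via the homological Wick rotation in \sec{ss:Wick}. The first step is to invoke Lemma \ref{l:decfeynmanpowers}: for any prescribed $p\in\nn$ we choose $N$ large enough so that, in $\pazocal{D}^\prime(\tilde{\pazocal{U}})$,
\begin{equation}\label{eq:planparam}
(P\pm i\varepsilon)^{-\cv}=\sum_{\varm=0}^N u_\varm\frac{(-1)^\varm\Gamma(-\cv+1)}{\Gamma(-\cv-\varm+1)\Gamma(\cv+\varm)}\mathbf{F}_{\varm+\cv-1}(-m^2\mp i\varepsilon)+R_N(\pm i\varepsilon,\cv),
\end{equation}
with $R_N(\pm i\varepsilon,\cv)\in C^\varalpha(\pazocal{U})$ holomorphic in $\cv$ on $\{\Re\cv>-p\}$. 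Restricting to the diagonal is justified because $\mathbf{F}_{\varm+\cv-1}(z,x,x)=\Fso[\varm+\cv-1]{z}$ by construction, and $R_N$ is genuinely continuous there. So the diagonal restriction is holomorphic for $\Re\cv$ large and its meromorphic continuation is governed entirely by the finite sum.

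Next I would analyze each summand in \eqref{eq:planparam} on the diagonal. The prefactor $\Gamma(-\cv+1)/\Gamma(-\cv-\varm+1)=(-\cv)(-\cv-1)\cdots(-\cv-\varm+1)$ is entire, so it contributes no poles, only a polynomial weighting; in particular it equals $(k-\tfrac n2)\cdots(1-\tfrac n2)$ at the relevant points. The poles therefore come solely from $\Gamma(\cv+\varm)^{-1}\Fso[\varm+\cv-1]{\mp m^2\mp i\varepsilon}$, whose analytic continuation and residues were computed in \sec{ss:Wick}: writing $\Fso[\cv+\varm-1]{z}$ via its inverse-Fourier representation as $(2\pi)^{-n}\int_{\rr^n}(Q(\xi)\mp(z+i0))^{-\cv-\varm}d^n\xi$ and applying the Stokes/Wick-rotation identity of Proposition \ref{p:stokes} together with the warm-up Euclidean calculation of \sec{sss:warmup}, one finds simple poles at $\cv=\tfrac n2-\varm,\tfrac n2-\varm-1,\dots,1-\varm$ with the explicit residues recorded there. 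Summing over $\varm=0,\dots,N$, the poles of $(P\pm i\varepsilon)^{-\cv}(x,x)$ land exactly on $\{\tfrac n2,\tfrac n2-1,\dots,1\}$; the point $\cv=\tfrac n2-\varm$ receives a contribution only from the $\varm$-th term (lower $\varm'$ have their poles shifted right and the prefactor kills nothing, higher $\varm'$ have poles too far left), which isolates the curvature-type coefficient. Plugging in the residue value $\Res_{\cv=\frac n2-\varm}\Gamma(\cv+\varm)^{-1}\Fso[\cv+\varm-1]{\mp m^2\mp i\varepsilon}=\mp\frac{i}{2^n\pi^{n/2}(\frac n2-1)!}$ and combining with the prefactor $(k-\tfrac n2)\cdots(1-\tfrac n2)=(-1)^{\varm}(\tfrac n2-1)!/(\tfrac n2-\varm-1)!$ and the sign $(-1)^\varm$ from $u_\varm$, one gets, after the cancellations, $\mp\,i\,u_\varm(x,x)/\big(2^n\pi^{n/2}(\tfrac n2-\varm-1)!\big)$. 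Setting $m^2=0$ (so $P=\square_g$) and taking $\varepsilon\to0^+$ is harmless at the level of the residue because the $m,\varepsilon$ dependence sits only inside $\Fso[\cv+\varm-1]{\mp m^2\mp i\varepsilon}$ and, as the warm-up shows, the residue at $\cv=\tfrac n2-\varm$ of that object is the $m$- and $\varepsilon$-independent constant above; the continuity in $\varepsilon$ down to $0$ follows from the uniform bounds of Proposition \ref{p:fsholder}\eqref{sholder3}.

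The main obstacle, and the step that needs the most care, is the bookkeeping that the only $(m,\varepsilon)$-independent residue at $\cv=\tfrac n2-\varm$ is precisely the one displayed, i.e.\ that all other contributions to $\Res_{\cv=\frac n2-\varm}(P\pm i\varepsilon)^{-\cv}(x,x)$ carry positive powers of $m^2$ or $\varepsilon$ (Case 1 of the computation) and hence vanish in the massless limit, while for $p\leqslant 0$ there is no pole at all (Case 2). This requires tracking the full double sum $\sum_{k=0}^{n/2-p}(-1)^k(-p)\cdots(-p-k+1)\,\Res_{\cv=p}\Gamma(\cv+k)^{-1}\Fso[\cv+k-1]{-m^2\mp i\varepsilon}$ and observing that $\Res_{\cv=p}\Gamma(\cv+k)^{-1}\Fso[\cv+k-1]{-m^2\mp i\varepsilon}\propto(\mp m^2)^{n/2-k-p}$, so the exponent vanishes only when $k=\tfrac n2-p$ — the leading term — while all others scale with a positive power of $m^2$. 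Once this is checked the limit $\varepsilon\to0^+$ and $m\to 0$ commutes with taking the residue, and the theorem follows. Everything else is either a direct quotation of earlier results or an elementary Gamma-function identity, so I expect no further difficulty; the regularity and holomorphy claims for $\Re\cv>-1$ are immediate from Lemma \ref{l:decfeynmanpowers} with $p=1$.
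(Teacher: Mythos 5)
Your proposal is correct and follows essentially the same route as the paper: the decomposition of Lemma \ref{l:decfeynmanpowers}, diagonal restriction via $\mathbf{F}_{\cv}(z,x,x)=F_{\cv}(z,0)$, the homological Wick-rotation residue formula, and the Case 1/Case 2 bookkeeping with the Gamma-factor cancellations. One small inaccuracy: the parenthetical claim that only the $\varm$-th term has a pole at $\cv=\frac{n}{2}-\varm$ is false (every $k\leqslant\varm$ contributes a pole there, with residue proportional to $(-m^2\mp i\varepsilon)^{\varm-k}$), but your second paragraph performs exactly the correct bookkeeping — those extra contributions disappear only after taking $m=0$ and $\varepsilon\to 0^{+}$ — which is precisely the paper's argument, so the slip is immaterial.
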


\subsection{Residues for the spectral action}

To recover the spectral action principle, we must study the pole
structure of the restriction $\Gamma(\cv)(P+i\varepsilon)^{-\cv}(x,x)$ where we must take into account
the non trivial effect of the $\Gamma$ factor for  $\cv\leqslant 0$. 
For usual applications of the spectral action principle, we need the
first three poles at $\cv=\frac{n}{2},\frac{n}{2}-1,\frac{n}{2}-2$ of 
$\Gamma(\cv)(P\pm i\varepsilon)^{-\cv}(x,x)$ that we will explicitely calculate in terms of the mass term $m^2$ and
the regulator $\varepsilon$, after tedious bookkeping of all the formulas from the previous paragraph we find:
$$
\bea
\Res_{\cv=\frac{n}{2}}\Gamma(\cv) (P\pm i\varepsilon)^{-\cv}(x,x)&= 
\mp\frac{ i }{2^n\pi^{\frac{n}{2}}}\\
\Res_{\cv=\frac{n}{2}-1}\Gamma(\cv)(P\pm i\varepsilon)^{-\cv}(x,x)&= 
\mp \frac{i(-m^2\mp i\varepsilon)}{2^n\pi^{\frac{n}{2}}}\mp  \frac{ iu_1(x,x)}{2^n\pi^{\frac{n}{2}}}\\
\Res_{\cv=\frac{n}{2}-2}\Gamma(\cv)(P\pm i\varepsilon)^{-\cv}(x,x)&= \mp \frac{i(-m^2\mp i\varepsilon)^2}{2^{n+1}\pi^{\frac{n}{2}}}\mp \frac{i(-m^2\mp i\varepsilon)u_1(x,x)}{2^n\pi^{\frac{n}{2}}}  \mp   \frac{ iu_2(x,x)}{2^{n}\pi^{\frac{n}{2}}}.
\eea
$$
In conclusion, this yields the following result.

\begin{thm}\label{thm:spectralaction}
Let $(M,g)$ be a globally hyperbolic non-trapping Lorentzian scattering space of even dimension $n$ and let $P=\square_g+m^2$, $m\geqslant 0$, the corresponding 
Klein--Gordon or wave operator. Then for every $f\in S^{-\infty}_+(\mathbb{R})$,
the Schwartz kernel 
$f(\frac{P {+} i\varepsilon}{\scalambda^2})(.,.) $ is smooth near the diagonal and admits an asymptotic expansion of the form:
$$
\bea
f\left(\frac{P  + i\varepsilon}{\scalambda^2}\right)(x,x)&= \frac{  e^{i\frac{n\pi}{4}} c_0 }{i2^n\pi^{\frac{n}{2}}}  \scalambda^n+
  \frac{ e^{i\frac{(n-2)\pi}{4}}c_1 }{i2^n\pi^{\frac{n}{2}}} \left((-m^2- i\varepsilon)+  u_1(x,x)\right) \scalambda^{n-2} \fantom
  +\frac{ e^{i\frac{(n-4)\pi}{4}} c_2}{i2^n\pi^{\frac{n}{2}}} \left( \frac{(-m^2- i\varepsilon)^2}{2}+(-m^2- i\varepsilon)u_1(x,x) + u_2(x,x)  \right)\scalambda^{n-4} \fantom  +\cO(\scalambda^{n-5}),
\eea
$$
where $u_k$ are the Hadamard coefficients and
$
c_k=\int_0^\infty \widehat{f}(t)t^{\frac{n}{2}-k-1}dt$.
\end{thm}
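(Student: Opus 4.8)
The plan is to combine the Mellin representation \eqref{e:mellinf}, the meromorphic structure of the diagonal restriction of complex powers from Theorem \ref{mainmainthm} (more precisely its $\Gamma$-weighted version computed just above), and a contour-shifting argument. First I would recall that for $f\in S^{-\infty}_+(\mathbb{R})$ and $\varepsilon>0$ we have, after rescaling by $\scalambda^2>0$,
$$
f\Big(\frac{P+i\varepsilon}{\scalambda^2}\Big)(x,x)=\frac{1}{2\pi i}\int_{\Re\cv=c} e^{i\cv\frac{\pi}{2}}\scalambda^{2\cv}\,(P+i\varepsilon)^{-\cv}(x,x)\,\Gamma(\cv)\,\pazocal{M}\widehat{f}(\cv)\,d\cv,
$$
valid near the diagonal in the sense of smooth Schwartz kernels, with $c>\dim M+\varalpha$ chosen large. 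The factor $\scalambda^{2\cv}$ produces the powers $\scalambda^{n},\scalambda^{n-2},\dots$ once we push the contour to the left and pick up residues.

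Next I would justify shifting the contour $\{\Re\cv=c\}$ to $\{\Re\cv=c'\}$ with $c'=\frac{n}{2}-N-\frac12$ for a suitable $N$, crossing the simple poles of $\Gamma(\cv)(P+i\varepsilon)^{-\cv}(x,x)$ located (by the $\Gamma$-weighted version of Theorem \ref{mainmainthm}) at $\cv=\frac{n}{2},\frac{n}{2}-1,\dots,1$; note that the poles of $\Gamma(\cv)$ at $\cv=0,-1,\dots$ are cancelled, as observed in Case 2 of \sec{ss:Wick}, so they contribute nothing. The horizontal pieces of the contour vanish in the limit because $\pazocal{M}\widehat{f}$ has rapid decay on vertical lines (Paley--Wiener) and, along $\gamma_\varepsilon$, the diagonal restriction $(P+i\varepsilon)^{-\cv}(x,x)$ is controlled by the H\"older estimates of \sec{ss:holderestimates} together with Proposition \ref{l:decompositionresolvent} and Corollary \ref{c:cororegfeynpowers}; the remaining integral over $\{\Re\cv=c'\}$ is $\cO(\scalambda^{2c'})=\cO(\scalambda^{n-2N-1})$ uniformly in $\varepsilon$. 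Residue calculus then gives
$$
f\Big(\frac{P+i\varepsilon}{\scalambda^2}\Big)(x,x)=\sum_{j=0}^{N} e^{i(\frac{n}{2}-j)\frac{\pi}{2}}\,\scalambda^{n-2j}\,\pazocal{M}\widehat{f}\big(\tfrac{n}{2}-j\big)\,\Res_{\cv=\frac{n}{2}-j}\!\Big(\Gamma(\cv)(P+i\varepsilon)^{-\cv}(x,x)\Big)+\cO(\scalambda^{n-2N-1}).
$$

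Then I would insert the explicit residues computed in the paragraph preceding Theorem \ref{thm:spectralaction} for $j=0,1,2$, and more generally the fact that $\Res_{\cv=\frac{n}{2}-j}\Gamma(\cv)(P+i\varepsilon)^{-\cv}(x,x)$ is, up to a universal constant depending only on $n$ and $j$, a polynomial in $(-m^2-i\varepsilon)$ with coefficients the Hadamard coefficients $u_k(x,x)$, $k\le j$. Recognizing $\pazocal{M}\widehat{f}(\frac{n}{2}-j)=\int_0^\infty\widehat{f}(t)t^{\frac{n}{2}-j-1}dt=c_j$ and combining the phase $e^{i(\frac{n}{2}-j)\frac{\pi}{2}}=e^{i\frac{(n-2j)\pi}{4}}$ with the $1/i$ from the $\mp i$ prefactor of the residue yields exactly the stated coefficients $C_j(f)=i^{-1}e^{i(n-2j)\pi/4}c_j$ and the claimed form of $a_j(x)$ (using $u_0(x,x)=1$, $u_1(x,x)=-\frac16 R_g(x)$, which follows from the transport equations in the pseudo-Riemannian Laplace--Beltrami case as in \cite{HormanderIII}). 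For the $\varepsilon\to 0^+$ statement with $m>0$ one additionally invokes non-trapping and injectivity at $\sigma=m^2$ via Corollary \ref{c:cororegfeynpowers} to pass to the limit inside the contour integral. The main obstacle is the uniformity: one must show the contour shift and the error bound $\cO(\varepsilon,\scalambda^{n-2N-1})$ hold with constants independent of $\varepsilon$ and $\scalambda$, which requires carefully tracking the $\bra z\ket$-decay of the remainders $E_{N,1}(z),E_{N,2}(z)$ from Proposition \ref{l:decompositionresolvent} through the Mellin integral and the H\"older-norm bounds on $\mathbf{F}_{\varm+\cv-1}$; everything else is bookkeeping of Gamma-function factors and phases.
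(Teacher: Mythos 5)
Your proposal is correct and follows essentially the same route as the paper: the Mellin representation \eqref{e:mellinf}, a contour shift past the simple poles of $\Gamma(\cv)(P+i\varepsilon)^{-\cv}(x,x)$ justified by the decomposition of Lemma \ref{l:decfeynmanpowers} together with the rapid decay of $\pazocal{M}\widehat{f}$ on vertical lines, and insertion of the residues computed just before the theorem. The only cosmetic differences are that you push the contour past $N$ poles (the paper stops at $\Re\cv=c_2\in(\frac{n}{2}-3,\frac{n}{2}-2)$, which suffices for the stated three-term expansion) and that your concern about uniformity in $\varepsilon$ is not actually required here, since the statement is at fixed $\varepsilon>0$.
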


We remark that in the case when  $m\neq 0$,  assuming injectivity and  non-trapping at energy $\sigma=m^2$ we have an analogous result with $\varepsilon=0$ and with $f\left(\frac{P  + i 0}{\scalambda^2}\right)(x,x)$ defined by \eqref{eqthislimit}.

\begin{refproof}{Theorem \ref{thm:spectralaction}}
Since we are interested  in the three first terms of the asymptotic 
expansion, we  choose $ \frac{n}{2}-3< c_2<\frac{n}{2}-2$.
By Lemma~\ref{l:decfeynmanpowers},
we can start from the diagonal expansion for 
$(P {+} i\varepsilon)^{-\cv}$:
\begin{eqnarray}
(P{+} i\varepsilon)^{-\cv}=\sum_{\varm=0}^N (-1)^\varm u_\varm \frac{\Gamma(-\cv+1)}{\Gamma(\cv+\varm)\Gamma(-\cv-\varm+1)}  \mathbf{F}_{\varm+\cv-1}(-m^2{-} i\varepsilon)+R_N({-} i\varepsilon,\cv).
\end{eqnarray}
By Lemma~\ref{l:decfeynmanpowers}, for any $p\in \mathbb{N}$ s.t. $-p<c_2$ and $\varalpha>0$, we may always choose $N$ large enough so that the remainder term
$R_N({-}i\varepsilon,\cv)$ has $\pazocal{C}^\varalpha_{\loc}$ regularity  hence they have well-defined diagonal restriction which is holomorphic and bounded on $\Re\cv\geqslant -p$. We also proved that
the terms $\mathbf{F}_{\varm+\cv-1}(-m^2{-} i\varepsilon,x,x)$ have well-defined diagonal restriction which is holomorphic 
on the vertical line $\Re\cv=c_2$ since this line does not meet the poles of $ \mathbf{F}_{\varm+\cv-1}(-m^2{-} i\varepsilon,x,x)$.
Then it means that the integrand $e^{i\cv\frac{\pi}{2}}(P{+} i\varepsilon)^{-\cv}(x,x)\scalambda^{2\cv}\Gamma(\cv)$ in 
$$ \bea 
f\bigg(\frac{P{+} i\varepsilon}{\scalambda^2}\bigg)(x,x)=\frac{1}{2\pi i}\int_{\Re\cv=c} e^{i\cv\frac{\pi}{2}}(P{+} i\varepsilon)^{-\cv}(x,x)\scalambda^{2\cv}\Gamma(\cv) \pazocal{M}\widehat{f}(\cv)ds
\eea $$
has simple poles
at $\frac{n}{2},\frac{n}{2}-1, \frac{n}{2}-2$.

Then the result follows by moving the contour from $\Re\cv=c$ to $\Re\cv=c_2$
and using the Cauchy residue formula (we are allowed to do so because of the fast decay of ${\cM}\widehat{f}(\cv)$ when $\module{\Im \cv}\rightarrow +\infty$)
to get
$$ \bea 
f\bigg(\frac{P{+} i\varepsilon}{\scalambda^2}\bigg)(x,x)&= \sum_{k=0}^2 \big(\Res_{\cv=\frac{n}{2}-k}\Gamma(\cv)(P{+} i\varepsilon)^{-\cv}(x,x)\big)e^{i\frac{\pi}{2}
(\frac{n}{2}-k)} \scalambda^{n-2k}{\cM}\widehat{f}(\textstyle\frac{n}{2}-k)   \fantom +\underset{\pazocal{O}(\scalambda^{2c_2})}{\underbrace{\frac{1}{2\pi i}\int_{\Re\cv=c_2} e^{i\cv\frac{\pi}{2}}(P{+} i\varepsilon)^{-\cv}(x,x)\scalambda^{2\cv}\Gamma(\cv) \pazocal{M}\widehat{f}(\cv)ds}},
\eea $$
where the underbraced term  is $\pazocal{O}(\scalambda^{2c_2})$, which is of lower order smaller than the preceding ones.
\end{refproof}



\subsection{Extraction of the scalar curvature} 
Finally, we specialize the discussion of the formal parametrix construction in  \sec{s:hadamardformal} to the Laplace--Beltrami operator
$P=\square_g$ to explain how one can extract the scalar curvature from the residue of $(P{\pm}i0)^{-\cv}(x,x)$ at $\cv=\frac{n}{2}-1$ .

To that end we need to understand the geometric nature of the term $b^j\partial_{x^j}$
appearing in $P$, and also to interpret geometrically the first transport equation on $u_0,u_1$.   
Recall that the operator $P$ is defined in any coordinate system as~\cite[p.~270]{Hormander-97}:
\begin{equation}
Pu=\module{g}^{-\frac{1}{2}}\partial_{x^j}\left( \module{g}^{\frac{1}{2}}g^{jk}\partial_{x^k}u \right)
\end{equation}
for all $u\in C^\infty(M)$ and
where we sum over repeated indices~\footnote{Our convention follows
H\"ormander~\cite[p.~270]{Hormander-97}.}.
Therefore $P=\partial_{x^j}g^{jk}\partial_{x^k}+b^k(x)\partial_{x^k}$ where by
~\cite[p.~270]{Hormander-97}, 
$b^k(x)=\module{g(x)}^{-\12} g^{jk}(x)(\partial_{x^j}\module{g(x)}^{\frac{1}{2}} )$.
This leads us to the identity
\begin{equation}
\boxed{P=\partial_{x^k} g^{kj}(x)\partial_{x^j}+g^{jk}(x)(\partial_{x^j}\log \module{g(x)}^{\frac{1}{2}} ) \partial_{x^k} }
\end{equation}
which holds true in normal coordinates centered at an arbitrary point $\varvarm$.
These formul{\ae} are completely analogous to the well-known ones for the Laplace--Beltrami operator on Riemannian manifolds~\cite[p.~41--42]{soggeHangzhou}.

Recall that when we introduced the transport equations to study the parametrix, in Lemma~\ref{l:pretransport1} 
there was a function $h$ defined
in normal coordinates as $h(x)=b^j(x){\eta_{jk}}x^k$. It can be written as 
$$ \bea {
h(x)=b^j\eta_{jk}x^k=g^{lj}(x)\left(\partial_{x^l}\log\module{g}^\frac{1}{2}\right)\eta_{jk}x^k=x^k\partial_{x^k}\log\module{g}^\frac{1}{2}=\rho\log\module{g}^\frac{1}{2} }
\eea $$
where $\rho$ is the Euler vector field induced by the pseudo-Riemannian metric.
The first transport equation $2\rho u_0+hu_0=0 $ now reads~\cite[(2.4.18) p.~43]{soggeHangzhou}:
$$ \bea 
2\rho u_0=-\rho\log\module{g}^\frac{1}{2}u_0, {\,\  u_0(0)=1}
\eea $$
hence $u_0(x)=\vert g(0)\vert^\frac{1}{4} \vert g(x)\vert^{-\frac{1}{4}}$. 
The second transport equation is:
$$ \bea 
\rho u_1+u_1+\frac{h}{2}u_1=-P u_0.
\eea $$
Since both $\rho u_1$ and ${h=\rho\log(\module{g})^\frac{1}{2}}$ vanish at the origin, this implies that 
$$u_1(0)=-Pu_0{(0)}=- P(\module{g({0})}^\frac{1}{4} \module{g(x)}^{-\frac{1}{4}})|_{x=0} .$$ 
Now in normal coordinates $\vert g(0)\vert^\frac{1}{4}=1$ and from the 
Taylor expansion of the metric in normal coordinates
~\cite[(5.2) p.~82]{Baereinstein}~\cite[Prop.~1.28 p.~37]{BGV}:
$$g_{ij}(x)=\eta_{ij}+\frac{1}{3}R_{ikjl}x^kx^l +\pazocal{O}(\vert x\vert^3).$$ 
Therefore~\cite[p.~84]{Baereinstein} we get the Taylor expansion of $\module{g(x)}^{-\frac{1}{4}}$
in normal coordinates:
$$  
\bea
\module{g(x)}^{-\frac{1}{4}}&= \bigg\vert\module{\eta}\exp\left(\Tr\log\left(\delta_{ij}+\eta^{i_1}_i\frac{1}{3}R_{i_1kjl}(0)x^kx^l +\pazocal{O}(\vert x\vert^3)  \right)  \right) \bigg\vert^{-\frac{1}{4}} 
\\
&=\left(1+ \frac{1}{3}\Tr\left(\eta^{i_1}_iR_{i_1kjl}(0)x^kx^l   \right)  \right)^{-\frac{1}{4}}+\pazocal{O}(\vert x\vert^3)
=1+\frac{1}{12}\mathbf{Ric}_{kl}(0)x^kx^l+\pazocal{O}(\vert x\vert^3), 
\eea
$$
since $\Tr\left(\eta^{i_1}_iR_{i_1kjl}(0)x^kx^l   \right)=\delta^{ij}\eta^{i_1}_iR_{i_1kjl}(0)x^kx^l=\eta^{ij}R_{ikjl}(0)x^kx^l=-\mathbf{Ric}_{kl}(0)x^kx^l   $
where $\mathbf{Ric}_{kl}$ is the {Ricci} tensor. This implies that
$$-P \module{g(x)}^{-\frac{1}{4}}=-\frac{1}{6} \underset{=R_g(0)}{\underbrace{ g^{kl}\mathbf{Ric}_{kl}(0)}}+\pazocal{O}(\vert x\vert), $$ where we recognize
$R_g(0)=g^{kl}\mathbf{Ric}_{kl}$ to be the scalar curvature. Finally $u_1(x,x)=-\frac{R_g(x)}{6}$.



We are done with extracting the scalar curvature from the coefficient $u_1(0)$ of the transport equations.  We conclude therefore the following result.

\begin{proposition}\label{mainmainprop} As a particular case  of Theorem \ref{mainmainthm}, 
if in addition the dimension  of $M$ is $n\geqslant 4$ then
$$ \bea 
\boxed{\lim_{\varepsilon\rightarrow 0^+} \Res_{\cv=\frac{n}{2}-1}(P\pm i\varepsilon)^{-\cv}(x,x)={\pm }
\frac{ {i}  R_g(x)}{6 {(4 \pi)^{\frac{n}{2} }}(\frac{n}{2}-2)!}}
\eea $$
where $R_g(x)$ is the scalar curvature at $x$.
\end{proposition}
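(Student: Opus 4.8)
The plan is to combine the general residue computation of Theorem \ref{mainmainthm} with the explicit identification of the Hadamard coefficient $u_1$ carried out just above the statement. Recall that Theorem \ref{mainmainthm} already gives, for $P=\square_g$ on a globally hyperbolic non-trapping Lorentzian scattering space of even dimension $n$, the formula
$$
\lim_{\varepsilon\rightarrow 0^+}\Res_{\cv=\frac{n}{2}-\varm}(P\pm i\varepsilon)^{-\cv}(x,x)=\mp\frac{i\,u_\varm(x,x)}{2^n\pi^{\frac{n}{2}}(\frac{n}{2}-\varm-1)!},
$$
so the only task is to specialise this to $\varm=1$ and substitute the value of $u_1(x,x)$. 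Hence the first step is simply to invoke Theorem \ref{mainmainthm} with $\varm=1$, which requires $\frac{n}{2}-1\geqslant 1$, i.e.\ $n\geqslant 4$ (this is exactly the dimension hypothesis in the statement, and is also what makes $\frac{n}{2}-1$ a genuine pole rather than being outside the arithmetic progression $\{\frac{n}{2},\dots,1\}$).

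Second, I would recall the transport equation analysis for $P=\square_g$. From the identity $P=\partial_{x^k}g^{kj}\partial_{x^j}+g^{jk}(\partial_{x^j}\log|g|^{1/2})\partial_{x^k}$ in normal coordinates one reads off $b^k=|g|^{-1/2}g^{jk}\partial_{x^j}|g|^{1/2}$, whence $h(x)=b^j\eta_{jk}x^k=\rho\log|g|^{1/2}$ with $\rho=x^k\partial_{x^k}$ the Euler field. The first transport equation $2\rho u_0+hu_0=0$, $u_0(0)=1$, then integrates to $u_0(x)=|g(0)|^{1/4}|g(x)|^{-1/4}$, and since both $\rho u_1$ and $h$ vanish at the origin, the second transport equation $\rho u_1+u_1+\frac{h}{2}u_1=-Pu_0$ forces $u_1(0)=-Pu_0(0)$. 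Using the normal-coordinate Taylor expansion $g_{ij}(x)=\eta_{ij}+\frac13 R_{ikjl}x^kx^l+\pazocal{O}(|x|^3)$ one computes $|g(x)|^{-1/4}=1+\frac{1}{12}\mathbf{Ric}_{kl}(0)x^kx^l+\pazocal{O}(|x|^3)$, and applying the principal part of $P$ (the full $b$-term contributes only $\pazocal{O}(|x|)$ since $b(0)=0$) yields $-Pu_0(0)=-\frac16 g^{kl}\mathbf{Ric}_{kl}(0)=-\frac16 R_g(0)$. Since the base point is arbitrary this is $u_1(x,x)=-\frac{R_g(x)}{6}$.

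Third, substitute: with $\varm=1$ the factorial in the denominator becomes $(\frac{n}{2}-2)!$ and $2^n\pi^{n/2}=(4\pi)^{n/2}$, so
$$
\lim_{\varepsilon\rightarrow 0^+}\Res_{\cv=\frac{n}{2}-1}(P\pm i\varepsilon)^{-\cv}(x,x)=\mp\frac{i}{(4\pi)^{n/2}(\frac{n}{2}-2)!}\cdot\Bigl(-\frac{R_g(x)}{6}\Bigr)=\pm\frac{iR_g(x)}{6(4\pi)^{n/2}(\frac{n}{2}-2)!},
$$
which is exactly the claimed formula, and in the $(P-i\varepsilon)$ case it matches \eqref{mainresult} once $(\frac{n}{2}-2)!$ is rewritten as $\Gamma(\frac{n}{2}-1)$.

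There is essentially no obstacle here: all the analytic work — existence of the diagonal restriction, its meromorphic continuation, the location of the poles, and the residue formula in terms of $u_\varm$ — has already been done in Theorems \ref{t:fsanalyticflat}, \ref{mainmainthm} and the surrounding lemmas (in particular the homological Wick-rotation argument of \sec{ss:Wick} which produces the numerical factor $i\pi^{n/2}u^{n/2-k}/(\Gamma(k)(\frac{n}{2}-k)!)$). The mildly delicate point, if any, is bookkeeping: one must check that the $\Gamma(-\cv+1)/\Gamma(-\cv-\varm+1)$ prefactor appearing in Lemma \ref{l:decfeynmanpowers} is holomorphic and nonvanishing at $\cv=\frac{n}{2}-1$ (it equals $(-\cv)\cdots(-\cv-\varm+1)$, a polynomial, so it contributes only a finite nonzero constant and no extra pole), and that among the finitely many terms $u_\varm \mathbf{F}_{\varm+\cv-1}$ in the parametrix only the $\varm=1$ term has a pole at $\cv=\frac{n}{2}-1$ that survives the $\varepsilon\to0^+$ limit with an $\varepsilon$-independent, $m$-independent residue — both facts are already established in Case 1 of the computation preceding Theorem \ref{mainmainthm}. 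So the proof is a one-line citation of Theorem \ref{mainmainthm} together with the identification $u_1(x,x)=-R_g(x)/6$.
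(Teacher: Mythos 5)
Your proposal is correct and follows essentially the same route as the paper: invoke Theorem \ref{mainmainthm} at $\varm=1$, identify $u_1(x,x)=-R_g(x)/6$ via the transport equations and the normal-coordinate Taylor expansion of $\module{g}^{-1/4}$, and substitute, using $2^n\pi^{n/2}=(4\pi)^{n/2}$. The bookkeeping checks you flag (holomorphy of the $\Gamma$-prefactor and which terms contribute an $\varepsilon$- and $m$-independent residue) are indeed exactly what the paper settles in Case 1 preceding Theorem \ref{mainmainthm}, so nothing further is needed.
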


Put  together with Theorem \ref{mainmainthm} this proves our main result stated in the introduction, i.e.~Theorem \ref{mainthm}.

\appendix
 
 \section{Propagation estimates}\label{appLie}\init
 
 \subsection{Summary} The purpose of this appendix is to  supplement the material   in \secs{ss:kg}{ss:jj}  with a very brief summary on scattering calculus and propagation estimates.   
  
  Propagation estimates in the scattering setting are due to Melrose \cite{melrosered}. The generalization to variable weight orders presented here is due to Vasy \cite{vasygrenoble,vasyessential}, see \cite[\S\S2--3]{hassell} for a concise introduction, cf.~\cite[\S E.4]{DZ} .
 The scattering calculus in the model case $\overline{\rr^n}$ was earlier developed  among others by Shubin \cite{Shubin1978} and Parenti \cite{Parenti1972}. 
 
 \subsection{Scattering calculus}\label{ss:scattering} We use the notation already introduced in \secs{ss:kg}{ss:jj}; recall in particular that  $\rho$ is a boundary-defining function and $y$ are local coordinates on $\p\M$, extended in a collar neighborhood of $\p\M$. Let $(\rho, y, \varrho, \eta)$ be local coordinates on $\co$ such that  $(\varrho,\eta)$ are the dual coordinates of $(\rho,y)$. Recall that we introduced the  formal notation $\bra \xi \ket^{-1}$  for the boundary defining function of fiber infinity.
 
 The class of \emph{scattering symbols} of order $s,\ell\in \rr$, denoted by $S^{s,\ell}_{\rm sc}(T^*M)$, is defined  away from $\p\M$ in the same way as the usual symbol class $S^{s}(T^*M)$, whereas near the boundary, any $a\in S^{s,\ell}_{\rm sc}(T^*M)$ is a smooth section of $T^*M$ that satisfies the estimate
 \beq\label{sym1}
\forall  j,k\in\nn, \ \alpha,\beta\in\nn^{n-1}, \  \  \Big| (\rho \p_\rho)^j \p_y^\alpha \p_\varrho^k \p_\eta^\beta a(\rho,y,\varrho,\eta)    \big|\leqslant  C_{jk\alpha\beta}  \, \rho^{-\ell}   \bra \xi\ket^{s-k-|\beta|}.
 \eeq 
The model example  is  as always  $\overline{\rr^n}$ with standard coordinates $(x,\xi)$ on $T^*\rr^n$. In this case, by using spherical coordinates $x=(r,y)$ and setting $\rho=r^{-1}$, $\xi=(\varrho,\rho^{-1}\eta)$, $\bra x \ket=(1+|x|^2)^\12$ and $\bra \xi \ket=(1+|\xi|^2)^\12$, one finds that   \eqref{sym1} is equivalent to 
 $$ 
 \forall\alpha,\beta\in\nn^{n}, \ \   \big| \p_x^\alpha   \p_\xi^\beta a(x,\xi)    \big|\leqslant  C_{\alpha\beta}   \bra x\ket^{\ell-|\alpha|}   \bra \xi\ket^{s-|\beta|}. 
 $$
The class  of \emph{scattering pseudo-differential operators} $\Psi^{s,\ell}_{\rm sc}(M)$ is obtained from $\sc$-symbols $a\in S^{s,\ell}_{\rm sc}(T^*M)$ by reduction to quantization of symbols on $T^*\rr^n$. This requires to choose  a partition of unity $\{\psi_i\}_i$ subordinate to a finite chart covering of $M$ as well as suitable diffeomorphisms close to $\p\M$ (one can show that in the end different choices give the same operator modulo an element of $\Psi^{s-1,\ell-1}_{\rm sc }(M)$). One also includes in the definition of $\Psi^{s,\ell}_{\rm sc}(M)$ a class of regularizing operators in the sense that their Schwartz kernels $K_A(x,x')$ are smooth and decrease rapidly (with all derivatives) as the distance between $x\in M$ and $x'\in M$ tends to infinity. We refer the reader to, e.g., \cite{vasygrenoble} and \cite[\S 2]{Uhlmann2016} for an introduction, cf.~\cite{melrosered} for the original, more geometric description of the Schwartz kernels of scattering pseudo-differential operators.

 In the $\sc$-calculus, the \emph{principal symbol}  of $A\in\Psi^{s,\ell}_{\rm sc }(M)$ is the equivalence class of the symbol of $A$  in $S^{s,\ell}_{\rm sc}(T^*M)/ S^{s-1,\ell-1}_{\rm sc}(T^*M)$. It is often useful to consider the more narrow  class of \emph{classical pseudo-differential operators} which is obtained from \emph{classical symbols}, i.e.~from symbols of the form  $a=\bra \xi\ket^s \rho^{-\ell} \tilde a$ with $\tilde a\in\cf(\co)$. In the simplest case of $A\in \Psi^{0,0}_{\sc}(M)$ classical, it is possible to identify the principal symbol with the restriction of $a\in \cf(\co)$ to $\p \co$. For  classical  $A\in \Psi^{s,\ell}_{\sc}(M)$ of arbitrary order  there is also a natural identification of the principal symbol with a function on $\p\co$, see for instance \eqref{eqpri} for the explicit formula for the principal symbol $p_z$ of $\square_g-z$. 
 
 The \emph{microsupport} $\wf'_\sc(A)$ of $A\in \Psi^{s,\ell}_\sc(M)$ is the complement of the set of points $q\in \p\co$ such that the (full) symbol of $A$ coincides in a neighborhood of $A$ with a symbol in $S^{-N,-L}_\sc(T^*M)$ for all $N,L\in\rr$. If $A$ is classical, then its \emph{elliptic set} is the  complement $\elll_\sc(A)=\p\co\setminus \Sigma_\sc(A)$ of the \emph{characteristic set} $\Sigma_\sc(A)$, defined as the closure of the zero set of the principal symbol. 
 
In the context of propagation estimates it is useful to allow for weight orders $\ell$ that vary on $\co$. On top of the obvious modifications of the definitions of $S^{s,\ell}_{\rm sc}(T^*M)$ and $\Psi^{s,\ell}_{\rm sc }(M)$, the cost to pay is that one needs to slightly relax the decay stated in \eqref{sym1}, and require instead that
\beq\label{eqlklkk}
\Big| (\rho \p_\rho)^j \p_y^\alpha \p_\varrho^k \p_\eta^\beta a(\rho,y,\varrho,\eta)    \big|\leqslant  C_{jk\alpha\beta}  \, \rho^{-\ell-\delta(j+k+|\alpha|+|\beta|)}   \bra \xi\ket^{s-k-|\beta|}.
\eeq
for some $\delta>0$. This circumvents logarithmic losses one would otherwise have when differentiating $\ell$.  {Apart from that,} the change of definition \eqref{eqlklkk} has however no big practical significance and will be disregarded in the notation entirely.
 
Now, if $s\geqslant 0$ and  $\ell\in \cf(\co )$, we define the \emph{weighted Sobolev space of variable weight order}:
 $$
 H^{s,\ell}_\sc(M)= \{ u \in L^2(M) \ | \ A u \in L^2(M)\},
 $$
 where $A\in\Psi^{s,\ell}_\sc(M)$ is a classical elliptic operator (i.e., $\elll_\sc(A)=\p\co$) which can be chosen arbitrarily. One can fix in particular an invertible $A$, and  the norm can be then defined as  $\| u\|_{s,\ell}= \|A u \|$ (different choices of $A$ give equivalent norms). This agrees with the definition for $s\in\zz_{\geqslant 0}$ given in the main part of the paper. For $s\leqslant  0$, $H^{s,\ell}_\sc(M)$ can be defined as the dual of $H^{-s,-\ell}_\sc(M)$. Then, one has for all $s,\ell\in\rr$ and any elliptic $A\in\Psi^{s,\ell}_\sc(M)$
 $$
  H^{s,\ell}_\sc(M)= \big\{ u \in \textstyle\bigcup_{s',\ell'} H^{s',\ell'}_\sc(M)  \ \big|\big. \ A u \in L^2(M)\big\}.
 $$
 \subsection{Propagation estimates}\label{ss:pe} We now consider the setting of the wave or Klein-Gordon operator $P-z$ on non-trapping Lorentzian scattering spaces introduced in \secs{ss:kg}{ss:bi}. We review various microlocal estimates for $P-z$, following \cite{vasygrenoble,vasyessential}, with a particular emphasis on the dependence on the complex parameter $z$, which we assume to vary in some set $\complex\subset \cc$. 
 
 Recall that with the notation from \sec{ss:bi},  the characteristic set of $P-z$  is $\Char_\sc(P-z)=(\Sigma_0\cap\fibinf)\cup (\Sigma_z \cap\basinf)$.

We first state the analogue of H\"ormander's propagations of singularities theorem in our setting. The fixed $z$ version is due to Melrose \cite{melrosered};  see \cite[Thm.~5.4]{vasygrenoble} and the remarks in \cite{vasyessential} for the uniform version below including the $(\Im z)$ term. As in the main part of the text, we write $q\sim q'$ if $q$ and $q'$ are connected by a bicharacteristic in $\Char_\sc(P-z)$, and we denote {the closed bicharacteristic segment from $q$ to $q'$} by $\gamma_{q\sim q'}$. The notation $q\succ q'$ means that $q\sim q'$  and $q$ comes after $q'$ along the flow.
  
   \bep[Propagation of singularities]\label{pos} Let $s\in\rr$ and let $\ell\in\cf(\co)$ be non-decreasing along the Hamilton flow. Let $A_1,A_2,B\in\Psi^{0,0}_{\sc}(M)$ be such that $\wf'_\sc(A_1)\subset \elll_\sc(B)$ and for all $z\in\complex$ the following control condition is satisfied:
   \beq\label{eq:forward}
   \bea
   &\forall q\in \wf'_\sc(A_1)\cap \Char_\sc(P-z), \\ &\exists\, q'\in\elll_\sc(A_2) \mbox{ s.t. } q\succ q' \mbox{ and } \gamma_{q\sim q'} \subset \elll_\sc(B).  
  \eea
   \eeq
   Suppose $A_2 u \in\Hsc{s,\ell}$ and $\{B(P-z)u\}_{z\in\complex}$ is bounded in $H^{s-1,\ell+1}_\sc(M)$. Then for all $u\in H^{\SL}_\sc(M)$,
\beq\label{eq:thepos}
   \norm{A_1 u}_{s,\ell} + (\Im z )^{\12}    \norm{A_1 u}_{s-\12,\ell+\12}  \leqslant C (   \norm{A_2 u}_{s,\ell} +  \| B(P-z) u \|_{s-1,\ell+1}+ \err{u})
   \eeq
  uniformly in $z\in\complex\cap \{\Im z \geqslant 0\}$.
   \eep 
  
The control condition \eqref{eq:forward} means in particular that the knowledge about $u$ being in $H^{s,\ell}_\sc(M)$  microlocally is propagated \emph{forward} (from $\elll_\sc(A_2)$ to $\elll_\sc(A_1)$), consistently with the sign of $\Im z$.
  
  
Beside propagation of singularities one can also show a uniform version of the simpler \emph{elliptic estimate} \cite[Cor.~5.5]{vasygrenoble}.  
  
Next, recall that in our setting, $L_+$ are the sinks and  $L_-$ the sources. Below, $\ell\in \cf(\p\co)$ and $\ell_\pm=\ell|_{L_\pm}$ as in the main part of the text. 

 We now state the radial estimates for $P-z$. The \emph{low decay radial estimate} can be used to propagate decay properties of $u$ into $L_+$ from a punctured neighborhood $U\setminus U_1$. The \emph{higher decay radial estimate} serves to gain decay properties in a neighborhood of $L_-$ provided it is already better than the \emph{threshold value} $-\12$.
 
  We refer  again to \cite{melrosered} and  \cite[Prop.~5.27]{vasygrenoble} for the fixed $z$ version, and to \cite{vasyessential} for the modifications in the proof needed to accomodate for the $(\Im z)$ term. 
 
 \bep[Low decay radial estimate  {\cite[(5)]{vasyessential}}]\label{radial2}  Let $s\in\rr$ and assume that $\ell$ is non-decreasing along the Hamilton flow and $\ell_+<-\12$. Let $A,B,B_1\in\Psi^{0,0}_{\sc}(M)$ and let $U_1, U$ be open neighborhoods of $L_+$ in  $\p\co$ and assume $U_1\subset \elll_\sc(A)$, $\wf'_\sc(A)\subset \elll_\sc(B)\subset U$ and $\wf'_\sc(B_1)\subset U\setminus U_1$. Assume  that for all $z\in\complex$ the following control condition is satisfied:
    $$ 
    \bea
    &\forall q\in \wf'_\sc(A)\cap \Char_\sc(P-z)\setminus L_+, \\ &\exists\, q'\in\elll_\sc(B_1)  \mbox{ s.t. } q\succ q' \mbox{ and }  \gamma_{q\sim q'} \subset \elll_\sc(B).  
    \eea
    $$
    Suppose $B_1 u \in\Hsc{s,\ell}$ and $\{B(P-z)u\}_{z\in\complex}$ is bounded in $H^{s-1,\ell+1}_\sc(M)$. Then for all $u\in H^{\SL}_\sc(M)$, 
    $$
    \norm{A u}_{s,\ell} +  (\Im z )^\12    \norm{A u}_{s-\12,\ell+\12}   \leqslant C (   \norm{B_1 u}_{s,\ell} +  \| B(P-z) u \|_{s-1,\ell+1}+ \err{u})
    $$
   uniformly in $z\in\complex\cap \{\Im z \geqslant 0\}$.
 \eep

    \bep[Higher decay radial estimate {\cite[(4)]{vasyessential}}]\label{radial1}  Let $s\in\rr$ and assume that $\ell$ is non-decreasing along the Hamilton flow and  $\ell_->-\12$. Let $A,B\in\Psi^{0,0}_{\sc}(M)$ and let $U$ be a sufficiently small open neighborhood of $L_-$ in $\p\co$. Assume   $L_-\subset \elll_\sc(A)$ and $\wf'_\sc(A)\subset \elll_\sc(B)\subset U$. 
       Suppose $\{B(P-z)u\}_{z\in\complex}$ is bounded in $H^{s-1,\ell+1}_\sc(M)$. Then for all $s'\in \rr$, $\ell'\in\open{-\12,\ell}$ and $u\in H^{\SL}_\sc(M)$ such that $B u \in\Hsc{s',\ell'}$,
       \beq\label{erff}
       \norm{A u}_{s,\ell} +  (\Im z )^\12    \norm{A u}_{s-\12,\ell+\12}   \leqslant C (   \norm{B u}_{s',\ell'} +  \| B(P-z) u \|_{s-1,\ell+1}+ \err{u})
       \eeq
      uniformly in $z\in\complex\cap \{\Im z \geqslant 0\}$.
    \eep
    

   \section{Complex powers via functional calculus}\label{sec:global}
   
    \subsection{Contour integrals}\label{ss:contours} Suppose $P$ is a (possibly unbounded) self-adjoint operator acting in a Hilbert space $\cH$. 
    
    If $\cv\in\cc$ and $\varepsilon>0$, or if  $\Re\cv <0$ and $\varepsilon\geqslant 0$, then  the operator $(P-i\varepsilon)^{-\cv}$ is well-defined by the Borel functional calculus for self-adjoint operators.  In the particular case $\Re \cv > 0$ and $\varepsilon >0$, it satisfies
   $$
   (P-i\varepsilon)^{-\cv}=\frac{e^{-i\frac{\pi}{2}\cv}}{\Gamma(\cv)}\int_0^\infty s^{\cv-1} e^{-\varepsilon s} e^{ i P s}ds.
   $$
   in the sense of convergence of the integral in the strong operator  topology.

From our point of view it is more useful to express  $(P-i\varepsilon)^{-\cv}$ in terms of  the resolvent $(P-z)^{-1}$ as a contour integral, the precise form of which we briefly recall.  It is actually  more instructive to work with $A=i P$ instead of $P$. The reason is that $A$ and $A+\varepsilon$ are \emph{sectorial operators} (of angle $\frac{\pi}{2}$), and therefore their complex powers are special cases of a large and systematically studied functional calculus based on contour integrals, see \cite[\S 2]{sectorial} and references therein.  By \cite[Prop.~7.1.3]{sectorial}, the Borel functional calculus definition of $(A+\varepsilon)^{-\cv}$  coincides with the  sectorial calculus definition. This has the following immediate consequences.
   
   First, let $\varepsilon>0$. If $\Re \cv >0$, then consistency with the sectorial calculus implies the formula
    \beq\label{ctin0}
     ( A+ \varepsilon)^{-\cv}=  \frac{1}{2\pi i}\int_{\eta_\delta} z^{-\cv} \big(z-(A+\varepsilon)\big)^{-1} dz,
    \eeq
  where $0<\delta<\varepsilon$ and $\eta_\delta$ is an arbitrary  contour going from $\Im z\gg 0$ to  $\Im z\ll 0$ of the form
    $$
    \eta_\delta = e^{i\theta}\opencl{+\infty,\delta}\cup \{\delta e^{i\omega}\, | \, -\theta<\omega<\theta\}\cup  e^{-i\theta}\clopen{\delta,+\infty}
    $$  
    for some $\theta\in\open{\pid,\pi}$. More generally, for any $\cv\in \cc$
   \beq\label{ctin}
    ( A+ \varepsilon)^{-\cv}=  (1+A)^N \frac{1}{2\pi i}\int_{\eta_\delta}\frac{z^{-\cv}}{(1+z)^{N}} \big(z-(A+\varepsilon)\big)^{-1} dz,
   \eeq
  where $N\in\nn_{\geqslant 0}$ is an arbitrary number such that $N>-\Re \cv$. Furthermore, $\Dom A^N$ is a core for $( A+ \varepsilon)^{-\cv}$ \cite[Prop.~3.1.1]{sectorial}. Observe that if $\Re\cv<1$ then it is not necessary to surround $0$ in the integral, and so the contour $\eta_\delta$ in \eqref{ctin} can be replaced by
   $$
   \eta_0 =  e^{i\theta}\opencl{+\infty,0}\cup  e^{-i\theta}\clopen{0,+\infty}.
   $$
For $\Re\cv <0$, $( A+ \varepsilon)^{-\cv}$ is in general not bounded, it is however  a still a closed operator, with domain independent on $\varepsilon>0$.
   
   Let now $\varepsilon =0$. If $\Re\cv<0$ then again
    \beq\label{ctin2}
     A^{-\cv}=  (1+A)^N \frac{1}{2\pi i}\int_{\eta_0}\frac{z^{-\cv}}{(1+z)^{N}} (z-A)^{-1} dz,
     \eeq
   and the  domain is $\Dom A^{-\cv}=\Dom(A+ \varepsilon)^{-\cv}$  for $\varepsilon>0$ arbitrary, see \cite[Prop.~3.1.9]{sectorial}. Furthermore, 
     $$
     A^{-\cv} u = \lim_{\varepsilon\to 0^+} (A+ \varepsilon)^{-\cv} u, \ \   u\in \Dom A^{-\cv}.
    $$
In the special situation $0\notin \sp (A)$, $A^{-\cv}$ is well-defined for all $\cv\in \cc$, and
   $$
     A^{-\cv}=  (1+A)^N \frac{1}{2\pi i}\int_{\eta_\delta}\frac{z^{-\cv}}{(1+z)^{N}} (z-A)^{-1} dz
   $$
   for all  sufficiently small $\delta\geqslant 0$.
   
 Using back the relation $A=i P$ and changing the integration variable $z\to i (z+\varepsilon)$ one finds integrals with $(P-z)^{-1}$ over the contour $\gamma_\varepsilon$ used in the main part of the text (see  \ref{ss:restocp}).
   

\section{The ultrastatic case} \label{s:static}

\subsection{Resolvent bounds and Feynman wavefront sets for ultrastatic spacetimes} \label{app:static}

Let $(Y,h)$ be a complete Riemannian manifold of dimension $n-1$ and let
\beq\label{thelpp}
 M=\mathbb{R}\times Y,  \quad g=dt^2-h
 \eeq
 be the corresponding \emph{ultrastatic}
Lorentzian manifold of dimension $n$. 

The wave operator {is then}
$\square_g=\p_t^2 - \Delta_h$, where $\Delta_h$ is the  Laplace--Beltrami operator on $(Y,h)$ (with the convention  $-\Delta_h\geqslant 0$).  As explained in~\cite{derezinski}, the essential self-adjointness of  $\square_g$ in that case can be shown using Nelson's commutator theorem.

 For $s\in \mathbb{R}$ and $p>\frac{1}{2}$, we 
recall the definition of weighted Sobolev spaces $L^{2,p}_t H^s(Y)$ on $M$:
$$ \Vert u\Vert_{ L^{2,p}_tH^s(Y)}= \left(\int_{\mathbb{R}} \left\langle t\right\rangle^{2{ p}}\Vert u(t,.)\Vert_{H^s\left(Y \right)}^2  \right)^{\frac{1}{2}},$$
where $H^s(Y)= \bra -\Delta_h \ket^{-s} L^2\left(Y\right)$ is the usual Sobolev space on $Y$.

\begin{thm}\label{thm:us}
Let $(Y,h)$ be a complete Riemannian manifold, let $(M,g)$ be as in \eqref{thelpp} and let $P=\square_g$. Let $s\in \mathbb{R}$ and $p>\frac{1}{2}$. Then for $z\in \{\Im z\geqslant 0, \, \module{\Re z}\geqslant \varepsilon>0\}$, $(P-z)$ admits a Feynman inverse
$(P-z)^{-1}:L^{2,-p}_tH^s_y(\Cauchy)\to L^{2,p}_t H^{s+1}_y(\Cauchy) $ which satisfies a bound of the form 
\begin{equation} \label{e:boundfeynmanreolv}
\Vert (P-z)^{-1}u\Vert_{L^{2,{-p}}_tH^s_y}\leqslant C \Vert u\Vert_{L^{2,{p}}_tH^{s-1}_y}.
\end{equation}

Furthermore, if $\Re\cv>0$ and $\Im z>0$ then
$ (P-z)^{-\cv}:H^s(M) \to H^s(M) $ is well-defined for all $s\in \mathbb{R}$.
\end{thm}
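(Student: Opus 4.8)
The strategy is to exploit the product structure $M=\rr\times Y$ and the explicit spectral decomposition of $\square_g=\p_t^2-\Delta_h$ in the normal direction. The first step is to construct the Feynman inverse $(P-z)^{-1}$ for $\Im z\geqslant 0$, $\module{\Re z}\geqslant\varepsilon>0$, directly. Writing $\lambda=\lambda(z)$ for the branch of $\sqrt{z+\Delta_h}$ (functional calculus in $\Delta_h$) which has $\Im\lambda>0$ when $\Im z>0$ — this is where the condition $\module{\Re z}\geqslant\varepsilon$ on the positive half-line matters, to stay away from the branch point $\sp(-\Delta_h)\subset[0,\infty)$ — one sets
\beq\label{e:usresolv}
(P-z)^{-1}=\frac{1}{2i\lambda}e^{i\lambda\module{t-t'}}
\eeq
in the sense of an operator-valued kernel in the $t$ variable, composed with the functional calculus of $-\Delta_h$ on $Y$. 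A direct computation shows $(\p_t^2-\Delta_h-z)$ applied to the right-hand side of \eqref{e:usresolv} gives $\delta(t-t')$, so this is a two-sided inverse on, e.g., $H^{-N}_{\rm c}(M)$. The decay $\Im\lambda>0$ makes the kernel exponentially decaying in $\module{t-t'}$, which yields the weighted $L^{2,-p}_tH^s_y\to L^{2,p}_tH^{s+1}_y$ bound \eqref{e:boundfeynmanreolv}: indeed $\frac{1}{2i\lambda}e^{i\lambda\module{t-t'}}$ is uniformly bounded as an operator $H^{s-1}_y\to H^{s+1}_y$ (using $\module{\lambda}\gtrsim 1$ from $\module{z}\geqslant\varepsilon$ and the elliptic gain $\bra\xi\ket/\bra\xi\ket^2$ for the high frequencies in $Y$, with the exponential factor controlling the low-frequency regime), and convolution with an $L^1_t$ kernel maps $L^{2,p}_t$ to $L^{2,-p}_t$ boundedly; one checks that this identifies with the abstract functional-calculus resolvent, hence it is the Feynman inverse as in Remark \ref{rem:static2}.

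For the final statement on complex powers, fix $\Re\cv>0$ and $\Im z>0$. The plan is to represent $(P-z)^{-\cv}$ as a contour integral of the resolvent, as in \ref{ss:contours} and \sec{ss:restocp}: write
\beq\label{e:uscp}
(P-z)^{-\cv}=\frac{1}{2\pi i}\int_{\gamma}(w-z)^{-\cv}(P-w)^{-1}\,dw,
\eeq
over a suitable contour $\gamma$ in the upper half-plane encircling $z$, and then prove the required $H^s(M)\to H^s(M)$ boundedness under the integral sign. Alternatively, and more cleanly in the ultrastatic case, one can diagonalize: since $P=\p_t^2-\Delta_h$ and the two commuting pieces generate a joint spectral resolution, $(P-z)^{-\cv}$ is the functional-calculus operator associated to the symbol $(\tau^2+\mu-z)^{-\cv}$, where $\tau$ is the Fourier dual of $t$ and $\mu\in\sp(-\Delta_h)\subset[0,\infty)$. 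For $\Im z>0$ this symbol never vanishes and is $\cO((1+\tau^2+\mu)^{-\Re\cv})$, so in particular it is bounded, uniformly in $\tau\in\rr$, $\mu\geqslant 0$; by the spectral theorem $(P-z)^{-\cv}$ is then a bounded operator on $L^2(M)$. To upgrade to $H^s(M)$ one notes that $H^s(M)=\bra D_t\ket^{-s}\bra-\Delta_h\ket^{-s/2}$-type weighted spaces can be characterized via the same joint functional calculus (using $\bra D_t\ket^2+\bra-\Delta_h\ket$ as an elliptic reference operator), and $(P-z)^{-\cv}$ commutes with every function of $(D_t,-\Delta_h)$, hence preserves each $H^s(M)$; equivalently, the symbol estimate $\module{\p_\tau^\beta(\tau^2+\mu-z)^{-\cv}}\lesssim (1+\module{\tau})^{-\Re\cv-\beta}$ together with uniformity in $\mu$ shows $(P-z)^{-\cv}$ lies in a (parameter-dependent) pseudodifferential class of order $\leqslant 0$, which acts continuously on all Sobolev spaces.

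\textbf{Main obstacle.} The delicate point is not the $L^2$ boundedness — that is immediate from the spectral theorem — but rather verifying that $(P-z)^{-\cv}$ genuinely maps $H^s(M)$ to $H^s(M)$ \emph{continuously} for negative $s$ as well as positive, since $H^s(M)$ for $s<0$ is defined by duality and one must ensure the operator and its adjoint are simultaneously controlled; this is handled by observing $((P-z)^{-\cv})^*=(P-\bar z)^{-\bar\cv}$ (self-adjointness of $P$), which has $\Im\bar z<0$ and $\Re\bar\cv>0$, and the analogous symbol bound holds in the lower half-plane, so duality closes the argument. A secondary technical nuisance, if one goes the contour-integral route \eqref{e:uscp} rather than the direct functional-calculus route, is establishing enough decay of $(w-z)^{-\cv}(P-w)^{-1}$ along $\gamma$ to justify convergence in $\cB(H^s(M))$; but since the resolvent bound \eqref{e:boundfeynmanreolv} gives $\cO(1)$ behaviour (and one can insert extra resolvent factors $(1+P)^{N}(1+w)^{-N}$ as in \eqref{ctin} to gain decay when $\Re\cv$ is small), this reduces to a routine dominated-convergence estimate. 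I would present the diagonalization argument as the main line, with the contour integral mentioned only as a consistency check, since for $P=\square_g$ on an ultrastatic spacetime the joint spectral calculus of $(D_t,-\Delta_h)$ makes everything transparent.
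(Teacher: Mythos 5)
Your Schwartz-kernel ansatz is the same starting point as the paper's (modulo a branch slip: the fibrewise kernel must be $e^{-i\vert t-t'\vert\sqrt{-\Delta_h-z}}\,\bigl(2i\sqrt{-\Delta_h-z}\bigr)^{-1}$ up to a constant, i.e.\ $\lambda^2=-\Delta_h-z$ with $\Im\lambda$ of the decaying sign, not $\lambda^2=z+\Delta_h$). The genuine gap is in how you derive the weighted estimate \eqref{e:boundfeynmanreolv}. You rest it on two claims: exponential decay of the kernel in $\vert t-t'\vert$ giving an $L^1_t$-convolution bound, and a uniform two-derivative gain $H^{s-1}_y\to H^{s+1}_y$ for the fixed-time factor. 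Neither holds in the stated range $\{\Im z\geqslant 0,\ \vert\Re z\vert\geqslant\varepsilon\}$: for $\Im z=0$ the kernel is purely oscillatory (no decay at all), and even for $\Im z>0$ the decay rate on the spectral fibre $\mu\in\sp(-\Delta_h)$ is $\sim\Im z/(2\sqrt{\mu})$, which is not bounded below, so the operator-valued kernel is not (uniformly) in $L^1_t$; moreover $\sup_{\mu\geqslant 0}\langle\mu\rangle\,\vert\mu-z\vert^{-1/2}=\infty$, so the factor $\bigl(\sqrt{-\Delta_h-z}\bigr)^{-1}e^{-i\vert t-t'\vert\sqrt{-\Delta_h-z}}$ gains only \emph{one} $y$-derivative, not two (one derivative is exactly what \eqref{e:boundfeynmanreolv} asks for). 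Since the whole point of the statement — and the reason the weights with $p>\frac{1}{2}$ appear at all — is uniformity down to $\Im z\to 0^+$ with $\Re z\neq 0$ (limiting absorption), an argument resting on exponential time decay cannot prove it; indeed, if the kernel were uniformly $L^1_t$ one would get an unweighted $L^2_t$ bound, which is impossible at real $z$. The paper's proof uses no decay in $t-s$ whatsoever: it uses only the uniform operator bound $e^{-i\vert t-s\vert\sqrt{-\Delta_h-z}}\bigl(\sqrt{-\Delta_h-z}\bigr)^{-1}:H^{s-1}_y\to H^{s}_y$ (modulus of the half-wave factor $\leqslant 1$, one-derivative gain from the prefactor), and then Cauchy--Schwarz in $s$ together with $\int_{\rr}\langle t\rangle^{-2p}dt<\infty$ and $\int_{\rr}\langle s\rangle^{-2p}ds<\infty$; this is precisely where $p>\frac{1}{2}$ enters and why the estimate survives on the real axis.

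For the second assertion, your main line (joint spectral calculus of the commuting pair $(D_t,-\Delta_h)$) is essentially the paper's route: the paper bounds $(P-z)^{-1}$ on $H^s(M)$ by $C\vert\Im z\vert^{-1}$ through the joint spectral representation and then obtains $(P-i\varepsilon)^{-\cv}$ as the joint multiplier $(-\tau^2+\lambda-i\varepsilon)^{-\cv}$ via a contour integration. Since a bounded function of this commuting pair commutes with the elliptic reference operator defining $H^s(M)$, boundedness on every $H^s(M)$, $s\in\rr$, is immediate, and your separate duality discussion for $s<0$ is not needed. However, two of your supporting claims are false for the hyperbolic operator and should be dropped: the symbol is \emph{not} $\pazocal{O}\bigl((1+\tau^2+\mu)^{-\Re\cv}\bigr)$ — on the characteristic cone $\tau^2=\mu$ (with the correct symbol $-\tau^2+\mu-z$) it has modulus comparable to $\vert z\vert^{-\Re\cv}$, with no decay at infinity — and correspondingly $(P-z)^{-\cv}$ does not lie in a pseudodifferential class of negative order with symbolic gains in $\tau$. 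All that is true, and all that is needed, is the uniform bound $\vert(-\tau^2+\mu-z)^{-\cv}\vert\leqslant C_\cv(\Im z)^{-\Re\cv}$ coming from $\vert-\tau^2+\mu-z\vert\geqslant\Im z>0$.
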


 In particular, the above bound (\ref{e:boundfeynmanreolv}) holds true when $\Im z \rightarrow 0^+$, $\Re z\neq 0$ which yields a limiting absorption principle for the Klein--Gordon resolvent $(\square_g {+} m^2-i0)^{-1}$ as also proved in~\cite{derezinski}.

\begin{refproof}{Theorem \ref{thm:us}} The starting point is the well-known ansatz for $\Im z\geqslant 0$:
\beq
\big((P-z)^{-1} u\big)(t,.)=-\12 {\int_{\mathbb{R}}} \frac{e^{{-}i\vert t-s\vert\sqrt{-\Delta_h{-}z}}}{\sqrt{-\Delta_h{-}z}}u(s,.)ds.
\eeq
We first prove a rough bound for low values of $\Im z\geqslant 0$, $\module{\Re z}\geqslant \varepsilon$:
$$ \bea 
\Vert (P-z)^{-1}u\Vert^2_{L^{2,{ -p}}_tH^s_y}&=\frac{1}{4}\int_{\mathbb{R}}\left\langle t\right\rangle^{-2p}\Big\| \int_{\mathbb{R}} \frac{e^{{-}i\vert t-s\vert\sqrt{-\Delta_h{-}z}}}{\sqrt{-\Delta_h{-}z}}u(s,.)ds\Big\|^2_{H^s_y(\Cauchy)}dt \\
&\leqslant \frac{1}{4}\int_{\mathbb{R}}\left\langle t\right\rangle^{-2p}dt \, \sup_t\left(\int_{\mathbb{R}} \left\langle s\right\rangle^{-p}  \left\langle s\right\rangle^{p}\Big\| \frac{e^{{-}i\vert s-t\vert\sqrt{-\Delta_h{-}z}}}{\sqrt{-\Delta_h{-}z}}u(s,.)\Big\|_{H^s_y(\Cauchy)}ds\right)^2.
\eea $$
Then by Cauchy--Schwartz inequality in $s$:
$$ \bea 
&\int_{\mathbb{R}} \left\langle s\right\rangle^{-p}  \left\langle s\right\rangle^{p}\Big\| \frac{e^{{-}i\vert s-t\vert\sqrt{-\Delta_h{-}z}}}{\sqrt{-\Delta_h{-}z}}u(s,.)\Big\|_{H^s_y(\Cauchy)}ds \\ &\leqslant \left(\int_{\mathbb{R}} \left\langle s\right\rangle^{-2p}ds\right)^{\frac{1}{2}} \sup_t \left(\int_{\mathbb{R}} \left\langle s\right\rangle^{2p}\Big\| \frac{e^{{-}i\vert s-t\vert\sqrt{-\Delta_h{-}z}}}{\sqrt{-\Delta_h{-}z}}u(s,.)\Big\|^2_{H^s_y(\Cauchy)}ds\right)^{\frac{1}{2}} \\
&\leqslant  C\left(\int_{\mathbb{R}} \left\langle s\right\rangle^{2p}\Vert u(s,.)\Vert^2_{H^{s-1}_y(\mathbb{R}^{n-1})}ds\right)^{\frac{1}{2}}=C\Vert u\Vert_{L^{2,{p}}_tH^{s-1}_y}
\eea $$
using the fact that 
$$
\frac{e^{{-}i\vert t-s \vert\sqrt{-\Delta_h{-}z}}}{\sqrt{-\Delta_h{-}z}}: H^{s}_y(\Cauchy)\to H^{s+1}_y(\Cauchy) 
$$
 is {bounded for} all $s\in \mathbb{R}$ uniformly in $\Im z\geqslant 0$, $\norm{\Re z}\geqslant \varepsilon$.
Finally, for small $\Im z$, we get
$$ \bea 
\Vert (P-z)^{-1}u\Vert^2_{L^{2,{-p}}_tH^s_y(\mathbb{R}^{n-1})}\leqslant C^2 \Vert u\Vert^2_{L^{2,{p}}_tH^{s-1}_y}
\eea $$
which shows that $(P-z)^{-1}: L^{2,{p}}_t H^s_y(\Cauchy)\to L^{2,{-p}}_t H^{s+1}_y(\Cauchy) $ is invertible on the half-plane $\Im z\geqslant 0$, $\module{\Re z}\geqslant \varepsilon$. 

Next, we refine the above bounds for large $\vert z\vert$
along the contour $\gamma_\varepsilon$ defined in \sec{ss:restocp} to get decay in $z$. We denote by $E(\lambda)d\lambda$ the projection-valued measure associated to the functional
calculus of $-\Delta_h$, which is well-known to
be self-adjoint by completeness of $Y$ \cite{Chernoff1973,Strichartz1983}. For $u\in C^\infty_\c(M)$, we define $\widehat{u}=\int_{\mathbb{R}} e^{-i\tau t}E_\lambda\left(u(t,.)\right)dt
$. Then,
we get
$$ \Vert (P-z)^{-1}u\Vert_{H^s(M)}^2=\int_{\mathbb{R}\times \mathbb{R}_{\geqslant 0}}  \frac{(1+\vert\tau\vert^2+\lambda)^{s}\Vert \widehat{u}(\tau,\lambda)\Vert_{L^2(Y)}^2 }{ \vert {-}\tau^2{+}\lambda-z\vert^2}  d\tau d\lambda \lesssim \frac{1}{\module{\Im z}^2} \Vert u\Vert_{H^s(M)}^2.  $$
For $\varepsilon>0$, this implies by a contour integration argument  like in the proof of Lemma~\ref{l:holofamilyFm} that the complex powers $(P-i\varepsilon)^{-\cv}$, $\Re \cv > 0$ are well
defined and can be represented as
\begin{equation}\label{eq:repsfeynmanpowers}
(P-i\varepsilon)^{-\cv}u=\frac{1}{2\pi}\int_{\mathbb{R}\times \mathbb{R}_{\geqslant 0}} e^{i\tau t}  \left({-}\tau^2{+}\lambda-i\varepsilon \right)^{-\cv}\widehat{u}(\tau,\lambda) d\tau d\lambda.
\end{equation}
for all $ u\in H^s(M)$.

The bound on the wavefront set of $\left(P-z\right)^{-1}$ is an immediate consequence of the explicit formula for the Feynman inverse and follows the discussion in \sec{ss:firstparametrix}.
\end{refproof}

{
\subsection{Limiting absorption principle for Feynman powers}
\label{a:LAPfeynman}

In this second part of  Appendix~\ref{s:static}, for the sake of illustration  we specialize to ultrastatic Lorentzian manifolds $\mathbb{R}\times Y$ which are \emph{space-compact}, i.e.~such that $Y$ is compact Riemannian.
We give an elementary proof of the limiting absorption principle for Feynman powers, by which we mean that the limit
$\lim_{\varepsilon\rightarrow 0^+}(P-i\varepsilon
)^{-\cv}:C^\infty_\c(M) \to \pazocal{D}^\prime(M)$ exists for $P=\square_g+m^2$ with $m>0$.
This amounts to showing that one can take the $\varepsilon\rightarrow 0^+$ limit in \eqref{eq:repsfeynmanpowers}, and then to finding suitable  weighted Sobolev spaces on which the complex Feynman powers are well-defined.

\begin{defi}[Weighted anisotropic Sobolev spaces]\label{d:sobolevaniso}
We define the following product-type weighted Sobolev norms on $M$ depending on three indices, 
$H^{(s,\ell),p}(M)$ where $s$ is the time regularity, $\ell$ is a weight on the time variable and $p$ is the space regularity:
\begin{equation}
\Vert u\Vert_{H^{(s,\ell),p}(M)}=\left(\int_{M} \big| \bra D_t\ket^s \bra t \ket^\ell \bra  -\Delta_h\ket^{\frac{p}{2}}u\big|^2 d \vol_g \right)^{\frac{1}{2}}.
\end{equation}
\end{defi}

An important property of these spaces is that Fourier transform in the time variable exchanges the first two indices, i.e.~$u\in H^{(s,\ell),p}(M)$ implies $\cF_t(u)\in H^{(\ell,s),p}(M)$.

\begin{lemm}\label{l:estimates1d}
Let $m>0$, $\alpha\in \mathbb{C}$, $\Re\alpha>0$. Then
 $\left({-}\tau^2{+}\lambda{+}m^2-i0 \right)^{-\cv}_{\lambda\in \mathbb{R}_{\geqslant 0}}$ is a family of tempered distributions 
which satisfies 
\beq\label{eq:kll}
\big\Vert  \left({-}\tau^2{+}\lambda{+}m^2-i0 \right)^{-\cv}\big\Vert_{H^{\ell-\Re\alpha,-s_1}(\mathbb{R})}=\cO(\lambda^{-\frac{\Re \alpha}{2}})
\eeq
for all $\ell\in \clopen{0,\frac{1}{2}}$, $s_1>\frac{1}{2}$,
where $H^{\ell-\Re\alpha,-s_1}(\mathbb{R})$ denotes the weighted Sobolev space
$\left\langle t\right\rangle^{s_1}H^{\ell-\Re\alpha}(\mathbb{R})$. 
\end{lemm}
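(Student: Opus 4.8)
The statement concerns a one–dimensional estimate in the time–Fourier variable $\tau$, for fixed spatial spectral parameter $\lambda\in\mathbb{R}_{\geqslant 0}$, and it is the building block for inverting $P-i\varepsilon$ on the anisotropic weighted spaces $H^{(s,\ell),p}(M)$ of Definition~\ref{d:sobolevaniso}. Since $m>0$, the point is that $\lambda+m^2\geqslant m^2>0$, so the quadratic $-\tau^2+\lambda+m^2$ in $\tau$ has its zero set at $\tau=\pm\sqrt{\lambda+m^2}\neq 0$; thus the distribution $(-\tau^2+\lambda+m^2-i0)^{-\alpha}$ is obtained by pulling back the model family $(t-i0)^{-\alpha}$ by a submersion away from the critical point, exactly as in Proposition~\ref{prop:flatfeynm}. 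First I would record that $(t-i0)^{-\alpha}$ is, for $0<\Re\alpha<1$, an element of $L^1_{\mathrm{loc}}(\mathbb{R})$ with at most polynomial growth, hence a tempered distribution, and that for general $\Re\alpha>0$ one writes $(t-i0)^{-\alpha}=c_{\alpha,N}\,\partial_t^N (t-i0)^{-\alpha+N}$ to realise it as a distribution of order $\lceil\Re\alpha\rceil$; in either case it lies in $\langle t\rangle^{s_1}H^{\ell-\Re\alpha}(\mathbb{R})$ locally for any $\ell<\tfrac12$ and $s_1>\tfrac12$, because $H^{\ell-\Re\alpha}$ absorbs $\lceil\Re\alpha\rceil$ derivatives of an $L^2_{\mathrm{loc}}$-function with a small negative index to spare, and the weight $\langle t\rangle^{s_1}$ with $s_1>\tfrac12$ takes care of the polynomial growth at infinity (the borderline $\ell=\tfrac12$, $s_1=\tfrac12$ being excluded for the usual Hardy-type reason).

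\textbf{Key steps.} The scaling is the heart of the matter. Write $-\tau^2+\lambda+m^2-i0 = (\lambda+m^2)\big(1-\tau^2/(\lambda+m^2)-i0\big)$ and substitute $\tau=\sqrt{\lambda+m^2}\,\sigma$. Then
\[
\big(-\tau^2+\lambda+m^2-i0\big)^{-\alpha} = (\lambda+m^2)^{-\alpha}\,\big(1-\sigma^2-i0\big)^{-\alpha}\Big|_{\sigma=\tau/\sqrt{\lambda+m^2}}.
\]
The distribution $(1-\sigma^2-i0)^{-\alpha}$ is a \emph{fixed} tempered distribution on $\mathbb{R}_\sigma$, independent of $\lambda$, with singularities only at $\sigma=\pm1$, and it belongs to every local weighted Sobolev space of the type above. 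Next I would track how the $H^{\ell-\Re\alpha,-s_1}(\mathbb{R})$ norm — i.e.\ the $\langle t\rangle^{s_1}$-weighted $H^{\ell-\Re\alpha}$ norm on the time-side, equivalently by Plancherel a $\langle D_t\rangle^{\ell-\Re\alpha}$-controlled norm against $\langle t\rangle^{-s_1}$ test functions — behaves under the dilation $\tau\mapsto\sqrt{\lambda+m^2}\,\tau$. A Fourier-transform computation shows the dilation by $\mu=\sqrt{\lambda+m^2}$ rescales the $H^r$ norm by a factor $\mu^{-r-1/2}$ up to constants, and the weight $\langle t\rangle^{s_1}$ in the physical variable contributes a further bounded factor for $\mu\geqslant m$; collecting the powers, the prefactor $(\lambda+m^2)^{-\Re\alpha}$ together with $\mu=(\lambda+m^2)^{1/2}$ yields exactly the announced $\mathcal{O}(\lambda^{-\Re\alpha/2})$ as $\lambda\to+\infty$, while for $\lambda$ bounded the quantity is trivially bounded since $\lambda+m^2\in[m^2,C]$ and the model distribution is fixed. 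One should also check that the implicit constant is uniform in $\lambda$, which follows from the compactness of $[0,C]$ and the explicit scaling for $\lambda$ large.

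\textbf{Main obstacle.} I expect the only real subtlety to be the careful bookkeeping of how the three indices $(\ell-\Re\alpha,-s_1)$ transform under the anisotropic dilation $\tau=\mu\sigma$, and in particular verifying that the weight exponent $-s_1$ does not interfere with the gain in $\mu$ — i.e.\ that no positive power of $\mu$ sneaks in from the weight when $\mu$ is large. This is handled by writing the weighted Sobolev pairing explicitly and using that for $\mu\geqslant 1$ one has $\langle t\rangle^{s_1}\leqslant \mu^{s_1}\langle t/\mu\rangle^{s_1}$, \emph{but} the decay $s_1>\tfrac12$ on the conjugate side more than compensates; alternatively, and more cleanly, pass to the Fourier side in $t$ where the weight becomes a derivative loss that is exactly the index $\ell$, and use $\ell<\tfrac12$ to stay in the range where the model distribution $(1-\sigma^2-i0)^{-\alpha}$ is controlled near $\sigma=\pm1$. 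Once \eqref{eq:kll} is established, the limiting absorption principle for the Feynman powers follows by integrating the bound against $d\lambda$ using the spectral measure of $-\Delta_h$ on the compact manifold $Y$ (where the measure is discrete and the eigenvalues grow, so $\lambda^{-\Re\alpha/2}$ is summable once $\Re\alpha$ is large enough, or after inserting enough spatial smoothing $\langle-\Delta_h\rangle^{-p/2}$), exactly paralleling the contour argument in the proof of Theorem~\ref{thm:us}.
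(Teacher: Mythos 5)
There is a genuine gap, and it sits exactly at the sharp power in $\lambda$. Your reduction to the fixed model $(1-\sigma^2-i0)^{-\alpha}$ via $\tau=\sqrt{\lambda+m^2}\,\sigma$ is fine, but the scaling step that is supposed to produce $\lambda^{-\Re\alpha/2}$ is both misstated and, once corrected, insufficient. For the stretching $u\mapsto u(\cdot/\mu)$ one has $\|u(\cdot/\mu)\|_{\dot H^{r}}=\mu^{1/2-r}\|u\|_{\dot H^{r}}$, not $\mu^{-r-1/2}$ (and for the inhomogeneous, weighted norm there is no clean power law at all, since neither $\langle D_\tau\rangle$ nor $\langle\tau\rangle^{-s_1}$ is scale invariant). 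With the correct exponent, the prefactor $(\lambda+m^2)^{-\Re\alpha}$ combined with $\mu^{1/2-r}=\mu^{1/2-\ell+\Re\alpha}$ gives only $\mu^{-\Re\alpha+\frac12-\ell}$, i.e.\ $\cO\bigl(\lambda^{-\Re\alpha/2+(\frac12-\ell)/2}\bigr)$, which falls short of \eqref{eq:kll} by the positive power $\lambda^{(\frac12-\ell)/2}$ precisely in the regime $\ell<\tfrac12$ demanded by the lemma; also note that your bookkeeping as written ("the prefactor together with $\mu$ yields exactly $\cO(\lambda^{-\Re\alpha/2})$") does not follow from the exponent you state. The missing decay cannot come from scale invariance: it has to be extracted either from the weight $\langle\tau\rangle^{-s_1}$ evaluated at the \emph{moving} singular support $\tau=\pm\sqrt{\lambda+m^2}$ (which contributes $\mu^{-s_1}$ with $s_1>\tfrac12>\tfrac12-\ell$, but only after you localize near and away from those points and estimate each piece), or from the explicit structure of the singular factor. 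Your "main obstacle" paragraph correctly identifies this weight bookkeeping as the crux but then disposes of it by assertion rather than argument, so the proof as proposed does not close.

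For comparison, the paper avoids scaling altogether: it takes a partition of unity $1=\chi_1+\chi_2+\chi_3$ in $\tau$ (two pieces away from $\tau=0$, one near $\tau=0$ where the symbol is elliptic and already gives $\cO(\lambda^{-\Re\alpha})$), factorizes $-\tau^2+\lambda+m^2=\bigl(\sqrt{\lambda+m^2}-\tau\bigr)\bigl(\sqrt{\lambda+m^2}+\tau\bigr)$ on each outer piece, puts the whole $\lambda$-decay into the smooth cofactor, which is $\cO_{C^\infty}(\lambda^{-\Re\alpha/2})$ there, and shows the remaining singular factor is bounded in $H^{\ell-\Re\alpha,-s_1}(\mathbb{R})$ \emph{uniformly in} $\lambda$ by computing its Fourier transform explicitly, $C\,e^{it\sqrt{\lambda+m^2}}\,1_{t\geqslant0}\,t^{\alpha-1}$, whose modulus is $\lambda$-independent, and pairing against $\widehat\psi$ using $\ell\in\clopen{0,\tfrac12}$, $s_1>\tfrac12$ and duality. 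If you want to salvage the scaling route, you would in effect have to reproduce this localization near the characteristic points and use the weight there, at which point the argument coincides with the paper's; as it stands, your proposal is missing that step.
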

\begin{proof}
We cut the domain in three regions using a smooth partition of unitiy $1=\chi_1+\chi_2+\chi_3 $, where
where $\supp\chi_3 \subset \{\tau\geqslant \delta>0\}$, $\supp\chi_1\subset \{\tau\leqslant -\delta<0\}$ and $\supp \chi_2\subset \{ \vert\tau\vert\leqslant \frac{m}{2} \}$.

 Observe that $\chi_2\left( {-}\tau^2{+}\lambda{+}m^2-i0 \right)^{-\cv} $ is smooth, compactly supported and uniformly bounded in Schwartz functions when $\lambda\in \mathbb{R}_{\geqslant 0}$, so this term satisfies \eqref{eq:kll}.

Let us  examine the term  $\chi_{1}\left({-}\tau^2{+}\lambda{+}m^2-i0 \right)^{-\cv} $. On the support of $\chi_3$,  $\tau+\sqrt{\lambda+m^2}\geqslant \delta>0$  uniformly in $\lambda$, therefore in the factorization 
$$
\chi_{1}\left( {-}\tau^2{+}\lambda{+}m^2-i0 \right)^{-\cv} =\chi_{1}(\tau)\big( \tau+\sqrt{\lambda+m^2} \big)^{-\cv}\big( {-}\tau{+}\sqrt{\lambda+m^2}-i0 \big)^{-\cv},
 $$
 the term  $\chi_{1}(\tau)\left( \tau+\sqrt{\lambda+m^2} \right)^{-\cv}$  is $\cO_{C^\infty(\mathbb{R})}(\lambda^{-\frac{\Re\cv}{2}})$. By inverse Fourier transform in the variable $\tau$, we get
$$  \cF\Big( {-}\tau{+}\sqrt{\lambda+m^2}-i0 \Big)^{-\cv}=Ce^{it\sqrt{\lambda+m^2}}1_{\mathbb{R}_{\geqslant 0}}(t)t^{\alpha-1} $$
where $C$ is some constant. Thus,
$$
\bea
\Big\vert\Big\langle \chi_{1}\big( {-}\tau{+}\sqrt{\lambda+m^2}-i0 \big)^{-\cv},\psi  \Big\rangle\Big\vert&=\module{C} \bigg\vert\int_{\mathbb{R}}  e^{it\sqrt{\lambda+m^2}}1_{\mathbb{R}_{\geqslant 0}}(t)t^{\alpha-1} \widehat{\psi}(t)dt \bigg\vert \leqslant
 \module{C} \int_{0}^\infty \big\vert t^{\alpha-1} \widehat{\psi}(t) \big\vert dt\\
&\leqslant \module{C} \Big( \big\Vert \left\langle t\right\rangle^{\Re\alpha-\ell}\widehat{\psi} \big\Vert_{L^2(\rr)}+\Vert \widehat{\psi}\Vert_{H^{s_1}(\mathbb{R})}  \Big) \\ &\lesssim \Vert\widehat{\psi}\Vert_{H^{s_1,\Re\alpha-\ell}(\mathbb{R})}=\Vert\psi \Vert_{H^{\Re\alpha-\ell,s_1}(\mathbb{R})}
\eea
$$
if $\ell\in \clopen{0,\frac{1}{2}}$, $\ell\leqslant \Re \cv$ and $s_1>\frac{1}{2}$. By duality, $\chi_{1}\left( {-}\tau{+}\sqrt{\lambda+m^2}-i0 \right)^{-\cv} $ is bounded in the dual weighted Sobolev space $ H^{\ell-\Re\alpha,-s_1}(\mathbb{R})$. 

The term with $\chi_{3}$ is treated in the same way.
\end{proof}

Our objective is to study the regularity of the distribution
$ \left\langle u_2, \lim_{\varepsilon\rightarrow 0^+}  (P-i\varepsilon)^{-\cv}u_1\right\rangle $ where $P$ is the Klein--Gordon operator.
By compactness of $Y$, one has a discrete spectral resolution of the Laplacian $-\Delta_h:C^\infty(M)\to L^2(M)$, there is an orthonormal basis $(e_\lambda)_{\lambda\in \sp(-\Delta_h)}$ of $L^2(M)$, where $-\Delta_h e_\lambda=\lambda e_\lambda$.
By functional calculus, for any $f\in L^\infty(\mathbb{R}_{\geqslant 0})$, we 
have 
$$f(-\Delta_h)u= \sum_{\lambda\in \sp(-\Delta_h)} f(\lambda) \left\langle u, e_\lambda\right\rangle e_\lambda $$ where $f(-\Delta_h):L^2(Y) \to L^2(Y)$ acts as bounded operator.

 For all test functions $u\in C^\infty_\c(M)$, we define 
$\widehat{u}(\tau,\lambda)=\int_{\mathbb{R}}e^{-it\tau}\left\langle(u(t,.),e_\lambda\right\rangle  dt\in {\cS}(\mathbb{R})$.
By Fourier transform we are reduced to study the pairing
\begin{eqnarray*}
\Big\langle u_2, \lim_{\varepsilon\rightarrow 0^+}  (P-i\varepsilon)^{-\cv}u_1\Big\rangle=\sum_{\lambda\in \sp(-\Delta_h)} \int_{\mathbb{R}}  \frac{\overline{\widehat{u_2}}(\tau,\lambda)\widehat{u_1}(\tau,\lambda)}{({-}\tau^2{+}\lambda{+}m^2-i0)^\cv}d\tau 
\end{eqnarray*}  
for all test functions  $u_1$ and $u_2$.
The functions $\widehat{u_1}\widehat{u_2}$ are Schwartz in $\tau$, more precisely
\begin{eqnarray*}
\sum_{\lambda\in \sp(-\Delta_h)}\int_{\mathbb{R}} (1+\lambda+\tau^2)^N \Vert \left\langle \partial_\tau\right\rangle^{N} \widehat{u_i}(\tau,\lambda)\Vert^2_{L^2(Y)}d\tau <\infty
\end{eqnarray*}
for all $N$, hence the product $\widehat{u_2}(\tau,\lambda)\widehat{u_1}(\tau,\lambda)$ is Schwartz in $\tau$ with fast decay in $\lambda$ which implies the distributional pairings
$\int_{\mathbb{R}}  \frac{\overline{\widehat{u_2}}(\tau,\lambda)\widehat{u_1}(\tau,\lambda)}{({-}\tau^2{+}\lambda{+}m^2-i0)^\cv}d\tau $ is well-defined for all $\lambda\in \sigma(-\Delta_h)$.

Now using the crucial Lemma~\ref{l:estimates1d}, we get that
$$
\bea
\big\vert\big\langle u_2, \lim_{\varepsilon\rightarrow 0^+}  (P-i\varepsilon)^{-\cv}u_1\big\rangle\big\vert&\leqslant  \sum_{\lambda\in \sp(-\Delta_h)} \bigg\vert\bigg(\int_{\mathbb{R}}  \frac{\overline{\widehat{u_2}}(\tau,\lambda)\widehat{u_1}(\tau,\lambda)}{({-}\tau^2{+}\lambda{+}m^2-i0)^\cv} d\tau\bigg) \bigg\vert \\
&\leqslant C \sum_{\lambda\in \sp(-\Delta_h)} \left\langle\lambda \right\rangle^{-\frac{\Re \alpha}{2}} \Vert \overline{\widehat{u_2}}(.,\lambda)\widehat{u_1}(.,\lambda)   \Vert_{H^{\Re\cv,s_1}(\mathbb{R})} 
\eea
$$
for some $s_1>\frac{1}{2}$. Now if $s_2>\frac{1}{2}$, $s_2\geqslant \Re\alpha$,  the Moser estimates yield
$$ \Vert uv \Vert_{H^{s_2}( \mathbb{R} )}\lesssim \Vert v \Vert_{H^{s_2}( \mathbb{R} )}\Vert v \Vert_{H^{s_2}( \mathbb{R} )} ,$$ or in the weighted 
version,
$$ \Vert uv \Vert_{H^{s_2,s_1}( \mathbb{R} )}\lesssim \Vert v \Vert_{H^{s_2,\ell_1}( \mathbb{R} )}\Vert v \Vert_{H^{s_2,\ell_2}( \mathbb{R} )}   $$
where $\ell_1+\ell_2=s_1>\frac{1}{2}$.
This implies that for each $\lambda\in \sp(-\Delta_h)$,
$$
\Vert \widehat{u_2}(.,\lambda)\widehat{u_1}(.,\lambda) \Vert_{H^{\Re\alpha,s_1}(\mathbb{R})}
\leqslant 
C \Vert \widehat{u_1}(.,\lambda) \Vert_{H^{s_2,\ell_1}_\tau(\mathbb{R})}  \Vert \widehat{u_2}(.,\lambda)\Vert_{H^{s_2,\ell_2}_\tau(\mathbb{R})}.  
$$
Therefore, using Cauchy--Schwarz inequality we obtain
$$
\bea
\big\vert\big\langle u_2, \lim_{\varepsilon\rightarrow 0^+}  (P-i\varepsilon)^{-\cv}u_1\big\rangle\big\vert
&\leqslant \sum_{\lambda\in \sp(-\Delta_h)} C
\left\langle\lambda \right\rangle^{-\frac{\Re\alpha}{2}}
\Vert \widehat{u_1}(.,\lambda) \Vert_{H^{s_2,\ell_1}_\tau(\mathbb{R})}  \Vert \widehat{u_2}(.,\lambda)\Vert_{H^{s_2,\ell_2}_\tau(\mathbb{R})}\\
&\leqslant 
C
\bigg(\sum_{\lambda\in \sp(-\Delta_h)}  \left\langle\lambda \right\rangle^{-2p_1} 
\Vert \widehat{u_1}(.,\lambda) \Vert_{H^{s_2,\ell_1}_\tau(\mathbb{R})}^2 
\bigg)^{\frac{1}{2}} \fantom \times
\bigg(\sum_{\lambda\in \sp(-\Delta_h)}  \left\langle\lambda \right\rangle^{-2p_2} 
\Vert \widehat{u_2}(.,\lambda) \Vert_{H^{s_2,\ell_2}_\tau(\mathbb{R})}^2   \bigg)^{\frac{1}{2}}
\eea
$$
where $p_1+p_2=\frac{\Re\alpha}{2}$.

To estimate the r.h.s.~we need the following simple result.

\begin{lemm}
For all $u\in C^\infty_\c(M)$,
\begin{equation}
\sum_{\lambda\in \sp(-\Delta_h)}  \left\langle \lambda\right\rangle^{p} \Vert u_\lambda\Vert_{H^{s,\ell}_t(\mathbb{R})}^2= \Vert u\Vert^2_{H^{(s,\ell),p}(M)}
\end{equation}
where $\Vert .\Vert_{H^{(s,\ell),p}(M)}$ is the product-type weighted norm from Definition~\ref{d:sobolevaniso}.
\end{lemm}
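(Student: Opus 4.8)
The plan is to unwind both sides of the identity by diagonalizing $-\Delta_h$ on the compact Cauchy surface $Y$ and invoking Parseval's identity. First, since the metric is ultrastatic, $g=dt^2-h$, the volume form factorizes as $d\vol_g=dt\,d\vol_h$, so by Definition~\ref{d:sobolevaniso}
$$
\|u\|^2_{H^{(s,\ell),p}(M)}=\int_{\mathbb{R}}\int_Y\big|\big(\langle D_t\rangle^s\langle t\rangle^\ell\langle -\Delta_h\rangle^{\frac p2}u\big)(t,y)\big|^2\,d\vol_h\,dt .
$$
Next I would expand $u(t,\cdot)=\sum_{\lambda\in\sp(-\Delta_h)}u_\lambda(t)\,e_\lambda$ in the orthonormal eigenbasis $(e_\lambda)$, where $u_\lambda(t)=\langle u(t,\cdot),e_\lambda\rangle$; for $u\in C^\infty_{\rm c}(M)$ this series converges rapidly (with all the required decay in $\lambda$, as already used just above the lemma for the products $\widehat{u_i}(\tau,\lambda)$), so all the interchanges below are legitimate.

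The key observation is that $\langle -\Delta_h\rangle^{p/2}$ acts as multiplication by $\langle\lambda\rangle^{p/2}$ on each $e_\lambda$ and commutes with the $t$-operators $\langle D_t\rangle^s$ and $\langle t\rangle^\ell$, which act only on the time variable and leave $e_\lambda$ untouched. Hence
$$
\langle D_t\rangle^s\langle t\rangle^\ell\langle -\Delta_h\rangle^{\frac p2}u=\sum_{\lambda\in\sp(-\Delta_h)}\langle\lambda\rangle^{\frac p2}\big(\langle D_t\rangle^s\langle t\rangle^\ell u_\lambda\big)(t)\,e_\lambda .
$$
At each fixed $t$, Parseval's identity for the orthonormal family $(e_\lambda)$ in $L^2(Y)$ then gives
$$
\int_Y\big|\langle D_t\rangle^s\langle t\rangle^\ell\langle -\Delta_h\rangle^{\frac p2}u\big|^2\,d\vol_h=\sum_{\lambda\in\sp(-\Delta_h)}\langle\lambda\rangle^{p}\big|\big(\langle D_t\rangle^s\langle t\rangle^\ell u_\lambda\big)(t)\big|^2 .
$$

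Finally, integrating in $t$ over $\mathbb{R}$ and using Fubini--Tonelli (again justified by the decay of $u_\lambda$ in $\lambda$ together with the polynomial weights), the right-hand side becomes $\sum_{\lambda\in\sp(-\Delta_h)}\langle\lambda\rangle^{p}\|u_\lambda\|^2_{H^{s,\ell}_t(\mathbb{R})}$, where I use that, by the ordering conventions fixed in Appendix~\ref{s:static}, $\|v\|_{H^{s,\ell}_t(\mathbb{R})}=\|\langle D_t\rangle^s\langle t\rangle^\ell v\|_{L^2(\mathbb{R})}$. This is exactly the asserted equality. The argument is entirely routine; the only point that deserves a word is the interchange of the $\lambda$-summation with the $y$- and $t$-integrations, and this is the mildest of obstacles since it follows immediately from the rapid decay of the spectral coefficients of a smooth compactly supported function.
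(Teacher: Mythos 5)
Your proof is correct and follows essentially the same route as the paper's: expansion in the eigenbasis of $-\Delta_h$, commutation of the $t$-operators $\langle D_t\rangle^s\langle t\rangle^\ell$ with the spectral decomposition, Parseval in $L^2(Y)$, and a Fubini interchange justified by the rapid decay of the spectral coefficients. The only difference is cosmetic — you compute from the right-hand side while the paper starts from the left — so there is nothing further to add.
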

\begin{proof}
By definition of  $\Vert .\Vert_{H^{(s,\ell),p}(M)}$ and Fubini's theorem, 
$$
\bea
\sum_{\lambda\in \sp(-\Delta_h)}  \left\langle \lambda\right\rangle^{p} \Vert u_\lambda\Vert_{H^{s,\ell}_t(\mathbb{R})}^2  &=\sum_{\lambda\in \sp(-\Delta_h)}  \left\langle \lambda\right\rangle^{p}
\int_{\mathbb{R}} \vert \left\langle D_t\right\rangle^s  \left\langle t\right\rangle^\ell \left\langle u(t),e_\lambda\right\rangle \vert^2 dt  \\
&=\int_{\mathbb{R}} \sum_{\lambda\in \sp(-\Delta_h)}  \left\langle \lambda\right\rangle^{p}\big\vert\big\langle\big(\left\langle  D_t\right\rangle^s  \langle t\rangle^\ell u\big)(t),e_\lambda\big\rangle \big\vert^2   dt\\
 &= \int_{\mathbb{R}} \Vert(\one-\Delta_h)^{\frac{p}{2}}
 \left\langle  D_t\right\rangle^s  \left\langle t\right\rangle^\ell u\Vert^2_{L^2(Y)}dt=\Vert u\Vert^2_{H^{(s,\ell),p}(M)}
\eea
$$
where we used functional calculus and the fact that the spectral projection commutes with operators depending only on the $t$ variable.
\end{proof}

Therefore, we find that
$$
\Big|\Big\langle u_2, \lim_{\varepsilon\rightarrow 0^+}  (P-i\varepsilon)^{-\cv}u_1\Big\rangle\Big|
\leqslant 
C\Vert  u_1 \Vert_{H^{(\ell_1,s_2),p_1}(M)}
\Vert  u_2 \Vert_{H^{(\ell_2,s_2),p_2}(M)}
$$
for all $s_2>\frac{1}{2}$, $s_2\geqslant \Re \cv$, $\ell_1+\ell_2>\frac{1}{2}$ and $p_1+p_2=\frac{\Re \cv}{2}$ which concludes the proof of the limiting absorption principle stated below. 

\begin{thm}
Let $M=  \mathbb{R}\times Y$ be an ultrastatic Lorentzian manifold such that $Y$ is compact, and let  $P$ be the Klein--Gordon operator with $m>0$.
The complex Feynman powers acts as a continuous map between weighted Sobolev spaces, namely, the weak operator limit
\begin{eqnarray*}
 (P-i0)^{-\cv}: H^{(\ell_1,s_2),p}(M)\to H^{(-\ell_2,-s_2),p-\frac{\Re\cv}{2}}(M)
\end{eqnarray*} 
is well-defined and continuous for all $p\in \mathbb{R}$, $s_2>\frac{1}{2}$, $s_2\geqslant \Re\cv$ and $\ell_1+\ell_2>\frac{1}{2}$.
\end{thm}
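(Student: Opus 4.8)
The plan is to combine the explicit Fourier-side representation of Feynman powers on space-compact ultrastatic manifolds, already available from the proof of Theorem \ref{thm:us}, with the one-dimensional weighted estimates of Lemma \ref{l:estimates1d}. First I would recall from \eqref{eq:repsfeynmanpowers} that for $\varepsilon>0$ and $\Re\cv>0$,
$$
(P-i\varepsilon)^{-\cv}u=\frac{1}{2\pi}\int_{\mathbb{R}\times\sp(-\Delta_h)} e^{i\tau t}\,\big({-}\tau^2{+}\lambda{+}m^2-i\varepsilon\big)^{-\cv}\widehat{u}(\tau,\lambda)\,d\tau,
$$
with the sum over $\lambda$ replacing the continuous integral because $Y$ is compact, and where $\widehat u(\tau,\lambda)=\int_{\mathbb{R}}e^{-i\tau t}\langle u(t,\cdot),e_\lambda\rangle\,dt$. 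The pairing $\langle u_2,(P-i\varepsilon)^{-\cv}u_1\rangle$ then becomes $\sum_\lambda\int_{\mathbb{R}}\overline{\widehat{u_2}}(\tau,\lambda)\widehat{u_1}(\tau,\lambda)\,({-}\tau^2{+}\lambda{+}m^2-i\varepsilon)^{-\cv}\,d\tau$. Because $m>0$, the limit $\varepsilon\to 0^+$ of $({-}\tau^2{+}\lambda{+}m^2-i\varepsilon)^{-\cv}$ exists in $\pazocal{S}'(\mathbb{R}_\tau)$ for every fixed $\lambda\geqslant 0$ (the zero set $\tau^2=\lambda+m^2$ is a smooth noncritical hypersurface of the phase, away from the origin), and since $\widehat{u_1}\widehat{u_2}$ is Schwartz in $\tau$ with rapid decay in $\lambda$, each term passes to the limit and the sum converges; this justifies the weak operator limit $(P-i0)^{-\cv}$ on test functions.

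Next I would estimate the pairing in the limit. Fixing $\lambda$, Lemma \ref{l:estimates1d} gives $\|({-}\tau^2{+}\lambda{+}m^2-i0)^{-\cv}\|_{H^{\ell-\Re\cv,-s_1}(\mathbb{R})}=\pazocal{O}(\lambda^{-\Re\cv/2})$ for $\ell\in\clopen{0,\frac12}$ with $\ell\leqslant\Re\cv$ and $s_1>\frac12$; here the index $-s_1$ records a weight $\langle t\rangle^{s_1}$ on the Fourier-conjugate side. Dualizing, the $\tau$-integral at each $\lambda$ is bounded by $C\langle\lambda\rangle^{-\Re\cv/2}\|\overline{\widehat{u_2}}(\cdot,\lambda)\widehat{u_1}(\cdot,\lambda)\|_{H^{\Re\cv,s_1}(\mathbb{R})}$. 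A weighted Moser (algebra) estimate $\|uv\|_{H^{s_2,s_1}(\mathbb{R})}\lesssim\|u\|_{H^{s_2,\ell_1}(\mathbb{R})}\|v\|_{H^{s_2,\ell_2}(\mathbb{R})}$ with $s_2>\frac12$, $s_2\geqslant\Re\cv$, $\ell_1+\ell_2=s_1$, applied in the $\tau$ variable, then splits the product. Summing over $\lambda$ and applying Cauchy--Schwarz with the weight split $\langle\lambda\rangle^{-\Re\cv/2}=\langle\lambda\rangle^{-p_1}\langle\lambda\rangle^{-p_2}$, $p_1+p_2=\frac{\Re\cv}{2}$, yields
$$
\big|\big\langle u_2,(P-i0)^{-\cv}u_1\big\rangle\big|\leqslant C\,\Big(\sum_\lambda\langle\lambda\rangle^{-2p_1}\|\widehat{u_1}(\cdot,\lambda)\|_{H^{s_2,\ell_1}_\tau}^2\Big)^{1/2}\Big(\sum_\lambda\langle\lambda\rangle^{-2p_2}\|\widehat{u_2}(\cdot,\lambda)\|_{H^{s_2,\ell_2}_\tau}^2\Big)^{1/2}.
$$

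Finally, the identity $\sum_\lambda\langle\lambda\rangle^p\|u_\lambda\|_{H^{s,\ell}_t(\mathbb{R})}^2=\|u\|_{H^{(s,\ell),p}(M)}^2$ (proved by Fubini and functional calculus, using that spectral projections of $-\Delta_h$ commute with operators in $t$ only, together with the fact that Fourier transform in $t$ swaps the first two indices of $H^{(s,\ell),p}$) converts the two factors on the right into $\|u_1\|_{H^{(\ell_1,s_2),p_1}(M)}$ and $\|u_2\|_{H^{(\ell_2,s_2),p_2}(M)}$. Relabeling $p_1=p$, $p_2=\frac{\Re\cv}{2}-p$ (which is legitimate for arbitrary $p\in\mathbb{R}$ since only the sum is constrained) and using Sobolev duality on $M$ to identify the dual of $H^{(\ell_2,s_2),p_2}(M)$ with $H^{(-\ell_2,-s_2),-p_2}(M)$, one reads off that $(P-i0)^{-\cv}$ extends to a continuous map $H^{(\ell_1,s_2),p}(M)\to H^{(-\ell_2,-s_2),p-\Re\cv/2}(M)$ for all $p\in\mathbb{R}$, $s_2>\frac12$, $s_2\geqslant\Re\cv$, $\ell_1+\ell_2>\frac12$. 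I expect the one-dimensional weighted estimate of Lemma \ref{l:estimates1d}, specifically the sharp bookkeeping of the loss of regularity $\Re\cv$ against the weight $s_1$ and the decay gain $\lambda^{-\Re\cv/2}$, to be the main obstacle; everything downstream is functional-analytic bookkeeping with the anisotropic spaces of Definition \ref{d:sobolevaniso}.
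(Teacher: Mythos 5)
Your proposal is correct and follows essentially the same route as the paper: the Fourier--spectral reduction of the pairing to a sum over $\sp(-\Delta_h)$, the one-dimensional weighted bound of Lemma \ref{l:estimates1d}, the weighted Moser product estimate, Cauchy--Schwarz with the split $p_1+p_2=\frac{\Re\cv}{2}$, and the identity converting the $\lambda$-sums into the anisotropic norms of Definition \ref{d:sobolevaniso}, concluded by duality. No substantive deviation or gap to report.
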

}

\section{Various auxiliary proofs}

\subsection{A Wick rotation lemma} We state below a lemma used several times   in the main part  of the text. As in \sec{sec:elementary}, $Q$ is the quadratic form $Q(\xi)= -\xi_0^2+\sum_{i=1}^{n-1}\xi_i^2$ on $\rr^n$.

\begin{lemm}\label{l:wick} 
Let $\cv\in\cc$. When $\theta\rightarrow -\frac{\pi}{2}$, the distribution $ \left(e^{i2\theta}\xi_1^2+\xi_2^2+\dots+\xi_n^2\right)^{-\cv}\rightarrow (Q(\xi)-i0)^{-\cv} $ in $\cD^\prime_\Gamma(\mathbb{R}^n\setminus\{0\})$, $\Gamma=\{(\xi;\tau dQ(\xi)) \,|\, Q(\xi)=0, \tau<0\}$.
\end{lemm}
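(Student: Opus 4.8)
The statement is a convergence result in the topology of $\cD'_\Gamma$, so by the standard characterization of that topology one must establish two things: (i) weak-$*$ convergence $\left(e^{2i\theta}\xi_1^2+\xi_2^2+\dots+\xi_n^2\right)^{-\cv}\to (Q(\xi)-i0)^{-\cv}$ in $\cD'(\rr^n\setminus\{0\})$, and (ii) uniform control of the seminorms $\Vert\cdot\Vert_{N,V,\chi}$ for the family, for every $\chi\in\ccf(\rr^n\setminus\{0\})$ and every closed cone $V$ such that $\supp\chi\times V$ misses $\Gamma$. Both parts should be proved by first reducing to the case $-1<\Re\cv<0$ (where $(\,\cdot\,)^{-\cv}$ is locally integrable and the objects are honest $L^1_{\loc}$ functions away from the origin) and then propagating to general $\cv$ by the now-familiar trick of integration by parts / the Bernstein--Sato type functional equation already used in \sec{sec:elementary}, which raises $\Re\cv$ by one at the cost of differentiating test functions; since that manipulation is continuous for the $\cD'_\Gamma$ seminorms and holomorphic in $\cv$, uniformity in $\theta$ is preserved.

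For step (i) in the range $-1<\Re\cv<0$: fix $\varphi\in\ccf(\rr^n\setminus\{0\})$ and write $w_\theta(\xi)=e^{2i\theta}\xi_1^2+\sum_{j\geqslant 2}\xi_j^2$. As $\theta\to-\tfrac{\pi}{2}^+$ one has $w_\theta(\xi)\to Q(\xi)$ pointwise, and for $\theta$ in a left neighborhood of $-\tfrac{\pi}{2}$ the imaginary part $\Im w_\theta(\xi)=\sin(2\theta)\,\xi_1^2\leqslant 0$, so $w_\theta(\xi)$ stays in the closed lower half-plane, where the branch of $(\,\cdot\,)^{-\cv}$ is continuous away from $0$; moreover on a neighborhood of $\supp\varphi$ we have $|w_\theta(\xi)|\geqslant c\,|\xi|^2>0$ uniformly, giving the domination $|w_\theta(\xi)^{-\cv}|\leqslant C|\xi|^{-2\Re\cv}$, which is integrable on $\supp\varphi$. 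The dominated convergence theorem then yields $\langle w_\theta^{-\cv},\varphi\rangle\to\langle (Q-i0)^{-\cv},\varphi\rangle$, the limiting distribution being exactly $Q^*(t-i0)^{-\cv}$ restricted to $\rr^n\setminus\{0\}$ by the pull-back theorem (as in Proposition \ref{prop:flatfeynm}), since $dQ\neq0$ off the origin.

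For step (ii), the seminorm bound: with $\chi$ and $V$ as above, one must bound $(1+\Vert\xi\Vert)^N|\widehat{\chi\, w_\theta^{-\cv}}(\xi)|$ for $\xi\in V$, uniformly in $\theta$ close to $-\tfrac{\pi}{2}$. The argument mirrors \emph{Step 1} of the microlocal estimates in \sec{sec:elementary}: represent $w_\theta(\xi)^{-\cv}$ as a complex oscillatory integral $\propto\int_0^\infty e^{-iu w_\theta(\xi)}u^{\cv-1}du$ (convergent since $\Im w_\theta\leqslant0$ and $\Re w_\theta>0$ on $\supp\chi$), insert it into the Fourier transform, and integrate by parts in $\xi$ using the operator $\pazocal{L}=\dfrac{1+\langle\nabla_\xi\phi_\theta,\nabla_\xi\rangle}{1+\langle\nabla_\xi\phi_\theta,\nabla_\xi\phi_\theta\rangle}$ adapted to the phase $\phi_\theta(\xi,\vareta,u)=\langle\vareta,\xi\rangle+u\,\Re w_\theta(\xi)$ (or, even more simply, work directly with $\chi(\xi)e^{-i\langle\vareta,\xi\rangle}w_\theta(\xi)^{-\cv}$ and integrate by parts with the vector field $\dfrac{\langle\nabla_\xi w_\theta,\nabla_\xi\rangle}{|\nabla_\xi w_\theta|^2}$). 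The denominator is bounded below because, for $\xi\in V$ and $\xi\in\supp\chi$, the real gradient $\vareta+u\,\Re\nabla w_\theta(\xi)$ never vanishes: $\Re\nabla w_\theta(\xi)\to\tfrac12\nabla Q(\xi)$ and $V$ avoids $\rr_{<0}\,dQ(\xi)$ on $\supp\chi$, so for $\theta$ sufficiently close to $-\tfrac{\pi}{2}$ the non-vanishing survives by continuity and compactness, with a lower bound $\gtrsim(\Vert\vareta\Vert+u)$ uniform in $\theta$. Each integration by parts gains a factor $(\Vert\vareta\Vert+u)^{-1}$, and after combining with the factor $u^{\Re\cv-1}$ and cutting the $u$-integral dyadically one arrives at $|\widehat{\chi w_\theta^{-\cv}}(\vareta)|\leqslant C_N(1+\Vert\vareta\Vert)^{-N}$ for all $N$, uniformly in $\theta$ near $-\tfrac{\pi}{2}$; the Zygmund/decay dictionary of \sec{sss:Holderfourierdecay} then packages this as the required seminorm bound.

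\textbf{Main obstacle.} The genuinely delicate point is establishing the \emph{uniformity in $\theta$} of the lower bound on $|\vareta+u\,\Re\nabla w_\theta(\xi)|$ over the \emph{noncompact} ranges $\vareta\in V$ and $u\in(0,\infty)$, i.e.~making precise that the perturbation of the real phase $\tfrac12 Q$ by $O(\tfrac{\pi}{2}+\theta)$ does not destroy the stationary-phase non-degeneracy that excludes the conormal direction $\Gamma$; once this is done, the decay estimates and the passage from the strip $-1<\Re\cv<0$ to all $\cv\in\cc$ via the functional equation are routine and structurally identical to arguments already carried out in \sec{sec:elementary}. The convergence $\theta\to-\tfrac{\pi}{2}$ itself is harmless away from the origin because $Q$ has no critical points there; the only place the restriction to $\rr^n\setminus\{0\}$ matters is precisely to keep $|\nabla_\xi w_\theta|$ bounded below, which is why the lemma is stated on the punctured space.
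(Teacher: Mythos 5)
Your route is genuinely different from the paper's: the paper proves this lemma by viewing the family (after rotating the $\xi_1$-variable) as an analytic function approaching its boundary value and simply invokes H\"ormander's Theorems 3.1.15 (for the $\cD'$ convergence) and 8.4.8 (for the uniform wavefront/seminorm control), whereas you attempt a self-contained argument in the style of Section~\ref{sec:elementary} (dominated convergence in a strip of $\cv$, non-stationary phase for the seminorms, functional equation in $\cv$). That strategy is viable in principle, but as written it has a genuine gap at the key non-stationarity step. Writing $w_\theta(\xi)=e^{2i\theta}\xi_1^2+\xi_2^2+\cdots+\xi_n^2$, the hypothesis $\supp\chi\times V\cap\Gamma=\emptyset$ only excludes $\eta\in\mathbb{R}_{<0}\,dQ(\xi)$ over points $\xi\in\supp\chi$ \emph{on the cone} $\{Q=0\}$; it is perfectly admissible that $\supp\chi$ contains some $\xi_0$ with $Q(\xi_0)\neq 0$ while $V$ contains $-dQ(\xi_0)$. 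For such a pair $(\chi,V)$ the real phase $\langle\eta,\xi\rangle+u\,\Re w_\theta(\xi)$ does have a stationary point in $\xi$ (even at $\theta=-\tfrac\pi2$), so your claimed lower bound $\gtrsim(\Vert\eta\Vert+u)$, and hence the integration by parts in $\xi$ alone, fails there. The repair is to split $\supp\chi$: near the cone your angular-separation argument works (the cone points of $\supp\chi$ and $V$ are disjoint closed cones, hence at positive angular distance, and $\Re\nabla w_\theta\to\nabla Q$ uniformly); off the cone one must use a different mechanism, e.g.\ the uniform bound $|w_\theta(\xi)|\geqslant\tfrac12|Q(\xi)|$ for $\theta$ near $-\tfrac\pi2$ (which follows from $|w_\theta|\geqslant\max\bigl(|Q+\delta\xi_1^2|,\sqrt\delta\,\xi_1^2\bigr)$ with $\delta=1+\cos2\theta$), giving $C^\infty$ bounds on $w_\theta^{-\cv}$ uniform in $\theta$ and hence rapid decay of $\widehat{\chi w_\theta^{-\cv}}$ in \emph{all} directions; alternatively integrate by parts jointly in $(\xi,u)$ using $\Re w_\theta\approx Q\neq 0$.

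Two of your auxiliary justifications are also false as stated, though fixable. The domination in step (i) is argued from "$|w_\theta(\xi)|\geqslant c|\xi|^2$ uniformly in $\theta$", which fails on the light cone as $\theta\to-\tfrac\pi2$; for $-1<\Re\cv<0$ what you actually need is only $|w_\theta(\xi)|\leqslant|\xi|^2$ together with $|\arg w_\theta|\leqslant\pi$, which gives $|w_\theta^{-\cv}|\leqslant C|\xi|^{-2\Re\cv}$ with a \emph{positive} exponent. Similarly "$\Re w_\theta>0$ on $\supp\chi$" is false inside the light cone, and the representation $\int_0^\infty e^{-iuw_\theta}u^{\cv-1}du$ requires $\Re\cv>0$ and a UV/IR treatment as in Lemma~\ref{l:oscilldec}, so it cannot be used verbatim in the strip you reduced to. Finally, the Bernstein--Sato operator of Section~\ref{sec:elementary} carries a factor $z^{-1}$ and does not apply when there is no spectral shift; for the induction in $\cv$ use instead a $\theta$-adapted identity such as $\bigl(e^{-2i\theta}\partial_{\xi_1}^2+\sum_{j\geqslant2}\partial_{\xi_j}^2\bigr)w_\theta^{-\cv}=2\cv\bigl(2(\cv+1)-n\bigr)w_\theta^{-\cv-1}$, or $\partial_{\xi_j}w_\theta^{-\cv}=-2\cv\,c_j(\theta)\,\xi_j\,w_\theta^{-\cv-1}$ on $\{\xi_j\neq0\}$ combined with a partition of unity on $\rr^n\setminus\{0\}$, handling exceptional values of $\cv$ by holomorphy. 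With these repairs your argument goes through, but the off-cone stationary-point issue is the one step that, as written, would fail.
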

\begin{proof}
The proof follows closely the proof of~\cite[Thm 3.1.15]{H} for the convergence in $\pazocal{D}^\prime$. For the control of the wavefront set, the proof follows closely~\cite[Thm 8.4.8]{H} which gives a wavefront bound in the sense of quasi--analytic classes.
\end{proof}

\subsection{Wavefront set of the pull-back \texorpdfstring{$G^*\Fse{z}$}{GF(z)}}
\label{ss:pullback}
We compute the 
wavefront set
of the germ of distribution $G^*\Fse{z}$ as stated in Lemma \ref{Wavefrontpullback}. Let us recall that $\Fse{z}$ is the elementary family of distributions on $\rr^n$ introduced in \sec{ss:elementary}, and that the pull-back  by the submersion $G$ defined in \sec{sss:pullback} gives a distribution defined on a neighborhood $\cU$ of the diagonal $\Delta$ in  $M\times M$.

\begin{refproof}{Lemma \ref{Wavefrontpullback}}   \step{1} An application of the 
pull-back  theorem \cite[Thm.~8.2.4]{H} in 
our situation
gives
\begin{equation}\label{pulledbackWF}
\WF(G^*F_\cv)\subset \{(x_1,x_2; k\circ d_{x_1}G,k\circ d_{x_2}G) \st (G(x_1,x_2),k)\in \WF(F_\cv)\} 
\end{equation}
We denote by $(x_1,x_2;\eta_1,\eta_2)$ an element
of $T^* \pazocal{V}\subset T^* M^2$ and
$(h^\mu;k_\mu) $ the coordinates in $T^* \mathbb{R}^{n}$.
The pull-back with 
indices reads:
$$(x_1,x_2; k\circ d_{x_1}G,k\circ d_{x_2}G)=(x_1,x_2; k_\mu d_{x_1}G^\mu,k_\mu d_{x_2}G^\mu).$$

\step{2} We first 
compute
$\WF(G^* F_\cv)$
outside
the set
$\diag=\{x_1=x_2\}$.
The condition 
$(G(x_1,x_2),k)\in \WF(F_\cv)$ in (\ref{pulledbackWF}), 
reads 
$(G^\mu(x_1,x_2);k_\mu)=(G^\mu(x_1,x_2);\lambda\eta_{\mu\nu}G^\nu(x_1,x_2))$ where $\lambda <0$.
We obtain $$(x_1,x_2; \lambda k\circ d_{x_1}G,\lambda k\circ d_{x_2}G)=(x_1,x_2;\lambda G^\mu\eta_{\mu\nu_2}d_{x_1}G^{\nu_2},\lambda G^\mu\eta_{\mu\nu_2} d_{x_2}G^{\nu_2})$$
and also 
$G^\mu(x_1,x_2)\eta_{\mu\nu}G^\nu(x_1,x_2)=0$.
Now set
$\Gamma(x_1,x_2)=G^\mu(x_1,x_2)\eta_{\mu\nu}G^\nu(x_1,x_2)$.
The key
observation is that
$d_{x_1}\Gamma=2G^\mu\eta_{\mu\nu}d_{x_1}G^\nu$ and $d_{x_2}\Gamma=
2G^\mu\eta_{\mu\nu}d_{x_2}G^\nu$, 
hence:
$$\WF(G^{* }F_\cv)\subset \{(x_1,x_2; \lambda d_{x_1}\Gamma,\lambda d_{x_2}\Gamma)\,|\, \Gamma(x_1,x_2)=0, \, \lambda\in\mathbb{R}_{<0}  \}.$$

We first interpret the term 
$$\{(x_1,x_2; \lambda d_{x_1}\Gamma,\lambda d_{x_2}\Gamma) \,| \,\Gamma(x_1,x_2)=0, \, \lambda <0   \}$$ 
appearing in the last formula
as the subset
of all elements
in $T^* \pazocal{U}$
of the conormal bundle
of the conoid $\{\Gamma=0\}$ 
such 
that 
elements of positive energy are propagated in the future and elements of negative energy are propagated in the past:
this is exactly the 
\emphasize{Feynman condition}.
In fact, if we use the metric 
to lift the indices,
$d_{x_1}\Gamma\left(e_\mu(x_1)\right)\eta^{\mu\nu}e_\nu(x_1)$
and $d_{x_2}\Gamma\left(e_\mu(x_2)\right)\eta^{\mu\nu}e_\nu(x_2)$
are the Euler vector fields $\nabla_1\Gamma,\nabla_2\Gamma$ defined by Hadamard. 
The vectors 
$\nabla_1\Gamma,-\nabla_2\Gamma$ 
are parallel along the null geodesic
connecting $x_1$ and $x_2$ (which is easily checked using normal coordinates centered at $x_1$, proving
$(d_{x_1}\Gamma,-d_{x_2}\Gamma)$ are in fact \emphasize{coparallel}
along this null geodesic. Denoting $q_1=(x_1: \lambda d_{x_1}\Gamma)$ and $q_2=(x_2; \lambda d_{x_2}\Gamma)$, the relation $\exp_{x_1}(\nabla_1\Gamma)=x_2$ implies that $\nabla_1\Gamma$ points to the future (resp.~past) if and only if $q_1 \succ q_2$ (resp.~$q_2\succ q_1$), which implies the Feynman condition.

\step{3 (``diagonal'')} For any function
$G$ on $M^2$, we
uniquely decompose the total differential
in two
parts
as follows
$$dG=d_{x_1}G+d_{x_2}G,\text{ where }d_{x_1}G|_{\{0\}\times T_{x_2}M}=0,d_{x_2}G|_{T_{x_1}M\times\{0\}}=0.$$
Let $i$ be the diagonal inclusion
map $i \defeq M \ni x\mapsto (x,x)\in \diag\subset M$.
The  
$\forall x\in M$, $G\circ i(x)=0$ implies 
$d_x G\circ i=0$, which is equivalent to  $d_{x_1}G\circ di+d_{x_2}G\circ di=0$.
Since
$$d_{x_2}G^\mu(x,x)=d_{x_2}\varalpha^\mu_{x_1}\left(\exp_{x_1}^{-1}(x_2)\right)|_{x_1=x_2=x}=
\varalpha^\mu_{x_1}\left(d_{x_2}\exp_{x_1}^{-1}(x_2)\right)|_{x_1=x_2=x}=\varalpha^{\mu}(x),$$ because $d_{x_2}\exp^{-1}_{x_1}(x_2)|_{x_1=x_2=x}=\id_{T_xM\to T_xM}=e_{\mu}(x)\varalpha^{\mu}(x)$.
Thus
$d_{x_1}G^\mu(x,x)=-\varalpha^\mu(x)$ and
$$ \{(x_1,x_2; k\circ d_{x_1}G,k\circ d_{x_2}G) \st x_1=x_2, \}$$
$$=\{(x,x; -k_\mu \varalpha^\mu(x),k_\mu \varalpha^\mu(x)) \st x\in M\}.$$ 

This concludes the proof of Lemma \ref{Wavefrontpullback}.\end{refproof}

\subsection{H\"older, scaling and Fourier decay} \label{sss:Holderfourierdecay} We  turn now our attention to the proof of regularity estimates for $G^*\Fse{z}$ which are uniform in $z$.

We first recall a position space definition of H\"older functions $\pazocal{C}^\varalpha(\rr^n)$
which coincides with the Fourier definition for non integer $\varalpha>0$. 
The equivalence is proved in~\cite[Prop.~8.1]{taylorpartial3},  \cite[Prop.~8.6.1]{Hormander-97}. 
Let us recall a version adapted to our discussion.
\begin{lemm}\label{l:holdersobolev}
Let $\varalpha\in \mathbb{R}$. Then
$u\in \pazocal{C}^\varalpha_{\loc}(\mathbb{R}^n)$ iff for every test function $\chi\in C^\infty_\c(\mathbb{R}^n)$
$$ \vert \widehat{u\chi}(\xi) \vert\leqslant C(1+\vert \xi\vert)^{-\varalpha-n} .$$
{As a consequence, we have the continuous injection $\pazocal{C}^{\varalpha}_{\loc}(\mathbb{R}^n) \hookrightarrow H_{\loc}^{\varalpha+\frac{n}{2}-\varepsilon} (\mathbb{R}^n)$ for all $\varepsilon>0$.}
\end{lemm}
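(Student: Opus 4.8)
\emph{Proof proposal.} The assertion is a localized form of the classical Littlewood--Paley/Fourier description of Zygmund classes, and the plan is to reduce everything to the dyadic partition $1=\chi_0+\sum_{j\geqslant 0}\chi(2^{-j}\cdot)$ already used in \eqref{plfj} to define $\pazocal{C}^r(\rr^n)$. I write $\Delta_j u:=\chi(2^{-j}\sqrt{-\Delta})u$ and $\Delta_{-1}u:=\chi_0(\sqrt{-\Delta})u$ for the Littlewood--Paley blocks, so that $\widehat{\Delta_j u}(\xi)=\chi(2^{-j}|\xi|)\,\widehat u(\xi)$ is supported in a fixed dyadic annulus $\{|\xi|\sim 2^j\}$ (a ball for $j=-1$). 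Two soft facts will make the subscript ``$\loc$'' harmless: multiplication by a fixed $\psi\in C^\infty_\c(\rr^n)$ is bounded on every $\pazocal{C}^r(\rr^n)$ (a standard paraproduct estimate), so that $u\in\pazocal{C}^r_{\loc}(\rr^n)$ iff $u\chi\in\pazocal{C}^r(\rr^n)$ for every $\chi\in C^\infty_\c(\rr^n)$; and a compactly supported distribution has an entire, polynomially bounded Fourier transform, so all the pointwise quantities below are classically defined.

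For the sufficiency direction, assume $|\widehat{u\chi}(\xi)|\leqslant C(1+|\xi|)^{-\varalpha-n}$ for all $\chi\in C^\infty_\c(\rr^n)$ and fix such a $\chi$. Then $\widehat{\Delta_j(u\chi)}(\xi)=\chi(2^{-j}|\xi|)\,\widehat{u\chi}(\xi)$ is supported in $\{|\xi|\sim 2^j\}$ and bounded there by $C2^{-j(\varalpha+n)}$, whence
$$\|\Delta_j(u\chi)\|_{L^\infty}\leqslant (2\pi)^{-n}\|\chi(2^{-j}|\cdot|)\widehat{u\chi}\|_{L^1}\leqslant C'\,2^{jn}\cdot 2^{-j(\varalpha+n)}=C'\,2^{-j\varalpha},$$
and likewise $\|\Delta_{-1}(u\chi)\|_{L^\infty}\leqslant C'$. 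By \eqref{plfj} this gives $u\chi\in\pazocal{C}^\varalpha(\rr^n)$, and since $\chi$ was arbitrary, $u\in\pazocal{C}^\varalpha_{\loc}(\rr^n)$.

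For the necessity direction, if $u\in\pazocal{C}^\varalpha_{\loc}(\rr^n)$ then $u\chi\in\pazocal{C}^\varalpha(\rr^n)\cap\cE'(\rr^n)$ for every $\chi$, so by \eqref{plfj} the blocks satisfy $\|\Delta_j(u\chi)\|_{L^\infty}\lesssim 2^{-j\varalpha}$. For $|\xi|\sim 2^k$ only the finitely many blocks with $|j-k|\leqslant 1$ contribute to $\widehat{u\chi}(\xi)$, and one estimates each $|\widehat{\Delta_j(u\chi)}(\xi)|$ from the $L^\infty$ bound on $\Delta_j(u\chi)$ together with its localization in physical space coming from the compact support of $u\chi$; performed with the sharp book-keeping, this is precisely the Fourier-transform characterization of $\pazocal{C}^\varalpha$ proved in \cite[Prop.~8.1]{taylorpartial3} and \cite[Prop.~8.6.1]{Hormander-97}, which I would invoke at this point to obtain $|\widehat{u\chi}(\xi)|\leqslant C(1+|\xi|)^{-\varalpha-n}$ with $C$ controlled by a $\pazocal{C}^\varalpha$-seminorm of $u\chi$. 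This is the only genuinely delicate step: converting $L^\infty$-control of Littlewood--Paley blocks into \emph{pointwise} decay of the Fourier transform is not a soft argument and must use the compact support, so the plan rests here on the classical references rather than on the elementary estimates used elsewhere in the paper.

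Finally, granting the equivalence, for $u\in\pazocal{C}^\varalpha_{\loc}(\rr^n)$ and $\chi\in C^\infty_\c(\rr^n)$ one has $u\chi\in\cE'(\rr^n)$ with $|\widehat{u\chi}(\xi)|\leqslant C(1+|\xi|)^{-\varalpha-n}$, hence
$$\|u\chi\|_{H^s}^2=(2\pi)^{-n}\int_{\rr^n}|\widehat{u\chi}(\xi)|^2(1+|\xi|)^{2s}\,d\xi\leqslant C^2\int_{\rr^n}(1+|\xi|)^{2s-2\varalpha-2n}\,d\xi<\infty$$
precisely when $2s-2\varalpha-2n<-n$, that is for every $s<\varalpha+\frac{n}{2}$; together with the quantitative control of $C$ this yields the continuous inclusion $\pazocal{C}^\varalpha_{\loc}(\rr^n)\hookrightarrow H^{\varalpha+\frac{n}{2}-\varepsilon}_{\loc}(\rr^n)$ for all $\varepsilon>0$.
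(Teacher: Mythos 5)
Your sufficiency direction (pointwise decay of order $\varalpha+n$ for $\widehat{u\chi}$ implies $u\in\pazocal{C}^\varalpha_{\loc}$) is correct and is exactly the paper's argument: integrating $|\widehat{u\chi}|$ over the dyadic annulus gives $\Vert\chi(2^{-j}\sqrt{-\Delta})(u\chi)\Vert_{L^\infty}\leqslant C\,2^{jn}2^{-j(\varalpha+n)}=C\,2^{-j\varalpha}$, and then one invokes \eqref{plfj}. The Plancherel computation you use to pass from the pointwise Fourier bound to the Sobolev statement is also the intended one.

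The genuine gap is the necessity direction, and it cannot be closed by the citation you lean on. The propositions of Taylor and H\"ormander you invoke prove the Littlewood--Paley characterization $\sup_j 2^{j\varalpha}\Vert\chi(2^{-j}\sqrt{-\Delta})(u\chi)\Vert_{L^\infty}<\infty$ of $\pazocal{C}^\varalpha$; they do not contain a pointwise bound $|\widehat{u\chi}(\xi)|\leqslant C(1+|\xi|)^{-\varalpha-n}$, and the reduction you sketch cannot produce it: writing $\Delta_j(u\chi):=\chi(2^{-j}\sqrt{-\Delta})(u\chi)$, the block bounds together with the rapid off-support decay of $\Delta_j(u\chi)$ give at best $|\widehat{\Delta_j(u\chi)}(\xi)|\leqslant\Vert\Delta_j(u\chi)\Vert_{L^1}\leqslant C\,2^{-j\varalpha}$ on $\{|\xi|\sim 2^j\}$, i.e.\ decay of order $\varalpha$ only --- you lose precisely the $n$ powers at stake, and no book-keeping recovers them. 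Indeed the stronger decay is false in general: for a lacunary function $u(x)=\sum_{k}2^{-k\varalpha}\cos(2^k x_1)\in\pazocal{C}^\varalpha(\mathbb{R}^n)$ and any $\chi\in C^\infty_\c$ with $\chi\geqslant 0$, $\widehat{\chi}(0)>0$, one has $|\widehat{u\chi}(2^m e_1)|\geqslant c\,2^{-m\varalpha}$ with $c>0$, and $u\chi\in H^{s}$ only for $s<\varalpha$; so neither the claimed pointwise bound nor the gain of $\tfrac{n}{2}$ derivatives in the embedding can be reached by your route (the robust statements are the ``if'' half and $\pazocal{C}^\varalpha_{\loc}\hookrightarrow H^{\varalpha-\epsilon}_{\loc}$, which is what is actually needed in the proof of Proposition \ref{l:decompositionresolvent}, where one is free to take $N$, hence the H\"older exponent, larger). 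For comparison, the paper proves this direction by a different mechanism: the Taylor-polynomial scaling characterization of H\"older continuity is tested against $e^{i\xi\cdot}$ and combined with $\langle (u\chi-P)(\lambda\,\cdot),e^{i\xi\cdot}\rangle=\lambda^{-n}\widehat{u\chi}(\xi/\lambda)$ for $|\xi|\geqslant 1$; the factor $\lambda^{-n}$ is exactly where the exponent $\varalpha+n$ enters, but that step applies an estimate whose constant depends on the compact support of the test function to the non-decaying exponential, so the delicate point you flagged is real and is not resolved there either --- do not build on the two-sided equivalence with exponent $\varalpha+n$; use only the ``if'' direction and the $\epsilon$-loss embedding.
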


\begin{proof}
If $u\in \pazocal{C}^\varalpha(\mathbb{R}^n)$ with $\varalpha>0, k< \varalpha <k+1$ then it means for any $x$, there exists a polynomial $P$ of degree $k$, which is nothing but the Taylor polynomial of $u$ at $x$, s.t.~{for all test function $\varphi\in C^\infty_{\rm c}(\mathbb{R}^n)$ (one could also take $\varphi$ in the Schwartz class):} 
$$\bigg| \int_{\mathbb{R}^n}(u-P)(\lambda(y-x)+x)\varphi(y)d^ny\bigg|\leqslant C\lambda^\varalpha\Vert \varphi\Vert_{L^\infty} .$$
Now let $u\in \pazocal{C}^\varalpha_{\loc}(\mathbb{R}^n)$, hence we may multiply
$u$ with some cut-off $\chi\in C^\infty_\c(\mathbb{R}^n)$ so that
$u\chi\in \pazocal{C}^\varalpha$. 
In particular choosing the function on the r.h.s.~as $ e^{ix.\xi}$ yields 
$$ \sup_{0<\lambda\leqslant 1} \lambda^{-\varalpha} \sup_{1\leqslant\vert \xi\vert \leqslant 2} \module{\left\langle   (u\chi-P)(\lambda.), e^{i\xi.x}\right\rangle}\leqslant  \Vert u\chi\Vert_{\pazocal{C}^\varalpha} \sup_{1\leqslant\vert \xi\vert \leqslant 2} \Vert e^{i\left\langle\xi,.\right\rangle } \Vert_{L^\infty}= \Vert u\chi\Vert_{\pazocal{C}^\varalpha}. $$
Therefore, {using the fact} that $\left\langle   (u\chi-P)(\lambda.),e^{i\xi.x}\right\rangle=\left\langle   u\chi(\lambda.), e^{i\xi.x}\right\rangle  $ since the Fourier transform restricted to $\vert\xi\vert\geqslant 1$ does not see the polynomial, and 
$\left\langle   u\chi(\lambda.), e^{i\xi.x}\right\rangle=\lambda^{-n}\widehat{u\chi}(\frac{\xi}{\lambda}) $,
we get
$$ \sup_{0<\lambda\leqslant 1} \lambda^{-\varalpha-n} \sup_{1\leqslant\vert \xi\vert \leqslant 2} \vert\widehat{u\chi}(\xi/\lambda)\vert\leqslant \Vert u\chi\Vert_{\pazocal{C}^\varalpha} .$$
Hence for $\vert \xi\vert\geqslant 1$, we get
$$\vert\widehat{u\chi}(\xi)\vert= \vert\widehat{u\chi}(\xi\vert\xi\vert/\vert \xi\vert)\vert\leqslant \Vert u\chi\Vert_{\pazocal{C}^\varalpha}\vert \xi\vert^{-\varalpha-n} $$
and finally this means 
that: 
$$ \vert \widehat{u\chi}(\xi) \vert\leqslant C(1+\vert \xi\vert)^{-\varalpha-n} .$$

Conversely, if we have the Fourier decay 
$ \vert\widehat{u\chi}(\xi)\vert\leqslant C(1+\vert\xi \vert)^{-r} $ for $r\in \mathbb{R}_{\geqslant 0}$, then
the Littlewood--Paley blocks are bounded by: 
$$ \bea 
\Vert \psi(2^{-j}\sqrt{-\Delta}) (u\chi) \Vert_{L^\infty} &=\Vert \pazocal{F}^{-1}\left( \psi(2^{-j}\vert \xi\vert) \widehat{u\chi}(\xi)\right) \Vert_{L^\infty}
\leqslant \int_{\mathbb{R}^n} \vert  \psi(2^{-j}\vert \xi\vert) \widehat{u\chi}(\xi) \vert d^n\xi
\\
&\leqslant 
2^{jn}\int_{\mathbb{R}^n} \vert  \psi(\vert \xi\vert) \widehat{u\chi}(2^{-j}\xi) \vert d^n\xi\leqslant 
C2^{jn} \int_{\mathbb{R}^n} \psi(\vert \xi\vert)(1+2^{j}\vert \xi\vert)^{-r}d^n\xi\\
&\leqslant C2^{j(n-r)}  \int_{\mathbb{R}^n} \psi(\vert \xi\vert)(2^{-j}+\vert \xi\vert)^{-r}d^n\xi\lesssim 2^{j(n-r)}.
\eea $$ 
This means that $u\in \pazocal{C}^{ r-n}_{\loc}(\rr^n)$.
\end{proof}

Set $\cv\in \mathbb{C}$ with $\Re\cv\geqslant 0$. We  consider the H\"older regularity under pull-back of $G^*F_\cv\in  \pazocal{D}^\prime(\pazocal{U})$ where $\pazocal{U}\subset M\times M $ is the neighborhood of the diagonal 
and $G:\pazocal{U} \ni(x,y) \mapsto G(x,y)\in  \mathbb{R}^n$ is the $C^\infty$ submersive map defined by \eqref{Wavefrontpullback}.

\begin{lemm}\label{l:holderpullback}
Let $\varm=\plancher{\Re\cv}+1$ and $\Fse{z}\in \pazocal{D}^\prime(\mathbb{R}^n)$ as defined in equation~(\ref{e:defF}).
Let $G$ be the $C^\infty$ submersive map defined in \eqref{Wavefrontpullback}. Then the pull-back
$\mathbf{F}_{\cv}(z,.)=G^*F_\cv(z,.)$ is in 
$\pazocal{C}_{\loc}^{(2-2a)(\Re\cv+1)-\varm-n}(\pazocal{U})$ with  decay in $z$ of order $\pazocal{O}(\module{\Im z}^{-a(\Re\cv+1)})$ for $a\in [0,1]$. 
\end{lemm}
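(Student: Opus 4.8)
The plan is to transfer the Hölder regularity of $\Fse{z}$ on $\rr^n$, together with its $z$-decay, through the smooth submersion $G$, using the local characterization of Hölder--Zygmund regularity in terms of Fourier decay recorded in Lemma~\ref{l:holdersobolev}. Since $G$ is a submersion and $\pazocal{U}$ is the neighborhood of the diagonal constructed in \sec{movingframe}, working locally I can write $G$ in suitable coordinates as a projection $(x',x'')\mapsto x'$ after a change of variables, or more conveniently argue directly that $G^*$ maps $\pazocal{C}^\varalpha_{\loc}(\rr^n)$ continuously into $\pazocal{C}^\varalpha_{\loc}(\pazocal{U})$ for every $\varalpha\in\rr$. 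The constant in this continuity bound depends only on $G$ and the chosen cutoffs, not on $z$, so it does not interfere with the decay estimates.

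Concretely, first I would fix an arbitrary test function $\Psi\in C^\infty_\c(\pazocal{U})$ and cover $\supp\Psi$ by finitely many coordinate patches on which $G$ admits a smooth local right inverse; after partitioning $\Psi$ subordinate to this cover it suffices to treat one patch. On such a patch, choosing $\chi\in C^\infty_\c(\rr^n)$ equal to $1$ on a neighborhood of $G(\supp\Psi\cap\text{patch})$, we have $\Psi\, G^*\Fse{z} = \Psi\, G^*(\chi\Fse{z})$, and since $\chi\Fse{z}\in\pazocal{C}^\varalpha(\rr^n)$ with $\varalpha=(2-2a)(\Re\cv+1)-\varm-n$ by Proposition~\ref{p:fsholder} (where $\varm=\plancher{\Re\cv}+1$), and with a norm bounded by $C\module{\Im z}^{-a(\Re\cv+1)}$ uniformly for $\Im z>0$, the pull-back by the smooth submersion lands in $\pazocal{C}^\varalpha$ with a comparable norm. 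The cleanest way to see the pull-back bound is via the Fourier characterization: writing $G$ in local coordinates so that it becomes the projection onto the first $n$ coordinates composed with a diffeomorphism, $\widehat{\Psi\,G^*(\chi\Fse{z})}$ is an oscillatory integral against $\widehat{\chi\Fse{z}}$ whose phase is stationary in the fiber directions, and a routine non-stationary phase / repeated integration-by-parts argument gives the decay $(1+|\zeta|)^{-\varalpha-n}$ in the $2n$-dimensional frequency variable $\zeta$, with constant proportional to $\|\chi\Fse{z}\|_{\pazocal{C}^\varalpha}$; equivalently one simply invokes the standard fact that composition with a submersion is continuous $\pazocal{C}^\varalpha_{\loc}\to\pazocal{C}^\varalpha_{\loc}$. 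Combining this with the $z$-dependence of the Hölder norm of $\Fse{z}$ yields the claimed bound.

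The only genuine subtlety is keeping track of the dependence on $z$ in the pull-back step: one must ensure that the finite collection of coordinate charts, cutoffs, and the resulting continuity constant for $G^*$ can be chosen once and for all, independently of $z$ along the contour $\gamma_\varepsilon$ (indeed for all $z$ with $\Im z\geqslant 0$, $|z|\geqslant\varepsilon$). This is automatic because $G$ itself does not depend on $z$ — only $\Fse{z}$ does — so the chart data and the map $G^*$ are fixed, and the entire $z$-dependence is inherited verbatim from the bound in Proposition~\ref{p:fsholder}. A secondary bookkeeping point is that the integer $\varm=\plancher{\Re\cv}+1$ entering the regularity exponent is exactly the one appearing in Proposition~\ref{p:fsholder}, so no loss is incurred beyond what is already present on $\rr^n$. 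I expect the main (and essentially only) work to be the clean statement and verification that $G^*:\pazocal{C}^\varalpha_{\loc}(\rr^n)\to\pazocal{C}^\varalpha_{\loc}(\pazocal{U})$ is continuous for all real $\varalpha$ including negative ones, which is where the Fourier-decay characterization of Lemma~\ref{l:holdersobolev} does the job; everything else is direct substitution.
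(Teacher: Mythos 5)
Your overall strategy is the right one and is, in substance, the paper's: since $G$ is independent of $z$, all the $z$-uniformity is inherited from the flat estimate on $\Fse{z}$ (which, by the way, is the H\"older--Zygmund proposition of \sec{ss:holderestimates}, i.e.\ Theorem \ref{t:fsanalyticflat}, not Proposition \ref{p:fsholder}, which only gives the $\pazocal{D}'_{\varlambda_0}$ bounds), and the only thing to check is that pulling back through the fixed map $G$ does not lose regularity. The paper implements this more concretely than you do: it conjugates by the explicit diffeomorphism $E:(x,h)\mapsto (x,\exp_x(h))$ built from the exponential map and the orthonormal frame, under which $G\circ E(x,h)=h$ is \emph{literally} the projection, so that $E^*(\chi\otimes\chi\,G^*\Fse{z})=F_\cv(z,\vert h\vert_g)\,\chi(x)\chi(\exp_x(h))$ is the flat family, constant in $x$, times a fixed smooth compactly supported factor. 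The claim is then immediate from the flat bound, multiplication by a fixed smooth cutoff, and invariance of $\pazocal{C}^\varalpha_{\loc}$ under the fixed diffeomorphism $E$ --- no general submersion theorem, no stationary phase.

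The step you yourself single out as ``the main work'' --- verifying $G^*:\pazocal{C}^\varalpha_{\loc}(\rr^n)\to\pazocal{C}^\varalpha_{\loc}(\pazocal{U})$ via the Fourier characterization of Lemma \ref{l:holdersobolev} --- is flawed as sketched. The exponent in that lemma is dimension dependent: on $\pazocal{U}\subset M\times M$, which has dimension $2n$, membership in $\pazocal{C}^\varalpha_{\loc}$ corresponds to decay $(1+\module{\zeta})^{-\varalpha-2n}$ of localized Fourier transforms, so the decay $(1+\module{\zeta})^{-\varalpha-n}$ you claim would only yield $\pazocal{C}^{\varalpha-n}_{\loc}$, a loss of $n$. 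Worse, the sharper decay is genuinely unattainable: in the straightened coordinates the pullback is a tensor product, $\widehat{\psi_1\otimes(\psi_2 u)}(\xi,\eta)=\widehat{\psi_1}(\xi)\,\widehat{\psi_2 u}(\eta)$, and in the directions $\xi$ bounded, $\module{\eta}\to\infty$ this decays no faster than $\widehat{\psi_2u}$ does, i.e.\ at rate $-\varalpha-n$ at best; so pointwise Fourier decay of localizations is simply not the invariant that survives the pullback without loss. The correct elementary argument is to work with the Littlewood--Paley definition \eqref{plfj} directly (the dyadic blocks of $\psi_1\otimes(\psi_2u)$ are controlled by those of $\psi_2u$, and $\pazocal{C}^\varalpha_{\loc}$ is invariant under the fixed diffeomorphism), or, as in the paper, to use the explicit identity via $E$ so that nothing needs to be proved about general submersions. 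If instead you simply cite the standard continuity of submersion pullback on local Zygmund spaces, your proof is fine and coincides with the paper's in content, with all $z$-dependence carried verbatim by the flat estimate.
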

\begin{proof}
Let $U$ be some small geodesically convex open subset in $M$.
We choose some test function $\chi\in C^\infty_\c(U)$ in such a way that, in the support of $\chi\otimes \chi$, we have
a local diffeomorphism $E:U\times \mathbb{R}^{n} \ni (x,h)\mapsto (x,\exp_x(h))\in U\times U$.
Then by definition of $G$ and of the exponential map, we have the  
identity 
$$ E^*\left(\chi\otimes \chi G^*\Fse{z}\right)(x;h)=F_\cv(z,\vert h\vert_g) \chi(x)\chi(\exp_x(h))\in \pazocal{D}^\prime(U\times \mathbb{R}^n).$$
Now observe that $F_\cv(z,\vert h\vert_g)=\pazocal{O}_{\pazocal{C}^\varalpha}((1+\module{\Im z})^{-a(\Re\cv+1)})$ for $a\in [0,1]$, 
$\varalpha\leqslant (2-2a)(\Re\cv+1)-\varm-n $ and that 
$\chi(x)\chi(\exp_x(h))\in C^\infty_\c(U\times \mathbb{R}^n)$ hence the result follows.
\end{proof}

\bibliographystyle{abbrv}
\bibliography{complexpowers}

\end{document}